\numberwithin{equation}{section}
\newcommand{\dd}{\mathrm{d}}
\newtheorem{thm}{Theorem}
\newtheorem{cor}{Corollary}
\newtheorem{prop}{Proposition}
\newtheorem{ass}{Assumption}
\newtheorem{lem}{Lemma}
\newtheorem{rmk}{Remark}
\title{Computationally-assisted proof of a novel \\ $\mathsf{O}(3)\times \mathsf{O}(10)$-invariant Einstein metric on $S^{12}$}
\author[1]{Timothy Buttsworth}
\author[2]{Liam Hodgkinson}
\affil[1]{\small School of Mathematics and Statistics\\
The University of New South Wales\\
Kensington, Sydney\\ NSW 2052, Australia\\ 
\emph{E-mail address}: {\tt t.buttsworth@unsw.edu.au}}
\affil[2]{\small School of Mathematics and Statistics \\
The University of Melbourne \\
Parkville, Melbourne\\
  VIC 3052, Australia\\ 
\emph{E-mail address}: {\tt lhodgkinson@unimelb.edu.au}}
\date{}
\begin{document}

\maketitle

\abstract{
We prove existence of a non-round Einstein metric $g$ on $S^{12}$ that is invariant under the usual cohomogeneity one action of $\mathsf{O}(3)\times\mathsf{O}(10)$ on $S^{12}\subset \mathbb{R}^{13}= \mathbb{R}^3\oplus \mathbb{R}^{10}$. The proof involves using several rigorous numerical analysis techniques to produce a Riemannian metric $\hat{g}$ which approximately satisfies the Einstein condition to known high precision, and then demonstrating that $\hat{g}$ can be perturbed into a true Einstein metric $g$.}

\section{Introduction}







Let $M$ be a smooth manifold. An \textit{Einstein metric} on $M$ is a smooth Riemannian metric $g$ for which there exists a constant $\lambda$ so that
\begin{equation}\label{GRS}
 \mathrm{Ric} (g)=\lambda g \ \text{on} \ M,
\end{equation}
where $\mathrm{Ric}(g)$ is the Ricci curvature of $g$. Finding and classifying complete Einstein metrics on a given smooth manifold $M$ (i.e., those Einstein metrics $g$ for which the Riemannian metric space $(M,g)$ is complete) is one of the most important topics in Riemannian geometry, mainly because the Einstein condition offers a coherent notion of what it means for a manifold's geometry to be `ideal'. See \cite{Besse} for a more thorough discussion of this idea, including why Einstein metrics are also `ideal' from the perspective of physics. 

If the dimension of $M$ is at most $3$, then any Einstein metric has constant sectional curvature, so the Killing-Hopf theorem implies that any complete Einstein manifold $(M,g)$ must be a quotient of the round sphere, flat Euclidean space, or hyperbolic space. Starting in dimension $4$, this classification result fails, and we are forced to use other methods in the study of the Einstein condition. 
In local coordinates, \eqref{GRS} is a system of quasi-linear, second order, non-elliptic partial differential equations, so one might hope to use PDE methods to solve it. While the local theory of solutions to this problem is now well-understood (see, for example, Chapter 5 of \cite{Besse}), the global theory is much more complicated. 


One of the most popular ways of producing complete solutions of \eqref{GRS} is by imposing \textit{symmetry}, i.e., one often assumes that $g$ is invariant under a certain Lie group action $G$ of $M$. Rather than directly reducing the number of equations being solved, the main point of imposing symmetry is that it reduces the number of independent variables in the PDE (although the symmetry is often powerful enough to reduce the number of equations as well). In case $G$ acts transitively on $M$, the manifold is diffeomorphic to the homogeneous space $G/H$ for some choice of isotropy subgroup $H$, and the Einstein equation for $G$-invariant Riemannian metrics becomes a system of algebraic equations. This algebraic problem has been studied thoroughly, and while there are many results and important articles that could be cited, we instead feel that it is only appropriate to highlight the following key observations:
\begin{itemize}
\item If $\lambda>0$, the Bonnet-Myers theorem implies that $M=G/H$ is necessarily compact. There are now many examples of compact homogeneous Einstein metrics, and the general theory has been well-developed in, for example, \cite{Bohm04,BWZ}. 
\item If $\lambda=0$, the Alekseevskii-Kimel'fel'd theorem \cite{AK} implies that any homogeneous solution of \eqref{GRS} is Riemann-flat (in particular, the universal cover is flat Euclidean space).
\item If $\lambda<0$, a theorem of Bochner (Theorem 1.84 of \cite{Besse}) implies that the homogeneous space must be non-compact.  The Alekseevskii conjecture (proven in \cite{BLA}) demonstrates that the resulting homogeneous space is diffeomorphic (though certainly not isometric) to $\mathbb{R}^n$. By \cite{Lauret}, the structure of homogeneous Einstein metrics on $\mathbb{R}^n$ is quite well-understood. 
\end{itemize}

After assuming that $G$ acts transitively, one natural ansatz asserts that $G$ acts with \textit{cohomogeneity one}, which means that the generic orbits of the action of $G$ in $M$ have dimension one less than that of the manifold, in which case, the Einstein condition becomes a system of ODEs. The first non-trivial compact Einstein metrics in this context were constructed by Page \cite{Page} and B\'erard-Bergery \cite{BB}, and the general theory for the short-time existence of solutions to these ODEs was developed in \cite{EschenburgWang}. Many complete cohomogeneity one Einstein metrics (both compact and non-compact) have been constructed, and are too numerous to discuss thoroughly. 

The focus of this paper is the problem of solving \eqref{GRS} on $S^{n}$ with $\lambda=n-1$ 
for a Riemannian metric $g$ which is invariant under the usual cohomogeneity one 
action of $\mathsf{O}(d_1+1)\times \mathsf{O}(d_2+1)$, with $d_1+d_2+1=n$. The relevant results about this problem are as follows: 
\begin{itemize}
\item The usual round Einstein metric on $S^{d_1+d_2+1}$ is invariant under this group action, so authors typically search for non-round solutions.
    \item If $d_1=1$ or $d_2=1$ (in particular, if the dimension of the sphere is $n=d_1+d_2+1=4$), then any invariant Einstein metrics are round. One way of seeing this is to reduce the equations to a system of two first order ODEs and analyse these directly (c.f. Remark 5.4 of \cite{NienhausWink}). Alternatively, one can use a maximum principle argument to show that the curvature operator of any invariant Einstein metric must be non-negative, and then invoke the B\"ohm-Wilking rounding theorem \cite{BW} to conclude that the Einstein metric must be round (cf. \cite{Donovan}). 
    \item If $n=5,6,7,8,9$, B\"ohm showed \cite{Bohm98} that there are infinitely-many non-round, pairwise non-isometric solutions.
    \item Nienhaus-Wink \cite{NienhausWink} constructed three new non-round Einstein metrics on $S^{10}$, one for each pair $(d_1,d_2)\in \{(2,7),(3,6),(4,5)\}$, and conjectured that this exhausts all Einstein metrics that are invariant under this particular group action, up to isometry. 
\end{itemize}
These existence results for $n\in [5,10]$ were obtained by studying the linearisation of the Einstein equation at a \textit{singular} Einstein metric that is also $\mathsf{O}(d_1+1)\times \mathsf{O}(d_2+1)$-invariant. In fact, this singular Einstein metric, also called the \textit{cone solution}, can be described explicitly. For $n=5,6,7,8,9$, B\" ohm's study of the linearisation of the Einstein equation reveals new Einstein metrics that are `arbitrarily close' to the cone solution, while for $n=10$, Nienhaus-Wink combine the study of the linearisation with a winding number argument to produce new Einstein metrics that are `close' but not `\textit{arbitrarily} close' to the cone solution.    Even though a non-round Einstein metric has been numerically detected on $S^{12}$ (for $d_1=2$ and $d_2=9$), the authors of \cite{NienhausWink} have explained that its existence does not follow from their techniques; it appears that this non-round Einstein metric is `too far away' from the cone solution for the winding number approach to reach it. In this paper, we use new techniques to prove existence of this Einstein metric. 
\begin{thm}\label{CT}
 There exists a non-round Einstein metric on $S^{12}$ which is $\mathsf{O}(3)\times \mathsf{O}(10)$-invariant.
\end{thm}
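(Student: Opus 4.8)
The plan is to reformulate the existence of such a metric as a boundary value problem for a system of ordinary differential equations and then to resolve that boundary value problem by a computer-assisted argument of Newton--Kantorovich type, as the abstract indicates.

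First I would record the shape of the relevant metrics. The action of $\mathsf{O}(3)\times\mathsf{O}(10)$ on $S^{12}\subset\R^3\oplus\R^{10}$ has cohomogeneity one with orbit space an interval, so every invariant metric is, away from the two singular orbits, of the form
\[
 g = \dd t^2 + f_1(t)^2\, g_{S^2} + f_2(t)^2\, g_{S^9} \qquad \text{on } (0,L)\times S^2\times S^9,
\]
with $g_{S^2}$, $g_{S^9}$ the unit round metrics. Smooth closure of $g$ at the two poles forces, at $t=0$, that the $S^9$-factor collapse ($f_2(0)=0$, $f_2'(0)=1$, with $f_2$ extending oddly and $f_1$ evenly across $t=0$, and $f_1(0)>0$), and symmetrically at $t=L$ with the roles of $f_1$ and $f_2$ exchanged and $f_1'(L)=-1$. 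Imposing $\mathrm{Ric}(g)=11\,g$ yields two coupled second-order ODEs for $(f_1,f_2)$. After a gauge choice normalising the parametrisation (say rescaling the interval to $[0,1]$ and keeping $L$ as an auxiliary scalar unknown) and a change of variables that resolves the endpoint singularities using the odd Taylor data of the collapsing warping function, the whole system becomes a nonlinear operator equation $\mathcal F(u)=0$, where $\mathcal F\colon X\to Y$ is a Fréchet-differentiable map between Banach spaces of function pairs carrying the boundary and parity constraints. The round metric is one explicit solution $u_{\mathrm{round}}$ of $\mathcal F(u)=0$; the goal is a second one.

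I would then construct the approximate Einstein metric $\hat g$ of the abstract by computing numerically, to high precision, a solution $\hat u$ of $\mathcal F(u)=0$ in a finite-dimensional subspace of $X$ (e.g.\ a truncated Chebyshev- or Fourier-type expansion adapted to the parity conditions) via ordinary Newton iteration, and then bounding the defect $\varepsilon := \|\mathcal F(\hat u)\|_Y$ rigorously with interval arithmetic. The core step is to bound the inverse of the linearisation at $\hat u$: I would build an approximate inverse $A$ of $D\mathcal F(\hat u)$ from the computed finite-dimensional linearised operator, prove rigorously that $\|I - A\,D\mathcal F(\hat u)\|_{X\to X} < 1$ by combining interval arithmetic on the finite block with analytic tail estimates for the high-frequency remainder and for the behaviour near the two poles, and deduce $\|D\mathcal F(\hat u)^{-1}\|\le K$ for an explicit $K$ via a Neumann series; one also needs an explicit local Lipschitz bound $\|D\mathcal F(u) - D\mathcal F(v)\| \le M\|u-v\|$ on a ball about $\hat u$, which follows from the polynomial form of the nonlinearities. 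With $\varepsilon$, $K$, $M$ in hand, the Newton--Kantorovich theorem (equivalently a radii-polynomial inequality such as $2KM\varepsilon < 1$) yields a genuine solution $u^\ast$ of $\mathcal F(u^\ast)=0$ in an explicit small ball $\overline{B}(\hat u,\rho)\subset X$.

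It remains to upgrade $u^\ast$ to the asserted metric. A regularity bootstrap on the ODE system, which is non-degenerate away from the two poles, shows $u^\ast$ is smooth; the built-in parity and boundary data then guarantee that the associated $g$ extends to a genuine smooth Riemannian metric on $S^{12}$, and positivity $f_1,f_2>0$ on the interior persists from $\hat u$ because the corresponding numerical margins exceed $\rho$. Finally, $g$ is non-round because $\|\hat u - u_{\mathrm{round}}\|_X$ is bounded below by a quantity strictly larger than $\rho$, so $u^\ast \ne u_{\mathrm{round}}$ and the metric it defines is not isometric to the round sphere. I expect the principal obstacle to be the rigorous bound on $D\mathcal F(\hat u)^{-1}$: the linearised operator genuinely degenerates at the poles where a sphere factor collapses, so the standard validated-numerics pipeline must be interwoven with a careful analytic treatment of the indicial behaviour there; once a uniform bound on this inverse is in place, the remaining estimates and the Newton--Kantorovich step are delicate but essentially routine.
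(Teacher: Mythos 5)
Your proposal is sound in outline and would, with enough work, yield the theorem, but it follows a route that differs from the paper's in two structural respects.

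First, the paper does not formulate the problem as a single two-point boundary value problem $\mathcal{F}(u)=0$ on $[0,1]$. Instead it splits into two \emph{singular initial value problems}, one evolving from each pole, and reduces the matching to a two-dimensional shooting condition $A(\alpha)=\Omega(\omega)$ for the scalar parameters $(\alpha,\omega)$ governing the initial data. This choice is not cosmetic: for an IVP the linear solution operator is automatically bounded and invertible (the paper obtains a crude but rigorous bound $\|\mathcal{S}\|\le e^{250}$ by a Gr\"onwall argument, Proposition~\ref{GreenEstimate}), so the delicate question of invertibility of the linearised BVP operator near the poles --- which you correctly flag as the principal obstacle in your direct formulation --- simply never arises. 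What remains to be inverted is only the $2\times 2$ matrix $J=(A^{(1)}\,|\,-\Omega^{(1)})$, whose nondegeneracy is checked numerically to high precision. Second, the paper uses Schauder's fixed point theorem rather than Newton--Kantorovich/Banach: it writes each of the relevant equations (the perturbation ODEs for $\mu^{(i)},\nu^{(i)}$, the stopping-time equations, and the final shooting equation) in the fixed-point form $x = x_0 + L(x) + Q(x)$ and checks a ball-mapping condition, trading uniqueness for simplicity. The paper itself remarks in its conclusion that a Banach-type argument would give local uniqueness, and that the crude Gr\"onwall constant $M=e^{250}$ is why it needs $\varepsilon<10^{-350}$ and roughly $550$ digits of working precision. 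Your Newton--Kantorovich proposal, if the inverse of $D\mathcal{F}(\hat u)$ were bounded via a careful indicial/Green's-function analysis at the poles as you suggest, could plausibly succeed with far less precision and would give uniqueness for free; the price is exactly the singular linear theory you identify but do not carry out. In short: same reduction to singular ODEs and the same ``high-precision approximate solution plus \emph{a posteriori} fixed-point'' philosophy, but the paper avoids the hard linear inversion by shooting and by choosing Schauder over Newton--Kantorovich.
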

Our proof of this result is again perturbative in nature. However, the key new insight we offer is that by taking careful advantage of computational resources, we are free to linearise about a metric that is arbitrarily close to a perceived non-round Einstein metric. We quantify the extent to which this metric satisfies the Einstein equation with an \textit{a posteriori} error bound, and then ``fix the error" with a perturbation, guided by the linearisation of the Ricci curvature operator at this same metric. 
For our numerics, the general approach we take, which is likely adaptable to find solutions of other PDEs, is as follows:
\begin{enumerate}[label=(\Roman*)]
\item \label{step:Heur} Obtain an \emph{heuristic pointwise solution} with no guaranteed regularity, accuracy, or integrability in elementary terms. In our work, this approximate solution is obtained using an arbitrary-precision Taylor series solver.
\item \label{step:Interp} Approximate the heuristic solution by Chebyshev interpolation to obtain $\hat{g}$. Then, using interval arithmetic and symbolic integration, rigorously compute a high-precision upper bound on the \emph{a posteriori} error in $\hat{g}$.
\item \label{step:Lin} To high accuracy, solve for the linearisation around $\hat{g}$ as a system of linear differential equations.
\item \label{step:Theory} Apply fixed-point theory to conclude the existence of a nearby solution. 
\end{enumerate}
In general, we can only expect this methodology to be successful if the \emph{heuristic pointwise solution} of Step \ref{step:Heur} is highly-accurate, as verified in Step \ref{step:Interp}. As a result, \textit{any} method for constructing an \emph{heuristic pointwise solution} will be acceptable, provided a high-level of accuracy is expected. Steps \ref{step:Interp} and \ref{step:Lin} involve only linear problems, which can generally be reliably solved to very high precision. However, in our approach, we will find that our solver in Step \ref{step:Heur} is sufficiently effective that we may obtain a numerical solution using only a finite difference scheme. We note that for functions of more than one real-variable, Chebyshev interpolation in Step \ref{step:Interp} can also be replaced by another general regression procedure (e.g. kernel regression). 
The main purpose of this paper is to illustrate that this general method can be applied successfully to prove Theorem \ref{CT}. The paper is organised according to these steps, and the specific sections are broken up as follows:
\begin{itemize}
\item In Section \ref{geomprelim}, we discuss the basics of the Einstein equation for $\mathsf{O}(d_1+1)\times \mathsf{O}(d_2+1)$-invariant Riemannian metrics on $S^{d_1+d_2+1}$, and formulate the existence question as a shooting problem;
\item In Section \ref{prelimapprox}, we conduct some low-precision numerical work which indicates the existence of a new Einstein metric on $S^{12}$, and make some observations about what this hypothetical (at this stage) Einstein metric should look like;
    \item In Section \ref{Guarantees}, the `theoretical' and estimate-heavy component of the paper, we prove existence of a novel $\mathsf{O}(3)\times \mathsf{O}(10)$ Einstein metric $g$, under the \textit{assumption} that it is possible to construct an approximate $\mathsf{O}(3)\times \mathsf{O}(10)$ Einstein metric $\hat{g}$ whose Ricci tensor is \textit{very} close to being a constant multiple of the identity, and whose Ricci tensor linearisation is sufficiently non-degenerate (this achieves Step \ref{step:Theory});
   \item In Section \ref{numerics}, we construct the required approximate solution $\hat{g}$ according to Steps \ref{step:Heur}--\ref{step:Lin}. This completes the proof.   
\end{itemize}
We are hopeful that our methods will go on to be used to find solutions of other important geometric equations, including in situations with less symmetry. 


\section*{Acknowledgments} The first author is grateful to the \textit{Mathematisches Forschungsinstitut Oberwolfach} for hosting the event \textit{Geometrie} in June 2024, where the idea for this project began. The first author is also grateful to Matthias Wink for insightful discussions on this project. Both authors were supported financially by the Australian Government through the Australian Research Council grants DE220100919 and DE240100144, respectively. 

\section{Geometric Preliminaries}\label{geomprelim}

In general, the imposition of cohomogeneity one symmetries on solutions of \eqref{GRS} reduces the problem to a system of ordinary differential equations. The specific cohomogeneity one symmetry of interest in this paper is the standard product action of $\mathsf{O}(d_1+1)\times \mathsf{O}(d_2+1)$ on  $S^{d_1+d_2+1}\subset \mathbb{R}^{d_1+1}\times \mathbb{R}^{d_2+1}$. In this case, the problem becomes a system of three second-order ODEs. Since our group action has two singular orbits, this system of ODEs comes equipped with singular boundary conditions. In this section, we explicitly write these ODEs, the boundary conditions, perform some changes of variables, and describe the associated `shooting problem' that must be solved in order to produce a new Einstein metric. 
\subsection{The boundary value problem}
Choose integers $d_1,d_2\ge 2$. Under the $\mathsf{O}(d_1+1)\times \mathsf{O}(d_2+1)$ action, the principal orbits of $S^{d_1+d_2+1}$ are product spheres $S^{d_1}\times S^{d_2}$, and there are two singular orbits: $S^{d_1}$ and $S^{d_2}$. For any $T>0$, there is an equivariant diffeomorphism $\Phi$ between the principal part of $S^{d_1+d_2+1}$ and $(0,T)\times S^{d_1}\times S^{d_2}$.  Using this identification, any $\mathsf{O}(d_1+1)\times \mathsf{O}(d_2+1)$-invariant Riemannian metric on $S^{d_1+d_2+1}$ which has a distance of $T$ between the two singular orbits appears as
\begin{equation}\label{metricform}
g= dt^2+f_1^2Q_{d_1}+f_2^2Q_{d_2},
\end{equation} on the principal part of $S^{d_1+d_2+1}$, 
up to diffeomorphism, where $f_i:(0,T)\to \mathbb{R}^+$ are smooth functions and
 $Q_{d_i}$ is the round metric of unit radius on $S^{d_i}$. In order for the Riemannian metric $g$ to close up smoothly at 
 the singular orbits, the functions $f_1$ and $f_2$ must be smoothly extendable to functions on $(-T,2T)$ so that
\begin{align}\label{metricsmoothness}
\begin{split}
 f_1(0)=\frac{\sqrt{d_1-1}}{\alpha}\ \text{and $f_1(t)$ is even about $t=0$},\qquad
 f_2'(0)=1\ \text{and $f_2(t)$ is odd about $t=0$},\\
f_1'(T)=-1\ \text{and $f_1(t)$ is odd about $t=T$},\qquad
f_2(T)=\frac{\sqrt{d_2-1}}{\omega}\ \text{and $f_2(t)$ is even about $t=T$},
 \end{split}
\end{align}
for some $\alpha,\omega\in (0,\infty)$. A standard computation (cf. Proposition 2.1 in \cite{EschenburgWang}) shows that the Ricci curvature of \eqref{metricform} on the principal part of the manifold is given by 
\begin{align*}
    \mathrm{Ric}(g)&=-\left(d_1\frac{f_1''}{f_1}+d_2\frac{f_2''}{f_2}\right)dt^2\\
    &-\left(f_1f_1''+(d_1-1)(f_1')^2+d_2\frac{f_1f_1' f_2'}{f_2}-(d_1-1)\right)Q_{d_1}\\
    &-\left(f_2f_2''+(d_2-1)(f_2')^2+d_1\frac{f_2f_2' f_1'}{f_1}-(d_2-1)\right)Q_{d_2}.
\end{align*} Thus, provided the smoothness conditions \eqref{metricsmoothness} are satisfied, a necessary and sufficient condition for the Riemannian metric $g$ in \eqref{metricform} to be Einstein (with Einstein constant $\lambda$) is that the following ODEs are satisfied on $(0,T)$:
\begin{align}\label{solitonequations}
\begin{split}
 d_1\frac{f_1''}{f_1}+d_2\frac{f_2''}{f_2}&=-\lambda,\\
 \frac{f_1''}{f_1}+(d_1-1)\frac{(f_1')^2}{f_1^2}+d_2\frac{f_1' f_2'}{f_1f_2}-\frac{d_1-1}{f_1^2}&=-\lambda,\\
 \frac{f_2''}{f_2}+(d_2-1)\frac{(f_2')^2}{f_2^2}+d_1\frac{f_1' f_2'}{f_1f_2}-\frac{d_2-1}{f_2^2}&=-\lambda.
 \end{split}
\end{align}
In this paper, we will always choose $\lambda=d_1+d_2$, so that $f_1=\cos(t)$ and $f_2=\sin(t)$ with $T=\frac{\pi}{2}$ gives the round Einstein metric.
\subsection{A conserved quantity}
Since \eqref{solitonequations} consists of three scalar second-order ODEs for two scalar functions, it would appear that our equations are over-determined, but this is not the case. Indeed, a linear combination of these three equations gives 
\begin{align}\label{firstintegral}
    (d_1+d_2-1)\lambda=\frac{d_1(d_1-1)}{f_1^2}+\frac{d_2(d_2-1)}{f_2^2}-d_1(d_1-1)\frac{(f_1')^2}{f_1^2}-d_2(d_2-1)\frac{(f_2')^2}{f_2^2}-2d_1d_2\frac{f_1'f_2'}{f_1f_2},
\end{align}
which turns out to be a conserved quantity of \eqref{solitonequations}. We make this precise with the following lemma (cf. Lemma 2.4 of \cite{EschenburgWang}). 
\begin{lem}\label{bianchi}
    If $f_1,f_2$ are smooth and positive functions on a non-empty open interval $I$ satisfying at least two of the three equations of \eqref{solitonequations} for some constant $\lambda$, then the final equation also holds, provided \eqref{firstintegral} holds at some $t_0\in I$. 
\end{lem}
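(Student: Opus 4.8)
The plan is to adapt the classical argument that the Hamiltonian constraint of a cohomogeneity one Einstein problem is propagated by the evolution equations (cf.\ Lemma~2.4 of \cite{EschenburgWang}), so that it applies assuming only two of the three scalar equations. Write $F_1,F_2,F_3$ for the functions obtained by subtracting the right-hand side from the left-hand side of the first, second and third equation of \eqref{solitonequations} respectively, so that ``the $j$-th equation of \eqref{solitonequations} holds on $I$'' is the same as ``$F_j\equiv 0$ on $I$'', and write $\mathcal{G}$ for the right-hand side of \eqref{firstintegral} minus $(d_1+d_2-1)\lambda$, so that \eqref{firstintegral} reads $\mathcal{G}=0$. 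Since $f_1,f_2>0$ on $I$, all of these are smooth functions on $I$. The first, elementary, step is the algebraic identity $\mathcal{G}=F_1-d_1F_2-d_2F_3$, which is precisely the ``linear combination'' alluded to just before the statement: in $F_1-d_1F_2-d_2F_3$ the terms involving $f_1''$ and $f_2''$ cancel, and the surviving first-order terms are exactly those of $\mathcal{G}$.

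The main step is a Bianchi-type identity. Because $\mathcal{G}$ depends only on $f_1,f_2$ and their first derivatives, $\mathcal{G}'$ is linear in $f_1''$ and $f_2''$, and one can choose smooth functions $c_1,c_2,c_3$ on $I$ so that $\mathcal{G}'=c_1F_1+c_2F_2+c_3F_3$. The coefficients are forced: matching the coefficients of $f_1''$, of $f_2''$, and of the constant $\lambda$ on both sides determines $c_1,c_2,c_3$ uniquely, with $c_1=-2(d_1f_1'/f_1+d_2f_2'/f_2)$ and $c_2,c_3$ accordingly; the content of the identity is then that, with these coefficients, the leftover purely first-order part of $\mathcal{G}'-c_1F_1-c_2F_2-c_3F_3$ vanishes identically. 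This is the infinitesimal version of the fact that the constraint \eqref{firstintegral} is preserved by the evolution equations, and may also be read off from the contracted second Bianchi identity applied to the metric \eqref{metricform}. Carrying out this verification is the only genuine obstacle in the proof, although it is an entirely routine computation.

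Granting both identities, I would conclude by a case split according to which of the three equations of \eqref{solitonequations}, say the one recorded by $F_k$, is the one not assumed; the cases $k=2$ and $k=3$ are interchanged by the symmetry $(d_1,f_1)\leftrightarrow(d_2,f_2)$, so it suffices to treat $k=1$ and $k=2$. In either case the other two of $F_1,F_2,F_3$ vanish identically on $I$, so the algebraic identity gives $\mathcal{G}=\varepsilon_kF_k$ with $(\varepsilon_1,\varepsilon_2,\varepsilon_3)=(1,-d_1,-d_2)$ --- all nonzero, since $d_1,d_2\ge 2$ --- and the Bianchi identity reduces to $\mathcal{G}'=c_kF_k=(c_k/\varepsilon_k)\,\mathcal{G}$, a linear homogeneous first-order ODE for $\mathcal{G}$ with continuous coefficient. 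By hypothesis $\mathcal{G}(t_0)=0$ for some $t_0\in I$, so $\mathcal{G}(t)=\mathcal{G}(t_0)\exp\!\left(\int_{t_0}^{t}(c_k/\varepsilon_k)\,\dd s\right)\equiv 0$ on $I$, whence $F_k=\mathcal{G}/\varepsilon_k\equiv 0$ on $I$; that is, the remaining equation of \eqref{solitonequations} holds on $I$, as claimed.
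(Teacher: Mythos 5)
Your argument is correct, but it takes the route that the paper deliberately declines (and the Remark immediately after the lemma explicitly flags this alternative). The paper's proof is indirect: it introduces auxiliary functions $\tilde{f}_1,\tilde{f}_2$ satisfying two of the equations with $\lambda$ replaced by the \emph{function} $F(\tilde{f}_1,\tilde{f}_2,\tilde{f}_1',\tilde{f}_2')$, shows by a small algebraic manipulation that $\tilde{f}_i$ then satisfy all three equations with variable Einstein factor $\Lambda(t)$, invokes the geometric contracted Bianchi identity on the resulting metric $\tilde{g}$ (with $\mathrm{Ric}(\tilde{g})=\Lambda\tilde{g}$ and $\dim>2$) to force $\Lambda$ constant, matches $\Lambda=\lambda$ via the hypothesis at $t_0$, and then uses ODE uniqueness to identify $\tilde{f}_i$ with $f_i$. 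Your proof instead performs the explicit propagation-of-constraint computation. Both the algebraic identity $\mathcal{G}=F_1-d_1F_2-d_2F_3$ and the Bianchi-type identity $\mathcal{G}'=c_1F_1+c_2F_2+c_3F_3$ are correct: with $c_1=-2\bigl(d_1f_1'/f_1+d_2f_2'/f_2\bigr)$, the other two coefficients come out to $c_2=2d_1f_1'/f_1$ and $c_3=2d_2f_2'/f_2$, and a term-by-term check confirms the purely first-order remainder vanishes, as you assert. Your case split and Gr\"onwall/linear-ODE conclusion are then sound, and the coefficients $\varepsilon_k\in\{1,-d_1,-d_2\}$ are indeed all nonzero. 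The trade-off between the two proofs is essentially as you'd expect: the paper's version gets the propagation identity for free from a geometric invariant and so never needs to write down the $c_k$, at the cost of an indirection through auxiliary solutions and an appeal to the ODE uniqueness theorem; your version is self-contained and elementary but requires carrying out the (admittedly routine) expansion of $\mathcal{G}'$ — a step you only sketch. Since you explicitly flag that verification as the one gap, and since it does in fact go through, I would count this as a correct alternative proof.
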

\begin{proof}
   Define $F:(\mathbb{R}^+)^2\times \mathbb{R}^2\to \mathbb{R}$ with 
   \begin{align*}
       (d_1+d_2-1) F(u,v,x,y)=\frac{d_1(d_1-1)}{u^2}+\frac{d_2(d_2-1)}{v^2}-d_1(d_1-1)\frac{x^2}{u^2}-d_2(d_2-1)\frac{y^2}{v^2}-2d_1d_2\frac{xy}{uv}.
   \end{align*}
Define $\tilde{f}_1,\tilde{f}_2$ to be the unique functions in a small neighbourhood of $t_0$ so that 
$\tilde{f}_i(t_0)=f_i(t_0)$ and $\tilde{f}_i'(t_0)=f_i'(t_0)$ for $i=1,2$, and so that they solve the same two equations as $f_1,f_2$, except with $\lambda$ replaced by $F(\tilde{f}_1,\tilde{f}_2,\tilde{f}_1',\tilde{f}_2')$. For example, if $f_1,f_2$ solve the first two equations, then $\tilde{f}_1,\tilde{f}_2$ are chosen to satisfy the well-posed IVP
\begin{align*}
 d_1\frac{\tilde{f}_1''}{\tilde{f}_1}+d_2\frac{\tilde{f}_2''}{\tilde{f}_2}&=-F(\tilde{f}_1,\tilde{f}_2,\tilde{f}_1',\tilde{f}_2'),\\
 \frac{\tilde{f}_1''}{\tilde{f}_1}+(d_1-1)\frac{(\tilde{f}_1')^2}{\tilde{f}_1^2}+d_2\frac{\tilde{f}_1' \tilde{f}_2'}{\tilde{f}_1\tilde{f}_2}-\frac{d_1-1}{\tilde{f}_1^2}&=-F(\tilde{f}_1,\tilde{f}_2,\tilde{f}_1',\tilde{f}_2'),\\
 \tilde{f}_i(t_0)=f_i(t_0), \qquad \tilde{f}'_i(t_0)=f_i'(t_0) \ \text{for} \ i=1,2. 
\end{align*}

By combining these two equations with the definition of $F$, we find that $\tilde{f}_1,\tilde{f}_2$ solve 
\begin{align*}
 d_1\frac{\tilde{f}_1''}{\tilde{f}_1}+d_2\frac{\tilde{f}_2''}{\tilde{f}_2}&=-\Lambda,\\
 \frac{\tilde{f}_1''}{\tilde{f}_1}+(d_1-1)\frac{(\tilde{f}_1')^2}{\tilde{f}_1^2}+d_2\frac{\tilde{f}_1' \tilde{f}_2'}{\tilde{f}_1\tilde{f}_2}-\frac{d_1-1}{\tilde{f}_1^2}&=-\Lambda,\\
 \frac{\tilde{f}_2''}{\tilde{f}_2}+(d_2-1)\frac{(\tilde{f}_2')^2}{\tilde{f}_2^2}+d_1\frac{\tilde{f}_1' \tilde{f}_2'}{\tilde{f}_1\tilde{f}_2}-\frac{d_2-1}{\tilde{f}_2^2}&=-\Lambda,
\end{align*}
where $\Lambda(t)=F(\tilde{f}_1(t),\tilde{f}_2(t),\tilde{f}_1'(t),\tilde{f}_2'(t))$. As a result, the Riemannian metric $\tilde{g}=dt^2+\tilde{f}_1^2 Q_{d_1}+\tilde{f}_2^2 Q_{d_2}$ satisfies $\mathrm{Ric}(\tilde{g})=\Lambda \tilde{g}$ in a tubular neighbourhood of the principal orbit corresponding to $t_0$. Since $d_1+d_2+1>2$, the second contracted Bianchi identity $d(\text{tr}_{\tilde{g}}\mathrm{Ric}(\tilde{g}))=2\text{div}(\mathrm{Ric}(\tilde{g}))$  then implies that $d\Lambda=0$, so $\Lambda(t)$ must be constant. Since \eqref{firstintegral} holds at $t_0$, $\Lambda(t)=\lambda$ everywhere. Uniqueness of solutions to ODEs then implies that $\tilde{f}_i=f_i$, as required. 
\end{proof}

\begin{rmk}
It is possible to obtain this proof directly, i.e., verifying with direct computation that \eqref{firstintegral} is a conserved quantity of any two of the three ODEs, but this will provide no new information beyond what the second contracted Bianchi identity already tells us. 
\end{rmk}

\subsection{Shooting problem}\label{shootingproblem}
A convenient way to break up the singular boundary value problem \eqref{metricsmoothness}-\eqref{solitonequations} is to consider the two singular value problems obtained by solving \eqref{solitonequations} from either the $t=0$ or $t=T$ end, and ask that the two solutions meet smoothly somewhere on the interior. The following existence result for this singular IVP is standard (cf. \cite{Bohm98}), and relies crucially on the conserved quantity \eqref{firstintegral}. 
\begin{prop}\label{IVPf}
    Fix $\lambda=d_1+d_2$. For each $\alpha>0$ and $\lambda>0$, there is a unique $t_{\alpha}>0$ and a unique pair of positive scalar functions $(f_1,f_2)$ defined in $[0,t_{\alpha}]$ solving  \eqref{solitonequations}, as well as the $t=0$ initial conditions of \eqref{metricsmoothness}, as well as the stopping condition $\left(\frac{d_1 f_1'}{f_1}+\frac{d_2 f_2'}{f_2}\right)\vert_{t_{\alpha}}=0$. Similarly, for each $T>0$ and $\omega>0$, there is a unique $t_{\omega}>0$ and a unique pair of positive scalar functions on $[T-t_{\omega},T)$ satisfying \eqref{solitonequations}, the $t=T$ initial conditions of \eqref{metricsmoothness}, and the stopping condition $\left(\frac{d_1 f_1'}{f_1}+\frac{d_2 f_2'}{f_2}\right)\vert_{T-t_{\beta}}=0$. The solution in both cases depends smoothly on $\alpha$, or $\omega$, respectively. 
\end{prop}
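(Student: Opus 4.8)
The plan is to reduce everything to the behaviour near the $t=0$ end. The substitution $s=T-t$ leaves the system \eqref{solitonequations} invariant, since only the quantities $(f_i')^2$, $f_1'f_2'$ and $f_i''$ appear; under this substitution the $t=T$ conditions of \eqref{metricsmoothness} become exactly the $t=0$ conditions with the roles of $(f_1,d_1,\alpha)$ and $(f_2,d_2,\omega)$ interchanged. Hence the second assertion of the Proposition follows formally from the first, and I would only argue the first: for each $\alpha>0$, construct a solution of \eqref{solitonequations} emanating from $t=0$ with the prescribed singular initial data, and show it reaches the stopping locus $w:=d_1f_1'/f_1+d_2f_2'/f_2=0$ at a finite time $t_\alpha$, smoothly in $\alpha$.

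First I would establish local existence, uniqueness and smooth dependence on $\alpha$ near $t=0$. The only difficulty is that $f_2(0)=0$ makes \eqref{solitonequations} singular there, but this is precisely the kind of singular initial value problem handled by the general cohomogeneity-one short-time existence theory (cf.\ Proposition 2.1 and the surrounding discussion in \cite{EschenburgWang}); alternatively one can write down a formal power-series solution $f_1=\tfrac{\sqrt{d_1-1}}{\alpha}+\cdots$, $f_2=t+\cdots$ whose coefficients are uniquely and polynomially determined by $\alpha$, and prove convergence by a majorant argument. In either approach one solves, say, the first two equations of \eqref{solitonequations}; a short computation with the prescribed initial data shows that the conserved quantity \eqref{firstintegral} holds in the limit $t\to 0^+$, hence at interior points by continuity, so Lemma \ref{bianchi} upgrades this to a solution of all three equations. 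Since $\alpha$ enters the (desingularised) equations analytically, the solution depends smoothly, indeed analytically, on $\alpha$.

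Next I would run the solution forward. Differentiating $w$ and using the first equation of \eqref{solitonequations} gives $w'=-\lambda-d_1(f_1'/f_1)^2-d_2(f_2'/f_2)^2\le-\lambda<0$, while $w\to+\infty$ as $t\to0^+$ because $f_2\sim t$ and $f_1'(0)=0$. Let $[0,t_*)$ be the maximal interval on which $f_1,f_2>0$. If $w$ stayed positive on all of $[0,t_*)$, then $w(t)\le w(t_0)-\lambda(t-t_0)$ would force $t_*<\infty$, and the non-escape estimate below would show $f_1,f_2$ and their first two derivatives stay bounded on $[t_0,t_*)$, so the solution would extend past $t_*$ with $f_i>0$, contradicting maximality. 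Hence $w$ has a first zero $t_\alpha\in(0,t_*)$; on $(0,t_\alpha)$ one has $w>0$, so $w'(t_\alpha)\le-\lambda<0$ and the zero is simple. Uniqueness of $t_\alpha$ and of $(f_1,f_2)$ then follows from the uniqueness in the previous step, and since $w$ is smooth in $(t,\alpha)$ with $\partial_tw(t_\alpha,\alpha)\neq0$, the implicit function theorem gives $t_\alpha$, and therefore $(f_1,f_2)|_{[0,t_\alpha]}$, smoothly dependent on $\alpha$.

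The remaining, and crucial, point is the non-escape estimate, and this is where \eqref{firstintegral} is used essentially; I expect it to be the main obstacle. Suppose $w>0$ on $[t_0,\tau)\subset(0,t_*)$ with $\tau<\infty$. Then $w\in(0,w(t_0)]$ there, and integrating $w'=-\lambda-d_1(f_1'/f_1)^2-d_2(f_2'/f_2)^2$ shows $f_1'/f_1,f_2'/f_2\in L^2[t_0,\tau)$, so $\log f_1,\log f_2$ stay bounded and $0<m\le f_1,f_2\le M$ on $[t_0,\tau)$. Now read \eqref{firstintegral} as $Q(f_1'/f_1,f_2'/f_2)=d_1(d_1-1)f_1^{-2}+d_2(d_2-1)f_2^{-2}-(d_1+d_2-1)\lambda$, where $Q(p,q)=d_1(d_1-1)p^2+2d_1d_2\,pq+d_2(d_2-1)q^2$; the right-hand side is now bounded. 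The form $Q$ is indefinite, but its restriction to any line $\{d_1p+d_2q=c\}$ tends to $-\infty$ at infinity, with leading coefficient $-d_1(d_1+d_2)/d_2$. Combined with the bound on $w=d_1(f_1'/f_1)+d_2(f_2'/f_2)$ and the resulting lower bound on $Q$, this forces $f_1'/f_1,f_2'/f_2$, hence $f_1',f_2'$ and then, via \eqref{solitonequations}, $f_1'',f_2''$, to be bounded on $[t_0,\tau)$. Thus the solution extends past $\tau$ with $f_1,f_2>0$, which is exactly what was needed above. The rest of the argument is bookkeeping around standard ODE theory and the Bianchi-type Lemma \ref{bianchi}.
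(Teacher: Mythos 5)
Your proposal is correct and follows the same overall architecture as the paper's remarks at the end of Section~\ref{reformulate}: reduce the $t=T$ case to the $t=0$ case by the $s=T-t$ symmetry, obtain short-time existence and smooth dependence on $\alpha$ via the singular IVP framework of \cite{EschenburgWang}, use $w' = -\lambda - d_1(f_1'/f_1)^2 - d_2(f_2'/f_2)^2 < 0$ to get strict monotonicity and uniqueness of the stopping time, and invoke the first integral \eqref{firstintegral} for the non-escape estimate. The one place you diverge from the paper is the non-escape estimate itself. The paper rearranges \eqref{firstintegral} by adding $Z^2$ to both sides, obtaining
\begin{align*}
    (d_1+d_2-1)\lambda + Z^2 = \frac{d_1(d_1-1)}{f_1^2} + \frac{d_2(d_2-1)}{f_2^2} + d_1\left(\frac{f_1'}{f_1}\right)^2 + d_2\left(\frac{f_2'}{f_2}\right)^2,
\end{align*}
where every term on the right is nonnegative, so boundedness of $Z=w$ immediately gives uniform bounds on $1/f_1$, $1/f_2$, $f_1'/f_1$, $f_2'/f_2$ in one line. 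You instead first extract $L^2$ bounds on $f_i'/f_i$ to control $\log f_i$, then analyse the indefinite quadratic form $Q$ on lines of constant $w$ and its leading coefficient $-d_1(d_1+d_2)/d_2$. This works, but it is more circuitous: note that $w^2 - Q(p,q) = d_1p^2 + d_2q^2$ identically, which is exactly the paper's positive-definite rearrangement, so the detour through the indefinite form (and the implicit compactness argument over the range of $c = w$) is unnecessary. If you do keep your route, you should spell out the uniformity in $c$ of the bound coming from the concave parabola $Q|_{\{d_1p+d_2q=c\}} \ge -(d_1+d_2-1)\lambda$; as written this is asserted but not quite proved. Everything else — the symmetry reduction, the use of Lemma~\ref{bianchi} to upgrade two equations to three, the simplicity of the zero of $w$ and the implicit function theorem for smooth dependence of $t_\alpha$ — is correct and consistent with the paper.
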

We do not include a proof of this result since it is standard, but we include some remarks on the proof at the end of Section \ref{reformulate}. 
 We can use Proposition \ref{IVPf} to define two continuous functions $A:\mathbb{R}^+\to \mathbb{R}^2$ and $\Omega:\mathbb{R}^+\to \mathbb{R}^2$ as
\begin{align}\label{defAO}
    A(\alpha)=\left(\frac{\sqrt{d_2-1}}{f_2(t_{\alpha})},-\frac{d_2 f_2'(t_{\alpha})}{f_2(t_{\alpha})}\right), \qquad \Omega(\omega)=\left(\frac{\sqrt{d_2-1}}{f_2(T-t_{\omega})},-\frac{d_2 f_2'(T-t_{\omega})}{f_2(T-t_{\omega})}\right),
\end{align}
where for $A$, $f_2$ is the solution from Proposition \ref{IVPf} starting at $t=0$, while the $\Omega$ definition uses the $f_2$ solution evolving backwards from $T$. Note that the definition of $\Omega$ does not depend on $T$ because the ODEs we are solving do not depend explicitly on $t$, so solutions can be time-shifted to new solutions. 
The $A$ and $\Omega$ functions can be used to find Einstein metrics as described in the following:  
\begin{prop}\label{shooting}
    If the pair $(\alpha,\omega)\in (0,\infty)\times (0,\infty)$ satisfies $A(\alpha)=\Omega(\omega)$, then we can choose $T=t_{\alpha}+t_{\beta}$, so that the two pairs of functions $(f_1,f_2)$ can be glued together on $[0,T]$ to give a smooth Einstein metric. 
\end{prop}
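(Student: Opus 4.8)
The plan is to choose $T = t_\alpha + t_\omega$, so that the solution $(f_1,f_2)$ produced from the $t=0$ end by Proposition \ref{IVPf} is defined on $[0,t_\alpha]$ while the solution produced from the $t=T$ end is defined on $[t_\alpha,T)$, the two sharing the single interior point $t_\alpha = T - t_\omega$. The whole argument then reduces to one claim: the full $1$-jet $(f_1,f_2,f_1',f_2')$ of the left solution agrees at $t_\alpha$ with that of the right solution. Granting this, everything else is standard ODE bookkeeping.

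First I would match the jet data at $t_\alpha$. Both solutions satisfy the stopping condition $\left(\frac{d_1 f_1'}{f_1} + \frac{d_2 f_2'}{f_2}\right)\big|_{t_\alpha} = 0$ by construction, and the hypothesis $A(\alpha) = \Omega(\omega)$ says exactly that $f_2(t_\alpha)$ and $f_2'(t_\alpha)/f_2(t_\alpha)$ are common to both. Combining these, $f_1'/f_1 = -\frac{d_2}{d_1}\frac{f_2'}{f_2}$ is also common to both at $t_\alpha$. The remaining unknown is $f_1(t_\alpha)$ itself, and here I would evaluate the conserved quantity \eqref{firstintegral} at $t_\alpha$: all of its terms except $d_1(d_1-1)/f_1^2$ are built from $f_2$, $f_2'/f_2$ and $f_1'/f_1$ at $t_\alpha$, hence are already shared, so \eqref{firstintegral} pins down $d_1(d_1-1)/f_1^2$; since $d_1 \geq 2$ this determines a single positive value $f_1(t_\alpha)$ — the same for both solutions — and then $f_1'(t_\alpha) = (f_1'/f_1)(t_\alpha)\,f_1(t_\alpha)$ agrees as well. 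One small point to note here is that \eqref{firstintegral} genuinely holds along each solution: it is a fixed linear combination of the three equations in \eqref{solitonequations}, all of which each solution satisfies on its open interior, so it persists at $t_\alpha$ by continuity.

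Next I would glue and bootstrap regularity. Solved for $f_1''$ and $f_2''$, the second and third equations of \eqref{solitonequations} form a first-order ODE system in $(f_1,f_2,f_1',f_2')$ that is smooth wherever $f_1,f_2>0$; since both the left and right solutions solve it near $t_\alpha$ with the same data at $t_\alpha$, uniqueness forces them to coincide on a neighbourhood of $t_\alpha$, so the glued pair $(f_1,f_2)$ is smooth on $(0,T)$ and satisfies equations two and three of \eqref{solitonequations} there. Because \eqref{firstintegral} holds at the interior point $t_\alpha$, Lemma \ref{bianchi} upgrades this to all three equations of \eqref{solitonequations} on $(0,T)$. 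Finally, the $t=0$ data of the left solution and the $t=T$ data of the right solution are exactly the smoothness conditions \eqref{metricsmoothness} at the two singular orbits, so $g = dt^2 + f_1^2 Q_{d_1} + f_2^2 Q_{d_2}$ extends to a smooth Riemannian metric on $S^{d_1+d_2+1}$; the Ricci formula recorded just before \eqref{solitonequations} then shows $\mathrm{Ric}(g) = \lambda g$ on the principal part, hence on all of $S^{d_1+d_2+1}$ by density.

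The genuinely delicate step — and the one I expect to be the main obstacle — is the matching of $f_1$ and $f_1'$ at $t_\alpha$, precisely because $A$ and $\Omega$ record only the $f_2$-component of the jet. The resolution is the two-stage argument above: the stopping condition converts agreement of $f_2'/f_2$ into agreement of $f_1'/f_1$, and then the first integral \eqref{firstintegral} converts agreement of everything else into agreement of $f_1$. This is exactly the role the conserved quantity plays, and it is why the problem is not genuinely over-determined despite being three equations in two unknowns. The remaining pieces — smooth gluing via ODE uniqueness, promotion to the third equation via Lemma \ref{bianchi}, and smooth closure via \eqref{metricsmoothness} — are routine.
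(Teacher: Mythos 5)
Your proof is correct and follows essentially the same route as the paper: $A(\alpha)=\Omega(\omega)$ matches $f_2$ and $f_2'$ at $t_\alpha$, the stopping condition $\frac{d_1 f_1'}{f_1}+\frac{d_2 f_2'}{f_2}=0$ then matches $f_1'/f_1$, the first integral \eqref{firstintegral} pins down $f_1(t_\alpha)$ and hence $f_1'(t_\alpha)$, and ODE uniqueness for the regular system across $t_\alpha$ yields smoothness. The only departure is your final invocation of Lemma \ref{bianchi} to recover the first equation of \eqref{solitonequations}; this is harmless but not needed, since each half already satisfies all three equations on its own closed domain, so once the glued function is known to be smooth the first equation holds across $t_\alpha$ simply by continuity.
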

\begin{proof}
    It is clear that the glued functions $f_1,f_2$ satisfy \eqref{metricsmoothness}, and are smooth and satisfy \eqref{solitonequations} on $(0,t_{\alpha})\cup (t_{\alpha},T)$. The condition $A(\alpha)=\Omega(\omega)$ implies that $f_2$ is continuous and has a continuous first derivative over $t_{\alpha}$. Furthermore, the conserved quantity \eqref{firstintegral} and the fact that $\frac{d_1 f_1'}{f_1}+\frac{d_2 f_2'}{f_2}=0$ at $t_{\alpha}$ (in the limit sense from both sides) implies that $f_1$ is continuous, and also has a continuous first derivative over $t_{\alpha}$. We then conclude that the functions are in fact smooth over $t_{\alpha}$ because of existence, uniqueness and smoothness of solutions to the last two equations of \eqref{solitonequations} starting at $t_{\alpha}$, with prescription of the values of the functions, as well as their first derivatives. 
\end{proof}

\subsection{Reformulating the singular initial value problem}\label{reformulate}
To fully understand the functions $A$, $\Omega:\mathbb{R}^+\to \mathbb{R}^2$ (whose intersections we want to find) we must understand the solutions of the ODEs \eqref{solitonequations}, equipped with the singular IVPs from \eqref{metricsmoothness}, from either $t=0$ or $t=T$. In fact, it is possible to treat both of these problems at the same time because the transformation $t\mapsto T-t$, $f_1\mapsto f_2$, $f_2\mapsto f_1$, $\alpha\mapsto \omega$, $\omega\mapsto \alpha$, $d_1\mapsto d_2$, $d_2\mapsto d_1$ leaves \eqref{solitonequations} and \eqref{metricsmoothness} unchanged. As a result, it suffices to study the singular IVP from $t=0$, with the understanding that for a given sphere, we will need to study the same IVP twice for the different combinations of $d_1,d_2$.

Let $W=\frac{\sqrt{d_2-1}}{f_2}$, $X=\frac{\sqrt{d_1-1}}{f_1}$, $Y=\frac{d_1f_1'}{f_1}$ and $Z=d_1 \frac{f_1'}{f_1}+d_2\frac{f_2'}{f_2}$, so that $d_2\frac{f_2'}{f_2}=Z-Y$. It follows from definition that 
\[
\begin{aligned}
    W'&=-\frac{f_2'\sqrt{d_2-1}}{f_2^2}\\
    &=\frac{W(Y-Z)}{d_2},
\end{aligned}
\qquad\qquad\qquad\qquad
\begin{aligned}
    X'&=-\frac{f_1'\sqrt{d_1-1}}{f_1^2}\\
    &=-\frac{YX}{d_1}. 
\end{aligned}
\]
The first and second equations of \eqref{solitonequations} become 
\begin{equation}\label{XYZEq}
\begin{split}
    Y'&=d_1\left(\frac{f_1''}{f_1}-\left(\frac{f_1'}{f_1}\right)^2\right)\\
    &=d_1\left(-d_1 \frac{(f_1')^2}{f_1^2}-d_2\frac{f_1'f_2'}{f_1f_2}+\frac{d_1-1}{f_1^2}-\lambda\right)\\
    &=-Z Y+d_1X^2-d_1\lambda
    \end{split}
    \qquad\qquad
    \begin{split}
    Z'&=d_1\left(\frac{f_1''}{f_1}-\left(\frac{f_1'}{f_1}\right)^2\right)+d_2\left(\frac{f_2''}{f_2}-\left(\frac{f_2'}{f_2}\right)^2\right)\\
    &=-d_1\left(\frac{f_1'}{f_1}\right)^2-d_2\left(\frac{f_2'}{f_2}\right)^2-\lambda\\
    &=-\frac{Y^2}{d_1}-\frac{(Z-Y)^2}{d_2}-\lambda\\
    &=-\frac{Z^2}{d_2}-Y^2\left(\frac{1}{d_1}+\frac{1}{d_2}\right)+\frac{2YZ}{d_2}-\lambda.
    \end{split}
\end{equation}
We thus obtain a system of four ODEs for four functions $W,X,Y,Z$ that are equivalent to the equations of \eqref{solitonequations} (recalling the quantity \eqref{firstintegral} which is conserved for these equations by Lemma \ref{bianchi}). The main reason for this change of variables is for reduced complexity of computational implementation. Indeed, the ODEs for $X,Y,Z$ are independent of $W$, so we can solve for $X,Y,Z$, and then recover $W$ from its own differential equation, or alternatively we can recover $W$ from the conserved quantity \eqref{firstintegral}, which now reads
\begin{align}\label{IntegralWXYZ}
    (d_1+d_2-1)\lambda=d_1X^2+d_2W^2-\frac{(d_1-1)Y^2}{d_1}-\frac{(d_2-1)(Y-Z)^2}{d_2}-2Y(Z-Y).
\end{align}
In the language of the $W,X,Y,Z$ functions, the smoothness conditions of \eqref{metricsmoothness} around $t=0$ become the following:
\begin{align}\label{WXYZsmoothness}
\begin{split}
    &W(t)-\frac{\sqrt{d_2-1}}{t} \ \text{is a smooth and odd function;}\\
    &X(t)-\alpha \ \text{is a smooth, even function which vanishes at $t=0$;}\\
    &Y(t) \ \text{is a smooth and odd function;}\\
    &Z(t)-\frac{d_2}{t} \ \text{is a smooth and odd function.}
\end{split}
\end{align}

Since the $X,Y,Z$ equations are independent of $W$, we write 
$X(t)=\alpha+\eta_1(t)$, $Y(t)=\eta_2(t)$, $Z(t)=\frac{d_2}{t}+\eta_3(t)$, where $\eta_i$ are smooth functions with $\eta_i(0)=0$. Putting these expressions into \eqref{XYZEq} gives 
\begin{align}\label{etaeq123}
\begin{split}
    \eta_1'&=-\frac{\eta_2(\alpha+\eta_1)}{d_1},\\
    \eta_2'&=-\frac{d_2\eta_2}{t}-\eta_2\eta_3+d_1(\alpha+\eta_1)^2-d_1\lambda\\
    \eta_3'&=\frac{2\eta_2}{t}-\frac{2\eta_3}{t}-\frac{\eta_3^2}{d_2}-\eta_2^2\left(\frac{1}{d_1}+\frac{1}{d_2}\right)+\frac{2\eta_2 \eta_3}{d_2}-\lambda.
    \end{split}
\end{align}
To initiate the systematic study of \eqref{etaeq123} subject to the conditions \eqref{WXYZsmoothness}, we think of $\alpha,\lambda$ as functions of time (with vanishing first derivatives), and consider the evolution of the vector-valued function $\eta(t)=(\eta_1(t),\eta_2(t),\eta_3(t),\alpha,\sqrt{\lambda})$. This change of variables, combined with the first integral of Lemma \ref{bianchi} gives the following. 
\begin{prop}\label{ftoeta}
A smooth vector-valued function $\eta:[0,\epsilon]\to \mathbb{R}^5$ will generate (via the change of variables discussed earlier in this subsection) a smooth solution $(f_1,f_2)$ of \eqref{solitonequations} on $[0,\epsilon]$ subject to the $t=0$ initial conditions of \eqref{metricsmoothness}, if and only if $\eta_1$ is even, $\eta_2$ is odd, $\eta_3$ is odd, and $\eta(t)$ satisfies the initial value problem 
\begin{align}\label{etaequationO}
    \eta'(t)=\frac{1}{t}L_{d_1d_2} \eta(t)+B_{d_1d_2}(\eta(t),\eta(t)), \qquad \eta(0)=(0,0,0,\alpha,\sqrt{\lambda}),
\end{align}
where 
\begin{align*}
    L_{d_1d_2}=\begin{pmatrix}
        0&0&0&0&0\\
        0&-d_2&0&0&0\\
        0&2&-2&0&0\\
        0&0&0&0&0\\
        0&0&0&0&0
    \end{pmatrix},
\end{align*}
and $B_{d_1d_2}:\mathbb{R}^5\to \mathbb{R}^5$ is the symmetric bi-linear form 
\begin{align*}
    B_{d_1d_2}(x,y)=
    \begin{pmatrix}
        -\frac{1}{2d_1}(x_2y_4+x_4y_2+x_1y_2+x_2y_1)\\
        -\frac{1}{2}(x_2y_3+x_3y_2)+d_1\left(x_1y_1+x_4y_4+x_1y_4+x_4y_1\right)-d_1x_5y_5\\
        -\frac{1}{d_2}x_3y_3+\frac{1}{d_2}\left(x_2y_3+x_3y_2\right)-(\frac{1}{d_1}+\frac{1}{d_2})x_2y_2-x_5y_5\\
        0\\
        0
    \end{pmatrix}.
\end{align*} 
\end{prop}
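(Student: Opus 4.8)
\emph{Overview.} I would prove the two implications of the equivalence separately, carrying the change of variables $(f_1,f_2)\leftrightarrow(\eta_1,\eta_2,\eta_3)$ of Section~\ref{reformulate} through in each direction while keeping careful track of the parities and of the regularity at $t=0$, and then — in the harder (``if'') direction — recovering the \emph{third} equation of \eqref{solitonequations}, which the $\eta$-system does not see directly, from the conserved quantity \eqref{firstintegral} via Lemma~\ref{bianchi}.

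\emph{The ``only if'' direction.} Suppose $(f_1,f_2)$ is a smooth positive solution of \eqref{solitonequations} on $[0,\epsilon]$ satisfying the $t=0$ conditions of \eqref{metricsmoothness}. Since $f_1$ is smooth, even and strictly positive, $X:=\sqrt{d_1-1}/f_1$ is smooth and even with $X(0)=\alpha$, and $d_1f_1'/f_1$ is smooth and odd and vanishes at $0$; hence $\eta_1:=X-\alpha$ and $\eta_2:=d_1f_1'/f_1$ have the required parities and vanish at $0$. For $\eta_3:=d_1f_1'/f_1+d_2f_2'/f_2-d_2/t$ I would use that a smooth odd function with derivative $1$ at the origin is of the form $f_2(t)=t\phi(t)$ with $\phi$ smooth, even and $\phi(0)=1$; then $d_2f_2'/f_2-d_2/t=d_2\phi'/\phi$ is smooth, odd and vanishes at $0$, so $\eta_3$ is smooth, odd and $\eta_3(0)=0$, and therefore $\eta(0)=(0,0,0,\alpha,\sqrt\lambda)$. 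Finally, by the computations already carried out in Section~\ref{reformulate}, equations $1$ and $2$ of \eqref{solitonequations} turn into the $Y'$- and $Z'$-equations of \eqref{XYZEq}, which together with the purely definitional $X'$-equation are exactly \eqref{etaequationO}.

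\emph{The ``if'' direction.} Conversely, let $\eta$ be smooth on $[0,\epsilon]$ with $\eta_1$ even, $\eta_2,\eta_3$ odd, and $\eta$ solving \eqref{etaequationO}; its last two components give $\eta_4\equiv\alpha$ and $\eta_5\equiv\sqrt\lambda$. Put $X=\alpha+\eta_1$, $Y=\eta_2$, $Z=d_2/t+\eta_3$, and (assuming, as we may, that $\epsilon$ is small enough that $\alpha+\eta_1>0$) define $f_1:=\sqrt{d_1-1}/X$, which is smooth, even, positive, with $f_1(0)=\sqrt{d_1-1}/\alpha$; the first component of \eqref{etaequationO} then yields $d_1f_1'/f_1=Y$. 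I would next \emph{define} $f_2$ by integrating $f_2'/f_2=(Z-Y)/d_2=1/t+(\eta_3-\eta_2)/d_2$, i.e. $f_2(t):=t\exp\!\big(\tfrac{1}{d_2}\int_0^t(\eta_3(s)-\eta_2(s))\,ds\big)$; since $\eta_3-\eta_2$ is smooth and odd, $f_2$ is smooth on $[0,\epsilon]$, positive on $(0,\epsilon]$, odd, with $f_2(0)=0$, $f_2'(0)=1$, and by construction $Z=d_1f_1'/f_1+d_2f_2'/f_2$. Hence $(f_1,f_2)$ obeys the $t=0$ conditions of \eqref{metricsmoothness}, and — running the computations of Section~\ref{reformulate} in reverse — components $2$ and $3$ of \eqref{etaequationO} translate back into equations $1$ and $2$ of \eqref{solitonequations} on $(0,\epsilon]$. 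To obtain the third equation I would invoke Lemma~\ref{bianchi}: writing $\Phi(t)$ for the difference of the two sides of \eqref{firstintegral}, the lemma's proof (or the direct computation in the Remark following it) shows $\Phi$ is a first integral of equations $1$ and $2$, so $\Phi$ is constant on $(0,\epsilon]$ and it is enough to show $\Phi(t)\to 0$ as $t\to 0^+$. That limit I would compute by expanding $\eta$ to first order at $t=0$ — its leading coefficients are pinned down by \eqref{etaequationO} (e.g.\ $\eta_2'(0)=d_1(\alpha^2-\lambda)/(d_2+1)$ and $3\eta_3'(0)=2\eta_2'(0)-\lambda$) — and substituting into \eqref{firstintegral}, where the two terms that blow up at $t=0$ must be grouped so that their $t^{-2}$ parts cancel; the remainder then tends to $0$. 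Thus $\Phi\equiv0$, Lemma~\ref{bianchi} gives the third equation of \eqref{solitonequations} on $(0,\epsilon)$, and hence on $(0,\epsilon]$ by continuity, so $(f_1,f_2)$ is the required solution.

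\emph{Main obstacle, and an alternative.} The bookkeeping — the change of variables, the parities, and the regularity at $t=0$ (the normalisation $f_2=t\phi(t)$, the reconstruction $f_2=t\exp(\cdots)$) — is routine once one is careful with the $1/t$ singularities; the only genuinely delicate point is recovering the third equation of \eqref{solitonequations}, i.e.\ showing \eqref{firstintegral} holds at the singular endpoint. An alternative that avoids the Taylor expansion altogether: once equations $1$ and $2$ hold, $g=dt^2+f_1^2Q_{d_1}+f_2^2Q_{d_2}$ closes up smoothly over the singular orbit $\{t=0\}$, and the contracted Bianchi identity — the argument in the proof of Lemma~\ref{bianchi}, now run toward the singular end — forces the $Q_{d_2}$-coefficient $E_2$ of $\mathrm{Ric}(g)-\lambda g$ to be \emph{constant} in $t$; since $f_2(0)=0$ and $f_2'(0)=1$ force $E_2\to 0$ as $t\to 0$, we conclude $E_2\equiv 0$, which is exactly the third equation of \eqref{solitonequations}.
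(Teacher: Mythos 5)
The paper states Proposition~\ref{ftoeta} without an explicit proof, presenting it as a consequence of the change of variables carried out in the preceding paragraphs; your proposal supplies the missing argument, and it is essentially correct. You correctly identify the two non-obvious points. First, in the ``if'' direction you observe that $f_2$ must be \emph{reconstructed} (since $W=\sqrt{d_2-1}/f_2$ is not among the $\eta$-variables), and your definition $f_2(t)=t\exp\big(\tfrac1{d_2}\int_0^t(\eta_3-\eta_2)\big)$ is exactly right: the parities make the exponent smooth and even, so $f_2$ is smooth, odd, with $f_2(0)=0$, $f_2'(0)=1$, and $d_2f_2'/f_2=Z-Y$ by construction. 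Second, you correctly flag that the $\eta$-system encodes only the first two equations of \eqref{solitonequations}, and that the third must be recovered through the conserved quantity \eqref{firstintegral}. Your argument — $\Phi$ is a first integral of equations $1$ and $2$ (the content of the Remark after Lemma~\ref{bianchi}, not quite the Lemma's proof as you write, but the citation is close enough), then compute $\Phi(t)\to 0$ as $t\to 0^+$ from the Taylor expansion pinned down by \eqref{etaequationO}, hence $\Phi\equiv 0$, hence equation $3$ — is correct, and I have verified that the leading coefficients and cancellations in the limit computation do work out. Note that once $\Phi\equiv 0$ is established, invoking Lemma~\ref{bianchi} is slightly heavier than necessary: since $-A_1+d_1A_2+d_2A_3=-\Phi$ identically (where $A_i$ denotes the $i$-th equation's defect), equations $1,2$ and $\Phi\equiv 0$ give $A_3\equiv 0$ by pure linear algebra.

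Your proposed alternative has a small imprecision worth flagging. The $Q_{d_2}$-coefficient of $\mathrm{Ric}(g)-\lambda g$ in the literal sense (coefficient of $Q_{d_2}$, not of $f_2^2Q_{d_2}$) equals $-f_2^2A_3$, which is \emph{not} constant when $A_3$ is a nonzero constant, and it tends to $0$ regardless of $A_3$ simply because $f_2\to 0$; so as written the argument is vacuous. What \emph{is} constant is $A_3$ itself (equivalently the coefficient of $f_2^2Q_{d_2}$), and showing $A_3\to 0$ as $t\to 0$ does require slightly more than the boundary data $f_2(0)=0$, $f_2'(0)=1$: one must pair the $(d_2-1)(f_2'/f_2)^2$ and $(d_2-1)/f_2^2$ terms so the $t^{-2}$ singularities cancel, and then equation $1$ of \eqref{solitonequations} evaluated in the limit gives $d_1f_1''(0)/f_1(0)+d_2f_2'''(0)=-\lambda$, which is exactly the needed limit identity. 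This is cleaner than the full $\eta$-Taylor expansion in your primary argument, but it is not a ``free'' consequence of the contracted Bianchi identity and the closing-up conditions as your sketch suggests.
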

Proposition \ref{ftoeta} demonstrates that the solutions described in Proposition \ref{IVPf} can be found by solving the IVP \eqref{etaequationO}, with the stated parity constraints on $(\eta_1,\eta_2,\eta_3)$. Even though \eqref{etaequationO} is a singular initial value problem, it is well-posed for short time because the eigenvalues of $L_{d_1d_2}$ are all non-positive integers, and $(0,0,0,\alpha,\sqrt{\lambda})$ is in the kernel of $L_{d_1d_2}$. Furthermore, the corresponding solution $(f_1,f_2)$ of \eqref{solitonequations} can be extended up until the unique point where the stopping condition $\left(\frac{d_1 f_1'}{f_1}+\frac{d_2 f_2'}{f_2}\right)=0$ is satisfied. Indeed, $\lim_{t\to 0}\left(\frac{d_1 f_1'}{f_1}+\frac{d_2 f_2'}{f_2}\right)=+\infty$, 
 and the first integral \eqref{firstintegral} can be expressed as 
\begin{align*}
    (d_1+d_2-1)\lambda +\left(\frac{d_1f_1'}{f_1}+\frac{d_2f_2'}{f_2}\right)^2&=\frac{d_1(d_1-1)}{f_1^2}+\frac{d_2(d_2-1)}{f_2^2}+d_1 \left(\frac{f_1'}{f_1}\right)^2+d_2\left(\frac{f_2'}{f_2}\right)^2,
\end{align*} 
so as long as $\frac{d_1f_1'}{f_1}+\frac{d_2f_2'}{f_2}$ is positive and decreasing (follows from the first equation of \eqref{solitonequations}), the quantities $\frac{1}{f_1},\frac{f_1'}{f_1},\frac{1}{f_2},\frac{f_2'}{f_2}$ are uniformly bounded. Proposition \ref{IVPf} follows. 

\subsection{Shooting problem revisited}
As we saw in  \ref{shootingproblem}, a convenient way to find an Einstein metric is to take the two solutions of the initial value problem from Proposition \ref{IVPf}, and vary $\alpha$ and $\omega$ until we find a combination satisfying $A(\alpha)=\Omega(\omega)$. We now describe these functions in terms of the $\eta(t)$ solution of \eqref{etaequationO}. First, for a given pair of positive integers $(d_1,d_2)$ (in this paper, we take $(2,9)$), we use $\eta(\alpha,t)$ to refer to the solution of \eqref{etaequationO}, and use $t_{\alpha}$ to refer to the unique time satisfying the stopping condition of Proposition \ref{IVPf}. In the language of $\eta$ functions, $t_{\alpha}>0$ is the unique time satisfying $\frac{d_2}{t_{\alpha}}+\eta_3(t_{\alpha})=0$. On the other hand, for the same $(d_1,d_2)$ pair, we use $\zeta(\omega,t)$ to denote the solution of \eqref{etaequationO} with $d_1,d_2$ reversed, and $\alpha$ replaced with $\omega$, i.e., 
\begin{align}
\label{zetaequation0}
    \zeta'(t)=\frac{1}{t}L_{d_2d_1}\zeta(t)+B_{d_2d_1}(\zeta(t),\zeta(t)), \qquad \zeta(0)=(0,0,0,\omega,\sqrt{\lambda}). 
\end{align}
Similarly, we define the stopping time $t_{\omega}>0$ to be the unique solution of $\frac{d_1}{t_{\omega}}+\zeta_3(t_{\omega})=0$. 

We now write the functions $A(\alpha)$ and $\Omega(\omega)$ in terms of this notation. First, note that when evolving from the $\alpha$ end, we have  $\frac{\sqrt{d_2-1}}{f_2}=W$, and $-\frac{d_2 f_2'}{f_2}=Y-Z$, which is the same as $Y$ at time $t_{\alpha}$.   
The integral \eqref{IntegralWXYZ} implies that 
$W$ at the stopping time can be found with
 \begin{align*}
    W=\sqrt{\left(\frac{
    (d_1+d_2-1)\lambda-d_1X^2-\left(\frac{1}{d_1}+\frac{1}{d_2}\right)Y^2}{d_2}\right)}.
  \end{align*}
  Consequently, we have 
\begin{align}\label{Aalphadef}
A(\alpha)&=\left(\sqrt{\left(\frac{
    (d_1+d_2-1)\lambda-d_1(\alpha+\eta_1(t_{\alpha}))^2-\left(\frac{1}{d_1}+\frac{1}{d_2}\right)\eta_2(t_{\alpha})^2}{d_2}\right)},\eta_2(t_{\alpha})\right).
     \end{align}
     Also, when evolving from the $\omega$ end, we have $\frac{\sqrt{d_2-1}}{f_2}=X$, and $-\frac{d_2 f_2'}{f_2}=Y$ at the stopping time, so
\begin{align}\label{Omegaomegadef}
\Omega(\omega)&=\left(\zeta_1(t_{\omega})+\omega,\zeta_2(t_{\omega})\right).
\end{align}

\section{Preliminary Numerical Estimates}\label{prelimapprox}
In this section, we give a `plausibility' argument for the existence of a non-round Einstein metric on $S^{12}$ using low-precision numerics, and make some rough observations about what such an Einstein metric would look like. We also use this section to formulate some \textit{assumptions} about our ability to produce highly-accurate approximate solutions that are close to the perceived Einstein metric. 

Henceforth, we set $d_1=2$, and $d_2=9$. Recall that by Proposition \ref{shooting}, it is sufficient to find a pair $(\alpha,\omega)\in (\mathbb{R}^+)^2$ such that $A(\alpha)=\Omega(\omega)$, as the corresponding two solutions $\eta,\zeta$ of the two singular IVPs \eqref{etaequationO} ($d_1,d_2$ are reversed for $\zeta$) can be combined to create a single pair of functions $(f_1,f_2)$ which solves \eqref{solitonequations}, meets up smoothly in the middle, and satisfies the smoothness conditions \eqref{metricsmoothness}. To find a non-round Einstein metric, it suffices to find such a pair $(\alpha,\omega)\neq (\sqrt{d_1-1},\sqrt{d_2-1})$.

The natural first step in this task is to solve approximately for $(\alpha,\omega)$ via the differential equations at low precision. Einstein metrics coincide precisely with the intersections of the two continuous curves $A:\mathbb{R}^+\to \mathbb{R}^2$ and $\Omega:\mathbb{R}^+\to \mathbb{R}^2$. Using a standard adaptive fourth-order Dormand--Prince solver, the curves for $d_1=2,d_2=9$ are illustrated in Figure \ref{fig:curves_29}. 
\begin{figure}[t]
\centering
\includegraphics[height=5cm]{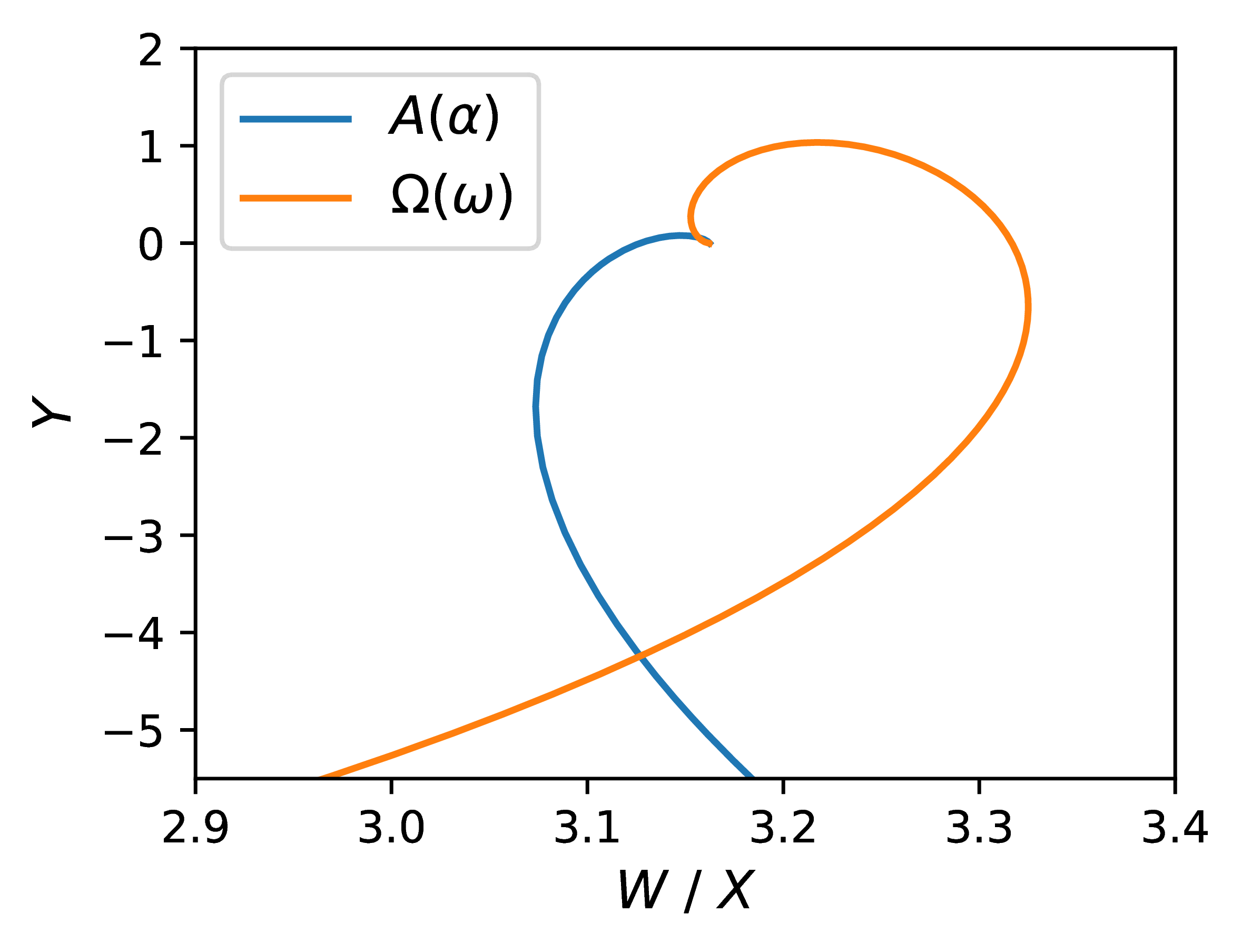}
\includegraphics[height=5cm]{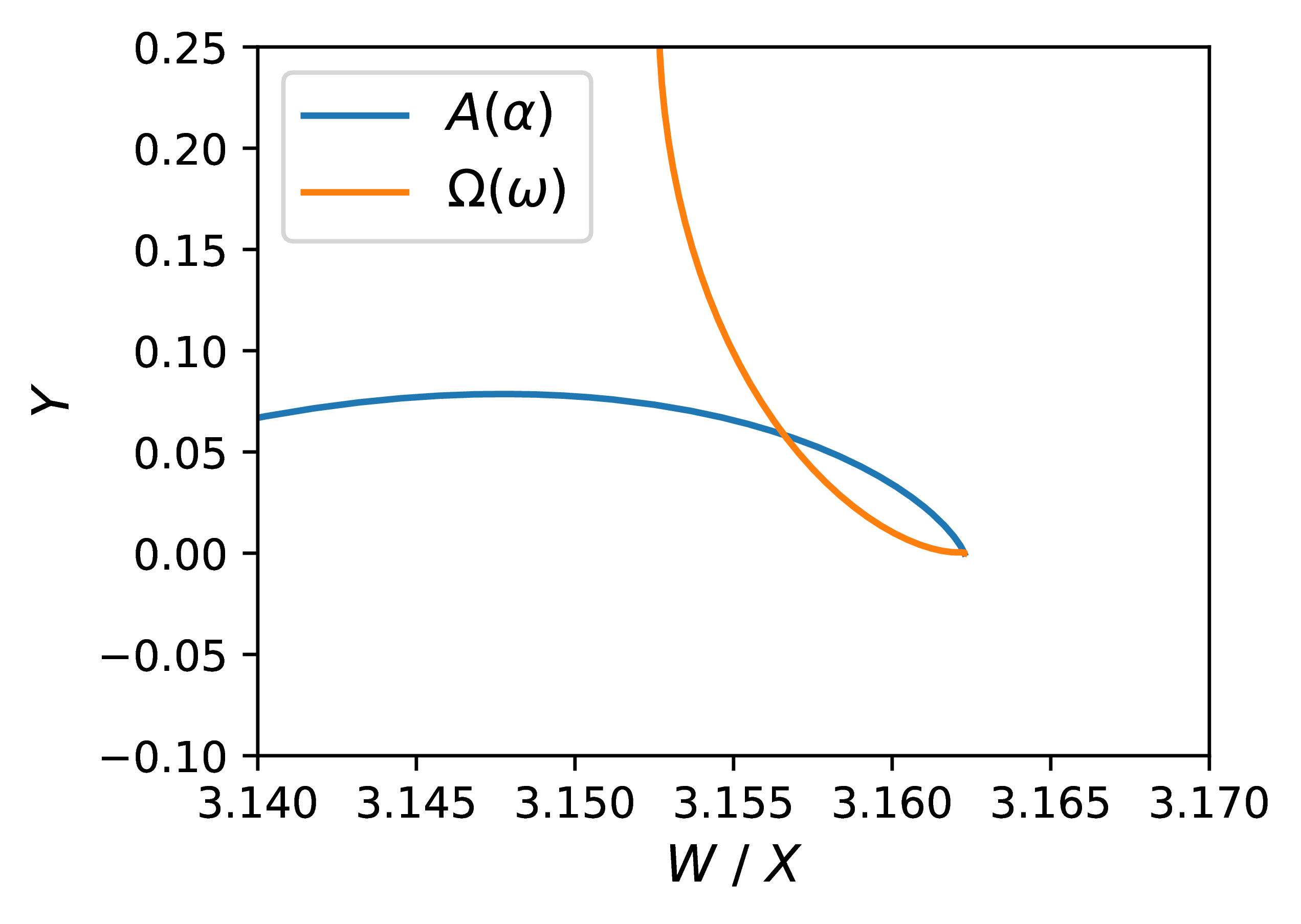}
\caption{\label{fig:curves_29}The $A(\alpha)$ and $\Omega(\omega)$ parametric curves in $\mathbb{R}^2$ for $\alpha,\omega > 0$ and $d_1 = 2$, $d_2 = 9$. The first intersection seen at left is the round Einstein metric at $\alpha = \sqrt{d_1 -1}$ and $\omega = \sqrt{d_2 - 1}$. A second intersection is best seen at right: the new non-round metric constructed in this paper.}
\end{figure}
The non-round Einstein metric arises at 
\begin{equation}
\label{eq:ValEsts}
\alpha \approx 6.0838655, \quad \omega \approx 6.1859148,\quad t_{\alpha} \approx 1.4923108,\quad t_{\omega} \approx 1.1621186,
\end{equation}
and it appears that the solutions satisfy the properties listed in Tables \ref{nonlinearstop} and \ref{starnorms} below. 
\begin{table}[h]
\centering
\begin{tabular}{crrrrrr}
\toprule
\multirow{2}{*}{Index} & \multicolumn{4}{c}{Quantities at stopping time}\\
\cmidrule(lr){2-5} \cmidrule(lr){6-7}
& $\eta(t_{\alpha})$ & $\eta'(t_{\alpha})$ & $\zeta(t_{\omega})$ & $\zeta'(t_{\omega})$ \\
\midrule
1 & $-2.896297$ & $-0.093145$ & $-3.029322$ & $-0.020498$\\
2 & $0.058442$ & $-1.678808$ & $0.058442$ & $-9.323279$  \\ 
3 & $-6.030915$ & $-6.960761$ & $-1.720995$ & $-9.521176$ \\
\bottomrule
\end{tabular}
\caption{Apparent values of $\eta_i$, $\zeta_i$ and their derivatives at $t_{\alpha}$, $t_{\omega}$, respectively.}\label{nonlinearstop}
\end{table}
\begin{table}[h]
\centering
\begin{tabular}{crrrrrr}
\toprule
\multirow{2}{*}{Norms} & \multicolumn{6}{c}{Functions} \\
\cmidrule{2-7}
& ${\eta}_1$ & ${\eta}_2$ & ${\eta}_3$ & ${\zeta}_1$ & ${\zeta}_2$ & ${\zeta}_3$ \\
\midrule
$L^2$ & 2.54 & 1.18 & 3.79 & 2.52 & 6.50 & 4.18 \\
$H^1$ & 5.26 & 3.93 & 9.48 & 5.95 & 32.66 & 21.87 \\
$H^2$ & 12.56 & 12.22 & 15.74 & 22.61 & 216.01 & 133.31 \\
$H^3$ & 40.68 & 49.34 & 38.32 & 162.73 & 2307.32 & 1328.67 \\
\bottomrule
\end{tabular}
\caption{Apparent upper bounds for Sobolev norms of $\eta_i$, $\zeta_i$ over the interval $[0,t_{\alpha}+10^{-3}]$, $[0,t_{\omega}+10^{-3}]$, respectively.}\label{starnorms}
\end{table}

These low-precision estimates function as good heuristics but do not qualify as a solution. To produce a true solution, we instead produce \textit{approximate} solutions that are reasonably close to these heuristics, and then perturb these approximations. The production of these approximate solutions will not occur until Section \ref{numerics}. Instead, we now identify properties that we want these approximations to satisfy; for example, that our approximate solution is reasonably close to the heuristics we already found. In the sequel, we let $\hat{x}$ denote numerical approximations of a particular quantity $x$.

\begin{ass}\label{aposerror}
There exist four real numbers $\hat{\alpha},\hat{\omega},\hat{t}_{\hat{\alpha}},\hat{t}_{\hat{\omega}}$ and smooth functions 
\begin{align*}
    \hat{\eta}:[0,\hat{t}_{\hat{\alpha}}+10^{-3}]\to \mathbb{R}^5, \qquad \hat{\zeta}:[0,\hat{t}_{\hat{\omega}}+10^{-3}]\to \mathbb{R}^5
\end{align*} so that the following quantities are all less than $\varepsilon$:
 \begin{itemize}
     \item The $H^3$ norm of the differential equation errors 
    \begin{align*}
        \hat{E}_{1,\mathrm{start}}(t)=\frac{L_{29}\hat{\eta}(t)}{t}+B_{29}(\hat{\eta}(t),\hat{\eta}(t))-\hat{\eta}'(t), \qquad  \hat{E}_{1,\mathrm{end}}(t)=\frac{L_{92}\hat{\zeta}(t)}{t}+B_{92}(\hat{\zeta}(t),\hat{\zeta}(t))-\hat{\zeta}'(t)
    \end{align*}
    on $[0,\hat{t}_{\hat{\alpha}}+10^{-3}]$ and $[0,\hat{t}_{\hat{\omega}}+10^{-3}]$.
    \item The shooting problem error $|\hat{A}(\hat{\alpha})-\hat{\Omega}(\hat{\omega})|$, where $$\hat{A}(\hat{\alpha}) = \left(\frac13 \sqrt{
    110-2(\hat{\alpha}+\hat{\eta}_1(\hat{t}_{\hat{\alpha}}))^2-\left(\frac{1}{2}+\frac{1}{9}\right)\hat{\eta}_2(\hat{t}_{\hat{\alpha}})^2},\,\hat{\eta}_2(\hat{t}_{\hat{\alpha}})\right),\quad \hat{\Omega}(\hat{\omega}) = (\hat{\zeta}_1(\hat{t}_{\hat{\omega}})+\hat{\omega}, \hat{\zeta}_2 (\hat{t}_{\hat{\omega}})).$$
    \item The stopping time errors $|\frac{9}{\hat{t}_{\hat{\alpha}}}+\hat{\eta}_3(\hat{t}_{\hat{\alpha}})|$ and $|\frac{2}{\hat{t}_{\hat{\omega}}}+\hat{\zeta}_3(\hat{t}_{\hat{\omega}})|$.
 \end{itemize}
Furthermore, the quantities in (\ref{eq:ValEsts}) are accurate to within seven decimal places, all quantities listed in Table \ref{nonlinearstop} are accurate to within six decimal places, and the values in Table \ref{starnorms} are upper bounds on the relevant quantities.
\end{ass}
Essentially, we use $\varepsilon$ to quantity how good the solution is. We will proceed through the theory in Section \ref{Guarantees} keeping $\varepsilon$, and will only specify what it needs to be to guarantee existence of a true solution at the end of that section. 

We will later verify Assumption \ref{aposerror} in Section \ref{numerics}. However, even verifying this assumption for extremely small $\varepsilon$ is not enough to guarantee existence of a true solution; we still need a mechanism with which to fix the error. To that end, we study the linearisation of the problem at our approximate solutions by solving the equations
\begin{subequations}
\label{lineariseIC}
\begin{align}
        \mu_{\mathrm{IC}}'(t)&=\frac{1}{t}L_{29}\mu_{\mathrm{IC}}(t)+2B_{29}(\hat{\eta}(t),\mu_{\mathrm{IC}}(t)), \qquad  \mu_{\mathrm{IC}}(0)=(0,0,0,1,0)\\
    \nu_{\mathrm{IC}}'(t)&=\frac{1}{t}L_{92}\mu_{\mathrm{IC}}(t)+2B_{92}(\hat{\zeta}(t),\nu_{\mathrm{IC}}(t)), \qquad  \nu_{\mathrm{IC}}(0)=(0,0,0,1,0).
    \end{align}
\end{subequations}
Equations (\ref{lineariseIC}) are derived from (\ref{etaequationO}) and (\ref{zetaequation0}) by differentiating both sides with respect to $\alpha$. By replacing $\hat{\eta}$ and $\hat{\zeta}$ with our previous heuristics and solving these equations, we observe the properties for solutions of the linear problem listed in Table \ref{icnorms}. 

\begin{table}[h]
\centering
\begin{tabular}{crrrrrr}
\toprule
\multirow{2}{*}{Index $k$} & \multicolumn{4}{c}{Quantities at stopping time} & \multicolumn{2}{c}{Norms} \\
\cmidrule(lr){2-5} \cmidrule(lr){6-7}
& $\mu_{\mathrm{IC}}(t_{\alpha})$ & $\mu_{\mathrm{IC}}'(t_{\alpha})$ & $\nu_{\mathrm{IC}}(t_{\omega})$ & $\nu_{\mathrm{IC}}'(t_{\omega})$ & $\|(\mu_{\mathrm{IC}})_k\|_{H^3}$ & $\|(\nu_{\mathrm{IC}})_k\|_{H^3}$ \\
\midrule
1 & $-1.012567$ & $-0.009517$ & $-0.994609$ & $-0.210300$ & $27.63$ & $116.98$ \\
2 & $0.006202$ & $-0.168800$ & $0.599502$ & $0.259384$ & $35.52$ & $1818.89$  \\ 
3 & $0.146589$ & $0.001461$ & $0.802500$ & $0.004078$ & $21.53$ & $1038.25$ \\
\bottomrule
\end{tabular}
\caption{Properties of solutions to the linearised problem}\label{icnorms}
\end{table}
We want to make sure that these properties remain accurate for the true solution of \eqref{lineariseIC}.
\begin{ass}\label{linearassumption}
Let $\hat{\eta},\hat{\zeta}$ be as in Assumption \ref{aposerror}. There are functions $\hat{\mu}_{\mathrm{IC}}$ and $\hat{\nu}_{\mathrm{IC}}$ which are initially $(0,0,0,1,0)$, have the same properties as in Table \ref{icnorms} (accurate to six decimal places), where presented norms are upper bounds, and the $H^3$ norm of the differential equation errors 
\begin{align*}
    \hat{E}_{2,\mathrm{start}}(t)&=(\hat{\mu}_{\mathrm{IC}})'(t)-\frac{1}{t}L_{29}\hat{\mu}_{\mathrm{IC}}(t)-2B_{29}(\hat{\eta}(t),\hat{\mu}_{\mathrm{IC}}(t)), \\
    \hat{E}_{2,\mathrm{end}}(t)&=(\hat{\nu}_{\mathrm{IC}})'(t)-\frac{1}{t}L_{92}\hat{\nu}_{\mathrm{IC}}(t)-2B_{92}(\hat{\zeta}(t),\hat{\nu}_{\mathrm{IC}}(t)).
\end{align*}
are all less than $\varepsilon$. 
\end{ass}


\section{Guarantees for a Solution}\label{Guarantees}

We proceed with our existence proof (Step \ref{step:Theory}) under Assumptions \ref{aposerror} and \ref{linearassumption}. This step involves fixed-point theory, combined with estimates on solutions of the ODEs \eqref{etaequationO} to prove existence of a new Einstein metric. 
The key theoretical tool we will use to estimate solutions of our differential equations, as well as to prove existence of a non-round Einstein metric, is the Schauder fixed point theorem \cite{Granas}. We present a slight refinement of this theorem that is useful for our purposes.
\begin{thm}[\textsc{Schauder}]\label{Schauder}
    Let $X$ be a Banach space, $L:X \to X$ a bounded linear operator, and $Q:X \to X$ a map satisfying $\|Q(x)\| \leq q\|x\|^2$ whenever $\|x\| \leq s_0$. If $\|x_0\| + \|L\| s + q s^2 \leq s$ for some $x_0\in X$ and $0 < s \leq s_0$, and $L,Q$ are both completely continuous, then the fixed point equation
    \[
    x = x_0 + L(x) + Q(x),
    \]
    has a solution $x^\ast \in X$ satisfying $\|x^\ast\| \leq s$.
\end{thm}
\begin{proof}
Letting $\mathcal{A}(x) = x_0 + L(x) + Q(x)$ for $x \in X$ and $K = \{x:\|x\|\leq s\}$, by hypothesis, $\mathcal{A}$ maps $K$ into itself. Since $K$ is convex and $\mathcal{A}$ is continuous and compact, the Schauder fixed point theorem \cite[Theorem 6.3.2]{Granas} implies $\mathcal{A}$ has a fixed point in $K$. 
\end{proof}
While Theorem \ref{Schauder} guarantees existence, we rely on Lemma \ref{uniquenessODE} to assert uniqueness as well.
The main goal of Step \ref{step:Theory} is to write the equation $A(\alpha)=\Omega(\omega)$ from \eqref{defAO} in such a fixed point form, and to estimate solutions of the associated ODEs to the extent that we gather enough information about the constants $\|x_0\|,l,q$ to establish the existence of a solution via the Schauder fixed point theorem.

\subsection{$C^k$ norms of the hat quantities}\label{Ckhat}
For this entire section, it is useful to have explicit control over the classical $C^k$ norms (for $k=0,1,2$) of the approximate solutions $\hat{\eta},\hat{\zeta}$ of the non-linear problem described in Assumption \ref{aposerror}, as well as our approximate solutions $\hat{\mu}_{\mathrm{IC}}$ and $\hat{\nu}_{\mathrm{IC}}$ of the linearised problem described in Assumption \ref{linearassumption}. 
\begin{lem}\label{hatcknorms}
    If $\hat{\eta}$ and $\hat{\zeta}$ satisfy Assumption \ref{aposerror}, then 
    \[
    \begin{aligned}
        \| \hat{\eta}\|_{C^0}&\le 16.1,\\
        \| \hat{\eta}\|_{C^1}&\le 63.3,\\
        \| \hat{\eta}\|_{C^2}&\le 212.5,
    \end{aligned}
    \qquad
    \begin{aligned}
        \| \hat{\zeta}\|_{C^0}&\le 50.3, \\
        \| \hat{\zeta}\|_{C^1}&\le 560.1, \\
        \| \hat{\zeta}\|_{C^2}&\le 5895.2.
    \end{aligned}
    \]
\end{lem}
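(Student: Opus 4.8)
The plan is to derive the pointwise ($C^{k}$) bounds from the Sobolev ($H^{3}$) bounds that Assumption \ref{aposerror} furnishes through Table \ref{starnorms}, via the one-dimensional embedding $H^{k+1}(I)\hookrightarrow C^{k}(I)$ on the bounded interval $I$ on which $\hat{\eta}$ (resp.\ $\hat{\zeta}$) is defined, and treating the two constant coordinates of these $\mathbb{R}^{5}$-valued maps directly.

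First I would record the elementary estimate that, for a smooth scalar function $u$ on an interval of length $\ell$,
\[
\|u\|_{C^{0}}^{2} \le \frac{1}{\ell}\|u\|_{L^{2}}^{2} + 2\|u\|_{L^{2}}\|u'\|_{L^{2}} \le \Big(1+\frac{1}{\ell}\Big)\|u\|_{H^{1}}^{2},
\]
which follows by averaging the identity $u(x)^{2}-u(y)^{2}=\int_{y}^{x}2uu'$ over $y$, applying Cauchy--Schwarz, and using $2ab\le a^{2}+b^{2}$. Applying this with $u$ taken to be $\hat{\eta}_{i},\hat{\eta}_{i}',\hat{\eta}_{i}''$ for $i=1,2,3$ --- legitimate since $\hat{\eta}$ is smooth on a compact interval --- and bounding $\|\hat{\eta}_{i}\|_{H^{1}}$, $\|\hat{\eta}_{i}'\|_{H^{1}}\le\|\hat{\eta}_{i}\|_{H^{2}}$ and $\|\hat{\eta}_{i}''\|_{H^{1}}\le\|\hat{\eta}_{i}\|_{H^{3}}$ by the entries of Table \ref{starnorms}, produces explicit pointwise bounds for the three non-constant components of $\hat{\eta}$ and their first two derivatives; the same is done for $\hat{\zeta}$. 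This step requires a rigorous lower bound on the interval length: since Assumption \ref{aposerror} asserts that the quantities in \eqref{eq:ValEsts} are correct to seven decimal places, one may safely take $\ell\ge 1.49$ for $\hat{\eta}$ and $\ell\ge 1.16$ for $\hat{\zeta}$, making the Sobolev constant $\sqrt{1+1/\ell}$ an explicit number below $1.3$, resp.\ $1.37$.

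It then remains to account for the fourth and fifth coordinates, which are the constants $\hat{\alpha}$ (resp.\ $\hat{\omega}$) and $\sqrt{\lambda}=\sqrt{11}$. These contribute at most $6.085$ (resp.\ $6.186$) and $3.317$ to the $C^{0}$ norm --- using that $\hat{\alpha},\hat{\omega}$ are the numerically-determined values which, by Assumption \ref{aposerror}, match \eqref{eq:ValEsts} to high precision --- and contribute nothing to the $C^{1}$ and $C^{2}$ norms. Taking the $C^{k}$ norm of the vector to be the supremum over $I$ of the norm of the tuple and assembling the componentwise estimates then yields the asserted constants $16.1,63.3,212.5$ for $\hat{\eta}$ and $50.3,560.1,5895.2$ for $\hat{\zeta}$.

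There is no conceptual content here; the only work is bookkeeping. The points to be careful about are: pinning down the Sobolev constant as a function of the (lower-bounded) interval length; confirming that the entries of Table \ref{starnorms}, via Assumption \ref{aposerror}, bound the $H^{k}$ norms of $\hat{\eta}_{i},\hat{\zeta}_{i}$ themselves and over the correct interval (a possible mismatch between $[0,\hat{t}_{\hat{\alpha}}+10^{-3}]$ and $[0,t_{\alpha}+10^{-3}]$ is absorbed by the seven-digit accuracy together with the slack in the constants); and carrying enough digits through the final arithmetic that the stated constants genuinely dominate. The tightest case is $\hat{\zeta}$, because the tabulated $H^{2}$ and $H^{3}$ norms of $\hat{\zeta}_{2},\hat{\zeta}_{3}$ are large (the metric varies rapidly near the $\omega$-end), so if the crude bound above were insufficient there one would retain the sharper inequality $\|u\|_{C^{0}}^{2}\le\tfrac{1}{\ell}\|u\|_{L^{2}}^{2}+2\|u\|_{L^{2}}\|u'\|_{L^{2}}$ and feed in the individual $\|u\|_{L^{2}}$ and $\|u'\|_{L^{2}}$ data recoverable from consecutive rows of Table \ref{starnorms}.
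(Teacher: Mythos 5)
Your overall strategy—passing from the tabulated $H^{k}$ norms to pointwise $C^{k-1}$ bounds via a one-dimensional Sobolev embedding and then adding the two constant coordinates—is the paper's approach. The difference is in the embedding inequality and its constant, and this difference matters for the $C^{0}$ bounds.

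The paper estimates $\|\hat{\eta}_i\|_{C^{0}}$ and $\|\hat{\zeta}_i\|_{C^{0}}$ (for $i=1,2,3$) by exploiting the fact that $\hat{\eta}_i(0)=\hat{\zeta}_i(0)=0$: with $t_0=0$, Lemma~\ref{Sobolevembedding}(a) gives $\|\hat{\eta}_i\|_{C^{0}}\le\tfrac{5}{4}\|\hat{\eta}_i'\|_{L^{2}}\le\tfrac{5}{4}\|\hat{\eta}_i\|_{H^{1}}$. Your ``crude'' bound, $\|u\|_{C^{0}}\le\sqrt{1+1/\ell}\,\|u\|_{H^{1}}$, has a larger constant: with $\ell\approx1.49$ it is $\approx1.29$ and with $\ell\approx1.16$ it is $\approx1.37$, versus the paper's $5/4$. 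This loss is fatal for the $C^{0}$ estimates. Plugging in the Table~\ref{starnorms} $H^{1}$ entries and the constants $\hat{\alpha}\approx6.084$, $\hat{\omega}\approx6.186$, $\sqrt{\lambda}=\sqrt{11}$, the crude bound gives $\|\hat{\eta}\|_{C^{0}}\lesssim16.4$ and $\|\hat{\zeta}\|_{C^{0}}\lesssim54.7$, both exceeding the asserted $16.1$ and $50.3$. The paper's bound on $\|\hat{\eta}\|_{C^{0}}$ is itself already razor-thin (it evaluates to roughly $16.0$ against a target of $16.1$), so any extra factor sinks it. You flag that the crude bound might fail and name a sharper fallback inequality, but you only identify $\hat{\zeta}$ as the worry (on the grounds that its high-order Sobolev norms are large), when in fact $\hat{\eta}$ is also affected and is the numerically tightest case for $C^{0}$. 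The fallback you propose, $\|u\|_{C^{0}}^{2}\le\tfrac{1}{\ell}\|u\|_{L^{2}}^{2}+2\|u\|_{L^{2}}\|u'\|_{L^{2}}$ with $\|u\|_{L^{2}}$ read from the $L^{2}$ row and $\|u'\|_{L^{2}}\le\|u\|_{H^{1}}$, does in fact yield ample margin (it gives $\|\hat{\eta}\|_{C^{0}}\lesssim13.1$ and $\|\hat{\zeta}\|_{C^{0}}\lesssim27.3$), so the argument can be salvaged, but as written the proof does not establish the stated constants and the carry-out of the fallback is left undone. One minor caveat for the fallback bookkeeping: because the table entries are upper bounds rather than exact values, you cannot recover $\|u'\|_{L^{2}}$ as $\sqrt{\|u\|_{H^{1}}^{2}-\|u\|_{L^{2}}^{2}}$; you must use $\|u'\|_{L^{2}}\le\|u\|_{H^{1}}$ directly. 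For the $C^{1}$ and $C^{2}$ pieces the paper uses the factor-$2$ bound of Lemma~\ref{Sobolevembedding}(b), which is looser than your crude factor, so your bounds there comfortably dominate the targets and there is no issue.
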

\begin{proof}
     Since $\hat{\eta}_i(0)=\hat{\zeta}_i(0)=0$ for $i=1,2,3$, Assumption \ref{aposerror} (specifically, the estimates of Table \ref{starnorms})
     combined with the Sobolev Embedding Theorem (Lemma \ref{Sobolevembedding} (a)) gives 
     \[
     \begin{aligned}
     \|\hat{\eta}_1\|_{C^0} &\le 6.6,\\
     \|\hat{\zeta}_1\|_{C^0}&\le 7.5,
     \end{aligned}
     \qquad
     \begin{aligned}
         \|\hat{\eta}_2\|_{C^0}&\le 5.0,\\
         \|\hat{\zeta}_2\|_{C^0}&\le 40.9,
     \end{aligned}
     \qquad
     \begin{aligned}
         \|\hat{\eta}_3\|_{C^0}&\le 11.9,\\
         \|\hat{\zeta}_3\|_{C^0}&\le 27.4,
     \end{aligned}
     \]
     so that 
     \begin{align*}
         \|\hat{\eta}\|_{C^0}&\le \sqrt{(6.6)^2+(5.0)^2+(11.9)^2+\hat{\alpha}^2+\lambda}\le 16.1,\\
         \|\hat{\zeta}\|_{C^0}&\le \sqrt{(7.5)^2+(40.9)^2+(27.4)^2+\hat{\omega}^2+\lambda}\le 50.3.
     \end{align*}
     Proceeding with higher norms, we use Lemma \ref{Sobolevembedding}(b) to obtain 
     \[
     \begin{aligned}
     \|\hat{\eta}_1'\|_{C^0} &\le 25.2,\\
     \|\hat{\zeta}_1'\|_{C^0}&\le 45.3,
     \end{aligned}
     \qquad
     \begin{aligned}
         \|\hat{\eta}_2'\|_{C^0}&\le 24.5,\\
         \|\hat{\zeta}_2'\|_{C^0}&\le 432.1,
     \end{aligned}
     \qquad
     \begin{aligned}
         \|\hat{\eta}_3'\|_{C^0}&\le 31.5,\\
         \|\hat{\zeta}_3'\|_{C^0}&\le 266.7,
     \end{aligned}
     \]
     and
     \[
     \begin{aligned}
     \|\hat{\eta}_1''\|_{C^0} &\le 81.4,\\
     \|\hat{\zeta}_1''\|_{C^0}&\le 325.5,
     \end{aligned}
     \qquad
     \begin{aligned}
         \|\hat{\eta}_2''\|_{C^0}&\le 98.7,\\
         \|\hat{\zeta}_2''\|_{C^0}&\le 4614.7,
     \end{aligned}
     \qquad
     \begin{aligned}
         \|\hat{\eta}_3''\|_{C^0}&\le 76.7,\\
         \|\hat{\zeta}_3''\|_{C^0}&\le 2657.4.
     \end{aligned}
     \]
     Taking the root of the sum of squares as before gives the result.
\end{proof}
We similarly estimate $\hat{\mu}_{\mathrm{IC}}$ and $\hat{\nu}_{\mathrm{IC}}$, although since we only recorded the $H^3$ norms in Table \ref{icnorms} we will only estimate the entire $C^2$ norm (rather than the $C^1$ and $C^0$ norms individually). 
\begin{lem}\label{linearhat2ndd}
If $\hat{\mu}_{\mathrm{IC}}$ and $\hat{\nu}_{\mathrm{IC}}$ satisfy Assumption \ref{linearassumption} (with respect to a choice of $\hat{\eta}$ and $\hat{\zeta}$ satisfying Assumption \ref{aposerror}), then 
\begin{align*}
            \|\hat{\mu}_{\mathrm{IC}}\|_{C^2}, \quad \| \hat{\nu}_{\mathrm{IC}}\|_{C^2}\le 10^{5}.
    \end{align*}
\end{lem}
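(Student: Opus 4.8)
The plan is to repeat the argument of Lemma \ref{hatcknorms} almost verbatim, the only genuinely new point being the behaviour of the two ``parameter'' components. Throughout I use the same convention as in Lemma \ref{hatcknorms}: for a vector-valued function $h$ one sets $\|h\|_{C^2} = \|h\|_{C^0} + \|h'\|_{C^0} + \|h''\|_{C^0}$, with each $\|\cdot\|_{C^0}$ the supremum over the relevant interval of the Euclidean norm. First I would bound the first three components of $\hat{\mu}_{\mathrm{IC}}$ and of $\hat{\nu}_{\mathrm{IC}}$. Since $\hat{\mu}_{\mathrm{IC}}(0) = \hat{\nu}_{\mathrm{IC}}(0) = (0,0,0,1,0)$, the components indexed $1,2,3$ vanish at $t=0$, so the $H^3$ bounds recorded in Table \ref{icnorms} (guaranteed by Assumption \ref{linearassumption}) feed directly into the Sobolev embedding estimates of Lemma \ref{Sobolevembedding}, parts (a) and (b), exactly as $\hat{\eta}$ and $\hat{\zeta}$ did in Lemma \ref{hatcknorms}, yielding $C^0$ bounds on each of these components and on their first and second derivatives over $[0,\hat{t}_{\hat{\alpha}}+10^{-3}]$ and $[0,\hat{t}_{\hat{\omega}}+10^{-3}]$ respectively.

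Next I would dispose of the components indexed $4$ and $5$. The fourth and fifth rows of $L_{29}$ vanish identically, as do the fourth and fifth components of $B_{29}$, so the very definition of the linearised error in Assumption \ref{linearassumption} forces $(\hat{\mu}_{\mathrm{IC}})_4' = (\hat{E}_{2,\mathrm{start}})_4$ and $(\hat{\mu}_{\mathrm{IC}})_5' = (\hat{E}_{2,\mathrm{start}})_5$ (and the analogue with $\hat{\nu}_{\mathrm{IC}}$, $\hat{E}_{2,\mathrm{end}}$, $L_{92}$, $B_{92}$). Since $\|\hat{E}_{2,\mathrm{start}}\|_{H^3} < \varepsilon$, another application of Sobolev embedding bounds these two derivatives, and their derivatives up to second order, by a fixed multiple of $\varepsilon$; integrating from the initial values $1$ and $0$ then shows that $(\hat{\mu}_{\mathrm{IC}})_4$ and $(\hat{\mu}_{\mathrm{IC}})_5$ stay within $O(\varepsilon)$ of the constants $1$ and $0$. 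Hence the contribution of these two components to $\|\hat{\mu}_{\mathrm{IC}}\|_{C^2}$ (and likewise for $\hat{\nu}_{\mathrm{IC}}$) is at most a small absolute constant, certainly at most $2$ as soon as $\varepsilon\le 1$.

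Finally I would assemble the pieces: taking the Euclidean norm across the five components at each derivative order and summing the three orders, the dominant contributions are those of $(\hat{\nu}_{\mathrm{IC}})_2$ and $(\hat{\nu}_{\mathrm{IC}})_3$, whose $H^3$ norms from Table \ref{icnorms} are about $1800$ and $1000$. Even allowing a generous value for the $H^3 \hookrightarrow C^2$ embedding constant on the short intervals involved (lengths just over $1.49$ and $1.16$), this produces $C^2$ norms of order $10^4$, comfortably under the stated bound. I do not anticipate any real difficulty here: the bound $10^5$ is deliberately lossy, so the only thing to watch is the bookkeeping — fixing the Sobolev constant for the two interval lengths and tracking the $\sqrt{5}$ and the factor $3$ lost in passing from componentwise estimates to the Euclidean-norm $C^2$ quantity. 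A clean write-up can simply bound every contribution by (embedding constant) $\times$ (largest entry of Table \ref{icnorms}) $\times \sqrt{5} \times 3$ plus a negligible $O(\varepsilon)$ term, and observe that this is below $10^5$.
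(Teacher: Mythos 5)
Your proposal is correct and follows essentially the same route as the paper: apply Lemma \ref{Sobolevembedding}(b) to each derivative order to obtain the scalar estimate $\|f\|_{C^2}\le 6\|f\|_{H^3}$, and then read off the $H^3$ norms from Table \ref{icnorms}. The only genuine difference is that you spell out the treatment of the fourth and fifth components, using the vanishing of the last two rows of $L_{29}$, $L_{92}$ and of the last two components of $B_{29}$, $B_{92}$ to conclude that these components stay within $O(\varepsilon)$ of the constants $1$ and $0$; the paper's one-line proof tacitly omits this (Table \ref{icnorms} only records components $1$--$3$), so your version is slightly more complete without changing the argument.
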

\begin{proof}
 As before, we can estimate the $C^0$ norms of $\hat{\mu}_{\mathrm{IC}},\hat{\mu}_{\mathrm{IC}}$ as well as their first and second derivatives using (b) of Lemma \ref{Sobolevembedding}. Using only the $H^3$ norm, we find $\|f\|_{C^2} \leq 6 \|f\|_{H^3}$. The result follows from Table \ref{icnorms}.
\end{proof}
Finally, we can also get $C^2$ norms for the errors. 
\begin{lem}\label{errorc2bounds}
  The $C^2$ norms of $\hat{E}_{1,\mathrm{start}}$, $\hat{E}_{1,\mathrm{end}}$, $\hat{E}_{2,\mathrm{start}}$, $\hat{E}_{2,\mathrm{end}}$ are all less than $6\varepsilon$. 
\end{lem}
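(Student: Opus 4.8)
The plan is to obtain Lemma~\ref{errorc2bounds} as an immediate consequence of the $H^3$ bounds already built into Assumptions~\ref{aposerror} and \ref{linearassumption}, by passing from $H^3$ to $C^2$ via the one-dimensional Sobolev embedding with the explicit constant $6$ that was already invoked in the proof of Lemma~\ref{linearhat2ndd}. Recall that each of $\hat{E}_{1,\mathrm{start}}$, $\hat{E}_{1,\mathrm{end}}$, $\hat{E}_{2,\mathrm{start}}$, $\hat{E}_{2,\mathrm{end}}$ is genuinely a smooth function on its closed interval ($[0,\hat{t}_{\hat{\alpha}}+10^{-3}]$ or $[0,\hat{t}_{\hat{\omega}}+10^{-3}]$): although the defining expressions contain the factor $\tfrac1t L_{29}\hat{\eta}(t)$ (resp.\ $\tfrac1t L_{92}\hat\zeta(t)$, and likewise for the linearised errors), the parity/vanishing conditions on $\hat\eta,\hat\zeta,\hat\mu_{\mathrm{IC}},\hat\nu_{\mathrm{IC}}$ at $t=0$ together with the structure of $L_{29}$, $L_{92}$ cancel the apparent $t^{-1}$ singularity, so these errors lie in $H^3$ and $C^2$ with the norms appearing in the assumptions finite.

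First I would record, from Assumptions~\ref{aposerror} and \ref{linearassumption}, that the $H^3$ norm of each of the four vector-valued error functions is strictly less than $\varepsilon$. Then I would apply the embedding estimate from Lemma~\ref{Sobolevembedding}, namely $\|f\|_{C^2}\le 6\|f\|_{H^3}$ for a scalar function $f$ on an interval of the relevant length, to each component of each error function. Since the factor $6$ is uniform across components, it factors out of whichever norm convention is used to combine the components into a vector norm (root-sum-of-squares, as in Lemma~\ref{hatcknorms}, or a componentwise maximum), giving $\|\hat E\|_{C^2}\le 6\|\hat E\|_{H^3}<6\varepsilon$ in all four cases.

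There is no substantive obstacle here; the statement is essentially a corollary of the two assumptions. The only point requiring care is confirming that the Sobolev constant $6$ is valid on the specific intervals $[0,\hat{t}_{\hat{\alpha}}+10^{-3}]$ and $[0,\hat{t}_{\hat{\omega}}+10^{-3}]$, which follows from Lemma~\ref{Sobolevembedding} once one notes, via the estimates \eqref{eq:ValEsts} and Assumption~\ref{aposerror}, that $\hat{t}_{\hat{\alpha}}$ and $\hat{t}_{\hat{\omega}}$ are bounded above by an explicit constant.
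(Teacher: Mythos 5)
Your proposal is correct and follows essentially the same route as the paper: both pass from the $H^3 < \varepsilon$ bounds in Assumptions~\ref{aposerror} and \ref{linearassumption} to $C^2$ via Lemma~\ref{Sobolevembedding}(b), using the constant $\|f\|_{C^2}\le 6\|f\|_{H^3}$ (the paper spells out the factor $6$ as $2+2+2$ across the three derivatives). Your added remarks about vector norms and the interval length $\hat{t}_{\hat\alpha}+10^{-3}<1.5$ are appropriate points of care but do not change the argument.
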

\begin{proof}
    By Assumptions \ref{aposerror} and \ref{linearassumption}, the $H^3$ norms of these four quantities are all under $\varepsilon$. Then using (b) of Lemma \ref{Sobolevembedding}, we conclude  
    \begin{align*}
        \left|\left|\hat{E}_{1,\text{start}}\right|\right|_{C^2}&=\left|\left|\hat{E}_{1,\text{start}}\right|\right|_{C^0}+\left|\left|(\hat{E}_{1,\text{start}})'\right|\right|_{C^0}+\left|\left|(\hat{E}_{1,\text{start}})''\right|\right|_{C^0} \\
        &\le 2 \left|\left|\hat{E}_{1,\text{start}}\right|\right|_{H^1}+2\left|\left|(\hat{E}_{1,\text{start}})'\right|\right|_{H^1}+2\left|\left|(\hat{E}_{1,\text{start}})''\right|\right|_{H^1}\\
        &\le 6 \left|\left|\hat{E}_{1,\text{start}}\right|\right|_{H^3}\\
        &\le 6\varepsilon, 
    \end{align*}
    and similarly for the other quantities. 
\end{proof}

\subsection{Linear theory}\label{linearisation}
Having now estimated the size of our approximate solutions $\hat{\eta}$ and $\hat{\zeta}$ in Section \ref{Ckhat}, we proceed with the steps required to show that this approximate solution can be perturbed to a true solution, i.e., there is a true solution nearby. Recall that we let $\eta(\alpha,t)$ be the true solution of \eqref{etaequationO} for $d_1=2$, $d_2=9$ with the initial condition $\eta(\alpha,0)=(0,0,0,\alpha,\sqrt{\lambda})$, and similarly define $\zeta(\omega,t)$ to be the true solution of \eqref{etaequationO}, with $d_1=9$, $d_2=2$, satisfying the initial condition $\zeta(\omega,0)=(0,0,0,\omega,\sqrt{\lambda})$, specifically, we have 
\begin{align}
\label{etaequation1}
    \eta'(t)=\frac{1}{t}L_{29}\eta(t)+B_{29}(\eta(t),\eta(t)), \qquad \eta(0)=(0,0,0,\alpha,\sqrt{\lambda}). 
\end{align}
and
\begin{align}
\label{zetaequation1}
    \zeta'(t)=\frac{1}{t}L_{92}\zeta(t)+B_{92}(\zeta(t),\zeta(t)), \qquad \zeta(0)=(0,0,0,\omega,\sqrt{\lambda}). 
\end{align}
Define the perturbation functions as    $\mu(\alpha,t)=\eta(\alpha,t)-\hat{\eta}(t)$ and $\nu(\omega,t)=\zeta(\omega,t)-\hat{\zeta}(t)$, so that, as functions of time, they satisfy the IVPs
\begin{align}\label{mueq}
    \mu'(t)=\frac{L_{29} \mu(t)}{t}+2B_{29}(\hat{\eta}(t),\mu(t))+\hat{E}_{1,\text{start}}(t)+B_{29}(\mu(t),\mu(t)), \quad \mu(0)=(0,0,0,\alpha-\hat{\alpha},0)
\end{align}
and 
\begin{align}\label{nueq}
    \nu'(t)=\frac{L_{92} \nu(t)}{t}+2B_{92}(\hat{\zeta}(t),\nu(t))+\hat{E}_{1,\text{end}}(t)+B_{92}(\nu(t),\nu(t)), \quad \nu(0)=(0,0,0,\omega-\hat{\omega},0).
\end{align}
Recall that we are looking for a solution of $A(\alpha)=\Omega(\omega)$; this equation says that the two solutions $\eta,\zeta$ corresponding to $\alpha$ and $\omega$ respectively meet up smoothly in the middle (at the principal orbit whose mean curvature vanishes). The estimates of Assumption \ref{aposerror} imply that we should look for solutions with $\alpha-\hat{\alpha}$ and $\omega-\hat{\omega}$ quite small. It therefore follows that most of the relevant dynamics will come from the linear inhomogeneous equation, i.e., we ignore the non-linear $B_{29}$ and $B_{92}$ terms in \eqref{mueq} and \eqref{nueq}. To this end, we now study the linearisation of these two problems around the reference solutions. For the sake of clarity, we specify that in this subsection, we are particularly interested in solutions of the linear inhomogeneous problems
\begin{align}\label{mulinear}
     \mu'(t)&=\frac{L_{29} \mu(t)}{t}+2B_{29}(\hat{\eta}(t),\mu(t))+F_{\text{start}}(t), \ t\in [0,\hat{t}_{\hat{\alpha}}+10^{-3}]
     \end{align}
     and 
     \begin{align}\label{nulinear}
      \nu'(t)&=\frac{L_{92} \nu(t)}{t}+2B_{92}(\hat{\zeta}(t),\nu(t))+F_{\text{end}}(t),\ t\in [0,\hat{t}_{\hat{\alpha}}+10^{-3}].
\end{align}
It is straightforward to demonstrate that if $F_{\text{start}}$ and $F_{\text{end}}$ are smooth, then the solutions of these two problems, subject to the initial conditions $\mu(0)=0$ and $\nu(0)=0$ exist and are unique, and we write these solutions as 
\begin{align*}
    \mu=\mathcal{S}_{29}F_{\text{start}}, \qquad \nu=\mathcal{S}_{92}F_{\text{end}},
\end{align*}
for linear solution operators $\mathcal{S}_{29}$ and $\mathcal{S}_{92}$. 
The key result in this subsection is the following.

\begin{prop}\label{GreenEstimate}
   Under Assumption \ref{aposerror}:
   \begin{enumerate}[label=(\alph*)]
   \item The linear operators 
   \begin{align*}
       \mathcal{S}_{29}&: L^2 ([0,\hat{t}_{\hat{\alpha}}+10^{-3}];\mathbb{R}^5)\to C^0([0,\hat{t}_{\hat{\alpha}}+10^{-3}];\mathbb{R}^5), \\ \mathcal{S}_{92}&: L^2 ([0,\hat{t}_{\hat{\omega}}+10^{-3}];\mathbb{R}^5)\to C^0([0,\hat{t}_{\hat{\omega}}+10^{-3}];\mathbb{R}^5) 
   \end{align*} are bounded, with $\|\mathcal{S}_{29}\|,\|\mathcal{S}_{92}\| \leq M$ where $M\coloneqq e^{250} \leq 10^{109}$. 
\item For each $k=0,1$, the restriction of $\mathcal{S}_{29}$ to $C^k ([0,\hat{t}_{\hat{\alpha}}+10^{-3}];\mathbb{R}^5)$ maps into $C^{k+1} ([0,\hat{t}_{\hat{\alpha}}+10^{-3}];\mathbb{R}^5)$, and the two norms are less than $10^{5} M$ and $10^{13}M$, respectively. The same estimates hold for the restriction of the linear operator $\mathcal{S}_{92}$ to $C^k ([0,\hat{t}_{\hat{\omega}}+10^{-3}];\mathbb{R}^5)$. 
   \end{enumerate}
\end{prop}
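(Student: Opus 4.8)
The plan is to realise $\mathcal{S}_{29}$ and $\mathcal{S}_{92}$ as Green's (solution) operators for the singular linear systems \eqref{mulinear} and \eqref{nulinear} and to estimate them by variation of parameters combined with Gr\"onwall's inequality. First I would record the fundamental matrix of the ``frozen'' singular part $\mu'=\tfrac1t L_{29}\mu$: the substitution $\tau=\log t$ turns it into the constant-coefficient system $\tfrac{d\mu}{d\tau}=L_{29}\mu$, so its propagator from time $s$ to time $t$ is $(t/s)^{L_{29}}\coloneqq\exp\!\big(L_{29}\log(t/s)\big)$. The crucial structural fact is that $L_{d_1d_2}$ is block upper-triangular with $\mathrm{spec}(L_{29})=\{0,0,0,-9,-2\}$ and $\mathrm{spec}(L_{92})=\{0,0,0,-2,-2\}$, both contained in $(-\infty,0]$, and that $(0,0,0,*,*)$ spans the kernel; consequently $(t/s)^{L_{d_1d_2}}$ is \emph{uniformly bounded} on $\{0<s\le t\le T\}$ by an explicit constant $C_0$ (the defective Jordan block of $L_{92}$ contributes only a factor $(t/s)^{-2}\log(t/s)$, which is likewise bounded there). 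Duhamel's formula then gives the fixed-point identity
\[
\mathcal{S}_{29}F(t)=\int_0^t (t/s)^{L_{29}}\Big(2B_{29}\big(\hat\eta(s),\mathcal{S}_{29}F(s)\big)+F(s)\Big)\,ds,
\]
and its analogue for $\mathcal{S}_{92}$; existence and uniqueness of the continuous solution, together with convergence of the singular integral at $s=0$, follow by contraction on a short initial interval (using precisely that $\mathrm{spec}(L_{d_1d_2})\le 0$) and continuation.

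For part (a) I would take absolute values in the Duhamel identity, bound $(t/s)^{L_{29}}$ by $C_0$ and the symmetric bilinear form $B_{29}$ by an explicit operator-norm constant $c_B$ obtained componentwise from the formula in Proposition \ref{ftoeta}, reaching the integral inequality
\[
|\mathcal{S}_{29}F(t)|\le C_0\int_0^t\Big(2c_B\,|\hat\eta(s)|\,|\mathcal{S}_{29}F(s)|+|F(s)|\Big)\,ds.
\]
Gr\"onwall's inequality together with Cauchy--Schwarz then yields
\[
\|\mathcal{S}_{29}F\|_{C^0}\le C_0\sqrt{T}\,\exp\!\Big(2C_0 c_B\!\int_0^T |\hat\eta(s)|\,ds\Big)\|F\|_{L^2},
\]
and likewise for $\mathcal{S}_{92}$. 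Inserting the bounds on $\|\hat\eta\|_{C^0},\|\hat\zeta\|_{C^0}$ from Lemma \ref{hatcknorms} (or the sharper $L^2$ bounds of Table \ref{starnorms}), the explicit values of $C_0$ and $c_B$, and the interval lengths $T=\hat t_{\hat\alpha}+10^{-3}$ resp.\ $\hat t_{\hat\omega}+10^{-3}$, one checks that the exponent stays below $250$, giving $\|\mathcal{S}_{29}\|,\|\mathcal{S}_{92}\|\le e^{250}=M\le 10^{109}$.

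For part (b) I would bootstrap regularity directly from the equations. If $F\in C^k$, then the right-hand side of \eqref{mulinear} is $C^k$ away from $t=0$; near $t=0$, the integrating-factor identities $t^{d_2}(\mathcal{S}_{29}F)_2(t)=\int_0^t s^{d_2}(\cdots)\,ds$ and $t^2(\mathcal{S}_{29}F)_3(t)=\int_0^t s(\cdots)\,ds$ for the singular components (and the trivial fact that the kernel components are plain integrals) show that $\mathcal{S}_{29}F(t)=\mathcal{O}(t)$, and, when $F\in C^1$, also $(\mathcal{S}_{29}F)'(t)-t^{-1}\mathcal{S}_{29}F(t)=\mathcal{O}(t)$; hence $t^{-1}L_{29}\mathcal{S}_{29}F$, and its first derivative in the $C^1$ case, extend continuously across $t=0$, so from the equation $\mathcal{S}_{29}F\in C^{k+1}$ for $k=0,1$. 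Differentiating \eqref{mulinear} once (resp.\ twice) and combining with the $C^0$ bound of part (a) and the explicit constants $\|\hat\eta\|_{C^1}\le 63.3$, $\|\hat\zeta\|_{C^1}\le 560.1$ (resp.\ the $C^2$ bounds $\|\hat\eta\|_{C^2}\le 212.5$, $\|\hat\zeta\|_{C^2}\le 5895.2$) from Lemma \ref{hatcknorms}, plus a bound on $\|L_{d_1d_2}\|$, produces the stated operator norms $10^5 M$ for $k=0$ and $10^{13}M$ for $k=1$; the large jump reflects the appearance of $\|L_{d_1d_2}\|^2$ and of the sizeable $C^2$ norm of $\hat\zeta$ in the second bootstrap.

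The main obstacle throughout is the singularity at $t=0$: the whole argument rests on showing that the singular IVP is well posed and that its solution (and the relevant number of its derivatives) vanishes to the correct order at the origin, so that $t^{-1}L_{d_1d_2}\mu$ and its derivatives stay bounded --- this is exactly where the non-positivity of $\mathrm{spec}(L_{d_1d_2})$ and the kernel condition on the initial data enter. A secondary but genuinely tedious point is the explicit constant bookkeeping, in particular verifying that the Gr\"onwall exponent does not exceed $250$ and that the regularity constants fit under $10^5$ and $10^{13}$; this relies on the $C^0$/$L^2$ bounds for $\hat\eta,\hat\zeta$ and on a careful operator-norm estimate for $B_{29}$ and $B_{92}$.
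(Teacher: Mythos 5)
Your proof proposal for part (a) takes a genuinely different route from the paper's, and this difference matters for the stated constant. The paper does \emph{not} use a Duhamel/variation-of-parameters representation with the propagator $(t/s)^{L_{d_1 d_2}}$; instead it differentiates the squared norm $|\mu(t)|^2$ directly and uses the two structural facts proved in the Appendix: (i) $-L_{d_1 d_2}$ is non-negative definite (Lemma \ref{linearprops}(a)), so the singular term contributes $\langle\mu,L\mu\rangle/t\le 0$ and costs \emph{nothing} in the estimate; and (ii) the refined bound $|\langle x, B_{d_1 d_2}(x,y)\rangle|\le C(d_1,d_2,y)|x|^2$ from Lemma \ref{Btermok}, which exploits cancellations in the inner product that the cruder operator-norm bound $|B_{d_1 d_2}(x,y)|\le 3(d_1+1)|x||y|$ from Lemma \ref{BE} cannot see. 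The resulting Gr\"onwall exponent is $\tfrac12\int_0^T(4C(d_1,d_2,\hat\eta)+1)$, which the paper shows to be $\le 66$ for $\mathcal{S}_{29}$ and $\le 235$ for $\mathcal{S}_{92}$, hence $M=e^{250}$.

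Your Duhamel approach loses in two places: you pay the propagator bound $C_0$ (versus the effective $C_0=1$ of the energy method), and you pay $c_B\,|\hat\eta(s)|$ with $c_B=3(d_1+1)$ (equal to $9$ for $B_{29}$, $30$ for $B_{92}$) in place of $C(d_1,d_2,\hat\eta(s))$, whose coefficients are roughly a factor $2$--$4$ smaller. Putting in even conservative numbers for $\mathcal{S}_{92}$ (where $c_B=30$, $C_0\approx 2$, $T\approx 1.16$, and $|\hat\zeta|\ge\sqrt{\hat\omega^2+11}\approx 7$ everywhere since the last two components are constant) already gives a Gr\"onwall exponent $2C_0 c_B\int_0^T|\hat\zeta|\gtrsim 2\cdot 2\cdot 30\cdot 8\approx 960$, which is far above $250$. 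So the sentence ``one checks that the exponent stays below $250$'' is not correct as written --- your route proves boundedness with a worse constant, and that matters here, because $M=e^{250}$ feeds directly into the threshold $\varepsilon<10^{-350}$ in Proposition \ref{prop:finalstep} and hence into the target precision $d=550$ of the numerics; an $M$ closer to $e^{1000}$ would push the required $\varepsilon$ well beyond what was verified.

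Your part (b) is in the same spirit as the paper's, with slightly different bookkeeping: the paper isolates the pure singular-linear solution operator $\mathcal{L}_{d_1 d_2}$ and quotes its $C^k\to C^{k+1}$ bounds from Lemma \ref{singularlinear}, then writes $\mu=\mathcal{L}_{29}(F+2B_{29}(\hat\eta,\mu))$ and bootstraps, whereas you bootstrap directly from the equation via integrating-factor identities for the singular components. Both are legitimate; the paper's route is a bit cleaner because the hard work near $t=0$ is packaged once in Lemma \ref{singularlinear}.
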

\begin{proof}
If $\mu$ solves \eqref{mulinear} and satisfies $\mu(0)=0$, we combine Lemma \ref{linearprops} (a) and Lemma  \ref{Btermok} (both from Appendix A) to estimate
\begin{align*}
    (|\mu(t)|^2)'&\le 4 C(2,9,\hat{\eta}(t)) |\mu(t)|^2+2|F_{\text{start}}(t)||\mu(t)|\\
    &\le 4\left(C(2,9,\hat{\eta}(t))+\frac{1}{4}\right)|\mu(t)|^2+|F_{\text{start}}(t)|^2. 
\end{align*}
Gr\"onwall's inequality then gives 
\begin{align*}
    \left|\mu(t)\right|^2 \le \mathcal{I}_{start}(t)\left(\int_0^t \mathcal{I}_{start}(s)^{-1}\left|F_{\text{start}}(s)\right|^2 \dd s\right),
\end{align*}
where $\mathcal{I}_{start}(t)=e^{\int_0^t 4 C(2,9,\hat{\eta}(s))+1 \dd s}\ge 1.$
On the other hand, we similarly obtain 
\begin{align*}
    \left|\nu(t)\right|^2\le \mathcal{I}_{\text{end}}(t)\left(\int_0^t \mathcal{I}_{\text{end}}(s)^{-1}\left|F_{\text{end}}(s)\right|^2\dd s\right),
\end{align*}
where $\mathcal{I}_{\text{end}}(t)=e^{\int_0^t 4 C(9,2,\hat{\zeta}(s))+1 \dd s}\ge 1$, 
if $\nu$ solves \eqref{nulinear} and $\nu(0)=0$. 
To estimate both $\mathcal{I}_{start}$ and $\mathcal{I}_{\text{end}}$, we recall the definition $C$ in Lemma \ref{Btermok}:
 \begin{align*}
       C(d_1,d_2,y)&= |y_1|\left(d_1+\frac{1}{4d_1}\right)+|y_2|\left(\frac{1}{2d_1}+\frac{3}{2d_2}+\frac{1}{4}\right)\\&+|y_3|\left(\frac{1}{2d_2}+\frac{1}{2}\right)+|y_4|\left(d_1+\frac{1}{4d_1}\right)+|y_5|\left(\frac{d_1+1}{2}\right).
   \end{align*}
   As a consequence, we use the $L^2$ estimates for $\hat{\eta}$ and $\hat{\zeta}$ described in Table \ref{starnorms}, and H\"older's inequality $\|f\|_{L^{1}([0,\hat{t}_{\hat{\alpha}}+10^{-3}])}\leq\sqrt{\frac{3}{2}}\|f\|_{L^{2}([0,\hat{t}_{\hat{\alpha}}+10^{-3}])}$ to estimate 
   \begin{align*}
       &\int_{0}^{\hat{t}_{\hat\alpha}+10^{-3}} 4 C(2,9,\hat{\eta})+1\\
       &\le 1.5+\sqrt{1.5}\times  4\times  \left(2.125\times 2.54+0.7\times 1.18 +0.6\times 3.79 +2.125\times 6.1+1.5\times \sqrt{11} \right)\\
       &\le 132,
   \end{align*}
   and similarly,
   \begin{align*}
       &\int_{0}^{\hat{t}_{\hat \omega}+10^{-3}} 4 C(9,2,\hat{\zeta})+1\\
       &\le 1.2 +\sqrt{1.2}\times 4\times \left(9.1\times 2.52 +1.1\times 6.5+0.75\times 4.18+9.1\times 6.2 +5\times \sqrt{11}\right)\\
       &\le 470,
   \end{align*}
   implying that both integrating factors are under $e^{500}$, implying  
\begin{align*}
\|\mu\|_{L^{\infty}}\leq e^{250} \|F_{\text{start}}\|_{L^2}, \qquad \|\nu\|_{L^{\infty}}\leq e^{250} \|F_{\text{end}}\|_{L^2}
\end{align*}
 This shows (a). 

To show (b), observe that if $\mu=\mathcal{S}_{29}F_{\text{start}}$, and $\nu=\mathcal{S}_{92}F_{\text{end}}$, then the linear solution operators $\mathcal{L}_{29}$ and $\mathcal{L}_{92}$ (described in Appendix A) can be used to write 
\begin{align*}
    \mu=\mathcal{L}_{29}(F_{\text{start}}+2 B_{29}(\hat{\eta},\mu)), \qquad \nu=\mathcal{L}_{92}(F_{\text{end}}+2B_{92}(\hat{\zeta},\nu)). 
\end{align*} Then Lemmas \ref{singularlinear} (with $k=0$), \ref{hatcknorms} and \ref{BE}, combined with part (a) of this Proposition give 
\begin{align*}
    \left|\left|\mu\right|\right|_{C^1}&\le 25 \left(\left|\left|F_{\text{start}}\right|\right|_{C^0}+60 \left|\left|\hat{\eta}\right|\right|_{C^0}\left|\left|\mu\right|\right|_{C^0} \right)\\
    &\le 25 \left(\left|\left|F_{\text{start}}\right|\right|_{C^0}+60 \times \sqrt{1.5} M\left|\left|\hat{\eta}\right|\right|_{C^0}\left|\left|F_{\text{start}}\right|\right|_{C^0} \right)
\end{align*}
and similarly, 
\begin{align*}
    \left|\left|\nu\right|\right|_{C^1}\le 10^{5}M\left|\left|F_{\text{end}}\right|\right|_{C^0}.
\end{align*}If in addition, $F_{\text{start}}$ is bounded in $C^1$, then 
\begin{align*}
    \left|\left|2B_{29}(\hat{\eta},\mu)+F_{\text{start}}\right|\right|_{C^1}&\le \left|\left|F_{\text{start}}\right|\right|_{C^1}+2 \left|\left|B_{29}(\hat{\eta},\mu)\right|\right|_{C^0}+2 \left|\left|B_{29}(\hat{\eta}',\mu)\right|\right|_{C^0}+2 \left|\left|B_{29}(\hat{\eta},\mu')\right|\right|_{C^0}\\
    &\le \left|\left|F_{\text{start}}\right|\right|_{C^1}+60  \left|\left|\hat{\eta}\right|\right|_{C^1} \left|\left|\mu\right|\right|_{C^1}\\
    &\le (1+10^{5}\cdot 60\cdot \left|\left|\hat{\eta}\right|\right|_{C^1}M)\left|\left|F_{\text{start}}\right|\right|_{C^1},
\end{align*} so Lemmas \ref{hatcknorms} and \ref{singularlinear} give the required $C^2$ estimate for $\mu$. The comptuation for $\nu$ is identical. 
\end{proof}
Now that we have studied the linearisation as a whole, we look at the specific linear problem that arises by differentiating \eqref{mueq} and \eqref{nueq} with respect to $\alpha$ and $\omega$, respectively: 
\begin{align}\label{munuIC}
\begin{split}
        \mu_{\mathrm{IC}}'(t)&=\frac{1}{t}L_{29}\mu_{\mathrm{IC}}(t)+2B_{29}(\hat{\eta}(t),\mu_{\mathrm{IC}}(t)), \qquad  \mu_{\mathrm{IC}}(0)=(0,0,0,1,0)\\
    \nu_{\mathrm{IC}}'(t)&=\frac{1}{t}L_{92}\mu_{\mathrm{IC}}(t)+2B_{92}(\hat{\zeta}(t),\nu_{\mathrm{IC}}(t)), \qquad  \nu_{\mathrm{IC}}(0)=(0,0,0,1,0).
    \end{split}
    \end{align}

\begin{cor}\label{linearIC}
  Under Assumptions \ref{aposerror} and \ref{linearassumption}, we have 
  \begin{align*}
      \|\mu_{\mathrm{IC}}-\hat{\mu}_{\mathrm{IC}}\|_{C^2}, \|\nu_{\mathrm{IC}}-\hat{\nu}_{\mathrm{IC}}\|_{C^2}\le 10^{14}M\varepsilon,
  \end{align*}
  and 
  \begin{align*}
           \|\mu_{\mathrm{IC}}\|_{C^2}, \|\nu_{\mathrm{IC}}\|_{C^2}\le 10^5+10^{14}M\varepsilon.
       \end{align*}
  Furthermore, the values of the first three components of $\mu_{\mathrm{IC}}(\hat{t}_{\hat\alpha})$, $\mu_{\mathrm{IC}}'(\hat{t}_{\hat\alpha})$, $\nu_{\mathrm{IC}}(\hat{t}_{\hat\omega})$ and $\nu_{\mathrm{IC}}'(\hat{t}_{\hat\omega})$ are given by the corresponding entries of Table \ref{icnorms}, to error $10^{-5}+10^{14}M\varepsilon$. 

\end{cor}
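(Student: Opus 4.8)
The plan is to read Corollary~\ref{linearIC} off as an essentially immediate consequence of Proposition~\ref{GreenEstimate}(b), Lemma~\ref{errorc2bounds}, Lemma~\ref{linearhat2ndd} and Assumption~\ref{linearassumption}; the only real task is to identify the correct linear inhomogeneous problem satisfied by the differences $\mu_{\mathrm{IC}}-\hat\mu_{\mathrm{IC}}$ and $\nu_{\mathrm{IC}}-\hat\nu_{\mathrm{IC}}$ and then to chase constants.

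First I would set $w \coloneqq \mu_{\mathrm{IC}}-\hat\mu_{\mathrm{IC}}$. Since both $\mu_{\mathrm{IC}}$ and $\hat\mu_{\mathrm{IC}}$ equal $(0,0,0,1,0)$ at $t=0$, we have $w(0)=0$; and subtracting the defining equation of $\mu_{\mathrm{IC}}$ in~\eqref{munuIC} from the identity $(\hat\mu_{\mathrm{IC}})' = \tfrac1t L_{29}\hat\mu_{\mathrm{IC}} + 2B_{29}(\hat\eta,\hat\mu_{\mathrm{IC}}) + \hat E_{2,\mathrm{start}}$ (which is just the definition of $\hat E_{2,\mathrm{start}}$) shows that $w$ solves the linear inhomogeneous problem~\eqref{mulinear} with forcing $F_{\text{start}} = -\hat E_{2,\mathrm{start}}$ and zero initial data. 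By definition of the solution operator, $w = \mathcal{S}_{29}(-\hat E_{2,\mathrm{start}})$, and identically $\nu_{\mathrm{IC}}-\hat\nu_{\mathrm{IC}} = \mathcal{S}_{92}(-\hat E_{2,\mathrm{end}})$.

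Next I would apply Proposition~\ref{GreenEstimate}(b) with $k=1$: since $\hat E_{2,\mathrm{start}}$ and $\hat E_{2,\mathrm{end}}$ lie in $C^2$ (hence in $C^1$) with $\|\hat E_{2,\mathrm{start}}\|_{C^1}\le\|\hat E_{2,\mathrm{start}}\|_{C^2}\le 6\varepsilon$ by Lemma~\ref{errorc2bounds}, the $C^1\to C^2$ bound in Proposition~\ref{GreenEstimate}(b) gives
\[
\|\mu_{\mathrm{IC}}-\hat\mu_{\mathrm{IC}}\|_{C^2}\ \le\ 10^{13}M\,\|\hat E_{2,\mathrm{start}}\|_{C^1}\ \le\ 6\cdot 10^{13}M\varepsilon\ \le\ 10^{14}M\varepsilon,
\]
and the same for $\nu_{\mathrm{IC}}-\hat\nu_{\mathrm{IC}}$; this is the first claimed bound. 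The second bound is then the triangle inequality in $C^2$ together with Lemma~\ref{linearhat2ndd}, namely $\|\mu_{\mathrm{IC}}\|_{C^2}\le\|\hat\mu_{\mathrm{IC}}\|_{C^2}+\|\mu_{\mathrm{IC}}-\hat\mu_{\mathrm{IC}}\|_{C^2}\le 10^5+10^{14}M\varepsilon$, and similarly for $\nu_{\mathrm{IC}}$. For the pointwise statement, the $C^2$ norm dominates the sup-norms of a function and of its first derivative, so the first bound yields $|\mu_{\mathrm{IC}}(\hat t_{\hat\alpha})-\hat\mu_{\mathrm{IC}}(\hat t_{\hat\alpha})|$ and $|\mu_{\mathrm{IC}}'(\hat t_{\hat\alpha})-\hat\mu_{\mathrm{IC}}'(\hat t_{\hat\alpha})|\le 10^{14}M\varepsilon$ componentwise (and the $\hat t_{\hat\omega}$, $\nu_{\mathrm{IC}}$ analogues); combining these with the six–decimal agreement of $\hat\mu_{\mathrm{IC}}(\hat t_{\hat\alpha}),\hat\mu_{\mathrm{IC}}'(\hat t_{\hat\alpha})$ (etc.) with Table~\ref{icnorms} from Assumption~\ref{linearassumption}, one more triangle inequality gives the error $10^{-5}+10^{14}M\varepsilon$.

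I do not expect a genuine obstacle: all the analytic content — the integrating-factor/Grönwall argument making $\mathcal{S}_{29},\mathcal{S}_{92}$ bounded and regularity-improving — is already packaged in Proposition~\ref{GreenEstimate}. The two points needing a little care are (i) that the difference of two solutions of~\eqref{munuIC} really is the $\mathcal{S}$-image of the negated residual, which relies on uniqueness for the singular linear problem (already asserted in Section~\ref{linearisation}), and (ii) keeping the evaluation point consistently at $\hat t_{\hat\alpha}$ (resp.\ $\hat t_{\hat\omega}$), so that the comparison with Table~\ref{icnorms} in Assumption~\ref{linearassumption} applies directly without picking up an extra $\|\hat\mu_{\mathrm{IC}}'\|_{C^0}\,|\hat t_{\hat\alpha}-t_\alpha|$ term; both are routine bookkeeping.
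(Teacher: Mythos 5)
Your proposal is correct and follows essentially the same route as the paper's proof: identify $\mu_{\mathrm{IC}}-\hat\mu_{\mathrm{IC}}$ as $\mathcal{S}_{29}$ applied to (minus) the residual $\hat E_{2,\mathrm{start}}$, invoke Proposition~\ref{GreenEstimate}(b) with the $C^1\to C^2$ bound, use Lemma~\ref{errorc2bounds} for $\|\hat E_{2,\mathrm{start}}\|_{C^1}\le 6\varepsilon$, then the triangle inequality with Lemma~\ref{linearhat2ndd} and Assumption~\ref{linearassumption} for the remaining claims. (One small remark: you correctly record the sign $w=\mathcal{S}_{29}(-\hat E_{2,\mathrm{start}})$, whereas the paper drops the minus sign; this is immaterial once norms are taken.)
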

\begin{proof}
 By Assumption \ref{linearassumption}, we have  
    \[
        \mu_{\mathrm{IC}}-\hat{\mu}_{\mathrm{IC}}=\mathcal{S}_{29}\hat{E}_{2,\text{start}},\qquad 
         \nu_{\mathrm{IC}}-\hat{\nu}_{\mathrm{IC}}=\mathcal{S}_{92}\hat{E}_{2,\text{end}}.
    \]
    Statement (b) of Proposition 
 \ref{GreenEstimate} combined with Lemma \ref{errorc2bounds} gives
 $$\|\mu_{\mathrm{IC}}-\hat{\mu}_{\mathrm{IC}}\|_{C^{2}}\leq\|\mathcal{S}_{29}\hat{E}_{2,\text{start}}\|_{C^{2}}\leq10^{13}M\|\hat{E}_{2,\text{start}}\|_{C^{1}}\leq10^{14}M\epsilon.$$ The estimate for $\nu_{\mathrm{IC}}-\hat{\nu}_{\mathrm{IC}}$ follows similarly. The  $C^2$ estimates on $\mu_{\mathrm{IC}}$ and $\nu_{\mathrm{IC}}$ follow from the triangle inequality with Lemma \ref{linearhat2ndd}. The rest follows from Assumption \ref{linearassumption}.
\end{proof}

\subsection{Estimates on $\mu,\nu$}\label{nonlinearestimates}
Recall from Section \ref{linearisation} that we use  $\mu(\alpha,t)=\eta(\alpha,t)-\hat{\eta}(t)$ and $\nu(\omega,t)=\zeta(\omega,t)-\hat{\zeta}(t)$ to track the errors between the true solutions $(\eta,\zeta)$ of \eqref{mueq}-\eqref{nueq} for various choices of $\alpha,\omega$, and the approximate solutions $\hat{\eta},\hat{\zeta}$ under Assumption \ref{aposerror}. Expecting a solution $A(\alpha)=\Omega(\omega)$ for $(\alpha,\omega)$ close to $(\hat{\alpha},\hat{\omega})$, the purpose of this section is to expand $\mu$ and $\nu$ locally around $\hat{\alpha}$ and $\hat{\omega}$, that is,   
\begin{align}\label{munualomex}
\begin{split}      \mu(\alpha,t)&=\mu^{(0)}(t)+(\alpha-\hat{\alpha})\mu^{(1)}(t)+(\alpha-\hat{\alpha})^2\mu^{(2)}(t,\alpha),\\
        \nu(\omega,t)&=\nu^{(0)}(t)+(\omega-\hat{\omega})\nu^{(1)}(t)+(\omega-\hat{\omega})^2\nu^{(2)}(t,\omega),
        \end{split}
    \end{align}
    and to provide informative estimates for all of these time-varying coefficient functions. In Section \ref{existence}, these estimates on $\mu$, $\nu$ will be used to study $A(\alpha)-\Omega(\omega)$ for $(\alpha,\omega)$ close to $(\hat{\alpha},\hat{\omega})$, and ultimately prove existence of a new zero corresponding to a non-round Einstein metric.
    \begin{lem}\label{munu0terms}
       If Assumption \ref{aposerror} holds for a choice of $\varepsilon$ satisfying 
 $M^2\varepsilon\le \frac{1}{200}$, then the constant-order terms of the expansion \eqref{munualomex} satisfy for $k=0,1,2$, 
       \begin{align*}
       \|\mu^{(0)}\|_{C^k[0,\hat{t}_{\hat{\alpha}}+10^{-3}]},\|\nu^{(0)}\|_{C^k[0,\hat{t}_{\hat{\omega}}+10^{-3}]}\le 10^{c_k} M\varepsilon,
       \end{align*}
      where $c_0=1$, $c_1=6$, $c_2=20$. 
    \end{lem}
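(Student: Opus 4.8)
The plan is to set $\mu^{(0)} = \mu(\hat\alpha,\cdot)$ and $\nu^{(0)} = \nu(\hat\omega,\cdot)$, that is, the perturbation functions evaluated at the approximate parameters themselves, and to bound them via the Schauder fixed-point machinery of Theorem \ref{Schauder} applied to the integral form of the IVPs \eqref{mueq}--\eqref{nueq}. With $\alpha = \hat\alpha$, equation \eqref{mueq} has initial datum $\mu(0) = 0$, so using the solution operator $\mathcal{S}_{29}$ from Proposition \ref{GreenEstimate} we may rewrite it as the fixed-point equation
\begin{equation*}
\mu^{(0)} = \mathcal{S}_{29}\big(\hat{E}_{1,\text{start}}\big) + \mathcal{S}_{29}\big(B_{29}(\mu^{(0)},\mu^{(0)})\big),
\end{equation*}
which has the form $x = x_0 + L(x) + Q(x)$ of Theorem \ref{Schauder} with $L = 0$, $x_0 = \mathcal{S}_{29}(\hat{E}_{1,\text{start}})$, and $Q(x) = \mathcal{S}_{29}(B_{29}(x,x))$. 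The same structure holds for $\nu^{(0)}$ with $\mathcal{S}_{92}$, $\hat{E}_{1,\text{end}}$, and $B_{92}$.

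First I would establish the $C^0$ bound. By Proposition \ref{GreenEstimate}(a) we have $\|x_0\|_{C^0} = \|\mathcal{S}_{29}(\hat{E}_{1,\text{start}})\|_{C^0} \le M\|\hat{E}_{1,\text{start}}\|_{L^2} \le M\varepsilon$ (using Assumption \ref{aposerror}). For the quadratic term, the boundedness of the bilinear form $B_{29}$ (the constant is absolute, coming from Lemma \ref{BE} in the appendix) gives $\|B_{29}(x,x)\|_{L^2} \lesssim \|x\|_{C^0}\|x\|_{L^2} \lesssim \|x\|_{C^0}^2$ on the fixed interval, hence $\|Q(x)\|_{C^0} \le q\|x\|_{C^0}^2$ for an absolute constant $q$ (absorbing $M$, the interval length $\le \tfrac32$, and the $B_{29}$-norm). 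The hypothesis $M^2\varepsilon \le \tfrac{1}{200}$ is exactly what makes the Schauder condition $\|x_0\| + qs^2 \le s$ solvable: taking $s = 10\, M\varepsilon$ (so $c_0 = 1$), one checks $M\varepsilon + q\,(10 M\varepsilon)^2 = M\varepsilon(1 + 100 q M^2\varepsilon) \le 10 M\varepsilon$ provided $M^2\varepsilon$ is small enough, which the stated bound guarantees. Complete continuity of $\mathcal{S}_{29}$ and of $Q$ follows from the smoothing in Proposition \ref{GreenEstimate}(b) together with the compact embedding $C^1 \hookrightarrow C^0$. This yields $\|\mu^{(0)}\|_{C^0} \le 10\, M\varepsilon$, and likewise for $\nu^{(0)}$.

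Next I would bootstrap to the $C^1$ and $C^2$ bounds. Having the $C^0$ bound, I rewrite $\mu^{(0)} = \mathcal{S}_{29}(F)$ with $F = \hat{E}_{1,\text{start}} + B_{29}(\mu^{(0)},\mu^{(0)})$ and apply Proposition \ref{GreenEstimate}(b): $\|\mu^{(0)}\|_{C^1} \le 10^5 M\,\|F\|_{C^0}$. Here $\|F\|_{C^0} \le \|\hat{E}_{1,\text{start}}\|_{C^0} + \|B_{29}(\mu^{(0)},\mu^{(0)})\|_{C^0} \le 6\varepsilon + (\text{const})(10\,M\varepsilon)^2$ using Lemma \ref{errorc2bounds} and the $C^0$ bound just obtained; since $M^2\varepsilon \le \tfrac{1}{200}$, the quadratic term is dominated by $M\varepsilon$, so $\|F\|_{C^0} \lesssim \varepsilon$ and $\|\mu^{(0)}\|_{C^1} \le 10^6 M\varepsilon$ (giving $c_1 = 6$). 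For $C^2$, I apply Proposition \ref{GreenEstimate}(b) once more, now with $k=1$: $\|\mu^{(0)}\|_{C^2} \le 10^{13} M\,\|F\|_{C^1}$, where $\|F\|_{C^1} \le 6\varepsilon + \|B_{29}(\mu^{(0)},\mu^{(0)})\|_{C^1}$ and the latter is controlled by $(\text{const})\|\mu^{(0)}\|_{C^1}^2 \le (\text{const})(10^6 M\varepsilon)^2$, again dominated by $\varepsilon$ under $M^2\varepsilon \le \tfrac{1}{200}$. Tracking the numerical constants (absorbing the $B$-norm from Lemma \ref{BE} and the factor from Lemma \ref{errorc2bounds}) yields $\|\mu^{(0)}\|_{C^2} \le 10^{20} M\varepsilon$, so $c_2 = 20$. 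The argument for $\nu^{(0)}$ is identical with $\mathcal{S}_{92}$, $B_{92}$, $\hat\zeta$, $\hat{E}_{1,\text{end}}$, and the interval $[0,\hat{t}_{\hat\omega}+10^{-3}]$.

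The main obstacle I anticipate is purely bookkeeping: verifying that the accumulated absolute constants — from the bilinear-form bound in Lemma \ref{BE}, from Hölder's inequality on the short interval, and from the two successive applications of the smoothing estimate in Proposition \ref{GreenEstimate}(b) with its factors $10^5$ and $10^{13}$ — actually fit under the clean powers $10^1, 10^6, 10^{20}$ claimed, and that at each stage the quadratic self-interaction term is genuinely absorbed by the linear inhomogeneous term. The smallness hypothesis $M^2\varepsilon \le \tfrac{1}{200}$ is clearly calibrated to make each such absorption work with room to spare; there is no conceptual difficulty, only the need to keep the constants honest.
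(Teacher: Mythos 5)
Your approach is the same as the paper's: set $\alpha=\hat\alpha$ (resp.\ $\omega=\hat\omega$) to obtain the fixed-point equation $\mu^{(0)}=\mathcal{S}_{29}(\hat{E}_{1,\text{start}}+B_{29}(\mu^{(0)},\mu^{(0)}))$, apply Theorem \ref{Schauder} with $L=0$ to get the $C^0$ bound, and then bootstrap to $C^1$ and $C^2$ using Proposition \ref{GreenEstimate}(b). However, there is a genuine numerical gap in the $C^1\to C^2$ bootstrap. You bound $\|B_{29}(\mu^{(0)},\mu^{(0)})\|_{C^1}$ by a constant times $\|\mu^{(0)}\|_{C^1}^2$, i.e.\ by $(\text{const})(10^6M\varepsilon)^2$, and assert this is ``dominated by $\varepsilon$''. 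It is not: $(10^6M\varepsilon)^2 = 10^{12}M^2\varepsilon\cdot\varepsilon \le 5\times 10^9\varepsilon$ under $M^2\varepsilon\le 1/200$, which, after multiplying by $10^{13}M$, yields roughly $10^{24}M\varepsilon$ rather than the claimed $10^{20}M\varepsilon$. The paper avoids this by observing that the derivative of the quadratic term is $2B_{29}(\mu^{(0)},(\mu^{(0)})')$, whose $C^0$ norm is at most $60\|\mu^{(0)}\|_{C^0}\|\mu^{(0)}\|_{C^1}\le 60\cdot(2M\varepsilon)\cdot(10^6M\varepsilon)$; pairing the $C^0$ norm of one factor with the $C^1$ norm of the other, rather than squaring the $C^1$ norm, saves a factor of about $10^6$ and lets the final constant fit under $10^{20}$.

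A smaller point: in the $C^0$ step you test the Schauder condition with $s=10M\varepsilon$ and an absolute $q$ absorbing $M$; but $q=30\sqrt{1.5}M$ together with $M^2\varepsilon\le 1/200$ gives $M\varepsilon+qs^2 = M\varepsilon(1+3000\sqrt{1.5}M^2\varepsilon)$, which can exceed $10M\varepsilon$. The paper takes $s=2M\varepsilon$, for which $qs^2 = M\varepsilon\cdot 120\sqrt{1.5}M^2\varepsilon \le 0.74\,M\varepsilon$ and the condition holds comfortably, and $2M\varepsilon\le 10M\varepsilon$ still gives $c_0=1$. This tighter $C^0$ bound is then exactly what makes the $C^1$ and $C^2$ constants close.
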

    \begin{proof}
By setting $\alpha=\hat{\alpha}$ and $\omega=\hat{\omega}$ in \eqref{mueq} and \eqref{nueq}, we find that the zeroth order terms satisfy the equations 
        \begin{align}\label{munuaoexp}
        \begin{split}
(\mu^{(0)})'(t)&=\frac{L_{29} \mu^{(0)}(t)}{t}+2B_{29}(\hat{\eta}(t),\mu^{(0)}(t))+\hat{E}_{1,\text{start}}(t)+B_{29}(\mu^{(0)}(t),\mu^{(0)}(t)), \quad \mu^{(0)}(0)=0,\\
    (\nu^{(0)})'(t)&=\frac{L_{92} \nu^{(0)}(t)}{t}+2B_{92}(\hat{\eta}(t),\nu^{(0)}(t))+\hat{E}_{1,\text{end}}(t)+B_{92}(\nu^{(0)}(t),\nu^{(0)}(t)), \quad \nu^{(0)}(0)=0.
        \end{split}
    \end{align}
Using the linear solutions operators $\mathcal{S}_{29}$ and $\mathcal{S}_{92}$ from Section \ref{linearisation}    We can thus write    \begin{align}\label{FPmu0nu0}
        \mu^{(0)}=\mathcal{S}_{29} \left(\hat{E}_{1,\text{start}}+B_{29}(\mu^{(0)},\mu^{(0)})\right), \qquad \nu^{(0)}=\mathcal{S}_{92}(\hat{E}_{1,\text{end}}+B_{92}(\nu^{(0)},\nu^{(0)})).
    \end{align}
    Observe that by Proposition \ref{GreenEstimate} (a),  Lemma \ref{BE} and Assumption \ref{aposerror}, 
    \begin{align*}
        \left|\left|\mathcal{S}_{29} \left(\hat{E}_{1,\text{start}}+B_{29}(\mu^{(0)},\mu^{(0)})\right) \right|\right|_{C^0}&\le M\left(\varepsilon+\|B_{29}(\mu^{(0)},\mu^{(0)})\|_{L^2}\right)\\
        &\le M\left(\varepsilon+\sqrt{1.5}\times 30 \|\mu^{(0)}\|_{C^0}^2\right).
    \end{align*}
    Since $M^2\varepsilon\le \frac{1}{200}$, the Schauder Fixed Point Theorem (Theorem \ref{Schauder}) asserts that there is a solution (unique as far as it is defined) with $\|\mu^{(0)}\|_{C^0}\le 2M\varepsilon$. Similarly, we obtain $\|\nu^{(0)}\|_{C^0}\le 2M\varepsilon$. Feeding these bounds into \eqref{FPmu0nu0}, and combining with Proposition  \ref{GreenEstimate} (b) Lemma \ref{BE} and Lemma \ref{errorc2bounds} we obtain 
    \begin{align*}
        \|\mu^{(0)}\|_{C^1}&\le 10^5 M \left(\|\hat{E}_{1,\text{start}}\|_{C^0}+\|B_{29}(\mu^{(0)},\mu^{(0)})\|_{C^0}\right)\\
        &\le 10^5 M \left(6\varepsilon+30 \times (2M\varepsilon)^2\right)\\
        &\le 10^6 M\varepsilon,
    \end{align*}
    again using $M^2 \varepsilon\le \frac{1}{200}$. The estimate for $\|\nu^{(0)}\|_{C^1}$ is similar. Finally, again using Proposition \ref{GreenEstimate} (b) Lemma \ref{BE} and Lemma \ref{errorc2bounds} we obtain
    \begin{align*}
        \|\mu^{(0)}\|_{C^2}& \le 10^{13}M(\|\hat{E}_{1,\text{start}}\|_{C^1}+\|B_{29}(\mu^{(0)},\mu^{(0)})\|_{C^1})\\
        &\le 10^{13}M(6\varepsilon+\|B_{29}(\mu^{(0)},\mu^{(0)})\|_{C^0}+2\|B_{29}(\mu^{(0)},(\mu^{(0)})')\|_{C^0} )\\
        &\le 10^{13}M \left(6\varepsilon+30\times (2M\varepsilon)^2+60 (2M\varepsilon)(10^{6}M\varepsilon)\right)\\
        &\le 10^{20}M\varepsilon, 
    \end{align*}
    and similarly for $\|\nu^{(0)}\|_{C^2}$. 
    \end{proof}
    We can also estimate the first-order terms of \eqref{munualomex}.
    \begin{lem}\label{mu1est}
Assuming that $M^2 \varepsilon\le 10^{-20}$, the first order terms satisfy for $k = 0,1,2$,
\begin{align*}
    \|\mu^{(1)}-\mu_{\mathrm{IC}}\|_{C^k[0,\hat{t}_{\hat{\alpha}}+10^{-3}]},\|\nu^{(1)}-\nu_{\mathrm{IC}}\|_{C^k[0,\hat{t}_{\hat{\omega}}+10^{-3}]}\le 10^{c_k}M^2\varepsilon,
\end{align*} where $\mu_{\mathrm{IC}}$ and $\nu_{\mathrm{IC}}$ are defined according to \eqref{munuIC}, and the constants are given by $c_0=10$, $c_1=13$, $c_2=30$.
    \end{lem}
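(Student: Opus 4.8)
\textbf{Proof plan for Lemma \ref{mu1est}.}

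The plan is to differentiate the defining IVP \eqref{mueq} with respect to $\alpha$ at $\alpha = \hat\alpha$ and identify the resulting linear equation for $\mu^{(1)}$, then compare it to the equation \eqref{munuIC} that defines $\mu_{\mathrm{IC}}$, so that the difference $\mu^{(1)} - \mu_{\mathrm{IC}}$ solves a linear inhomogeneous problem whose forcing term is controlled by the already-estimated $\mu^{(0)}$. First I would take $\partial_\alpha$ of \eqref{mueq}: since $\mu(\alpha,0) = (0,0,0,\alpha-\hat\alpha,0)$, we get $\mu^{(1)}(0) = (0,0,0,1,0)$, and differentiating the right-hand side and using bilinearity of $B_{29}$ yields
\begin{align*}
(\mu^{(1)})'(t) = \frac{L_{29}\mu^{(1)}(t)}{t} + 2B_{29}(\hat\eta(t),\mu^{(1)}(t)) + 2B_{29}(\mu^{(0)}(t),\mu^{(1)}(t)),
\end{align*}
where the $\hat E_{1,\mathrm{start}}$ term drops out (it is independent of $\alpha$). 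Subtracting \eqref{munuIC} gives that $\delta := \mu^{(1)} - \mu_{\mathrm{IC}}$ satisfies $\delta(0) = 0$ and
\begin{align*}
\delta'(t) = \frac{L_{29}\delta(t)}{t} + 2B_{29}(\hat\eta(t),\delta(t)) + 2B_{29}(\mu^{(0)}(t),\mu^{(1)}(t)),
\end{align*}
so that $\delta = \mathcal{S}_{29}\bigl(2B_{29}(\mu^{(0)},\mu^{(1)})\bigr)$ in the notation of Section \ref{linearisation}. The key point is that $\mu^{(0)}$ carries a factor $M\varepsilon$ by Lemma \ref{munu0terms}, and $\mu^{(1)}$ is (a priori) comparable to $\mu_{\mathrm{IC}}$, whose $C^2$ norm is $O(10^5)$ by Lemma \ref{linearhat2ndd} and Corollary \ref{linearIC}; applying the operator norm bounds for $\mathcal{S}_{29}$ from Proposition \ref{GreenEstimate} then produces the claimed $M^2\varepsilon$ scaling.

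Concretely, I would run this as a bootstrap across the three norms. First, using Proposition \ref{GreenEstimate}(a), Lemma \ref{BE} (to bound $\|B_{29}(\mu^{(0)},\mu^{(1)})\|_{L^2}$ by $\sqrt{1.5}\cdot 30\,\|\mu^{(0)}\|_{C^0}\|\mu^{(1)}\|_{C^0}$), and the crude a priori bound $\|\mu^{(1)}\|_{C^0} \le \|\mu_{\mathrm{IC}}\|_{C^0} + \|\delta\|_{C^0}$, I would get a self-improving inequality for $\|\delta\|_{C^0}$; since $M^2\varepsilon \le 10^{-20}$ the nonlinear feedback term is absorbed and $\|\delta\|_{C^0} \le 10^{10}M^2\varepsilon$ follows (the constant $c_0 = 10$ reflects $M \cdot M\varepsilon \cdot 10^5 \cdot (\text{small numerical factors})$, with room to spare). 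Then I would feed this back into $\delta = \mathcal{S}_{29}(2B_{29}(\mu^{(0)},\mu^{(1)}))$ and use Proposition \ref{GreenEstimate}(b) with $k=0$: here I need $\|B_{29}(\mu^{(0)},\mu^{(1)})\|_{C^0} \le 30\,\|\mu^{(0)}\|_{C^0}\|\mu^{(1)}\|_{C^0}$, combined with the $C^0 \to C^1$ operator bound $10^5 M$, giving $\|\delta\|_{C^1}\le 10^{13}M^2\varepsilon$. Finally, for the $C^2$ estimate I would use the $C^1 \to C^2$ bound $10^{13}M$ together with $\|B_{29}(\mu^{(0)},\mu^{(1)})\|_{C^1} \le \|B_{29}((\mu^{(0)})',\mu^{(1)})\|_{C^0} + \|B_{29}(\mu^{(0)},(\mu^{(1)})')\|_{C^0} + \|B_{29}(\mu^{(0)},\mu^{(1)})\|_{C^0}$, controlled via the $C^2$ bounds on $\mu^{(0)}$ (Lemma \ref{munu0terms}, exponent $20$) and on $\mu^{(1)}$ (from $\mu_{\mathrm{IC}}$, exponent $5$, plus the $C^1$ bound on $\delta$ just obtained). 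The worst term, $10^{13}M \cdot 10^{13}M^2\varepsilon \cdot (\text{numerics})$ versus the budget $10^{30}M^2\varepsilon$, is where the exponent $c_2 = 30$ is set; one checks $M^2\varepsilon \le 10^{-20}$ keeps every quadratic-in-$\delta$ correction subordinate. The computation for $\nu^{(1)}-\nu_{\mathrm{IC}}$ is word-for-word identical with $29 \leftrightarrow 92$, $\hat\eta \leftrightarrow \hat\zeta$, $\mu \leftrightarrow \nu$.

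The main obstacle is purely bookkeeping: tracking the powers of $M$ (which is astronomically large, $\le 10^{109}$) against the tiny parameter $\varepsilon$ through three successive applications of the solution operator, making sure at each stage that the quadratic self-interaction term $2B_{29}(\mu^{(0)},\delta)$ — which I folded into the linear operator above but which, if instead treated as a perturbation, would contribute a term like $M\cdot\|\mu^{(0)}\|_{C^k}\|\delta\|_{C^k}$ — is genuinely dominated thanks to $M^2\varepsilon \le 10^{-20}$. There is no conceptual difficulty beyond verifying that the stated exponents $c_0=10, c_1=13, c_2=30$ are large enough to absorb all the accumulated numerical constants (the $30$ and $60$ from Lemma \ref{BE}, the $\sqrt{1.5}$ from H\"older, the $10^5$ and $10^{13}$ from Proposition \ref{GreenEstimate}); the margins are deliberately generous.
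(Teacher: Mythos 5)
Your proposal is correct and follows essentially the same route as the paper's proof: you derive the same linear IVP for $\mu^{(1)}$ by differentiating \eqref{mueq} in $\alpha$, subtract \eqref{munuIC} to get a fixed-point equation $\delta=\mathcal{S}_{29}\bigl(2B_{29}(\mu^{(0)},\delta)+2B_{29}(\mu^{(0)},\mu_{\mathrm{IC}})\bigr)$ for $\delta=\mu^{(1)}-\mu_{\mathrm{IC}}$, and bootstrap through $C^0\to C^1\to C^2$ using the same ingredients (Lemma \ref{munu0terms} for the $M\varepsilon$ smallness of $\mu^{(0)}$, Corollary \ref{linearIC} for the $O(10^5)$ size of $\mu_{\mathrm{IC}}$, Lemma \ref{BE}, and Proposition \ref{GreenEstimate}(a)/(b)). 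The self-improving $C^0$ inequality you describe is precisely the Schauder fixed-point step the paper invokes, and the exponents $c_0,c_1,c_2$ arise in the same way.
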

    \begin{proof}
        By putting the expression \eqref{munualomex} into \eqref{mueq} and \eqref{nueq}, and isolating the linear terms (in $\alpha-\hat{\alpha}$ or $\omega-\hat{\omega}$), we obtain 
        \begin{align*}
            (\mu^{(1)})'(t)&=\frac{L_{29} \mu^{(1)}(t)}{t}+2B_{29}(\hat{\eta}(t)+\mu^{(0)}(t),\mu^{(1)}(t)), \qquad \mu^{(1)}(0)=(0,0,0,1,0),\\(\nu^{(1)})'(t)&=\frac{L_{92} \nu^{(1)}(t)}{t}+2B_{92}(\hat{\zeta}(t)+\nu^{(0)}(t),\nu^{(1)}(t)), \qquad \nu^{(1)}(0)=(0,0,0,1,0).\\
        \end{align*}
       Using the definition of $\mu_{\mathrm{IC}},\nu_{\mathrm{IC}}$ \eqref{munuIC}, we thus find 
\begin{align}\label{mu1nu1diff}
       \begin{split}
    \left(\mu^{(1)}(t)-\mu_{\mathrm{IC}}(t)\right)'&=\frac{L_{29}}{t}\left(\mu^{(1)}(t)-\mu_{\mathrm{IC}}(t)\right)+2B_{29}(\hat{\eta}(t),\mu^{(1)}(t)-\mu_{\mathrm{IC}}(t))+2B_{29}(\mu^{(0)}(t),\mu^{(1)}(t)), \\
           \left(\nu^{(1)}(t)-\nu_{\mathrm{IC}}(t)\right)'&=\frac{L_{92}}{t}\left(\nu^{(1)}(t)-\nu_{\mathrm{IC}}(t)\right)+2B_{92}(\hat{\zeta}(t),\nu^{(1)}(t)-\nu_{\mathrm{IC}}(t))+2B_{92}(\nu^{(0)}(t),\nu^{(1)}(t)),
           \end{split}
       \end{align}
       with $\mu^{(1)}(0)=\mu_{\mathrm{IC}}(0)$ and $\nu^{(1)}(0)=\nu_{\mathrm{IC}}(0)$. Therefore, we obtain the fixed-point equations 
       \begin{align}\label{mu1ICdif}
       \begin{split}
           \mu^{(1)}-\mu_{\mathrm{IC}}=\mathcal{S}_{29}\left(2B_{29}(\mu^{(0)},\mu^{(1)}-\mu_{\mathrm{IC}})+2B_{29}(\mu^{(0)},\mu_{\mathrm{IC}})\right),\\
           \nu^{(1)}-\nu_{\mathrm{IC}}=\mathcal{S}_{92}\left(2B_{92}(\nu^{(0)},\nu^{(1)}-\nu_{\mathrm{IC}})+2B_{92}(\nu^{(0)},\nu_{\mathrm{IC}})\right).
           \end{split}
       \end{align}
       Corollary \ref{linearIC} states that 
       \begin{align*}
           \left|\left|\mu_{\mathrm{IC}}\right|\right|_{C^2}\le 10^5+10^{14}M\varepsilon, \qquad \left|\left|\nu_{\mathrm{IC}}\right|\right|_{C^2}\le 10^5+10^{14}M\varepsilon.
       \end{align*}
We can then estimate the $C^0$ norms of the  right-hand sides of \eqref{mu1ICdif} using Proposition \ref{GreenEstimate} (a), Lemma \ref{BE} and the recent Lemma \ref{munu0terms}:
\begin{multline*}
\left
\| \mathcal{S}_{29}\left(2B_{29}(\mu^{(0)},\mu^{(1)}-\mu_{\mathrm{IC}})+2B_{29}(\mu^{(0)},\mu_{\mathrm{IC}})\right)\right\|_{C^0}\\ \le M\left(2 \, \|B_{29}(\mu^{(0)},\mu_{\mathrm{IC}})\|_{L^2}+2 \, \|B_{29}(\mu^{(0)},\mu^{(1)}-\mu_{\mathrm{IC}})\|_{L^2}\right)\\
         \le 5M\left( \|B_{29}(\mu^{(0)},\mu_{\mathrm{IC}})\|_{C^0}+ \|B_{29}(\mu^{(0)},\mu^{(1)}-\mu_{\mathrm{IC}})\|_{C^0}\right)\\
         \le 150 M \|\mu^{(0)}\|_{C^0}\left(\|\mu_{\mathrm{IC}}\|_{C^0}+\|\mu^{(1)}-\mu_{\mathrm{IC}}\|_{C^0}\right)\\
         \le  1.5\times 10^{3} M^2 \varepsilon \left(10^{5}+10^{14}M\varepsilon  +\|\mu^{(1)}-\mu_{\mathrm{IC}}\|_{C^0}\right).
\end{multline*}
     The Schauder Fixed Point Theorem then implies  $\|\mu^{(1)}-\mu_{\mathrm{IC}}\| M^2\varepsilon$, and similarly for $\|\nu^{(1)}-\nu_{\mathrm{IC}}\|_{C^0}$.  
Continuing on with the higher-order estimates, we use Proposition \ref{GreenEstimate} (b), Lemma \ref{BE} and Lemma \ref{munu0terms} to obtain 
\begin{align*}
\left\lVert \mathcal{S}_{29}\left(2B_{29}(\mu^{(0)},\mu^{(1)}-\mu_{\mathrm{IC}})\right.\right.&+\left.\left.2B_{29}(\mu^{(0)},\mu_{\mathrm{IC}})\right) \right\rVert_{C^1}
\\
&\le 60\times  10^5 M\|\mu^{(0)}\|_{C^0}(\|\mu_{\mathrm{IC}}\|_{C^0}+\|\mu^{(1)}-\mu_{\mathrm{IC}}\|_{C^0})\\
&\le 6\times 10^7 M^2\varepsilon (10^{5}+10^{14}M\varepsilon +10^{10}M^2\varepsilon),
\end{align*}
so $\|\mu^{(1)}-\mu_{\mathrm{IC}}\|_{C^1}\le 10^{13}M^2\varepsilon$, and similarly for $\nu^{(1)}-\nu_{\mathrm{IC}}$. Finally, 
\begin{align*}
    &\left\| \mathcal{S}_{29}\left(2B_{29}(\mu^{(0)},\mu^{(1)}-\mu_{\mathrm{IC}})+2B_{29}(\mu^{(0)},\mu_{\mathrm{IC}})\right)\right\|_{C^2}\\
    &\le 10^{13}M \|2B_{29}(\mu^{(0)},\mu^{(1)}-\mu_{\mathrm{IC}})+2B_{29}(\mu^{(0)},\mu_{\mathrm{IC}})\|_{C^1}\\
    &\le 10^{13}M\|2B_{29}(\mu^{(0)},\mu^{(1)}-\mu_{\mathrm{IC}})+2B_{29}(\mu^{(0)},\mu_{\mathrm{IC}})\|_{C^0}\\
    &+2\cdot 10^{13}M\|B_{29}((\mu^{(0)})',\mu^{(1)}-\mu_{\mathrm{IC}})+B_{29}(\mu^{(0)},(\mu^{(1)}-\mu_{\mathrm{IC}})')+B_{29}((\mu^{(0)})',\mu_{\mathrm{IC}})+B_{29}(\mu^{(0)},(\mu_{\mathrm{IC}})')\|_{C^0}.
\end{align*}
Using the triangle inequality, Lemma \ref{BE}, and the previous $C^1$ estimate on $\mu^{(1)}-\mu_{\mathrm{IC}}$, we obtain 
\begin{align*}
    \|\mu^{(1)}-\mu_{\mathrm{IC}}\|_{C^2}&\le 2\cdot60\cdot 3\cdot  10^{13}M \|\mu^{(0)}\|_{C^1}( \|\mu_{\mathrm{IC}}\|_{C^1}+\|\mu_{\mathrm{IC}}-\mu^{(1)}\|_{C^1})\\
    &\le 2\cdot60\cdot 3\cdot  10^{13}M \cdot 10^6 M\varepsilon\left( 10^{5}+10^{14}M\varepsilon+10^{13}M^2\varepsilon\right)\\
    &\le 10^{30}M^2\varepsilon.
\end{align*}We can similarly estimate the $C^2$ norm of $\nu^{(1)}-\nu_{\mathrm{IC}}$. 
    \end{proof}
    We find finally estimate the second-order terms. 
\begin{lem}\label{munusterm}
If $\left|\alpha-\hat{\alpha}\right|+\left|\omega-\hat{\omega}\right|\le \frac{10^{-30}}{M}$ and $M^2\varepsilon\le 10^{-20}$, the second order terms satisfy for $k=0,1,2$,
    \begin{align*}
\|\mu^{(2)}\|_{C^k},\|\nu^{(2)}\|_{C^k}\le 10^{c_k}M, 
    \end{align*}
    where $c_0=13$, $c_1=18$, $c_{2}=40$. 
\end{lem}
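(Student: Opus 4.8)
The plan is to derive, from the expansion \eqref{munualomex}, the differential equations satisfied by the second-order coefficient functions $\mu^{(2)}$ and $\nu^{(2)}$, and then apply the same Gr\"onwall/Schauder machinery used for $\mu^{(0)}$ and $\mu^{(1)}$. Substituting \eqref{munualomex} into \eqref{mueq} and collecting all terms of order $(\alpha-\hat\alpha)^2$ and higher, one finds that $\mu^{(2)}(\cdot,\alpha)$ solves a singular linear inhomogeneous ODE of the form
\begin{align*}
(\mu^{(2)})' = \frac{L_{29}\mu^{(2)}}{t} + 2B_{29}\bigl(\hat\eta + \mu^{(0)} + (\alpha-\hat\alpha)\mu^{(1)} + (\alpha-\hat\alpha)^2\mu^{(2)},\, \mu^{(2)}\bigr) + B_{29}(\mu^{(1)},\mu^{(1)}) + \text{(cross terms)},
\end{align*}
with $\mu^{(2)}(0,\alpha)=0$, where the inhomogeneity is built from $B_{29}(\mu^{(1)},\mu^{(1)})$ together with $B_{29}$ of $\mu^{(1)}$ against $(\alpha-\hat\alpha)\mu^{(2)}$ and similar cross terms; the coefficient of the linear-in-$\mu^{(2)}$ part is $\hat\eta$ perturbed by the small quantities $\mu^{(0)}$ (size $O(M\varepsilon)$ by Lemma \ref{munu0terms}) and $(\alpha-\hat\alpha)\mu^{(1)}$ (size $O(10^{-30})$ by the hypothesis on $|\alpha-\hat\alpha|$ and Lemma \ref{mu1est}, which controls $\mu^{(1)}$ via $\mu_{\mathrm{IC}}$). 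The dominant inhomogeneous term is $B_{29}(\mu^{(1)},\mu^{(1)})$, which is $O(1)$ in $C^0$ since $\mu^{(1)}\approx\mu_{\mathrm{IC}}$ has $C^0$ norm $O(1)$.

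Next I would recast this as a fixed-point equation $\mu^{(2)} = \mathcal{S}_{29}\bigl(F^{(2)} + (\text{terms linear in }\mu^{(2)})\bigr)$ using the solution operator $\mathcal{S}_{29}$ of Proposition \ref{GreenEstimate}, exactly as in the proofs of Lemmas \ref{munu0terms} and \ref{mu1est}. The forcing $F^{(2)}$ satisfies $\|F^{(2)}\|_{C^0}\lesssim 30\|\mu^{(1)}\|_{C^0}^2 \lesssim 10^{11}$ (using $\|\mu^{(1)}\|_{C^0}\le\|\mu_{\mathrm{IC}}\|_{C^0}+10^{10}M^2\varepsilon\lesssim 10^6$ from Lemma \ref{mu1est} and Corollary \ref{linearIC}), so Proposition \ref{GreenEstimate}(a) gives a $C^0$ bound of order $10^{11}M$ on the first Picard iterate; the terms linear in $\mu^{(2)}$ carry a prefactor $\lesssim M\varepsilon + 10^{-30}$, which is $\ll 1$, so the Schauder fixed-point theorem (Theorem \ref{Schauder}) applies with $\|x_0\|\sim 10^{11}M$, $\|L\|\ll 1$, yielding $\|\mu^{(2)}\|_{C^0}\le 10^{13}M$ after absorbing constants. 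Bootstrapping to $C^1$ and $C^2$ then proceeds as before: feed the $C^0$ bound back in, use Proposition \ref{GreenEstimate}(b) (which costs a factor $10^5$ from $C^0\to C^1$ and $10^{13}$ from $C^1\to C^2$), Lemma \ref{BE} to control the $C^k$ norms of the $B_{29}$-terms in terms of $C^k$ norms of $\mu^{(0)},\mu^{(1)},\mu^{(2)}$ and $\hat\eta$ (Lemmas \ref{hatcknorms}, \ref{munu0terms}, \ref{mu1est}), and the smallness hypotheses to discard the self-interaction corrections. The identical argument with $\mathcal{S}_{92}$, $B_{92}$, $\hat\zeta$, $\nu^{(i)}$ handles $\nu^{(2)}$.

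The main obstacle is purely bookkeeping: tracking the powers of $10$ and of $M$ through the $C^0\to C^1\to C^2$ bootstrap while keeping all the cross terms (there are several, since $\mu^{(2)}$ itself appears inside the argument of $B_{29}$ at orders $(\alpha-\hat\alpha)$ and $(\alpha-\hat\alpha)^2$, producing an implicit equation) and verifying that each can be absorbed into the stated constants $c_0=13$, $c_1=18$, $c_2=40$, which are deliberately generous. The only genuinely delicate point is confirming that the hypothesis $|\alpha-\hat\alpha|+|\omega-\hat\omega|\le 10^{-30}/M$ is strong enough to make the $\mu^{(2)}$-dependent coefficient in the linear part of the equation have operator norm (after composition with $\mathcal{S}_{29}$) strictly less than $1$, so that Schauder applies — but since $\mathcal{S}_{29}$ is bounded by $M$, $B_{29}$ costs a factor $60$, and $(\alpha-\hat\alpha)^2\|\mu^{(2)}\|$ is at worst $(10^{-30}/M)^2\cdot 10^{13}M$, this is comfortably satisfied. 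No new ideas beyond those already deployed for Lemmas \ref{munu0terms} and \ref{mu1est} are needed.
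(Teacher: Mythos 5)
Your proposal is correct and follows essentially the same route as the paper: substitute the expansion into \eqref{mueq}, obtain the singular linear ODE for $\mu^{(2)}$ with forcing dominated by $B_{29}(\mu^{(1)},\mu^{(1)})$ and small $\mu^{(2)}$-dependent corrections, recast via $\mathcal{S}_{29}$ as a fixed-point equation, apply Schauder (Theorem \ref{Schauder}) using Proposition \ref{GreenEstimate}(a), Lemma \ref{BE}, and the bounds on $\mu^{(0)},\mu^{(1)}$ from Lemmas \ref{munu0terms}, \ref{mu1est} and Corollary \ref{linearIC}, then bootstrap to $C^1$ and $C^2$ via Proposition \ref{GreenEstimate}(b). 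Your ballpark $\|x_0\|\sim 10^{11}M$ is slightly looser than the paper's $\sqrt{1.5}\cdot 10^{12}M$, but the conclusion $\|\mu^{(2)}\|_{C^0}\le 10^{13}M$ is the same, and the remaining bookkeeping you flag is exactly what the paper carries out.
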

\begin{proof}
    These terms solve the equations 
    \begin{align*}
(\mu^{(2)})'(t)&=\frac{L_{29} \mu^{(2)}(t)}{t}+2B_{29}(\hat{\eta}(t)+\mu^{(0)}(t)+(\alpha-\hat{\alpha})\mu^{(1)}(t),\mu^{(2)}(t))\\
&+(\alpha-\hat{\alpha})^2B_{29}(\mu^{(2)}(t),\nu^{(2)}(t))+B_{29}(\mu^{(1)}(t),\mu^{(1)}(t))\\
(\nu^{(2)})'(t)&=\frac{L_{92} \nu^{(2)}(t)}{t}+2B_{92}(\hat{\zeta}(t)+\nu^{(0)}(t)+(\omega-\hat{\omega})\nu^{(1)}(t),\nu^{(2)}(t))\\
&+(\omega-\hat{\omega})^2B_{92}(\nu^{(2)}(t),\nu^{(2)}(t))+B_{92}(\nu^{(1)}(t),\nu^{(1)}(t)),
    \end{align*}
    and $\mu^{(2)}(0,\alpha)=0=\nu^{(2)}(0,\omega)$. 
   In fixed-point form, this is
   \begin{align*}
    \mu^{(2)}&=\mathcal{S}_{29}\left(2B_{29}(\mu^{(0)}+(\alpha-\hat{\alpha})\mu^{(1)},\mu^{(2)})+(\alpha-\hat{\alpha})^2 B_{29}(\mu^{(2)},\mu^{(2)})+B_{29}(\mu^{(1)},\mu^{(1)})\right),\\
    \nu^{(2)}&=\mathcal{S}_{92}\left(2B_{29}(\nu^{(0)}+(\omega-\hat{\omega})\nu^{(1)},\nu^{(2)})+(\omega-\hat{\omega})^2 B_{92}(\nu^{(2)},\nu^{(2)})+B_{92}(\nu^{(1)},\nu^{(1)})\right).
   \end{align*}
   Using Proposition \ref{GreenEstimate} (a) and Corollary \ref{linearIC}, alongside Lemmas \ref{linearhat2ndd}, \ref{BE}, \ref{munu0terms} and \ref{mu1est}, the $C^0$ norm of the right hand of the first side is bounded by 
   \begin{align*}
       &\sqrt{1.5}M\left(30\cdot \|\mu^{(1)}\|_{C^0}^2+60 (\|\mu^{(0)}\|_{C^0}+\left|\alpha-\hat{\alpha}\right| \|\mu^{(1)}\|_{C^0})\|\mu^{(2)}\|_{C^0}+30(\alpha-\hat{\alpha})^2 \|\mu^{(2)}\|_{C^0}\right)\\
       &\le \sqrt{1.5}M\left(30\cdot (10^{10}M^2\varepsilon+10^{5}+10^{14}M\varepsilon)^2+60 (10 M\varepsilon+\left|\alpha-\hat{\alpha}\right|(10^{10}M^2\varepsilon+10^{5}+10^{14}M\varepsilon))\|\mu^{(2)}\|_{C^0}\right)\\
       &+\sqrt{1.5} M\left(30(\alpha-\hat{\alpha})^2\|\mu^{(2)}\|_{C^0}^2\right)\\
       &\le \sqrt{1.5}M \left(10^{12}+(600M\varepsilon+ 10^6 \left|\alpha-\hat{\alpha}\right|)\|\mu^{(2)}\|_{C^0}+30 (\alpha-\hat{\alpha})^2 \|\mu^{(2)}\|_{C^0}^2\right).
   \end{align*}
   Therefore, if $\left|\alpha-\hat{\alpha}\right|\le \frac{10^{-30}}{M}$, then the Schauder fixed-point Theorem implies that there is a solution satisfying the estimate $\|\mu^{(2)}\|_{C^0}\le 10^{13}M$. The argument for $\|\nu^{(2)}\|_{C^0}$ is identical. We can then estimate the $C^1$ norm:
   \begin{align*}
       \|\mu^{(2)}\|_{C^1}&\le 10^5 M \left(30\cdot \|\mu^{(1)}\|_{C^0}^2+60 (\|\mu^{(0)}\|_{C^0}+\left|\alpha-\hat{\alpha}\right| \|\mu^{(1)}\|_{C^0})\|\mu^{(2)}\|_{C^0}+30(\alpha-\hat{\alpha})^2 \|\mu^{(2)}\|_{C^0}\right)\\
       &\le 10^{5}M\left(10^{12}+(600M\varepsilon+ 10^6 \left|\alpha-\hat{\alpha}\right|)\|\mu^{(2)}\|_{C^0}+30 (\alpha-\hat{\alpha})^2 \|\mu^{(2)}\|_{C^0}^2\right)\\
       &\le 10^{5}M\left(10^{12}+(600M\varepsilon+ 10^6 \left|\alpha-\hat{\alpha}\right|)10^{13}M+30 (\alpha-\hat{\alpha})^2 10^{26}M^2\right)\\
       &\le 10^{18}M,
   \end{align*}
   and similarly for $\|\nu^{(2)}\|_{C^1}$. Finally, for the $C^2$ norms, we use Proposition, as well as Lemmas \ref{linearhat2ndd}, \ref{BE},  \ref{GreenEstimate}, \ref{mu1est}, \ref{munu0terms}, Corollary \ref{linearIC} and our previous $\mu^{(2)}$ estimates in this same proof to find 
   \begin{align*}
       \|\mu^{(2)}\|_{C^2}&=\|\mathcal{S}_{29}\left(2B_{29}(\mu^{(0)}+(\alpha-\hat{\alpha})\mu^{(1)},\mu^{(2)})+(\alpha-\hat{\alpha})^2 B_{29}(\mu^{(2)},\mu^{(2)})+B_{29}(\mu^{(1)},\mu^{(1)})\right)\|_{C^2} \\
       &\le 10^{13}M\|2B_{29}(\mu^{(0)}+(\alpha-\hat{\alpha})\mu^{(1)},\mu^{(2)})+(\alpha-\hat{\alpha})^2 B_{29}(\mu^{(2)},\mu^{(2)})+B_{29}(\mu^{(1)},\mu^{(1)})\|_{C^1}\\
       &\le 10^{13}M\|2B_{29}(\mu^{(0)}+(\alpha-\hat{\alpha})\mu^{(1)},\mu^{(2)})+(\alpha-\hat{\alpha})^2 B_{29}(\mu^{(2)},\mu^{(2)})+B_{29}(\mu^{(1)},\mu^{(1)})\|_{C^0}\\
       &+10^{13}M\|2B_{29}((\mu^{(0)})'+(\alpha-\hat{\alpha})(\mu^{(1)})',\mu^{(2)})+2B_{29}(\mu^{(0)}+(\alpha-\hat{\alpha})\mu^{(1)},(\mu^{(2)})')\|_{C^0}\\
       &+
       10^{13}M\|2(\alpha-\hat{\alpha})^2 B_{29}(\mu^{(2)},(\mu^{(2)})')+2B_{29}(\mu^{(1)},(\mu^{(1)}))'\|_{C^0}\\
       &\le 10^{13}M \left(60\cdot \|\mu^{(2)}\|_{C^1} (\|
       \mu^{(0)}\|_{C^1}+\left|\alpha-\hat{\alpha}\right| \|
       \mu^{(1)}\|_{C^1})+60 (\alpha-\hat{\alpha})^2\|\mu^{(2)}\|^2_{C^1}+60 \|\mu^{(1)}\|_{C^1}^2\right)\\
       &\le 10^{13}M\left(10^{20}M \cdot (10^{6}M\varepsilon+\left|\alpha-\hat{\alpha}\right|10^{6})+60(\alpha-\hat{\alpha})^2\cdot 10^{18}M+60\cdot 10^{12}\right),
   \end{align*}
   and the result for $\|\mu^{(2)}\|_{C^2}$ follows, The $\nu^{(2)}$ function is treated similarly. 
\end{proof}

Having now estimated the coefficients of \eqref{munualomex}, we apply this information to obtain accurate information about what $\mu(t,\alpha)$ looks like for $t$ close to $\hat{t}_{\hat{\alpha}}$, and $\alpha$ close to $\hat{\alpha}$. Similarly, we are interested in how $\nu(t,\omega)$ behaves for $t$ close to $\hat{t}_{\hat{\omega}}$ and $\omega$ close to $\hat{\omega}$. 
\begin{cor}\label{mouat}
    We have 
    \begin{align}\label{mubigsplit}
    \begin{split}
        \mu(\alpha,t)&=\mu^{(0,0)}+\mu^{(1,0)}(\alpha-\hat{\alpha})+\mu^{(0,1)}(t-\hat{t}_{\hat{\alpha}})+\mu^{(1,1)}(\alpha-\hat{\alpha})(t-\hat{t}_{\hat{\alpha}})\\
        &+(t-\hat{t}_{\hat{\alpha}})^2\left(\mu^{(0,2)}(t)+(\alpha-\hat{\alpha})\mu^{(1,2)}(t)\right)+(\alpha-\hat{\alpha})^2\mu^{(2,0)}(\alpha,t),
        \end{split}
    \end{align}
    and 
    \begin{align}\label{nubigsplit}
    \begin{split}
        \nu(\omega,t)&=\nu^{(0,0)}+\nu^{(1,0)}(\omega-\hat{\omega})+\nu^{(0,1)}(t-\hat{t}_{\hat{\omega}})+\nu^{(1,1)}(\omega-\hat{\omega})(t-\hat{t}_{\hat{\omega}})\\
        &+(t-\hat{t}_{\hat{\omega}})^2\left(\nu^{(0,2)}(t)+(\omega-\hat{\omega})\nu^{(1,2)}(t)\right)+(\omega-\hat{\omega})^2\nu^{(2,0)}(\omega,t),
        \end{split}
    \end{align}
    where the constant vectors satisfy 
    \begin{align*}
        |\mu^{(0,0)}|\le 10 M\varepsilon, \qquad |\mu^{(0,1)}|\le 10^{6}M\varepsilon, \qquad |\mu^{(1,0)}-\hat{\mu}_{\mathrm{IC}}(\hat{t}_{\hat{\alpha}})|\le 10^{11}M^2\varepsilon, \qquad |\mu^{(1,1)}-\hat{\mu}'_{\mathrm{IC}}(\hat{t}_{\hat{\alpha}})|\le 10^{14}M^2\varepsilon,
    \end{align*}
    and similarly for the corresponding $\nu$ constants, while the time-varying functions satisfy 
    \begin{align*}
        \|\mu^{(0,2)}\|_{C^0}\le 10^{20}M\varepsilon, \qquad \|\mu^{(1,2)}\|_{C^0}\le 10^{6}, \qquad \|\nu^{(0,2)}\|_{C^0}\le 10^{20}M\varepsilon, \qquad \|\nu^{(1,2)}\|_{C^0}\le 10^{6}
    \end{align*}
    and finally, for $\left|\alpha-\hat{\alpha}\right|+\left|\omega-\hat{\omega}\right|\le \frac{10^{-30}}{M}$, the time-varying functions $\mu^{(2,0)}$ and $\nu^{(2,0)}$ satisfy
    \begin{align*}
        \|\mu^{(2,0)}\|_{C^2}\le 10^{40}M, \qquad \|\nu^{(2,0)}\|_{C^2}\le 10^{40}M.
    \end{align*}
\end{cor}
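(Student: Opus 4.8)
The plan is to feed the $\alpha$-expansion \eqref{munualomex}---whose coefficient functions $\mu^{(0)},\mu^{(1)},\mu^{(2)}$ were controlled in Lemmas \ref{munu0terms}, \ref{mu1est} and \ref{munusterm}---into a one-variable Taylor expansion in $t$ about $\hat{t}_{\hat{\alpha}}$ of the two lowest coefficients $\mu^{(0)}(t)$ and $\mu^{(1)}(t)$, and then regroup by monomials in $(\alpha-\hat{\alpha})$ and $(t-\hat{t}_{\hat{\alpha}})$. The $\nu$-identity \eqref{nubigsplit} and its bounds are the same computation with $(d_1,d_2,\alpha,\hat{t}_{\hat{\alpha}})$ replaced throughout by $(d_2,d_1,\omega,\hat{t}_{\hat{\omega}})$ and the $\nu$-versions of the cited lemmas, so only the $\mu$ case needs to be written out.

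First I would apply Taylor's theorem with integral remainder to $\mu^{(0)}$, which Lemma \ref{munu0terms} places in $C^2$: writing $\mu^{(0)}(t)=\mu^{(0,0)}+\mu^{(0,1)}(t-\hat{t}_{\hat{\alpha}})+(t-\hat{t}_{\hat{\alpha}})^2\mu^{(0,2)}(t)$ with $\mu^{(0,0)}=\mu^{(0)}(\hat{t}_{\hat{\alpha}})$, $\mu^{(0,1)}=(\mu^{(0)})'(\hat{t}_{\hat{\alpha}})$ and $\mu^{(0,2)}(t)=\int_0^1(1-s)(\mu^{(0)})''(\hat{t}_{\hat{\alpha}}+s(t-\hat{t}_{\hat{\alpha}}))\,\dd s$, the $C^0,C^1,C^2$ bounds of Lemma \ref{munu0terms} give $|\mu^{(0,0)}|\le 10M\varepsilon$, $|\mu^{(0,1)}|\le 10^6M\varepsilon$ and $\|\mu^{(0,2)}\|_{C^0}\le\tfrac12\|\mu^{(0)}\|_{C^2}\le 10^{20}M\varepsilon$. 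The identical expansion of $\mu^{(1)}$ produces $\mu^{(1,0)}=\mu^{(1)}(\hat{t}_{\hat{\alpha}})$, $\mu^{(1,1)}=(\mu^{(1)})'(\hat{t}_{\hat{\alpha}})$ and a remainder $\mu^{(1,2)}$ with $\|\mu^{(1,2)}\|_{C^0}\le\tfrac12\|\mu^{(1)}\|_{C^2}$; multiplying that expansion by $(\alpha-\hat{\alpha})$, adding it to the one for $\mu^{(0)}$, and setting $\mu^{(2,0)}(\alpha,t):=\mu^{(2)}(t,\alpha)$ gives exactly \eqref{mubigsplit}, with $\|\mu^{(2,0)}\|_{C^2}\le 10^{40}M$ being the $k=2$ bound of Lemma \ref{munusterm}---which is the only reason the hypothesis $|\alpha-\hat{\alpha}|+|\omega-\hat{\omega}|\le 10^{-30}/M$ is needed, and only for that last term.

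The one place that needs care is the coefficients $\mu^{(1,0)},\mu^{(1,1)}$ and the $C^2$ control of $\mu^{(1)}$ behind $\mu^{(1,2)}$, because Lemma \ref{mu1est} only compares $\mu^{(1)}$ with the exact linear solution $\mu_{\mathrm{IC}}$, whereas the target bounds are stated relative to the numerically accessible $\hat{\mu}_{\mathrm{IC}}$. The fix is to bridge the two via Corollary \ref{linearIC} and the triangle inequality: $|\mu^{(1,0)}-\hat{\mu}_{\mathrm{IC}}(\hat{t}_{\hat{\alpha}})|\le\|\mu^{(1)}-\mu_{\mathrm{IC}}\|_{C^0}+\|\mu_{\mathrm{IC}}-\hat{\mu}_{\mathrm{IC}}\|_{C^0}\le 10^{10}M^2\varepsilon+10^{14}M\varepsilon$, which is $\le 10^{11}M^2\varepsilon$ because $M=e^{250}$ is large enough that $10^{14}M\varepsilon\le 10^{11}M^2\varepsilon$; likewise $|\mu^{(1,1)}-\hat{\mu}'_{\mathrm{IC}}(\hat{t}_{\hat{\alpha}})|\le\|\mu^{(1)}-\mu_{\mathrm{IC}}\|_{C^1}+\|\mu_{\mathrm{IC}}-\hat{\mu}_{\mathrm{IC}}\|_{C^1}\le 10^{14}M^2\varepsilon$, and $\|\mu^{(1)}\|_{C^2}\le\|\mu^{(1)}-\mu_{\mathrm{IC}}\|_{C^2}+\|\mu_{\mathrm{IC}}-\hat{\mu}_{\mathrm{IC}}\|_{C^2}+\|\hat{\mu}_{\mathrm{IC}}\|_{C^2}\le 2\times 10^6$ once $M^2\varepsilon$ is small enough (using Lemma \ref{linearhat2ndd}), so $\|\mu^{(1,2)}\|_{C^0}\le 10^6$. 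I do not expect a genuine analytic obstacle here: no new ODE, Gr\"onwall, or fixed-point input is required beyond Lemmas \ref{munu0terms}--\ref{munusterm} and Corollary \ref{linearIC}, and the whole content is the bookkeeping that converts those $C^k$ estimates into the stated powers of $10$ and $M$ after the regrouping, together with the repeated use of the enormity of $M$ to absorb $M\varepsilon$-order errors into $M^2\varepsilon$-order targets.
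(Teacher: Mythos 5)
Your proposal is correct and follows the same route as the paper: read off the bilinear expansion by Taylor-expanding $\mu^{(0)}$ and $\mu^{(1)}$ about $\hat{t}_{\hat{\alpha}}$ and setting $\mu^{(2,0)}=\mu^{(2)}$, then invoke Lemmas \ref{munu0terms}, \ref{mu1est}, \ref{munusterm} together with Corollary \ref{linearIC} and Lemma \ref{linearhat2ndd} for the $C^k$ bounds. The paper's own proof simply records the definitions of the coefficients and leaves the estimates implicit, so your explicit triangle-inequality bookkeeping (bridging $\mu^{(1)}$ to $\mu_{\mathrm{IC}}$ to $\hat{\mu}_{\mathrm{IC}}$, and absorbing $M\varepsilon$-order errors into $M^2\varepsilon$-order targets using the enormity of $M$) is exactly what the paper tacitly relies on.
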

\begin{proof}The result follows by recalling the expansion from  \eqref{munualomex}:
\begin{align*}
\mu(\alpha,t)&=\mu^{(0)}(t)+(\alpha-\hat{\alpha})\mu^{(1)}(t)+(\alpha-\hat{\alpha})^2\mu^{(2)}(t,\alpha),\\
        \nu(\omega,t)&=\nu^{(0)}(t)+(\omega-\hat{\omega})\nu^{(1)}(t)+(\omega-\hat{\omega})^2\nu^{(2)}(t,\omega),
    \end{align*} 
   and then reading off 
    \begin{align*}
        \mu^{(0,0)}=\mu^{(0)}(\hat{t}_{\hat{\alpha}}), \qquad \mu^{(1,0)}=\mu^{(1)}(\hat{t}_{\hat{\alpha}}), \qquad \mu^{(0,1)}=(\mu^{(0)})'(\hat{t}_{\hat{\alpha}}), \qquad \mu^{(1,1)}=(\mu^{(1)})'(\hat{t}_{\hat{\alpha}}),\\
        \mu^{(0,2)}(t)=\frac{\mu^{(0)}(t)-\mu^{(0)}(\hat{t}_{\hat{\alpha}})-(t-\hat{t}_{\hat{\alpha}})(\mu^{(0)})'(\hat{t}_{\hat{\alpha}})}{(t-\hat{t}_{\hat{\alpha}})^2},\\
        \mu^{(1,2)}(t)=\frac{\mu^{(1)}(t)-\mu^{(1)}(\hat{t}_{\hat{\alpha}})-(t-\hat{t}_{\hat{\alpha}})(\mu^{(1)})'(\hat{t}_{\hat{\alpha}})}{(t-\hat{t}_{\hat{\alpha}})^2},\\
        \mu^{(2,0)}(\alpha,t)=\mu^{(2)}(\alpha,t), 
    \end{align*}
    and similarly for the $\nu$ coefficients. 
\end{proof}
\subsection{Estimates on stopping time}\label{stoppingestimates}
One important consequence of the $\mu$ and $\nu$ estimates in Section \ref{nonlinearestimates} is that we can now accurately estimate the stopping times $t_{\alpha}$ and $t_{\omega}$. Recall that these are defined according to  
\begin{align}\label{stoppingtimedefinition}
    0=\eta_3(\alpha,t_{\alpha})+\frac{9}{t_{\alpha}}, \qquad 0=\zeta_3(\omega,t_{\omega})+\frac{2}{t_{\omega}}.
\end{align}
We estimate using $\eta(\alpha,t)=\hat{\eta}(t)+\mu(\alpha,t_{\alpha})$ and $\zeta(\omega,t)=\hat{\zeta}(t)+\nu(\omega,t_{\omega})$.
Lemma \ref{hatcknorms} implies that there are smooth functions $\hat{E}_{3,\text{start}}(t),\hat{E}_{3,\text{end}}(t)$, both bounded by $10^{5}$ (in $C^0$) so that 
\begin{subequations}
\label{etahat2d}
\begin{align}
    \hat{\eta}(t)&=\hat{\eta}(\hat{t}_{\hat{\alpha}})+(\hat{\eta})'(\hat{t}_{\hat{\alpha}})(t-\hat{t}_{\hat{\alpha}})+(t-\hat{t}_{\hat{\alpha}})^2 \hat{E}_{3,\text{start}}(t), \\ \hat{\zeta}(t)&=\hat{\zeta}(\hat{t}_{\hat{\omega}})+(\hat{\zeta})'(\hat{t}_{\hat{\omega}})(t-\hat{t}_{\hat{\omega}})+(t-\hat{t}_{\hat{\omega}})^2 \hat{E}_{3,\text{end}}(t).
\end{align}
\end{subequations}
On the other hand, we will use the lemmas of Section \ref{nonlinearestimates} as well as Corollary \ref{mouat} to estimate the $\mu$ and $\nu$ terms. 

Our first step is to estimate the true stopping times at $\hat{\alpha}$ and $\hat{\omega}$ (denoted $t_{\hat{\alpha}}$ and $t_{\hat{\omega}}$) in terms of our approximate stopping times $\hat{t}_{\hat{\alpha}}$ and $\hat{t}_{\hat{\omega}}$. 
\begin{cor}\label{stoppingzero}
If $M\varepsilon\le 10^{-50}$, then the true stopping times $t_{\hat{\alpha}}$ and $t_{\hat{\omega}}$ satisfy $t_{\hat{\alpha}}=\hat{t}_{\hat{\alpha}}+\rho^{(0)}$ and $t_{\hat{\omega}}=\hat{t}_{\hat{\omega}}+\sigma^{(0)}$, where $|\rho^{(0)}|,|\sigma^{(0)}|\le 10^{3}M\varepsilon$.
\end{cor}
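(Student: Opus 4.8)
The plan is to turn each identity in \eqref{stoppingtimedefinition}, evaluated at $\alpha=\hat\alpha$ (resp.\ $\omega=\hat\omega$), into a scalar equation $G(t)=0$, show that $\hat t_{\hat\alpha}$ (resp.\ $\hat t_{\hat\omega}$) is an approximate root with residual $O(M\varepsilon)$, and then run a quantitative intermediate-value argument using that $G'$ is bounded away from zero near that approximate root. Setting $\alpha=\hat\alpha$ in the expansion \eqref{munualomex} kills its first- and second-order terms, so $\eta(\hat\alpha,t)=\hat\eta(t)+\mu^{(0)}(t)$ on $[0,\hat t_{\hat\alpha}+10^{-3}]$ (the domain on which $\mu^{(0)}$ was built in Lemma \ref{munu0terms}, whose smallness hypothesis on $\varepsilon$ we take to be in force throughout), and the defining equation for $t_{\hat\alpha}$ becomes $G(t):=\frac{9}{t}+\hat\eta_3(t)+\mu^{(0)}_3(t)=0$. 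The stopping-time error bound of Assumption \ref{aposerror} gives $|\frac{9}{\hat t_{\hat\alpha}}+\hat\eta_3(\hat t_{\hat\alpha})|\le\varepsilon$, and Lemma \ref{munu0terms} gives $\|\mu^{(0)}\|_{C^0}\le 10M\varepsilon$, so $|G(\hat t_{\hat\alpha})|\le 11M\varepsilon$.

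Next I would bound $G'$ away from zero. We have $G'(t)=-\frac{9}{t^2}+\hat\eta_3'(t)+(\mu^{(0)}_3)'(t)$; Assumption \ref{aposerror} (through \eqref{eq:ValEsts} and Table \ref{nonlinearstop}) gives $\hat t_{\hat\alpha}\approx 1.49$ and $\hat\eta_3'(\hat t_{\hat\alpha})\approx -6.96$, while $\|\mu^{(0)}\|_{C^1}\le 10^6 M\varepsilon$ is negligible since $M\varepsilon\le 10^{-50}$, so $G'(\hat t_{\hat\alpha})\le -10$. To propagate this to $I:=[\hat t_{\hat\alpha}-r,\hat t_{\hat\alpha}+r]$ with $r:=10^3 M\varepsilon$ (here $r<10^{-3}$, so $I\subset[0,\hat t_{\hat\alpha}+10^{-3}]$), I bound $G''$ by $18/t^3$ (at most $\approx 6$ on $I$), $\|\hat\eta\|_{C^2}\le 212.5$ from Lemma \ref{hatcknorms}, and $\|\mu^{(0)}\|_{C^2}\le 10^{20}M\varepsilon$ from Lemma \ref{munu0terms}, so $\|G''\|_{C^0(I)}\le 300$ and hence $G'\le -9$ on $I$. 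Since $9r=9\cdot 10^3 M\varepsilon>11M\varepsilon\ge|G(\hat t_{\hat\alpha})|$, strict monotonicity forces $G(\hat t_{\hat\alpha}-r)>0>G(\hat t_{\hat\alpha}+r)$, so $G$ has a unique zero $t^\ast\in I$. By Proposition \ref{IVPf} the stopping time $t_{\hat\alpha}$ exists and is the unique positive time at which the stopping condition holds, so $t^\ast=t_{\hat\alpha}$; integrating $G'$ from $\hat t_{\hat\alpha}$ to $t_{\hat\alpha}$ then yields $|\rho^{(0)}|=|t_{\hat\alpha}-\hat t_{\hat\alpha}|\le|G(\hat t_{\hat\alpha})|/9<2M\varepsilon<10^3 M\varepsilon$. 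The $\omega$/$\zeta$ statement is proved identically with $G_\zeta(t):=\frac{2}{t}+\hat\zeta_3(t)+\nu^{(0)}_3(t)$, using $\hat t_{\hat\omega}\approx 1.16$, $\hat\zeta_3'(\hat t_{\hat\omega})\approx -9.52$, and $\|\hat\zeta\|_{C^2}\le 5895.2$ from Lemma \ref{hatcknorms} in place of the $\eta$ data (so $\|G_\zeta''\|_{C^0}\le 6000$ on the analogous interval), giving $|\sigma^{(0)}|<10^3 M\varepsilon$.

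The only delicate point, more a matter of care than a real obstacle, is verifying that $G'$ (and $G_\zeta'$) stays quantitatively bounded away from zero across the whole of $I$: this rests on the purely numerical value $G'(\hat t_{\hat\alpha})\approx -11$, which is legitimate here only because it is encoded in Assumption \ref{aposerror}, combined with the $C^2$ control of Lemmas \ref{hatcknorms} and \ref{munu0terms} to bound the oscillation of $G'$ over $I$. It is worth recording explicitly that $\mu^{(0)}$, and hence $\eta(\hat\alpha,\cdot)$, really is defined on all of $I$ — precisely the content of Lemma \ref{munu0terms} — and that existence and uniqueness of $t_{\hat\alpha}$ itself come for free from Proposition \ref{IVPf}, so only the closeness estimate is at stake. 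Everything else is routine, with the generous constant $10^3$ absorbing all the slack (the argument in fact delivers $|\rho^{(0)}|,|\sigma^{(0)}|\le 2M\varepsilon$).
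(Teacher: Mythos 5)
Your proof is correct and rests on the same three ingredients the paper uses: the second-order Taylor expansion of the stopping condition around $\hat{t}_{\hat{\alpha}}$, the bound $O(M\varepsilon)$ on the residual at $\hat{t}_{\hat{\alpha}}$ (combining the stopping-time error of Assumption~\ref{aposerror} with the $C^0$ bound on $\mu^{(0)}$ from Lemma~\ref{munu0terms}), and the observation that the linear coefficient $(\hat{\eta}_3)'(\hat{t}_{\hat{\alpha}}) - 9/\hat{t}_{\hat{\alpha}}^2 \approx -11$ is bounded away from zero. The only real difference is how the last step is closed: the paper feeds the expansion into its custom Schauder fixed point theorem (Theorem~\ref{Schauder}), treating the quadratic remainder as the ``small'' compact perturbation, whereas you make the monotonicity explicit with a quantitative intermediate value argument, using $\|G''\|_{C^0(I)}$ to propagate the slope bound over the small interval $I$. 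For this one-dimensional problem the two arguments are essentially interchangeable, and your version has the virtue of transparency (and in passing yields the sharper bound $|\rho^{(0)}|\le 2M\varepsilon$, which the paper implicitly also gets but rounds up to $10^3 M\varepsilon$). Two minor remarks: (i) when you invoke Proposition~\ref{IVPf} to identify the zero $t^\ast$ of $G$ with $t_{\hat{\alpha}}$, it is worth noting that uniqueness is backed by the strict monotone decrease of $Z(t)=\frac{9}{t}+\eta_3(\hat{\alpha},t)$, which the paper records in Section~\ref{reformulate}; and (ii) the paper's own proof actually only invokes the hypothesis $M\varepsilon\le 10^{-30}$ at the final Schauder step, so your uniform use of $M\varepsilon\le 10^{-50}$ is safe but slightly more conservative than necessary at that point.
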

\begin{proof}
   
    To find $\rho^{(0)}$, we set $\alpha=\hat{\alpha}$ in \eqref{stoppingtimedefinition}, and use \eqref{munualomex},\eqref{mubigsplit}, \eqref{etahat2d}, so that  
    \begin{align*}
        0&=\hat{\eta_3}(\hat{t}_{\hat{\alpha}}+\rho^{(0)})+\mu^{(0)}(\hat{t}_{\hat{\alpha}}+\rho^{(0)})+\frac{9}{\hat{t}_{\hat{\alpha}}+\rho^{(0)}}\\
        &=\left(\hat{\eta}_3(\hat{t}_{\hat{\alpha}})+\mu^{(0,0)}_3+\frac{9}{\hat{t}_{\hat{\alpha}}}\right)
+\rho^{(0)}\left((\hat{\eta_3})'(\hat{t}_{\hat{\alpha}})+\mu^{(0,1)}_3-\frac{9}{(\hat{t}_{\hat{\alpha}})^2}\right)\\
&+(\rho^{(0)})^2 \left((\hat{E}_{3,\text{start}})_3(\hat{t}_{\hat{\alpha}}+\rho^{(0)})+\mu^{(0,2)}_3(\hat{t}_{\hat{\alpha}}+\rho^{(0)})+\frac{9}{(\hat{t}_{\hat{\alpha}})^2(\hat{t}_{\hat{\alpha}}+\rho^{(0)})}\right). 
\end{align*}
Observe that by Assumption \ref{aposerror} and Corollary \ref{mouat}:
\begin{itemize}
    \item $\left|\hat{\eta}_3(\hat{t}_{\hat{\alpha}})+\mu^{(0,0)}_3+\frac{9}{\hat{t}_{\hat{\alpha}}}\right|\le 10M\varepsilon+\varepsilon$;
    \item $\left((\hat{\eta_3})'(\hat{t}_{\hat{\alpha}})+\mu_3^{(0,1)}-\frac{9}{(\hat{t}_{\hat{\alpha}})^2}\right)=-6.960761-\frac{9}{(1.4923)^2}$ up to an error of $10^{-4}+10^6 M\varepsilon$; and 
    \item $\left|(\hat{E}_{3,\text{start}})_3(\hat{t}_{\hat{\alpha}}+\rho^{(0)})+\mu_3^{(0,2)}(\hat{t}_{\hat{\alpha}}+\rho^{(0)})+\frac{9}{(\hat{t}_{\hat{\alpha}})^2(\hat{t}_{\hat{\alpha}}+\rho^{(0)})}\right|\le 10^5+10^{20}M\varepsilon+100$, provided $\left|\rho^{(0)}\right|\le \frac{1}{10}$.
\end{itemize}
Therefore, if $M\varepsilon\le 10^{-30}$, the Schauder fixed point Theorem gives that $|\rho^{(0)}|\le 10^{3} M\varepsilon$. The calculation for $|\sigma^{(0)}|$ is similar. 
\end{proof}
Now we can estimate what happens for stoppings times $t_{\alpha},t_{\omega}$ for $(\alpha,\omega)$ close to, but not necessarily identical to, $(\hat{\alpha},\hat{\omega})$. To this end, we find it convenient to expand the $\mu^{(0)},\mu^{(1)},\mu^{(2)}$ and $\nu^{(0)},\nu^{(1)},\nu^{(2)}$ time-varying functions of  
\eqref{munualomex} around $t_{\hat{\alpha}}$, rather than $\hat{t}_{\hat{\alpha}}$: 
\begin{align}\label{muexpcst}
\begin{split}
    \mu^{(i)}(t)=\mu^{(i)}(t_{\hat{\alpha}})+(t-t_{\hat{\alpha}})(\mu^{(i)})'(t_{\hat{\alpha}})+(t-t_{\hat{\alpha}})^2 \hat{E}_{4+i,\text{start}}(t),\\
    \nu^{(i)}(t)=\nu^{(i)}(t_{\hat{\omega}})+(t-t_{\hat{\omega}})(\nu^{(i)})'(t_{\hat{\omega}})+(t-t_{\hat{\omega}})^2 \hat{E}_{4+i,\text{end}}(t),
    \end{split}
\end{align}
for $i=0,1,2$. We also write 
\begin{align}\label{etaexpcst}
\begin{split}
    \hat{\eta}(t)=\hat{\eta}(t_{\hat{\alpha}})+(t-t_{\hat{\alpha}})(\hat{\eta})'(t_{\hat{\alpha}})+(t-t_{\hat{\alpha}})^2\hat{E}_{7,\text{start}}(t),\\
    \hat{\zeta}(t)=\hat{\zeta}(t_{\hat{\omega}})+(t-t_{\hat{\omega}})(\hat{\zeta})'(t_{\hat{\omega}})+(t-t_{\hat{\omega}})^2\hat{E}_{7,\text{end}}(t).
    \end{split}
\end{align}
\begin{cor}\label{stopping}
  If $M^2\varepsilon\le 10^{-40}$, then for $\left|\alpha-\hat{\alpha}\right|+\left|\omega-\hat{\omega}\right|\le \frac{10^{-50}}{M}$, the true stopping times $t_{\omega}$ and $t_{\alpha}$ satisfy  
    \begin{align}\label{stopcorrectstart}
    \begin{split}
        t_{\alpha}-t_{\hat{\alpha}}=(\alpha-\hat{\alpha})\rho^{(1)}+(\alpha-\hat{\alpha})^2\rho^{(2)}(\alpha),\\
         t_{\omega}-t_{\hat{\omega}}=(\omega-\hat{\omega})\sigma^{(1)}+(\omega-\hat{\omega})^2\sigma^{(2)}(\omega),
         \end{split}
    \end{align} where 
    \begin{align*}
        \left|\rho^{(1)}+\frac{(\hat{\mu}_{\mathrm{IC}})_3(\hat{t}_{\hat{\alpha}})}{(\hat{\eta}_3)'(\hat{t}_{\hat{\alpha}})-\frac{9}{(t_{\hat{\alpha}})^2}}\right|\le 10^{40}M^2\varepsilon, \qquad \left|\sigma^{(1)}+\frac{(\hat{\nu}_{\mathrm{IC}})_3(\hat{t}_{\hat{\omega}})}{(\hat{\zeta}_3)'(\hat{t}_{\hat{\omega}})-\frac{2}{(\hat{t}_{\hat{\omega}})^2}}\right|\le 10^{40}M^2\varepsilon,
    \end{align*}
    and $\left|\rho^{(2)}(\alpha)\right|+\left|\sigma^{(2)}(\omega)\right|\le 10^{15}M$. 
\end{cor}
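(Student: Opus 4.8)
The plan is to solve the defining equation $0=\hat\eta_3(t_\alpha)+\mu_3(\alpha,t_\alpha)+\frac{9}{t_\alpha}$ from \eqref{stoppingtimedefinition} (using $\eta=\hat\eta+\mu$) by expanding everything about the true stopping time $t_{\hat\alpha}$ at $\alpha=\hat\alpha$, which Corollary \ref{stoppingzero} already pins down as $t_{\hat\alpha}=\hat t_{\hat\alpha}+O(M\varepsilon)$ (note that $M^2\varepsilon\le10^{-40}$ together with $M=e^{250}$ forces $M\varepsilon\le10^{-50}$, so that corollary applies). Write $\delta=\delta(\alpha)\coloneqq t_\alpha-t_{\hat\alpha}$. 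Substituting the $\mu$-expansion \eqref{munualomex}, the Taylor expansions \eqref{muexpcst}--\eqref{etaexpcst} of $\hat\eta,\mu^{(0)},\mu^{(1)}$ about $t_{\hat\alpha}$, and $\frac{9}{t_{\hat\alpha}+\delta}=\frac{9}{t_{\hat\alpha}}-\frac{9}{t_{\hat\alpha}^2}\delta+\frac{9\delta^2}{t_{\hat\alpha}^2(t_{\hat\alpha}+\delta)}$, and using that the $\delta^0(\alpha-\hat\alpha)^0$ term vanishes identically (it is the stopping-time equation at $\alpha=\hat\alpha$, which $t_{\hat\alpha}$ solves, since $\mu(\hat\alpha,\cdot)=\mu^{(0)}$), we arrive at a scalar equation of the form
\[
0 = D\,\delta + N\,(\alpha-\hat\alpha) + R(\delta,\alpha),
\]
where $D=(\hat\eta_3)'(t_{\hat\alpha})-\frac{9}{t_{\hat\alpha}^2}+(\mu^{(0)}_3)'(t_{\hat\alpha})$, $N=\mu^{(1)}_3(t_{\hat\alpha})$, and $R$ gathers everything quadratic or higher in $(\delta,\alpha-\hat\alpha)$: a $\delta(\alpha-\hat\alpha)(\mu^{(1)}_3)'(t_{\hat\alpha})$ term, a $(\alpha-\hat\alpha)^2\mu^{(2)}_3(t_{\hat\alpha}+\delta,\alpha)$ term, and $\delta^2$- and $\delta^2(\alpha-\hat\alpha)$-terms whose coefficients are the Taylor remainders $\hat E_{4,\text{start}},\hat E_{5,\text{start}},\hat E_{7,\text{start}}$ of \eqref{muexpcst}--\eqref{etaexpcst} together with $\frac{9}{t_{\hat\alpha}^2(t_{\hat\alpha}+\delta)}$.

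Next I would record the ingredient bounds: $|N|=O(1)$, $\|\mu^{(2)}\|_{C^0}\le10^{13}M$ (Lemma \ref{munusterm}), $\|\mu^{(1)}\|_{C^2}$ finite (Lemma \ref{mu1est}, Corollary \ref{linearIC}), $\|\hat\eta\|_{C^2}\le212.5$ (Lemma \ref{hatcknorms}), $\|\mu^{(0)}\|_{C^2}\le10^{20}M\varepsilon$ (Lemma \ref{munu0terms}); in particular every coefficient appearing in $R$ is bounded. The crucial point is that $D$ is bounded away from zero by a universal constant: by Assumption \ref{aposerror} (Table \ref{nonlinearstop}) and \eqref{eq:ValEsts}, $(\hat\eta_3)'(\hat t_{\hat\alpha})\approx-6.96$ and $\hat t_{\hat\alpha}\approx1.492$, so $(\hat\eta_3)'(\hat t_{\hat\alpha})-\frac{9}{\hat t_{\hat\alpha}^2}\approx-11$, and the corrections from replacing $\hat t_{\hat\alpha}$ by $t_{\hat\alpha}$ and from $(\mu^{(0)}_3)'(t_{\hat\alpha})$ are $O(M\varepsilon)$ by Corollary \ref{stoppingzero} and Lemma \ref{munu0terms}; hence $|D|\ge 10$, say. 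Rearranging to $\delta=-\frac1D\bigl(N(\alpha-\hat\alpha)+(\alpha-\hat\alpha)^2\mu^{(2)}_3(\cdot)\bigr)-(\text{small-coefficient linear term in }\delta)-(\text{bounded-coefficient quadratic term in }\delta)$ and applying Theorem \ref{Schauder} on $\{|\delta|\le s\}$ with $s\sim 10^{7}|\alpha-\hat\alpha|$: since $|\alpha-\hat\alpha|\le10^{-50}/M$ the linear and quadratic contributions are negligible next to $s$, so the hypothesis $\|x_0\|+\|L\|s+qs^2\le s$ holds, yielding a solution $\delta^\ast$ with $|\delta^\ast|\le s$; uniqueness of the stopping time (Proposition \ref{IVPf}), together with continuity of $t_\alpha$ in $\alpha$ and $t_{\hat\alpha}\mapsto 0$ at $\alpha=\hat\alpha$, identifies $\delta^\ast$ with $\delta(\alpha)$, so $|\delta(\alpha)|\le 10^{7}|\alpha-\hat\alpha|$.

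Reading off coefficients from $0=D\delta+N(\alpha-\hat\alpha)+R$ gives the claimed form $\delta(\alpha)=(\alpha-\hat\alpha)\rho^{(1)}+(\alpha-\hat\alpha)^2\rho^{(2)}(\alpha)$ with $\rho^{(1)}=-N/D$ and $\rho^{(2)}(\alpha)=-R/\bigl(D(\alpha-\hat\alpha)^2\bigr)$; inserting the crude bound $|\delta|\le10^{7}|\alpha-\hat\alpha|$ into the coefficient bounds for $R$ shows $|R|\le10^{c}M|\alpha-\hat\alpha|^2$ for an explicit $c$, whence (after dividing by $|D|\ge 10$) $|\rho^{(2)}(\alpha)|\le 10^{15}M$. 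For $\rho^{(1)}$, I compare $-N/D$ with $-(\hat\mu_{\mathrm{IC}})_3(\hat t_{\hat\alpha})\big/\bigl((\hat\eta_3)'(\hat t_{\hat\alpha})-\frac{9}{t_{\hat\alpha}^2}\bigr)$: the numerators differ by at most $\|\mu^{(1)}-\mu_{\mathrm{IC}}\|_{C^0}+\|\mu_{\mathrm{IC}}-\hat\mu_{\mathrm{IC}}\|_{C^0}+\|\hat\mu_{\mathrm{IC}}\|_{C^1}|t_{\hat\alpha}-\hat t_{\hat\alpha}|\le10^{11}M^2\varepsilon$ (Lemma \ref{mu1est}, Corollary \ref{linearIC}, Lemma \ref{linearhat2ndd}, Corollary \ref{stoppingzero}), the denominators by $\|\hat\eta\|_{C^2}|t_{\hat\alpha}-\hat t_{\hat\alpha}|+\|\mu^{(0)}\|_{C^1}\le10^{9}M\varepsilon$, and since both denominators exceed $10$ in absolute value the quotients differ by at most $10^{40}M^2\varepsilon$. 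The estimates for $\sigma^{(1)},\sigma^{(2)}$ follow identically, replacing $(\eta,\mu,\hat\eta,\hat\mu_{\mathrm{IC}},\alpha,9)$ by $(\zeta,\nu,\hat\zeta,\hat\nu_{\mathrm{IC}},\omega,2)$ and $(t_{\hat\alpha},\hat t_{\hat\alpha})$ by $(t_{\hat\omega},\hat t_{\hat\omega})$. The main obstacle I anticipate is the bookkeeping that makes $R$ genuinely $O((\alpha-\hat\alpha)^2)$: because $\delta$ enters $R$ only implicitly, one needs the preliminary Schauder step, and $s$ must be chosen small enough—relative to both $|\alpha-\hat\alpha|$ and $1/M$—that the quadratic-in-$\delta$ terms never overwhelm $N(\alpha-\hat\alpha)$; the hypotheses $M^2\varepsilon\le10^{-40}$ and $|\alpha-\hat\alpha|+|\omega-\hat\omega|\le10^{-50}/M$ are tailored precisely so that this closes.
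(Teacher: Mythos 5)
Your proposal is correct and follows essentially the same route as the paper's proof: expand the stopping-time equation $0=\hat{\eta}_3(t_\alpha)+\mu_3(\alpha,t_\alpha)+\frac{9}{t_\alpha}$ about the true stopping time $t_{\hat{\alpha}}$ (using \eqref{muexpcst}--\eqref{etaexpcst}), observe that the constant term vanishes by definition of $t_{\hat{\alpha}}$, identify the linear coefficients $D$ and $N$ to obtain $\rho^{(1)}=-N/D$, and invoke the Schauder fixed point theorem to control the quadratic remainder $\rho^{(2)}(\alpha)$. The only minor organisational difference is that you run a preliminary Schauder step to bound $\delta$ linearly in $|\alpha-\hat{\alpha}|$ before reading off coefficients, whereas the paper substitutes the ansatz \eqref{stopcorrectstart} directly, reads off $\rho^{(1)}$ from the linear terms, and then applies Schauder to the residual equation for $\rho^{(2)}$; these are equivalent, and your version arguably makes the well-posedness of the expansion more explicit.
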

\begin{proof}
Using \eqref{muexpcst} and \eqref{etaexpcst}, the $t_{\alpha}$ stopping condition is defined according to 
\begin{align*}
0&=\hat{\eta}_3(t_{\alpha})+\mu_3 (\alpha,t_{\alpha})+\frac{9}{t_{\alpha}}\\
&=\hat{\eta}_3(t_{\hat{\alpha}})+(t_{\alpha}-t_{\hat{\alpha}})(\hat{\eta}_3)'(t_{\hat{\alpha}})+(t_{\alpha}-t_{\hat{\alpha}})^2(\hat{E}_{7,\text{start}})_3(t_{\hat{\alpha}})+\mu^{(0)}_{3}(t_{\alpha})+(\alpha-\hat{\alpha})\mu^{(1)}_3(t_{\alpha})+\mu_3^{(2)}(\alpha,t_{\alpha})(\alpha-\hat{\alpha})^2\\
&+\frac{9}{t_{\hat{\alpha}}}-\frac{9(t_{\alpha}-t_{\hat{\alpha}})}{t^2_{\hat{\alpha}}}+\frac{9(t_{\alpha}-t_{\hat{\alpha}})^2}{t^2_{\hat{\alpha}}t_{\alpha}}\\
&=\left((\hat{\eta}_3)'(t_{\hat{\alpha}})-\frac{9}{t^2_{\hat{\alpha}}}+(\mu^{(0)}_3)'(t_{\hat{\alpha}})\right)(t_{\alpha}-t_{\hat{\alpha}})+\left(\hat{E}_{7,\text{start}}(t_{\alpha})+\hat{E}_{4,\text{start}}(t_{\alpha})+\frac{9}{(t_{\hat{\alpha}})^2t_{\alpha}}\right)(t_{\alpha}-t_{\hat{\alpha}})^2\\
    &+\left(\mu_3^{(1)}(t_{\hat{\alpha}})+(\mu^{(1)}_3)'(t_{\hat{\alpha}})(t_{\alpha}-t_{\hat{\alpha}})+\hat{E}_{5,\text{start}}(t_{\alpha})(t_{\alpha}-t_{\hat{\alpha}})^2\right)(\alpha-\hat{\alpha})+\mu_3^{(2)}(\alpha,t_{\alpha})(\alpha-\hat{\alpha})^2.
\end{align*}
Note that there are no zeroth order terms because the equation is already known to hold when $\hat{\alpha}=\alpha$ and $t_{\alpha}=t_{\hat{\alpha}}$, by definition. Also, we will not need to expand the $\mu^{(2)}_3(\alpha,t_{\alpha})$ into powers of $t_{\alpha}-t_{\hat{\alpha}}$ because it already has $(\alpha-\hat{\alpha}^2)$ out the front.
By feeding the expression for $t_{\alpha}-t_{\hat{\alpha}}$ in \eqref{stopcorrectstart} into this equality and equating linear terms, we obtain 
\begin{align*}
    \rho^{(1)}&=-\frac{\mu_3^{(1)}(t_{\hat{\alpha}})}{(\hat{\eta}_3)'(t_{\hat{\alpha}})-\frac{9}{t^2_{\hat{\alpha}}}+(\mu^{(0)}_3)'(t_{\hat{\alpha}})}\\
    &=-\frac{(\hat{\mu}_{\mathrm{IC}})_3(\hat{t}_{\hat{\alpha}})}{(\hat{\eta}_3)'(\hat{t}_{\hat{\alpha}})-\frac{9}{\hat{t}^2_{\hat{\alpha}}}}+\frac{(\hat{\mu}_{\mathrm{IC}})_3(\hat{t}_{\hat{\alpha}})-\mu_3^{(1)}(t_{\hat{\alpha}})}{((\hat{\eta}_3)'(t_{\hat{\alpha}})-\frac{9}{t^2_{\hat{\alpha}}}+(\mu^{(0)}_3)'(t_{\hat{\alpha}}))}+\frac{(\hat{\mu}_{\mathrm{IC}})_3(\hat{t}_{\hat{\alpha}})((\hat{\eta}_3)'(\hat{t}_{\hat{\alpha}})-\frac{9}{\hat{t}^2_{\hat{\alpha}}}-(\hat{\eta}_3)'(t_{\hat{\alpha}})+\frac{9}{t^2_{\hat{\alpha}}}+(\mu^{(0)}_3)'(t_{\hat{\alpha}}))}{((\hat{\eta}_3)'(\hat{t}_{\hat{\alpha}})-\frac{9}{\hat{t}^2_{\hat{\alpha}}})((\hat{\eta}_3)'(t_{\hat{\alpha}})-\frac{9}{t^2_{\hat{\alpha}}}+(\mu^{(0)}_3)'(t_{\hat{\alpha}}))}.
\end{align*}The $\rho^{(1)}$ estimate then follows from the estimates on $t_{\hat{\alpha}}$ (Assumption \ref{aposerror} and Corollary \ref{stoppingzero}), $\hat{\eta}_3$ (from Assumption \ref{aposerror} and Lemma \ref{hatcknorms}), $\mu^{(0)}$ (Lemma \ref{munu0terms}), $\rho^{(0)}$ (Corollary \ref{stoppingzero}) as well as $\mu^{(1)}$ and  $\mu_{\mathrm{IC}}$  (Lemma \ref{mu1est},  Corollary \ref{linearIC} and Lemma \ref{linearhat2ndd}). With the linear terms taken care of, we then divide by $(\alpha-\hat{\alpha})^2$ to find that the $\rho^{(2)}(\alpha)$ term must satisfy 
\begin{align*}
    0&=\left((\hat{\eta}_3)'(t_{\hat{\alpha}})-\frac{9}{t^2_{\hat{\alpha}}}+(\mu^{(0)}_3)'(t_{\hat{\alpha}})\right)\rho^{(2)}(\alpha)+\mu_3^{(2)}(\alpha,t_{\alpha})\\    &+\left(\hat{E}_{7,\text{start}}(t_{\alpha})+\hat{E}_{4,\text{start}}(t_{\alpha})+\frac{9}{(t_{\hat{\alpha}})^2t_{\alpha}}\right)\left((\rho^{(1)})^2+2\rho^{(1)}\rho^{(2)}(\alpha)(\alpha-\hat{\alpha})+\rho^{(2)}(\alpha)^2(\alpha-\hat{\alpha})^2\right)\\
    &+\left((\mu^{(1)}_3)'(t_{\hat{\alpha}})(\rho^{(1)}+\rho^{(2)}(\alpha)(\alpha-\hat{\alpha}))+\hat{E}_{5,\text{start}}(t_{\alpha})(\alpha-\hat{\alpha})(\rho^{(1)}+(\alpha-\hat{\alpha})\rho^{(2)}(\alpha))^2\right).
\end{align*}
Using the same estimates as before, as well as estimates on $\mu^{(2)}$ (Lemma \ref{munusterm}), $\rho^{(1)}$ (earlier in this proof), $\hat{E}_{4,\text{start}}$, $\hat{E}_{5,\text{start}}$ (follows from Lemma \ref{munu0terms}, the previous $\mu^{(1)}$ estimates and the definition of \eqref{muexpcst}) and $\hat{E}_{7,\text{start}}$ (defined in \eqref{etaexpcst}, can be estimated with Lemma \ref{hatcknorms}).
The Schauder fixed point theorem then implies that for $\left|\alpha-\hat{\alpha}\right|\le \frac{10^{-50}}{M}$, there is a solution $\rho^{(2)}(\alpha)$ whose magnitude is less than $10^{15}M$. 

\end{proof}
To conclude this section, we briefly mention that by combining Corollaries \ref{stoppingzero} and \ref{stopping}, we obtain 
\begin{align}\label{stoppingtimeexpansions}
\begin{split}
    t_{\alpha}-\hat{t}_{\hat{\alpha}}=\rho^{(0)}+(\alpha-\hat{\alpha})\rho^{(1)}+(\alpha-\hat{\alpha})^2\rho^{(2)}(\alpha),\\
    t_{\omega}-\hat{t}_{\hat{\omega}}=\sigma^{(0)}+(\omega-\hat{\omega})\sigma^{(1)}+(\omega-\hat{\omega})^2\sigma^{(2)}(\omega),
    \end{split}
\end{align} 
and all of these coefficients have now been estimated. In particular, the following table is accurate to $10^{-7}$.
\begin{table}[h]
\centering
\begin{tabular}{cccc}
$\hat{t}_{\hat{\alpha}}$ & $\rho^{(1)}$ & $\hat{t}_{\hat{\omega}}$ & $\sigma^{(1)}$ \\ \midrule
$1.4923108$ & $0.0133237$ & $1.1621186$ & $0.0729407$\\\bottomrule
\end{tabular}
\caption{Key stoppings times and their differentials in $(\alpha,\omega)$, accurate to $\pm 10^{-7}$.}  \label{stoppingtimetable}
\end{table}

\subsection{Existence}\label{existence}
We can now put all of the estimates of previous subsections together to produce our non-round Einstein metric, under Assumptions \ref{aposerror} and \ref{linearassumption} for a suitable choice of $\varepsilon$. Recall that it suffices to find a pair $(\alpha,\omega)\neq (1,\sqrt{8})$ with $A(\alpha)=\Omega(\omega)$, and these functions are defined in \eqref{Aalphadef} and \eqref{Omegaomegadef}: 
\begin{align*}
A(\alpha)&=\left(\sqrt{\left(\frac{
    110-2((\alpha+\eta_1(t_{\alpha}))^2-\left(\frac{1}{2}+\frac{1}{9}\right)\eta_2(t_{\alpha})^2}{9}\right)},\eta_2(t_{\alpha})\right),\\
\Omega(\omega)&=\left(\zeta_1(t_{\omega})+\omega,\zeta_2(t_{\omega})\right),
     \end{align*}
     where $\eta$ and $\zeta$ are defined according to \eqref{etaequation1} and \eqref{zetaequation1}.
     The existence proof in this section follows by using the estimates on $\eta,\zeta,t_{\alpha},t_{\omega}$ obtained in  Sections \ref{nonlinearestimates} and \ref{stoppingestimates}. More precisely, we will use these estimates to obtain high-precision estimates on $A(\alpha)$ and $\Omega(\omega)$ for $(\alpha,\omega)$ close to $(\hat{\alpha},\hat{\omega})$. We will thus write $A(\alpha)=\Omega(\omega)$ in fixed-point form, and consequently demonstrate that the Schauder fixed point theorem is applicable in a small region around $(\hat{\alpha},\hat{\omega})$. 
     First, we obtain some estimates on $\zeta$ and $\eta$ at the stopping time.
     \begin{table}[h]
\centering
\begin{tabular}{crrrrrr}
\toprule
\multirow{2}{*}{Index} & \multicolumn{4}{c}{Quantities at stopping time}\\
\cmidrule(lr){2-5} \cmidrule(lr){6-7}
& $\eta^{(0)}$ & $\eta^{(1)}$ & $\zeta^{(0)}$ & $\zeta^{(1)}$ \\
\midrule
1 & $-2.89630$ & $-1.01381$ & $-3.02932$ & $-0.99611$\\
2 & $0.05844$ & $-0.01617$ & $0.05844$ & $-0.08054$  \\ 
3 & $-6.03092$ & $0.05385$ & $-1.72100$ & $0.10802$ \\
\bottomrule
\end{tabular}
\caption{$\eta(t_{\alpha})$ and $\zeta(t_{\omega})$ expansions in $\alpha$ and $\omega$, accurate to $\pm 6 \times 10^{-6}$.}  \label{existencestop}
\end{table}
 \begin{lem}\label{fast}
If $M^2\varepsilon\le 10^{-50}$, then at the two stopping times, we have 
\begin{align*}
    \eta(t_{\alpha})=\eta^{(0)}+(\alpha-\hat{\alpha})\eta^{(1)}+(\alpha-\hat{\alpha})^2\eta^{(2)}(\alpha),\\
\zeta(t_{\omega})=\zeta^{(0)}+(\omega-\hat{\omega})\zeta^{(1)}+(\omega-\hat{\omega})^2\zeta^{(2)}(\omega),
\end{align*}
where $\left|\eta^{(2)}(\alpha)\right|+\left|\zeta^{(2)}(\omega)\right|\le 10^{70}M$ for all $(\alpha,\omega)$ so that $\left|\alpha-\hat{\alpha}\right|+\left|\omega-\hat{\omega}\right|\le \frac{10^{-50}}{M}$,
\begin{align*}
    \left|\eta^{(0)}-\hat{\eta}(\hat{t}_{\hat{\alpha}})\right|\le 10^{10}M\varepsilon , \qquad \left|\zeta^{(0)}-\hat{\zeta}(\hat{t}_{\hat{\omega}})\right|\le 10^{10}M\varepsilon
\end{align*}and, accurate to $6\times 10^{-6}$, Table \ref{existencestop} gives $\eta^{(0)}$, $\eta^{(1)}$, $\zeta^{(0)}$ and $\zeta^{(1)}$. 
 \end{lem}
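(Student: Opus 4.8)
The plan is to substitute the expansions we have already assembled and read off the coefficients. Writing $\eta(\alpha,t_\alpha)=\hat\eta(t_\alpha)+\mu(\alpha,t_\alpha)$, I would insert the expansion of $\mu$ from \eqref{munualomex} with each $\mu^{(i)}$ further expanded about $t_{\hat\alpha}$ via \eqref{muexpcst}, the expansion of $\hat\eta$ about $t_{\hat\alpha}$ from \eqref{etaexpcst}, and the stopping‑time expansion $t_\alpha-\hat t_{\hat\alpha}=\rho^{(0)}+(\alpha-\hat\alpha)\rho^{(1)}+(\alpha-\hat\alpha)^2\rho^{(2)}(\alpha)$ from \eqref{stoppingtimeexpansions}. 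The organisational point is to expand the coefficient functions about $t_{\hat\alpha}$ (rather than $\hat t_{\hat\alpha}$): then $t_\alpha-t_{\hat\alpha}=(\alpha-\hat\alpha)\rho^{(1)}+(\alpha-\hat\alpha)^2\rho^{(2)}(\alpha)$ carries no term of order $(\alpha-\hat\alpha)^0$, and after multiplying everything out the result sorts cleanly by powers of $\alpha-\hat\alpha$. The order‑zero part is $\eta^{(0)}:=\eta(\hat\alpha,t_{\hat\alpha})=\hat\eta(t_{\hat\alpha})+\mu^{(0)}(t_{\hat\alpha})$; the order‑one part is (equivalently, by the chain rule) $\eta^{(1)}:=\mu^{(1)}(t_{\hat\alpha})+\big(\hat\eta'(t_{\hat\alpha})+(\mu^{(0)})'(t_{\hat\alpha})\big)\rho^{(1)}$; and all remaining terms carry an explicit factor $(\alpha-\hat\alpha)^2$ and are collected into $(\alpha-\hat\alpha)^2\eta^{(2)}(\alpha)$. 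The $\zeta$ statements follow verbatim with $d_1,d_2$ swapped and $\hat\zeta,\hat\nu_{\mathrm{IC}},\sigma^{(i)}$ in place of $\hat\eta,\hat\mu_{\mathrm{IC}},\rho^{(i)}$. First I would record that the standing hypotheses $M^2\varepsilon\le10^{-50}$ and $|\alpha-\hat\alpha|+|\omega-\hat\omega|\le10^{-50}M^{-1}$ imply every hypothesis of Lemmas \ref{hatcknorms}, \ref{linearhat2ndd}, \ref{munu0terms}, \ref{mu1est}, \ref{munusterm} and of Corollaries \ref{linearIC}, \ref{mouat}, \ref{stoppingzero}, \ref{stopping}, since $M\ge1$ gives $M\varepsilon\le M^2\varepsilon$.

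For $\eta^{(0)}$: since $t_{\hat\alpha}=\hat t_{\hat\alpha}+\rho^{(0)}$ with $|\rho^{(0)}|\le10^{3}M\varepsilon$ by Corollary \ref{stoppingzero}, a mean‑value estimate using $\|\hat\eta\|_{C^1}$ from Lemma \ref{hatcknorms} gives $|\hat\eta(t_{\hat\alpha})-\hat\eta(\hat t_{\hat\alpha})|\le10^{6}M\varepsilon$, and $\|\mu^{(0)}\|_{C^0}\le10M\varepsilon$ by Lemma \ref{munu0terms}; hence $|\eta^{(0)}-\hat\eta(\hat t_{\hat\alpha})|\le10^{10}M\varepsilon$ (generously), and likewise $|\zeta^{(0)}-\hat\zeta(\hat t_{\hat\omega})|\le10^{10}M\varepsilon$. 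Since $10^{10}M\varepsilon\le10^{-40}$, this together with the six‑decimal accuracy of Table \ref{nonlinearstop} asserted in Assumption \ref{aposerror} pins $\eta^{(0)},\zeta^{(0)}$ to the precision in Table \ref{existencestop}. For $\eta^{(1)}$ I would replace $t_{\hat\alpha}$ by $\hat t_{\hat\alpha}$ (errors $O(\|\hat\eta\|_{C^2}|\rho^{(0)}|)$ and $O(\|\mu^{(1)}\|_{C^1}|\rho^{(0)}|)$, both $\ll10^{-30}$ via Lemma \ref{hatcknorms}, Corollary \ref{linearIC} and Lemma \ref{mu1est}), replace $\mu^{(1)}(\hat t_{\hat\alpha})=\mu^{(1,0)}$ by $\hat\mu_{\mathrm{IC}}(\hat t_{\hat\alpha})$ (error $\le10^{11}M^2\varepsilon$ by Corollary \ref{mouat}), drop $(\mu^{(0)})'(\hat t_{\hat\alpha})$ (size $\le10^{6}M\varepsilon$ by Lemma \ref{munu0terms}), and substitute the numerical values of $\hat\eta'(\hat t_{\hat\alpha})$, $\hat\mu_{\mathrm{IC}}(\hat t_{\hat\alpha})$ and $\rho^{(1)}$ from Tables \ref{nonlinearstop}, \ref{icnorms} and \ref{stoppingtimetable}; the arithmetic then produces the $\eta^{(1)}$ column of Table \ref{existencestop}, the $\pm6\times10^{-6}$ tolerance absorbing the propagated rounding of those tables (the $M^2\varepsilon$‑type errors being well below $10^{-6}$). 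The $\zeta^{(1)}$ column comes out the same way from $\hat\nu_{\mathrm{IC}}$, $\hat\zeta'$ and $\sigma^{(1)}$.

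Finally, $\eta^{(2)}(\alpha)$ is a finite sum of products, each factor of which has already been bounded: $\mu^{(2)}(t_\alpha,\alpha)$ with $\|\mu^{(2)}\|_{C^0}\le10^{13}M$ (Lemma \ref{munusterm}; equivalently $\|\mu^{(2,0)}\|_{C^2}\le10^{40}M$ from Corollary \ref{mouat}), $\rho^{(2)}(\alpha)$ with $|\rho^{(2)}|\le10^{15}M$ (Corollary \ref{stopping}), the Taylor‑remainder functions $\hat E_{4,\mathrm{start}},\hat E_{5,\mathrm{start}},\hat E_{7,\mathrm{start}}$ evaluated at $t_\alpha$ (controlled by half the relevant $C^2$ norms via Lemmas \ref{hatcknorms}, \ref{linearhat2ndd}, \ref{munu0terms}, \ref{mu1est} and Corollary \ref{linearIC}), multiplied against the small factors $\rho^{(0)}$, $\mu^{(0)}$, $(\mu^{(0)})'$ and powers of $\alpha-\hat\alpha$, all uniform over $|\alpha-\hat\alpha|+|\omega-\hat\omega|\le10^{-50}M^{-1}$. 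Multiplying these bounds through gives $|\eta^{(2)}(\alpha)|+|\zeta^{(2)}(\omega)|\le10^{70}M$, which is far more headroom than the sharp accounting requires but is kept deliberately crude in line with the rest of Section \ref{Guarantees}. The main obstacle is thus not any single estimate but the bookkeeping: one must expand the coefficient functions about $t_{\hat\alpha}$ rather than $\hat t_{\hat\alpha}$ so the shift contributes no constant term, keep straight which of the many terms land in $\eta^{(0)}$, which in $\eta^{(1)}$, and which in the $(\alpha-\hat\alpha)^2$ remainder, and chase the very large but finite constants through without slip.
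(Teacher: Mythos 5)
Your proposal is correct and follows essentially the same route as the paper: substitute the expansion of $\mu$ and the stopping-time expansion, sort by powers of $\alpha-\hat\alpha$, identify $\eta^{(0)},\eta^{(1)}$ by chasing constants, and dump everything else into $\eta^{(2)}$ with a very generous bound. The one organizational deviation is that you expand the coefficient functions about $t_{\hat\alpha}$ via \eqref{muexpcst}--\eqref{etaexpcst}, so that $t_\alpha-t_{\hat\alpha}$ starts at order $(\alpha-\hat\alpha)^1$; the paper's actual proof instead expands about $\hat t_{\hat\alpha}$ using Corollary \ref{mouat} and \eqref{etahat2d}, so its intermediate formulas for $\eta^{(0)},\eta^{(1)}$ carry explicit $\rho^{(0)}$-terms that you avoid. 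Your version is marginally cleaner bookkeeping and makes the identifications $\eta^{(0)}=\hat\eta(t_{\hat\alpha})+\mu^{(0)}(t_{\hat\alpha})$ and $\eta^{(1)}=\mu^{(1)}(t_{\hat\alpha})+\big(\hat\eta'(t_{\hat\alpha})+(\mu^{(0)})'(t_{\hat\alpha})\big)\rho^{(1)}$ transparent via the chain rule; the cost is that you then need $C^1$ and $C^2$ control on $\hat\eta,\mu^{(0)},\mu^{(1)}$ near $t_{\hat\alpha}$ to bound the shifted remainders, but all of those bounds are already in hand from Lemmas \ref{hatcknorms}, \ref{munu0terms}, \ref{mu1est} and Corollary \ref{linearIC}. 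The numerical cross-checks (e.g.\ $\eta^{(1)}_1\approx\hat\mu_{\mathrm{IC},1}(\hat t_{\hat\alpha})+\hat\eta_1'(\hat t_{\hat\alpha})\rho^{(1)}\approx-1.01381$) match Table \ref{existencestop}, and the coarse $10^{70}M$ bound on $\eta^{(2)}$ is obtained exactly as in the paper by multiplying the known bounds on $\mu^{(2)}$, $\rho^{(2)}$, and the Taylor-remainder functions. No gaps.
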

 \begin{proof}
We shall focus on $\eta(t_\alpha)$, as the case for $\zeta(t_\omega)$ is essentially identical. 
 Recall the expression for $\mu$ from Corollary \ref{mouat}: 
\begin{align*}
        \mu(\alpha,t)&=\mu^{(0,0)}+\mu^{(1,0)}(\alpha-\hat{\alpha})+\mu^{(0,1)}(t-\hat{t}_{\hat{\alpha}})+\mu^{(1,1)}(\alpha-\hat{\alpha})(t-\hat{t}_{\hat{\alpha}})\\
        &+(t-\hat{t}_{\hat{\alpha}})^2\left(\mu^{(0,2)}(t)+(\alpha-\hat{\alpha})\mu^{(1,2)}(t)\right)+(\alpha-\hat{\alpha})^2\mu^{(2,0)}(\alpha,t).
    \end{align*}
    Combining this with \eqref{stoppingtimeexpansions} gives 
\begin{align*}
    \mu(\alpha,t_{\alpha})&=\mu^{(0,0)}+(\alpha-\hat{\alpha})\mu^{(1,0)}+(\alpha-\hat{\alpha})^2\mu^{(2,0)}(\alpha,t_{\alpha})\\
    &+\left(\rho^{(0)}+(\alpha-\hat{\alpha})\rho^{(1)}+(\alpha-\hat{\alpha})^2\rho^{(2)}(\alpha)\right)\left(\mu^{(0,1)}+(\alpha-\hat{\alpha})\mu^{(1,1)}\right)\\
        &+\left(\rho^{(0)}+(\alpha-\hat{\alpha})\rho^{(1)}+(\alpha-\hat{\alpha})^2\rho^{(2)}(\alpha)\right)^2\left(\mu^{(0,2)}(t_{\alpha})+(\alpha-\hat{\alpha})\mu^{(1,2)}(t_{\alpha})\right)\\&=\left(\mu^{(0,0)}+\rho^{(0)}\mu^{(0,1)}+(\rho^{(0)})^2\mu^{(0,2)}(t_{\alpha})\right)\\
        &+\left(\mu^{(1,0)}+\mu^{(0,1)}\rho^{(1)}+\mu^{(1,1)}\rho^{(0)}+(\rho^{(0)})^2\mu^{(1,2)}(t_{\alpha})+2\mu^{(0,2)}(t_{\alpha})\rho^{(0)}\rho^{(1)}\right)(\alpha-\hat{\alpha})\\
        &+\left(\mu^{(2,0)}(\alpha,t_{\alpha})+\rho^{(2)}(\alpha)\mu^{(0,1)}+\rho^{(1)}\mu^{(1,1)}+\mu^{(0,2)}(t_{\alpha})(\rho^{(0)}\rho^{(2)}(\alpha)+2\rho^{(1)})+2\mu^{(1,2)}(t_{\alpha})\rho^{(0)}\rho^{(1)}\right)(\alpha-\hat{\alpha})^2\\
        &+\left(\rho^{(2)}(\alpha)\mu^{(1,1)}+2\rho^{(1)}\mu^{(0,2)}(t_{\alpha})\rho^{(2)}(\alpha)+\mu^{(1,2)}(t_{\alpha})(\rho^{(0)}\rho^{(2)}(\alpha)+(\rho^{(1)})^2)\right)(\alpha-\hat{\alpha})^3\\
        &+\left((\rho^{(2)}(\alpha))^2+2\rho^{(1)}\rho^{(2)}(\alpha)\mu^{(1,2)}(t_{\alpha})\right)(\alpha-\hat{\alpha})^4+\mu^{(1,2)}(t_{\alpha})\rho^{(2)}(\alpha)^2(\alpha-\hat{\alpha})^5.
\end{align*}
Recall from Lemma \ref{hatcknorms} and   \eqref{etahat2d} that there are smooth functions $\hat{E}_{3,\text{start}}(t),\hat{E}_{3,\text{end}}(t)$, both bounded by $10^{5}$ so that 
\begin{align*}
    \hat{\eta}(t)=\hat{\eta}(\hat{t}_{\hat{\alpha}})+(\hat{\eta})'(\hat{t}_{\hat{\alpha}})(t-\hat{t}_{\hat{\alpha}})+(t-\hat{t}_{\hat{\alpha}})^2 \hat{E}_{3,\text{start}}(t), \qquad \hat{\eta}(t)=\hat{\zeta}(\hat{t}_{\hat{\omega}})+(\hat{\zeta})'(\hat{t}_{\hat{\omega}})(t-\hat{t}_{\hat{\omega}})+(t-\hat{t}_{\hat{\omega}})^2 \hat{E}_{3,\text{end}}(t)
\end{align*}
and similarly for $\hat{\zeta}(t)$. By setting $t=t_{\alpha}$ and again using \eqref{stoppingtimeexpansions}, we obtain 
\begin{align*}
    \hat{\eta}(t_{\alpha})&=\hat{\eta}(\hat{t}_{\hat{\alpha}})+\hat{\eta}'(\hat{t}_{\hat{\alpha}})\left(\rho^{(0)}+\rho^{(1)}(\alpha-\hat{\alpha})+\rho^{(2)}(\alpha)(\alpha-\hat{\alpha})^2\right)+\left(\rho^{(0)}+\rho^{(1)}(\alpha-\hat{\alpha})+\rho^{(2)}(\alpha)(\alpha-\hat{\alpha})^2\right)^2 \hat{E}_{3,\text{start}}(t_{\alpha})\\
    &=\left(\hat{\eta}(\hat{t}_{\hat{\alpha}})+\hat{\eta}'(\hat{t}_{\hat{\alpha}})\rho^{(0)}+(\rho^{(0)})^2\hat{E}_{3,\text{start}}(t_{\alpha})\right)\\
    &+\left(\hat{\eta}'(\hat{t}_{\hat{\alpha}})\rho^{(1)}+2\rho^{(0)}\rho^{(1)}\hat{E}_{3,\text{start}}(t)\right)(\alpha-\hat{\alpha})\\
    &+\left(\hat{\eta}'(\hat{t}_{\hat{\alpha}})\rho^{(2)}(\alpha)+\hat{E}_{3,\text{start}}(t_{\alpha})(\rho^{(0)}\rho^{(2)}(\alpha)+(\rho^{(1)})^2)\right)(\alpha-\hat{\alpha})^2\\
    &+\rho^{(1)}\rho^{(2)}(\alpha)\hat{E}_{3,\text{start}}(t_{\alpha})(\alpha-\hat{\alpha})^3+(\rho^{(2)}(\alpha))^2 \hat{E}_{3,\text{start}}(t_{\alpha})(\alpha-\hat{\alpha})^4.
\end{align*}

Altogether, since $\eta = \hat{\eta} + \mu$, we conclude 
\[
\eta^{(0)} = \hat{\eta}(\hat{t}_{\hat{\alpha}}) + \hat{\eta}'(\hat{t}_{\hat{\alpha}}) \rho^{(0)} + \rho^{(0)} \hat{E}_{3,\text{start}}(t_\alpha) + \mu^{(0,0)} + \rho^{(0)}\mu^{(0,1)} + \rho^{(0)} \mu^{(0,2)}(t_\alpha),
\]
\[
\eta^{(1)} = \left(\mu^{(1,0)}+\mu^{(0,1)}\rho^{(1)}+\mu^{(1,1)}\rho^{(0)}+(\rho^{(0)})^2\mu^{(1,2)}(t_{\alpha})+2\mu^{(0,2)}(t_{\alpha})\rho^{(0)}\rho^{(1)}\right)+\left(\hat{\eta}'(\hat{t}_{\hat{\alpha}})\rho^{(1)}+2\rho^{(0)}\rho^{(1)}\hat{E}_{3,\text{start}}(t)\right).
\]
For $\alpha$ in the specified range, we can estimate $\eta^{(0)}$ and $\eta^{(1)}$ using previous estimates on $\rho^{(0)}$ (Corollary \ref{stoppingzero}), $\rho^{(1)}$, (Corollary \ref{stopping}) and $\mu^{(i,j)}$ for various $(i,j)$  (Corollary \ref{mouat}). We can then use these same estimates, as well as those of $\rho^{(2)}(\alpha)$ (Corollary \ref{stopping}) to estimate $\eta^{(2)}$. 

 \end{proof}
 \begin{table}[h]
\centering
\begin{tabular}{cllllll}
\toprule
\multirow{2}{*}{Index} & \multicolumn{4}{c}{Shooting Function Terms}\\
\cmidrule(lr){2-5} \cmidrule(lr){6-7}
& $A^{(0)}$ & $A^{(1)}$ & $\Omega^{(0)}$ & $\Omega^{(1)}$ \\
\midrule
1 & $3.1566$ & $0.0031$ & $3.1566$ & $0.0039$\\
2 & $0.0584$ & $-0.0162$ & $0.0584$ & $-0.0805$  \\ 
\bottomrule
\end{tabular}
\caption{Coefficients for the first-order Taylor expansion of $(A,\Omega)$ near $(\hat{\alpha},\hat{\omega})$, accurate to $\pm 5 \times 10^{-5}$}  \label{shootingfunctions}
\end{table}
 \begin{lem}\label{penulestimate}
If $M^2\varepsilon\le 10^{-50}$ The second-order Taylor expansions of $A$ and $\Omega$ satisfy
\begin{align}\label{penequation}
\begin{split}
    A(\alpha)&=A^{(0)}+(\alpha-\hat{\alpha})A^{(1)}+(\alpha-\hat{\alpha})^2 A^{(2)}(\alpha),\\ 
    \Omega(\omega)&=\Omega^{(0)}+(\omega-\hat{\omega})\Omega^{(1)}+(\omega-\hat{\omega})^2 \Omega^{(2)}(\omega), 
    \end{split}
\end{align}
where $\left|A^{(2)}(\alpha)\right|+\left|\Omega^{(2)}(\omega)\right|\le 10^{80}M$ for all $(\alpha,\omega)$ with $\left|\alpha-\hat{\alpha}\right|+\left|\omega-\hat{\omega}\right|\le \frac{10^{-90}}{M}$, and the other terms are given by Table \ref{shootingfunctions}, accurate to $5\times 10^{-5}$. Furthermore, $\left|A^{(0)}-\Omega^{(0)}\right|\le 10^{15}M\varepsilon$.  
 \end{lem}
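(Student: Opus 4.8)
The plan is to substitute the second-order expansions of $\eta(t_\alpha)$ and $\zeta(t_\omega)$ furnished by Lemma \ref{fast}, together with the stopping-time expansions \eqref{stoppingtimeexpansions}, directly into the explicit formulas for $A$ and $\Omega$, collect powers of $\alpha-\hat\alpha$ and $\omega-\hat\omega$ up to second order, and absorb every higher-order term (and every remainder carrying a factor of $\eta^{(2)}$ or $\zeta^{(2)}$) into the quadratic coefficients $A^{(2)},\Omega^{(2)}$.

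The map $\Omega$ is affine in $\zeta(t_\omega)$ and in $\omega$, so this step is immediate: writing $\omega=\hat\omega+(\omega-\hat\omega)$ and inserting $\zeta(t_\omega)=\zeta^{(0)}+(\omega-\hat\omega)\zeta^{(1)}+(\omega-\hat\omega)^2\zeta^{(2)}(\omega)$ yields $\Omega^{(0)}=(\zeta_1^{(0)}+\hat\omega,\zeta_2^{(0)})$, $\Omega^{(1)}=(\zeta_1^{(1)}+1,\zeta_2^{(1)})$ and $\Omega^{(2)}(\omega)=(\zeta_1^{(2)}(\omega),\zeta_2^{(2)}(\omega))$, so the bound $|\Omega^{(2)}|\le 10^{70}M\le 10^{80}M$ of Lemma \ref{fast} applies, and the $\Omega$-columns of Table \ref{shootingfunctions} are read off from Table \ref{existencestop} and \eqref{eq:ValEsts}. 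The second component $A_2(\alpha)=\eta_2(t_\alpha)$ is likewise handled by transcribing the $\eta_2$ expansion: $A_2^{(i)}=\eta_2^{(i)}$.

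The only genuinely nonlinear part is the first component $A_1(\alpha)=\tfrac13\sqrt{P(\alpha)}$, where $P(\alpha)=110-2(\alpha+\eta_1(t_\alpha))^2-\tfrac{11}{18}\eta_2(t_\alpha)^2$. Substituting $\alpha+\eta_1(t_\alpha)=(\hat\alpha+\eta_1^{(0)})+(\alpha-\hat\alpha)(1+\eta_1^{(1)})+(\alpha-\hat\alpha)^2\eta_1^{(2)}(\alpha)$ and the $\eta_2$ expansion, then squaring and collecting, gives $P(\alpha)=P^{(0)}+(\alpha-\hat\alpha)P^{(1)}+(\alpha-\hat\alpha)^2P^{(2)}(\alpha)$ with $P^{(0)}=110-2(\hat\alpha+\eta_1^{(0)})^2-\tfrac{11}{18}(\eta_2^{(0)})^2=9(A_1^{(0)})^2$, an explicit $P^{(1)}=-4(\hat\alpha+\eta_1^{(0)})(1+\eta_1^{(1)})-\tfrac{11}{9}\eta_2^{(0)}\eta_2^{(1)}$, and $P^{(2)}(\alpha)$ bounded by a moderate multiple of $10^{70}M$ (the dominant contribution being $\eta^{(2)}(\alpha)$ times $\hat\alpha+\eta_1^{(0)}=O(1)$). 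Since $P^{(0)}\approx 9\cdot(3.1566)^2\approx 89.7$ is bounded well away from $0$, and since for $|\alpha-\hat\alpha|\le 10^{-90}/M$ the correction $(\alpha-\hat\alpha)P^{(1)}+(\alpha-\hat\alpha)^2P^{(2)}(\alpha)$ has magnitude much less than $1$, the function $\alpha\mapsto\sqrt{P(\alpha)}$ is $C^2$ on this interval with uniformly bounded second derivative; Taylor's theorem with remainder then gives $A_1^{(0)}=\tfrac13\sqrt{P^{(0)}}$, $A_1^{(1)}=\tfrac{P^{(1)}}{6\sqrt{P^{(0)}}}$, and $|A_1^{(2)}(\alpha)|\le 10^{80}M$ after absorbing the $1/\sqrt{P^{(0)}}$ and $1/(P^{(0)})^{3/2}$ factors together with the bounds on $P^{(1)},P^{(2)}$. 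The numerical entries of Table \ref{shootingfunctions} then follow by evaluating these closed forms at the values of Table \ref{existencestop} (accurate to $6\times10^{-6}$) and \eqref{eq:ValEsts} (accurate to $10^{-7}$), the maps involved being Lipschitz with moderate constants; the discrepancy $|\eta^{(0)}-\hat\eta(\hat t_{\hat\alpha})|\le 10^{10}M\varepsilon$ is negligible because $M^2\varepsilon\le 10^{-50}$ forces $M\varepsilon\le 10^{-50}/M$.

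For the final bound $|A^{(0)}-\Omega^{(0)}|\le 10^{15}M\varepsilon$, observe that $A^{(0)}=A(\hat\alpha)$ is assembled from $\eta^{(0)}=\eta(t_{\hat\alpha})$, whereas Assumption \ref{aposerror} controls $|\hat A(\hat\alpha)-\hat\Omega(\hat\omega)|<\varepsilon$ with $\hat A(\hat\alpha)$ assembled from $\hat\eta(\hat t_{\hat\alpha})$. The defining map $(\eta_1,\eta_2)\mapsto\bigl(\tfrac13\sqrt{110-2(\hat\alpha+\eta_1)^2-\tfrac{11}{18}\eta_2^2},\,\eta_2\bigr)$ is Lipschitz with a moderate constant near the relevant values (again using $P^{(0)}$ bounded away from $0$), so the bound $|\eta^{(0)}-\hat\eta(\hat t_{\hat\alpha})|\le 10^{10}M\varepsilon$ from Lemma \ref{fast} gives $|A^{(0)}-\hat A(\hat\alpha)|\le C\cdot 10^{10}M\varepsilon$, and symmetrically $|\Omega^{(0)}-\hat\Omega(\hat\omega)|\le C\cdot 10^{10}M\varepsilon$; the triangle inequality then yields $|A^{(0)}-\Omega^{(0)}|\le 10^{15}M\varepsilon$. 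The main obstacle is purely bookkeeping: tracking how the very large remainder $\eta^{(2)}(\alpha)\sim 10^{70}M$ propagates through the squaring and the square root without spoiling the positivity of $P$, which is exactly why the domain in this lemma is shrunk to $10^{-90}/M$, considerably smaller than the $10^{-50}/M$ available from Lemma \ref{fast}.
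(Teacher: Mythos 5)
Your proposal is correct and follows essentially the same route as the paper: substitute the expansions of $\eta(t_\alpha),\zeta(t_\omega)$ from Lemma \ref{fast} into the explicit formulas for $A$ and $\Omega$, use an affine/transcription argument for $\Omega$ and $A_2$, expand the square root in $A_1$ via a first-order Taylor estimate with a crude quadratic remainder bound (the paper uses \eqref{rootexpand} with $|O(x_0,x)|\le 10x^2$), and derive $|A^{(0)}-\Omega^{(0)}|\le 10^{15}M\varepsilon$ from Assumption \ref{aposerror} plus the $10^{10}M\varepsilon$ estimate on $|\eta^{(0)}-\hat\eta(\hat t_{\hat\alpha})|$. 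The only difference is that your write-up spells out the $P(\alpha)$ bookkeeping and the Lipschitz argument more explicitly than the paper's terse ``The result follows immediately by expanding these terms''.
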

 \begin{proof}
     Recall that 
     \begin{align*}
         A(\alpha)&=\left(\sqrt{\left(\frac{
    110-2(\alpha+\eta_1(t_{\alpha}))^2-\left(\frac{1}{2}+\frac{1}{9}\right)\eta_2(t_{\alpha})^2}{9}\right)},\eta_2(t_{\alpha})\right)
     \end{align*}
     and 
     \begin{align*}
         \Omega(\omega)=(\zeta_1(t_{\omega})+\omega,\zeta_2(t_{\omega})). 
     \end{align*}
     The result follows immediately by expanding these terms, using Lemma \ref{fast}. For the first component of $A$, we also use the fact that 
     \begin{align}\label{rootexpand}
        \sqrt{x_0+x}-\sqrt{x_0}&=\frac{x}{2\sqrt{x_0}}+O(x_0,x),
     \end{align}
     where $\left|O(x_0,x)\right|\le 10x^2$ for all $x$ and $x_0$ with  $\left|x\right|\le \frac{1}{10}$ and $x_0\ge 1$. Consequently, 
     \[
     \left| A_1^{(1)} + \frac{4(\hat{\alpha}+\eta_1^{(0)})(1+\eta_1^{(1)})+(1+\frac29)\eta_2^{(1)}\eta_2^{(0)}}{18 A_1^{(0)}} \right| \leq 10^{-5}.
     \]
    Using this same square root estimate with Assumption \ref{aposerror} gives the required estimate on  $\left|A^{(0)}-\Omega^{(0)}\right|$.  
 \end{proof}

Finally can construct our Einstein metric. 
\begin{prop}
\label{prop:finalstep}
Theorem \ref{CT} holds under Assumptions \ref{aposerror} and \ref{linearassumption} with $\varepsilon < 10^{-350}$.
\end{prop}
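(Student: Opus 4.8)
The plan is to recast the gluing condition $A(\alpha)=\Omega(\omega)$ — which by Proposition~\ref{shooting} produces a smooth $\mathsf{O}(3)\times\mathsf{O}(10)$-invariant Einstein metric on $S^{12}$ — as a fixed-point equation in $\mathbb{R}^2$ for the unknown $x=(\alpha-\hat\alpha,\,\omega-\hat\omega)$, and to apply Theorem~\ref{Schauder} with $L=0$. First I would subtract the two second-order expansions of Lemma~\ref{penulestimate}: writing $P$ for the $2\times2$ matrix whose columns are $A^{(1)}$ and $-\Omega^{(1)}$, and $R(x)=(\alpha-\hat\alpha)^2A^{(2)}(\alpha)-(\omega-\hat\omega)^2\Omega^{(2)}(\omega)$, the equation $A(\alpha)=\Omega(\omega)$ is exactly $(A^{(0)}-\Omega^{(0)})+Px+R(x)=0$. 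From Table~\ref{shootingfunctions} (accurate to $5\times10^{-5}$) one computes $\det P\approx1.9\times10^{-4}$, which is robustly bounded away from $0$ given that error bound; hence $P$ is invertible with $\|P^{-1}\|\le10^{3}$ in any fixed norm on $\mathbb{R}^2$, and I would use the $\ell^1$ norm $\|(u,v)\|=|u|+|v|$, since that is the norm governing the domain of validity in Lemma~\ref{penulestimate}. Multiplying through by $-P^{-1}$ puts the equation in the form
\[
x=x_0+Q(x),\qquad x_0:=-P^{-1}(A^{(0)}-\Omega^{(0)}),\qquad Q(x):=-P^{-1}R(x).
\]

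Next I would check the hypotheses of Theorem~\ref{Schauder} on the closed ball $K=\{\|x\|\le s_0\}$ with $s_0:=10^{-90}/M$, precisely the region on which Lemma~\ref{penulestimate} controls $A^{(2)}$ and $\Omega^{(2)}$. Since $X=\mathbb{R}^2$ is finite-dimensional, complete continuity is automatic, and $Q$ is continuous because $(\alpha-\hat\alpha)^2A^{(2)}(\alpha)=A(\alpha)-A^{(0)}-(\alpha-\hat\alpha)A^{(1)}$ depends continuously on $\alpha$ by smooth dependence on initial data (Proposition~\ref{IVPf}), and likewise for $\Omega$. Lemma~\ref{penulestimate} gives $|A^{(0)}-\Omega^{(0)}|\le10^{15}M\varepsilon$, so $\|x_0\|\le10^{18}M\varepsilon$; and $|A^{(2)}|+|\Omega^{(2)}|\le10^{80}M$ on $K$ gives $\|Q(x)\|\le q\|x\|^2$ with $q\le10^{83}M$ for $\|x\|\le s_0$. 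With $L=0$, the sufficient condition $\|x_0\|+qs_0^2\le s_0$ of Theorem~\ref{Schauder} splits into $qs_0^2=10^{-97}/M\le s_0/2$, which holds purely from the choice of $s_0$, and $\|x_0\|\le10^{18}M\varepsilon\le s_0/2$, which holds once $M^2\varepsilon\le\tfrac12\cdot10^{-108}$ — comfortably true since $M=e^{250}<10^{109}$ and $\varepsilon<10^{-350}$. Theorem~\ref{Schauder} then yields a fixed point $x^\ast=(\alpha^\ast-\hat\alpha,\,\omega^\ast-\hat\omega)$ with $\|x^\ast\|\le s_0$, i.e.\ $A(\alpha^\ast)=\Omega(\omega^\ast)$.

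Finally, since $\alpha^\ast,\omega^\ast>0$, Proposition~\ref{shooting} glues the two corresponding solutions of the singular initial value problems into a smooth $\mathsf{O}(3)\times\mathsf{O}(10)$-invariant Einstein metric on $S^{12}=S^{d_1+d_2+1}$ ($d_1=2$, $d_2=9$). It is non-round: the round metric sits at $(\alpha,\omega)=(\sqrt{d_1-1},\sqrt{d_2-1})=(1,\sqrt8)$, whereas $\alpha^\ast=\hat\alpha+O(s_0)$ with $\hat\alpha\approx6.08$ by Assumption~\ref{aposerror}, so $\alpha^\ast\ne1$. This establishes Proposition~\ref{prop:finalstep}.

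The one place that demands care is arithmetic rather than analytic: one must carry the tower of constants $10^{c_k}$ accumulated through Lemmas~\ref{munu0terms}--\ref{munusterm}, Corollaries~\ref{stoppingzero} and~\ref{stopping}, and Lemmas~\ref{fast} and~\ref{penulestimate}, and verify that with $M=e^{250}<10^{109}$ the choice $\varepsilon<10^{-350}$ (and the resulting $\|x^\ast\|\le s_0=10^{-90}/M$) leaves room in every inequality used above — in particular $\|x_0\|\le s_0/2$, the smallness hypotheses $M^2\varepsilon\le10^{-50}$ and $M\varepsilon\le10^{-50}$ of the earlier lemmas, and the domain constraints $|\alpha-\hat\alpha|+|\omega-\hat\omega|\le10^{-30}/M$, $10^{-50}/M$, $10^{-90}/M$ — so that all the expansions invoked are genuinely valid at $(\alpha^\ast,\omega^\ast)$. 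No estimate beyond the cited lemmas is needed; given the expansion of Lemma~\ref{penulestimate}, the Schauder step itself is immediate.
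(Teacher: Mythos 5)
Your proof is correct and follows essentially the same route as the paper: decompose $A(\alpha)-\Omega(\omega)$ via the second-order expansion from Lemma~\ref{penulestimate}, invert the $2\times2$ linear part (your $P$ is the paper's $J$), and apply Theorem~\ref{Schauder} on the ball $\|x\|\le 10^{-90}/M$ with the linear map $L=0$. Your constant $\|P^{-1}\|\le10^3$ is sharper than the paper's $\le10^5$, and you explicitly check the final non-roundedness ($\alpha^\ast$ close to $\hat\alpha\approx6.08\ne1$) — a small but worthwhile addition the paper leaves implicit; otherwise the two arguments coincide.
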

\begin{proof}
Using the notation from \eqref{penequation} as well as the $2\times 2$ square matrix $J = \begin{pmatrix}
        A^{(1)}&-\Omega^{(1)}
    \end{pmatrix}$, 
    we can write  
     \begin{align*}
    A(\alpha)-\Omega(\omega)=A^{(0)}-\Omega^{(0)}+J \begin{pmatrix}
        \alpha-\hat{\alpha}\\
        \omega-\hat{\omega}
    \end{pmatrix}+(\alpha-\hat{\alpha})^2 A^{(2)}(\alpha)-(\omega-\hat{\omega})^2 \Omega^{(2)}(\omega). 
\end{align*}
If $\varepsilon<10^{-350}$, then Lemma \ref{penulestimate} is applicable, so the square matrix $J$ is invertible, with $\left|J^{-1}\right|\le 10^5$, so that the Einstein equation $0=A(\alpha)-\Omega(\omega)$ can be written in the fixed-point form 
\begin{align*}
    \begin{pmatrix}
        \alpha-\hat{\alpha}\\
        \omega-\hat{\omega}
    \end{pmatrix}&=J^{-1}(\Omega^{(0)}-A^{(0)}+(\omega-\hat{\omega})^2 \Omega^{(2)}(\omega)-(\alpha-\hat{\alpha})^2 A^{(2)}(\alpha)),
\end{align*}
and observe that 
\begin{align*}
    \left| J^{-1}(\Omega^{(0)}-A^{(0)})\right|\le 10^{20}M\varepsilon. 
\end{align*}
On the other hand, if $\left|\alpha-\hat{\alpha}\right|+\left|\omega-\hat{\omega}\right|\le \frac{10^{-90}}{M}$, then 
\begin{align*}
    \left|J^{-1}\left((\omega-\hat{\omega})^2 \Omega^{(2)}(\omega)-(\alpha-\hat{\alpha})^2 A^{(2)}(\alpha)\right)\right|\le 10^{-4}\cdot \frac{10^{-90}}{M}.
\end{align*} Existence of a fixed point satisfying $\left|\alpha-\hat{\alpha}\right|+\left|\omega-\hat{\omega}\right|\le \frac{10^{-90}}{M}$ then follows from the Schauder fixed point Theorem (Theorem \ref{Schauder}), combined with the observation that $10^{20}M\varepsilon\le \frac{10^{-100}}{M}$.  

 \end{proof}

\section{High--Precision Construction}
\label{numerics}

It is now established that a non-round Einstein metric exists provided Assumptions \ref{aposerror} and \ref{linearassumption} hold with $\varepsilon < 10^{-350}$. Although there are many methods for solving differential equations, in order to achieve this level of accuracy, we are restricted to only a handful of options. In this section, we outline what is, to our knowledge, the most effective approach to reliably solve this problem. Since it is clear that a root-finding procedure is necessary to solve the shooting problem, it is tempting to use such a procedure to estimate the solution to the entire system of differential equations as well. Many such procedures are known to converge at a doubly-exponential rate, that is, the number of correct decimal places doubles with each iteration. Unfortunately, doing so as separate problems is slow; while doing so altogether tends to converge to the round metric. By separating each task according to Steps (I)--(IV) outlined in the Introduction, a high-precision approximation can be reliably obtained. 
All implementations and quantities are available at \url{https://github.com/liamhodg/O3O10}.

\begin{proof}[Proof of Theorem \ref{CT}]
Performing Steps \ref{step:Heur}--\ref{step:Interp} outlined in the following Sections \ref{sec:apsolver} and \ref{sec:chebsolver} with a target $d = 550$, we verify that the interpolants $\hat{\eta}$ and $\hat{\zeta}$ satisfy Assumptions \ref{aposerror} and \ref{linearassumption} with $\varepsilon = 10^{-362}$. In particular, for Assumption \ref{aposerror},
$$
\begin{array}{ccc}
\|\hat{E}_{1,\text{start}}\|_{H^3} \leq 10^{-534}, & \|\hat{E}_{1,\text{end}}\|_{H^3} \leq 10^{-446}, & |\hat{A}(\hat{\alpha}) - \hat{\Omega}(\hat{\omega})| \leq 10^{-551}, \\
& & \\
|\hat{\eta}_3(\hat{t}_{\hat{\alpha}}) + \frac{9}{\hat{t}_{\hat{\alpha}}}| \leq 10^{-549}, & \text{and} & |\hat{\zeta}_3(\hat{t}_{\hat{\omega}})+\frac{2}{\hat{t}_{\hat{\omega}}}| \leq 10^{-468}.
\end{array}
$$
Next, we verify Assumption \ref{linearassumption} following from Step \ref{step:Lin} outlined in Section \ref{sec:chebsolver}. We find that
\[
\|\hat{E}_{2,\text{start}}\|_{H^3} \leq 10^{-364},\qquad \|\hat{E}_{2,\text{end}}\|_{H^3} \leq 10^{-362}.
\]
The result follows from Proposition \ref{prop:finalstep}.
\end{proof}

\subsection{Step (I): Arbitrary-Precision Solver}
\label{sec:apsolver}
To solve (\ref{etaequationO}) to arbitrarily high precision, a custom Taylor series-based solver is employed. Unlike the following steps, this initial solver is \emph{heuristic}, that is, there are no guarantees of accuracy or regularity. First, we solve (\ref{etaequationO}) locally about zero using Frobenius' method. The ansatz $\eta(t) = \sum_{k=0}^{\infty} a_k t^k$ reveals
\[
\sum_{k=0}^{\infty} ((k+1)I - L_{d_1 d_2})a_{k+1} t^k = \sum_{k=0}^{\infty} t^k \sum_{i+j = k} B_{d_1 d_2} (a_i, a_j).
\]
Equating coefficients, we find that
\[
a_{k+1} = ((k+1)I - L_{d_1 d_2})^{-1} \sum_{i=0}^k B_{d_1 d_2}(a_i, a_{k-i}),
\]
forming a solvable recurrence relation starting from the initial condition $a_0 = (0,0,0,\alpha,\sqrt{\lambda})$. Convergence of this series is easily established using the method of majorants, and holds only on a small neighbourhood about zero. To solve (\ref{etaequationO}) outside of this neighbourhood, note that by differentiating (\ref{etaequationO}) $m$ times,
\[
\eta^{(m+1)}=\sum_{k=0}^{m}\binom{m}{k}\left\{ \frac{(-1)^{m-k}(m-k)! L_{d_1 d_2}\eta^{(k)}}{t^{m-k+1}}+B_{d_1 d_2}[\eta^{(m-k)},\eta^{(k)}]\right\},\quad m \geq 0. \]
Hence, starting from $\eta(t)$, higher-order derivatives of $\eta^{(m)}(t)$ can be computed recursively. Away from zero, local Taylor series expansions of $\eta$ can be computed about any point $t$, provided $\eta(t)$ is first known to high precision.

Any solution to (\ref{etaequationO}) is naturally holomorphic, and as the power series expansion for $\eta$ at zero has finite radius of convergence, the same will be true for any local Taylor expansion of $\eta$. Let $c_k(t) : [0,T] \to \mathbb{R}^5$ be the $k$-th coefficient of a power series expansion of $\eta$ about $t$, that is, $c_k(t) = \frac{1}{k!} \eta^{(k)}(t)$. The radius of convergence of the power series expansion about $t$ can be computed using the root test: $r(t)^{-1} = \limsup_{k\to \infty} \|c_k(t)\|_{\infty}^{1/k} > 0$. We estimate this quantity by taking the maximum of $\|c_k(t)\|_\infty^{1/k}$ of the last five computed coefficients. Any local expansion of $\eta$ about $t$ is used only in the interval $[t,t+r(t)/2]$ to balance both rapid convergence and reasonable coverage of the series expansion. To estimate $\eta$ beyond this interval, another expansion is computed about $t + r(t)/2$. 

The error in truncating the expansion is estimated by a corresponding geometric series. As we intend only for each expansion to be used up to half its radius of convergence, the maximal error for each expansion by truncating at $N$ terms is
\[
\epsilon_N(t) = \sup_{s \in [t,t+r(t)/2]} \left\lVert\eta(t) - \sum_{k=0}^N c_k(t) (s - t)^k\right\rVert_{\infty} \approx \|\eta(t)\|_{\infty} \sum_{k=N+1}^{\infty} 2^{-k} = 2^{-N} \|\eta(t)\|_{\infty}. 
\]
Assuming $\|\eta(t)\|_{\infty} \leq 100$ (which appears to be safe in practice), to achieve $d$ decimal places of accuracy, $N \geq (d+2)\frac{\log10}{\log2}$ terms suffices. To perform these operations naively, roughly $3d$ decimal places of precision is required. To reduce the required precision, we instead work with coefficients normalized relative to some small parameter $\rho > 0$:
\[
c_k^\rho(t) = \frac{\rho^k}{k!} \eta^{(k)}(t),\qquad c^\rho_{m+1}=\frac{\rho}{m+1}\sum_{k=0}^{m}\left\{ (-\rho)^{m-k}\frac{L_{d_{1}d_{2}}c^\rho_{k}}{t^{m-k+1}}+B_{d_{1}d_{2}}\left[c^\rho_{m-k},c^\rho_{k}\right]\right\},\quad m \geq 0.
\]
In practice, we have found $\rho = 0.3$ to work well. The local Taylor expansions then become
\[
\eta(t+\delta) \approx \sum_{k=0}^N c_k^\rho(t) \left(\frac{\delta}{\rho}\right)^k,\qquad 0 \leq \delta < \frac{r(t)}{2}.
\]
With approximate solutions for $\eta$ (and consequently $\zeta$ by switching $d_1$ and $d_2$), next we estimate the stopping times $t_\alpha$, $t_\omega$. This is a one-dimensional root-finding problem, solved by conducting a grid search for a value where $Z(t) < 0$ (since $Z(t)$ is strictly monotone decreasing) using Brent's method \cite{Brent71}. The final shooting problem constitutes a two-dimensional root-finding problem. Computing linearisations of $A$, $\Omega$ is ultimately too expensive, so Broyden's method \cite{Broyden65} is employed.

\subsection{Steps (II)--(III): Chebyshev Methods}
\label{sec:chebsolver}

To render an heuristic function $f:[0,T]\to \mathbb{R}$ explicitly and rigorously integrable, it is first approximated by a polynomial interpolant. Expressing the system of differential equations (\ref{etaequationO}) in terms of $\upsilon = \eta'$, all operations involved, including $+$, $-$, $\times$, $\upsilon\mapsto \int_0^x \upsilon$, $\upsilon \mapsto \frac1x \int_0^x \upsilon$, and differentiation, can be conducted within the polynomial ring differential algebra. Approximating $\upsilon$ using a polynomial, any $H^k$ norm of the \emph{a posteriori error} can be computed reliably within interval arithmetic. Unfortunately, evaluating polynomials in the standard monomial basis $\mathbb{R}[1,x,x^2,\dots]$ is susceptible to extreme precision loss when the degree is high. Such issues are rectified by performing interpolation under a different polynomial basis; arguably the best in this regard are the Chebyshev polynomials.

The Chebyshev polynomials $T_n$ are the unique polynomials \cite[\S1.2.1]{Mason} which satisfy $T_n(x) = \cos(n \arccos x)$ for $|x|\leq 1$, and form an orthonormal basis on $L^2([-1,1])$. They satisfy $T_0(x) = 1$, $T_1(x) = x$, and the recursion relation $T_{n+1}(x) = 2 xT_n(x) - T_{n-1}(x)$. To fit $f$ in terms of Chebyshev polynomials, the domain $[0,T]$ is mapped to $[-1,1]$ through the transformation $x \mapsto \frac{2x}{T} - 1$. Letting $N$ be a positive integer, the polynomial $f_n$ that matches $f$ on each of the Chebyshev nodes $x_k = \cos(\pi N^{-1}(k-\frac12))$ (the zeros of $T_N$) is given by \cite[Theorem 6.7]{Mason}
\begin{equation}
\label{chebyapprox}
f_N = \frac12 c_0 T_0 + \sum_{k=1}^{N-1} c_k T_k, \qquad
c_k = \frac{2}{N} \sum_{j=1}^N f(x_j) \cos\left(\frac{\pi k(j-\frac12)}{N}\right).
\end{equation}
Observe that function evaluations are shared across coefficients. Unlike the monomial basis, the coefficients $c_k$ decrease for large degree by the Riemann-Lebesgue lemma. Indeed, one can show that the Chebyshev basis is near-optimal for polynomial approximation \cite[\S3.3]{Mason}. The Chebyshev polynomials can also be efficiently computed using complex arithmetic through the expression \cite[eqn. (1.49)]{Mason}
\[
T_n(x) = \frac12(x_{+}^n + x_{-}^n),\qquad x_\pm = x \pm \sqrt{x^2 - 1}.
\]
Altogether, Chebyshev interpolants can be computed with minimal loss of precision. They also exhibit highly accurate approximations, as we have found the choice $N = 3d + 1$ to reliably achieve $d$ decimal places of precision across $[0,T]$ in our cases. However, computation of the \emph{a posteriori error} becomes nontrivial; calculations cannot be performed in the monomial basis, as this will once again incur precision loss. Instead, products are computed using \cite[eqn. (2.38)]{Mason}, 
integrals using \cite[eqn. (2.43)]{Mason}, 
and derivatives using \cite[eqn. (2.49)]{Mason}. 
For the Ces\`{a}ro mean $C_n(x) = \frac1x \int_0^x T_n(a s + b) \dd s$, for each $n$, let $R_n(x) = x^{-1}(T_n(a x + b) - T_n(b))$, 
which satisfies $R_0(x) = 0$, $R_1(x) = a$, and for $n \geq 2$,
\[
R_n(x) = 2 a T_{n-1}(a x + b) + 2 b R_{n-1}(x) - R_{n-2}(x).
\]
Then $C_n = T_0$ if $n = 0$, $C_n = \frac12 T_1 + \frac{b}{2} T_0$ if $n = 1$, and
\[
C_{n}=
\frac{1}{a}\left[\frac{R_{n+1}}{2(n+1)}-\frac{R_{n-1}}{2(n-1)}\right], \quad \text{ if }n\geq2.
\]
Using these properties of the Chebyshev polynomials together with equation (\ref{chebyapprox}) allows for the computation of the \emph{a posteriori error} in Step \ref{step:Interp}, as one can fit $\hat{\upsilon}$ to the derivative of the solution in Step \ref{step:Heur} using (\ref{chebyapprox}) and explicitly compute
\[
\hat{E}_{1,\text{start}} = L_{29} \left(\frac1t \int_0^t \hat{\upsilon}(s) \dd s\right) + B_{29}\left(\int_0^t \hat{\upsilon}(s), \int_0^t \hat{\upsilon}(s)\right) - \hat{\upsilon}(t),
\]
as a polynomial in the Chebyshev basis. The $H^k$ norm of $f \in \{\hat{E}_{1,\text{start}},\hat{E}_{1,\text{end}},\hat{\eta},\hat{\zeta}\}$ can be explicitly computed by evaluating $\int_0^t f(s)^2 \dd s$ in the Chebyshev basis, and evaluating it at $t = \hat{t}_{\hat{\alpha}} + 10^{-3}$ or $\hat{t}_{\hat{\omega}} + 10^{-3}$. To ensure rigour, all computations are performed using interval arithmetic with the \texttt{arb} software package \cite{Arb}.

Only Step \ref{step:Lin}---solving the system of linear differential equations (\ref{lineariseIC})---remains. Ordinarily, this can be readily achieved using a Chebyshev series method as in \cite[\S10.2.1]{Mason}. However, as (\ref{lineariseIC}) involves the approximations $\hat{\eta},\hat{\zeta}$ as well, we have found that reliably achieving this task requires an uncomfortably high degree of precision. Surprisingly, finite difference approximations appear to work better: taking
\[
\hat{\mu}(t,\alpha) = \frac{\hat{\eta}(t,\hat{\alpha}+\delta) - \hat{\eta}(t,\hat{\alpha}-\delta)}{2 \delta},\qquad
\hat{\nu}(t,\omega) = \frac{\hat{\zeta}(t,\hat{\omega}+\delta) - \hat{\zeta}(t,\hat{\omega}-\delta)}{2 \delta},
\]
the approximations $\hat{\mu},\hat{\nu}$ are accurate to order $\mathcal{O}(\delta^2)$. To remain in the Chebyshev basis for these calculations, it is important that Chebyshev expansions of $\hat{\eta}(\cdot,\hat{\alpha}+\delta)$ and $\hat{\eta}(\cdot,\hat{\alpha}-\delta)$ are conducted on the same interval $[0,\hat{t}_{\hat{\alpha}}+10^{-3}]$.

For targetting $d$ decimal places of precision, letting $\delta = 10^{-[d/3]}$ appears near optimal, losing roughly $d/3$ decimal places of accuracy over $\hat{\mu},\hat{\nu}$ themselves. Hence, to target $d_{\text{target}}$ decimal places of precision, one should look to solve the original problem to at least
$d \geq \frac32 d_{\text{target}}$ decimal places of precision. As Proposition \ref{prop:finalstep} requires $d_{\text{target}} = 350$, we require $d \geq 525$. As seen, $d = 550$ suffices.


\section{Concluding Remarks}
In this work, we have developed a computationally-assisted proof of the existence of a novel Einstein metric on $S^{12}$. On the numerical side, this was achieved by first obtaining a high-precision \emph{heuristic} solution, interpolating this solution with Chebyshev polynomials to rigorously compute \emph{a posteriori} error, and using finite difference schemes to estimate the differential in $\alpha$. To our knowledge, these techniques are effectively optimal---they are fast, and reveal rigorously accurate error bounds. We assert that this procedure (Steps \ref{step:Heur}--\ref{step:Theory}) should also be applicable for developing computationally-assisted proofs of existence of solutions to differential equations more generally. As the equations involved in this paper are ordinary differential equations, this affords the use of Taylor series solvers and polynomial methods, which are remarkably efficient for high-precision computation. Such methods are unlikely to be as effective for partial differential equations, even in two spatial dimensions. Kernel interpolation methods \cite{RasmussenWilliams} scale well into higher-dimensions, and can achieve high precision along carefully chosen quadrature points. For the heuristic solution to the nonlinear problem, one can look toward the adoption of high-precision finite element method solvers including hp-FEM \cite{Melenk}, although these can struggle with singular boundary conditions. Another noteworthy alternative that performs better with singular boundary conditions are neural network-based solutions, as seen in \cite{Wangetal23}. At present, such techniques are limited to low-precision, and would require further development to achieve higher-precision results. 

On the theoretical side, the Schauder Fixed Point Theorem was repeatedly used to guarantee existence, although this does not assert uniqueness in the final solution. Local uniqueness may be achieved via the Banach Fixed Point Theorem, but this would require some care. Due to the allowances from the high-precision solvers, substantial liberties were taken with the development of the linearisation bounds, particularly in Proposition \ref{GreenEstimate}, which evoked a crude Gronwall inequality for $M$ at great expense to the final precision required. This can be substantially improved by estimating the complete Green's function, that is, the fundamental matrix for the equations of \eqref{lineariseIC}. This quantity is unbounded in any neighbourhood of $t = 0$, and therefore requires specialised treatment of the singularity.

\clearpage

\appendix

\begin{center}
\LARGE \bf Appendix
\end{center}


\section{Analytic Preliminaries}\label{analprelim}
In Section \ref{geomprelim} of this paper, we saw that $\mathsf{O}(3)\times \mathsf{O}(10)$-invariant Einstein metrics on $S^{12}$ can be constructed by analysing solutions of a certain singular value problem for a system of ODEs. The purpose of this Appendix is to discuss the basic analytical tools that we will use in this study.  

 \subsection{Norms}\label{norms}
 Let $I\subseteq \mathbb{R}$ be an interval. In this paper, we will study several vectors and vector-valued functions $v:I\to \mathbb{R}^n$, as well as matrices and matrix-valued functions $M:I\to M_{n\times n}(\mathbb{R})$; the purpose of this subsection is to discuss the norms that we will use to study these objects.   
The finite-dimensional vector space $\mathbb{R}^n$ will always be equipped with the Euclidean norm, i.e., if $x=(x_1,\cdots,x_n)\in \mathbb{R}^n$, then we write 
 \begin{align*}
     \left|x\right|=\left(\sum_{i=1}^{n}\left|x_i\right|^2\right)^{\frac{1}{2}}.
 \end{align*}
 On the other hand, $ M_{n\times n}(\mathbb{R})$ is equipped with the operator norm, i.e., if $A\in M_{n\times n}(\mathbb{R})$, then 
 \begin{align*}
     \left|A\right|=\sup_{x\in \mathbb{R}^n: \left|x\right|=1}\left|Ax\right|. 
 \end{align*}
 We then define the norm of  functions with domain $I$ in the usual way with this understanding. For example, 
\begin{align*}
\|f\|_{L^2(I)}=\left(\int_I \left|f(t)\right|^2dt\right)^{\frac{1}{2}},
\end{align*}
where $f$ can be a vector function or a matrix function.
We find $L^2(I)$ norms to be the most useful for the purposes of this paper, because it is quite straightforward to compute the $L^2$ norm of a given polynomial function $p(t)$, as opposed to, say, its $\sup$ norm. 

We can extend the $L^2(I)$ norm to the $H^k(I)$ norms defined inductively as:  
\begin{align*}
    \| f\|^2_{H^k(I)}=\|f\|^2_{H^{k-1}(I)}+\|h\|_{L^2(I)}^2
\end{align*}
for any vector or matrix function $f$, where $h$ is the $k$th derivative of $f$. 
Finally, we also consider the $C^0$ norm as
\begin{align*}
    \|f\|_{C^0(I)}=\sup_{t\in I}\left|f(t)\right|,
\end{align*}
as well as the $C^k$ norms defined inductively by 
\begin{align*}
    \| f\|_{C^k(I)}=\|f\|_{C^{k-1}(I)}+\|h\|_{C^0(I)}. 
\end{align*}
The following comparisons between norms of scalar functions $f:I\to \mathbb{R}$ will be helpful. 

\begin{lem}[\textsc{Embedding}]\label{Sobolevembedding}
     Choose an interval $I=[0,t^*]$ for some $t^*<1.5$.
     \begin{enumerate}[label=(\alph*)]
         \item For any smooth scalar function $f:I\to \mathbb{R}$ and any $t_0\in I$, we have \begin{align*}
\left|\left|f-f_0\right|\right|_{C^0(I)}\le\frac{5}{4} \left|\left|f'\right|\right|_{L^2(I)},
\end{align*}
where $f_0:I\to \mathbb{R}$ is the constant function $f_0(t)=f(t_0)$. 
\item   For any smooth scalar function $f:I\to \mathbb{R}$, we have \begin{align*}
\left|\left|f\right|\right|_{C^0(I)}\le 2 \left|\left|f\right|\right|_{H^1(I)}.
\end{align*}
     \end{enumerate}

\end{lem}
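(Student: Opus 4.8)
The plan is to prove both embedding inequalities by elementary means, using only the fundamental theorem of calculus and the Cauchy--Schwarz inequality, exploiting the fact that the interval has length $t^* < 1.5$.

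For part (a), I would start from the identity $f(t) - f(t_0) = \int_{t_0}^t f'(s)\,\dd s$, valid for any $t,t_0 \in I$. Taking absolute values and applying Cauchy--Schwarz gives
\begin{align*}
\left|f(t) - f(t_0)\right| \le \left(\int_I 1 \,\dd s\right)^{1/2}\left(\int_I \left|f'(s)\right|^2 \dd s\right)^{1/2} \le \sqrt{t^*}\,\|f'\|_{L^2(I)}.
\end{align*}
Since $t^* < 1.5$, we have $\sqrt{t^*} < \sqrt{1.5} \approx 1.2247 < 5/4$, so taking the supremum over $t \in I$ yields $\|f - f_0\|_{C^0(I)} \le \frac54 \|f'\|_{L^2(I)}$. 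That is all part (a) requires.

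For part (b), the issue is that we must also control the size of $f$ itself, not merely its oscillation. The standard trick is to pick a point where $f$ cannot be too large. By the mean value theorem for integrals (or simply because the average cannot exceed the maximum), there exists $t_1 \in I$ with $|f(t_1)| = \frac{1}{t^*}\left|\int_I f(s)\,\dd s\right| \le \frac{1}{\sqrt{t^*}}\|f\|_{L^2(I)}$, again by Cauchy--Schwarz. Applying part (a) with $t_0 = t_1$ then gives, for every $t \in I$,
\begin{align*}
\left|f(t)\right| \le \left|f(t_1)\right| + \left|f(t) - f(t_1)\right| \le \frac{1}{\sqrt{t^*}}\|f\|_{L^2(I)} + \sqrt{t^*}\,\|f'\|_{L^2(I)}.
\end{align*}
The coefficients $1/\sqrt{t^*}$ and $\sqrt{t^*}$ are each bounded: $\sqrt{t^*} < \sqrt{1.5}$, and the $1/\sqrt{t^*}$ term is the only place where the behaviour as $t^* \to 0$ could be a problem. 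Here I expect the main (minor) obstacle: the constant $1/\sqrt{t^*}$ blows up as $t^* \to 0$. This must be addressed either by restricting the lemma to $I = [0,t^*]$ with $t^*$ bounded below as well, or --- more likely given how the lemma is invoked in Section~\ref{Ckhat}, where it is always applied with $t^*$ on the order of the stopping times --- by noting that in the actual applications $t^*$ is comfortably larger than some fixed constant. If one instead wants a clean universal statement, one replaces the averaging point argument by $|f(t_1)|^2 \le \frac{1}{t^*}\int_I f(s)^2\,\dd s + \int_I |f'(s)|\,|f(s)|\,\dd s$-type manipulations, but the simplest route is: using $a + b \le \sqrt{2}\sqrt{a^2+b^2}$, bound $\frac{1}{\sqrt{t^*}}\|f\|_{L^2} + \sqrt{t^*}\|f'\|_{L^2} \le \max\{1/\sqrt{t^*},\sqrt{t^*}\}\cdot\sqrt{2}\,\|f\|_{H^1(I)}$, and then observe that for the range of $t^*$ relevant to the paper (bounded away from $0$ by, say, $1$, since $\hat t_{\hat\alpha},\hat t_{\hat\omega} > 1$), the prefactor $\sqrt{2}\max\{1/\sqrt{t^*},\sqrt{t^*}\} \le \sqrt{2}\cdot\sqrt{1.5} < 2$, giving $\|f\|_{C^0(I)} \le 2\|f\|_{H^1(I)}$ as claimed. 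I would state the lemma with whatever lower bound on $t^*$ is actually used downstream and carry the constant through honestly.
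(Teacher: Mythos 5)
Your proof follows essentially the same route as the paper's. Part (a) is FTC plus Cauchy--Schwarz (the paper calls it H\"older), giving the factor $\sqrt{t^*}<\sqrt{1.5}<5/4$, exactly as you do. Part (b) uses the same averaging-point trick: both you and the paper choose the base point so that $f_0$ is the mean of $f$ over $I$ and bound $|f_0|\le \|f\|_{L^2}/\sqrt{t^*}$; the only difference is bookkeeping---you add the two pieces in absolute value and then invoke $a+b\le\sqrt{2(a^2+b^2)}$, while the paper squares first and uses the Pythagorean identity $\|f\|_{L^2}^2=\|f-f_0\|_{L^2}^2+t^*|f_0|^2$, which is available precisely because $\int_I(f-f_0)=0$.

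Your observation that the lemma as stated requires an unstated lower bound on $t^*$ is correct and well caught. Taking $f$ constant shows the constant $2$ in (b) forces $t^*\ge 1/4$; the paper's own proof silently needs $t^*\ge 16/25$ in the step where $2|f_0|^2$ is absorbed into $\tfrac{25}{8}t^*|f_0|^2$; and your cruder $\sqrt{2}\max\{1/\sqrt{t^*},\sqrt{t^*}\}$ bound needs $t^*\ge 1/2$ (sharpened via Cauchy--Schwarz on the vector $(1/\sqrt{t^*},\sqrt{t^*})$ this relaxes to $t^*\ge 2-\sqrt{3}$). All of these thresholds are comfortably cleared in the only places the lemma is actually invoked (Section \ref{Ckhat}), where $t^*=\hat{t}_{\hat\alpha}+10^{-3}\approx 1.49$ or $\hat{t}_{\hat\omega}+10^{-3}\approx 1.16$, so the downstream estimates are unaffected; but the lemma statement itself should carry a lower bound such as $t^*\ge 1$.
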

\begin{proof}
    H\"older's inequality gives
    \begin{align*}
        \left|\left|f-f_0\right|\right|_{C^0(I)}&=\sup_{t\in I} \left|\int_{t_0}^t f'(s)\dd s\right|\\
        &\le \int_I \left|f'(t)\right|\dd t\\
        &\le \frac{5}{4}\left|\left|f'\right|\right|_{L^2(I)},
    \end{align*}
    which gives (a). For (b), we use the mean value theorem to find a $t_0$ for which $\int_I f-f_0=0$. Then (a) gives 
    \begin{align*}
        \left|\left|f\right|\right|^2_{C^0(I)}&\le\left(\left|\left|f-f_0\right|\right|_{C^0(I)}+\left|\left|f_0\right|\right|_{C^0(I)}\right)^2 \\
        &\le \left(\frac{5}{4}\left|\left|f'\right|\right|_{L^2(I)}+\left|f_0\right|\right)^2\\
        &\le 2\left(\left(\frac{5}{4}\left|\left|f'\right|\right|_{L^2(I)}\right)^2+ \left|f_0\right|^2\right)\\
        &\le \frac{25}{8}\left(\left|\left|f'\right|\right|_{L^2(I)}^2+\left|\left|f-f_0\right|\right|^2_{L^2(I)}+t^*(f_0)^2\right)\\
        &=\frac{25}{8}\left(\left|\left|f'\right|\right|_{L^2(I)}^2+\left|\left|f-f_0+f_0\right|\right|_{L^2(I)}^{2}\right)\\
        &=\frac{25}{8}\left|\left|f\right|\right|_{H^1(I)}^2. 
    \end{align*}

\end{proof}

Lemma \ref{Sobolevembedding} can be interpreted as weak version of Morrey's inequality (an example of a Sobolev inequality).

\subsection{Estimates on terms in the ODE}
Choose $d_1,d_2\in \mathbb{N}\setminus\{0,1\}$. In Section \ref{geomprelim}, we wrote the Einstein equation for $\mathsf{O}(d_1+1)\times \mathsf{O}(d_2+1)$-invariant Einstein metrics on $S^{d_1+d_2+1}$ subject to one of the two singular boundary conditions as the following singular initial-value problem for $\eta(t)\in \mathbb{R}^5$: 
\begin{align}\label{etaequation}
    \eta'(t)=\frac{1}{t}L_{d_1d_2} \eta(t)+B_{d_1d_2}(\eta(t),\eta(t)), \qquad \eta(0)=(0,0,0,\alpha,\sqrt{\lambda}),
\end{align}
where 
\begin{align*}
    L_{d_1d_2}=\begin{pmatrix}
        0&0&0&0&0\\
        0&-d_2&0&0&0\\
        0&2&-2&0&0\\
        0&0&0&0&0\\
        0&0&0&0&0
    \end{pmatrix},
\end{align*}
and $B_{d_1d_2}:\mathbb{R}^5\to \mathbb{R}^5$ is the symmetric bi-linear form 
\begin{align*}
    B_{d_1d_2}(x,y)=
    \begin{pmatrix}
        -\frac{1}{2d_1}(x_2y_4+x_4y_2+x_1y_2+x_2y_1)\\
        -\frac{1}{2}(x_2y_3+x_3y_2)+d_1\left(x_1y_1+x_4y_4+x_1y_4+x_4y_1\right)-d_1x_5y_5\\
        -\frac{1}{d_2}x_3y_3+\frac{1}{d_2}\left(x_2y_3+x_3y_2\right)-(\frac{1}{d_1}+\frac{1}{d_2})x_2y_2-x_5y_5\\
        0\\
        0
    \end{pmatrix}.
\end{align*}  
In this subsection, we write some basic estimates for $L_{d_1d_2}$ and $B_{d_1d_2}$ in terms of the norms discussed in Subsection \ref{norms}. 

\begin{lem}
\label{linearprops}
The matrix $L_{d_1 d_2}$ for the linear term satisfies the following properties:
\begin{enumerate}[label=(\alph*)]
\item \label{linear:negdef} $-L_{d_1 d_2}$ is non-negative definite, that is, $x^\top L_{d_1 d_2} x \leq 0$ for all $x \in \mathbb{R}^5$; and
\item \label{linear:inv} for all $k=1,2,\dots$, $kI-L_{d_1d_2}$ is invertible, and $\left|(kI-L_{d_1d_2})^{-1}\right|\le \frac{5}{4k}$.
\end{enumerate}
\end{lem}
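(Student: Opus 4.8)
The plan is to establish (a) by directly evaluating the quadratic form attached to $L_{d_1 d_2}$, and then to deduce (b) from (a) by a coercivity argument, so that no explicit inversion of $kI - L_{d_1 d_2}$ is needed.

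For part (a): the matrix $L_{d_1 d_2}$ has nonzero entries only in the $2\times 2$ block indexed by the second and third coordinates, so for $x = (x_1,\dots,x_5)\in\mathbb{R}^5$ one reads off
\[
x^\top L_{d_1 d_2} x = -d_2 x_2^2 + 2 x_2 x_3 - 2 x_3^2 = -d_2\Bigl(x_2 - \tfrac{x_3}{d_2}\Bigr)^2 - \Bigl(2 - \tfrac1{d_2}\Bigr) x_3^2 .
\]
Since $d_2\ge 2$ we have $2 - \tfrac1{d_2}\ge \tfrac32 > 0$, so both terms on the right are $\le 0$, which proves (a).

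For part (b): first note that $kI - L_{d_1 d_2}$ is injective, hence invertible on $\mathbb{R}^5$, since $(kI - L_{d_1 d_2})x = 0$ forces $k|x|^2 = x^\top(kI - L_{d_1 d_2})x + x^\top L_{d_1 d_2} x \le 0$ by (a), whence $x = 0$. Now fix $y\in\mathbb{R}^5$ and set $x = (kI - L_{d_1 d_2})^{-1}y$; using (a) again,
\[
k|x|^2 \le k|x|^2 - x^\top L_{d_1 d_2} x = x^\top(kI - L_{d_1 d_2})x = x^\top y \le |x|\,|y|,
\]
so $|x| \le \tfrac1k |y| \le \tfrac{5}{4k}|y|$; taking the supremum over unit vectors $y$ gives the stated operator-norm bound.

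The statement is elementary and there is no real obstacle here: the only point requiring care is the sign bookkeeping in (a) — extracting the quadratic form correctly and observing that the standing hypothesis $d_2\ge 2$ is exactly what makes the cross-term coefficient harmless — after which (b) is immediate. (The argument in fact yields the sharper constant $\tfrac1k$; the factor $\tfrac54$ in the statement is merely slack. An alternative for (b) would be to write $(kI - L_{d_1 d_2})^{-1}$ out as a block-diagonal matrix and bound its operator norm via $|A|\le\sqrt{|A|_1|A|_\infty}$, but the coercivity route is shorter and reuses part (a).)
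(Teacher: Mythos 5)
Your proof is correct. Part (a) is essentially the same as the paper's: both extract the same quadratic form $-d_2 x_2^2 + 2x_2 x_3 - 2x_3^2$ and make it manifestly non-positive; the paper uses Cauchy's inequality $2x_2 x_3 \le x_2^2 + x_3^2$ to get $-(d_2-1)x_2^2 - x_3^2$, while you complete the square in $x_2$, a cosmetic variation. (Minor quibble: neither route actually needs the full strength of $d_2 \ge 2$; your completion works for $d_2 \ge \tfrac12$ and the paper's for $d_2 \ge 1$, so $d_2\ge2$ is \emph{sufficient} rather than ``exactly'' what is needed.)

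Part (b) is where you genuinely diverge, and to your advantage. The paper writes out $(kI - L_{d_1 d_2})^{-1}$ explicitly as a block matrix, splits it into a diagonal piece bounded by $1/k$ and a rank-one off-diagonal piece bounded by $\frac{2}{(k+2)(k+d_2)} \le \frac{1}{4k}$, and adds these to get the constant $\frac{5}{4k}$. Your coercivity argument $k|x|^2 \le x^\top(kI - L_{d_1 d_2})x = x^\top y \le |x|\,|y|$ sidesteps the explicit inversion entirely, establishes invertibility and the norm bound in one stroke, reuses part (a) directly, and yields the sharper constant $\frac{1}{k}$. In fact $\frac1k$ is attained (take $x = e_1$), so your bound is optimal and the paper's $\frac54$ is indeed pure slack introduced by their triangle-inequality split. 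The one thing worth being conscious of, though you handle it correctly, is that $L_{d_1d_2}$ is not symmetric, so ``$x^\top L_{d_1d_2}x \le 0$'' is a statement about the symmetric part of $L_{d_1d_2}$; the coercivity argument only ever uses the scalar $x^\top(kI - L_{d_1d_2})x$, so nothing goes wrong. Your route is shorter, more conceptual, and generalizes trivially to any matrix whose symmetric part is non-positive.
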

\begin{proof}
Beginning with \ref{linear:negdef}, for $x = (x_1,\dots,x_5)$ arbitrary, using Cauchy's inequality $2xy \leq x^2 + y^2$, 
\[
x^\top L_{d_1 d_2} x = -d_2 x_2^2 + 2 x_2 x_3 - 2 x_3^2 \leq -(d_2 - 1) x_2^2 - x_3^2 \leq 0,
\]
since $d_2 \geq 2$. To show \ref{linear:inv}, one can verify that 
    \begin{align*}
        (kI-L_{d_1d_2})^{-1}=\begin{pmatrix}
            \frac{1}{k}&0&0&0&0\\
            0&\frac{1}{d_2+k}&0&0&0\\
            0&\frac{2}{(k+2)(d_2+k)}&\frac{1}{k+2}&0&0\\
            0&0&0&\frac{1}{k}&0\\
            0&0&0&0&\frac{1}{k}
        \end{pmatrix}.
    \end{align*}
Using the fact that for all $k \geq 1$ and $d_2 \geq 2$,
\[
\frac{2}{(k+2)(k+d_2)} \leq \frac{2}{(k+2)^2} \leq \frac{2}{(k+2)^2 - (k-2)^2} = \frac{1}{4k},
\]
we find that 
\[
    \left|(kI-L_{d_1d_2})^{-1}x\right|\le \frac{1}{k}\left|x\right|+\frac{1}{4k}\left|x_2\right|\le \left(\frac{1}{k}+\frac{1}{4k}\right)\left|x\right|,
\]
as required. 
\end{proof}
We conclude our discussion on the linear term by understanding solutions of the linear problem 
\begin{align*}
    \mu'(t)=\frac{1}{t}L_{d_1d_2}\mu(t)+F(t), \qquad \mu(0)=0, 
\end{align*}
for time interval $[0,t^*]$. In the following Lemma, we assert that solutions to this problem are unique.
\begin{lem}\label{uniquenessODE}
Consider a differential equation of the form
\[
x'(t)=\frac{L_{d_1 d_2} x(t)}{t}+R(t,x(t))
\]
where $R$ is smooth in $t$ and $x(t)$. If a solution exists and $x(0)= 0$, then it is unique on any compact set.
\end{lem}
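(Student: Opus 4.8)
The plan is to reduce the statement to an energy estimate that exploits the definiteness of $L_{d_1 d_2}$. Suppose $x_1$ and $x_2$ are two solutions defined on a common interval $[0,T]$ with $x_1(0) = x_2(0) = 0$; the goal is to show $x_1 \equiv x_2$ there, which suffices since any compact subset of a common interval of existence is contained in such a $[0,T]$. Away from $t = 0$ the right-hand side is smooth, hence locally Lipschitz in $x$, so classical ODE uniqueness already applies on $(0,T]$; the only genuine issue is uniqueness across the singularity at $t = 0$, and I would handle the whole interval uniformly by a Gr\"onwall argument rather than patching.

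Set $w = x_1 - x_2$, so that $w$ is continuous on $[0,T]$, $w(0) = 0$, and on $(0,T]$ it satisfies
\[
w'(t) = \frac{L_{d_1 d_2} w(t)}{t} + \bigl(R(t,x_1(t)) - R(t,x_2(t))\bigr).
\]
Because $x_1$ and $x_2$ take values in a compact set and $R$ is smooth, there is a constant $C$ with $|R(t,x_1(t)) - R(t,x_2(t))| \le C|w(t)|$ for all $t \in [0,T]$. Differentiating $|w|^2$ and using $2a^\top b \le |a|^2 + |b|^2$,
\[
\frac{d}{dt}|w(t)|^2 = \frac{2}{t}\, w(t)^\top L_{d_1 d_2} w(t) + 2 w(t)^\top\bigl(R(t,x_1(t)) - R(t,x_2(t))\bigr) \le (2C+1)\,|w(t)|^2,
\]
where the crucial point is that $w(t)^\top L_{d_1 d_2} w(t) \le 0$ by Lemma \ref{linearprops}\ref{linear:negdef}, so the singular term is discarded entirely rather than estimated.

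Integrating this inequality from $t_0 > 0$ to $t$ gives $|w(t)|^2 \le |w(t_0)|^2 + (2C+1)\int_0^t |w(s)|^2\,\dd s$; letting $t_0 \to 0^+$ and using continuity of $w$ at $0$ with $w(0) = 0$ yields $|w(t)|^2 \le (2C+1)\int_0^t |w(s)|^2\,\dd s$ for every $t \in [0,T]$. Gr\"onwall's inequality (applied to $\phi(t) = \int_0^t |w(s)|^2\,\dd s$, which satisfies $\phi' \le (2C+1)\phi$ and $\phi(0)=0$) then forces $w \equiv 0$, i.e. $x_1 \equiv x_2$ on $[0,T]$. The main obstacle is precisely the singularity at $t = 0$: a naive Lipschitz bound on $L_{d_1 d_2} x / t$ blows up as $t \to 0$, and what rescues the argument is the structural fact that $-L_{d_1 d_2}$ is non-negative definite, which allows the singular term to be dropped from the energy inequality altogether.
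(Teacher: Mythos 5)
Your proof is correct and follows essentially the same approach as the paper's: form the difference $w = x_1 - x_2$, compute the derivative of $|w|^2$, discard the singular term using the non-negative definiteness of $-L_{d_1 d_2}$ from Lemma~\ref{linearprops}\ref{linear:negdef}, bound the remainder via the local Lipschitz constant of $R$, and conclude with Gr\"onwall. If anything, your version is slightly more careful than the paper's: you explicitly integrate from $t_0 > 0$ and pass to the limit $t_0 \to 0^+$ to legitimize the Gr\"onwall application across the singular endpoint, a step the paper elides.
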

\begin{proof}
Let $x_1,x_2$ be two solutions such that $x_1(0)=x_2(0) = 0$. Now let $y = x_1 - x_2$, which satisfies
\[
y'(t) = \frac{L_{d_1 d_2} y(t)}{t} + R(t,x_1(t)) - R(t,x_2(t)). 
\]
Consider the evolution of $z(t)= \left|y(t)\right|^2$. Note that on any compact set, $R$ is Lipschitz continuous in both arguments. By Lemma \ref{linearprops} \ref{linear:negdef}, there exists some $C > 0$ such that
\begin{align*}
z'(t) = 2 y(t) \cdot y'(t) &= \frac{2 y(t)^\top L_{d_1 d_2} y(t)}{t} + y(t) \cdot [R(t, x_1(t)) - R(t,x_2(t))] \\
&\leq y(t) \cdot [R(t,x_1(t)) - R(t,x_2(t))] \\
&\leq C \left|y(t)\right|\left|x_1(t) - x_2(t)\right| = C z(t),
\end{align*}
and so by Gronwall's inequality, $z(t) = 0$ since $z(0) = 0$. 
\end{proof}

We use $\mathcal{L}_{d_1d_2}$ to denote the solution operator, i.e., the unique solution to this problem is given by $\mu=\mathcal{L}_{d_1d_2}F$. 
\begin{lem}\label{singularlinear}
    If $t^*\le 1.5$ and $d_2\in \{2,9\}$, then 
$\mathcal{L}_{d_1d_2}:C^k([0,t^*];\mathbb{R}^5)\to C^{k+1}([0,t^*];\mathbb{R}^5)$ 
    is bounded by $25$ if $k=0$, and by $600$ if $k=1$. 
\end{lem}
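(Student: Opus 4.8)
The plan is to exploit the triangular structure of $\mathcal{L}_{d_1 d_2}$ forced by the shape of $L_{d_1d_2}$, which only couples the second and third coordinates and does not depend on $d_1$. Writing $\mu=\mathcal{L}_{d_1d_2}F$ componentwise, coordinates $1,4,5$ solve $\mu_i'=F_i$ so $\mu_i(t)=\int_0^t F_i$; coordinate $2$ solves $\mu_2'+\tfrac{d_2}{t}\mu_2=F_2$; and coordinate $3$ solves $\mu_3'+\tfrac2t\mu_3=\tfrac2t\mu_2+F_3$. Multiplying the last two equations by the integrating factors $t^{d_2}$ and $t^2$ gives the closed forms $\mu_2=T_{d_2}F_2$ and $\mu_3=T_2F_3+2t^{-2}\int_0^t s\,\mu_2(s)\,\dd s$, where for $a\geq 0$ I abbreviate $(T_af)(t):=t^{-a}\int_0^t s^a f(s)\,\dd s$. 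Existence of $\mu$ is immediate from these formulas, and uniqueness (so that $\mathcal{L}_{d_1d_2}$ is well defined) is Lemma \ref{uniquenessODE}. Everything then reduces to mapping properties of the operators $T_a$ for $a\in\{2,d_2\}$ with $d_2\in\{2,9\}$.

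For the bound $\mathcal{L}_{d_1d_2}:C^0\to C^1$ I would first record that on $[0,t^*]$ with $t^*\leq 1.5$ one has $|(T_af)(t)|\leq \tfrac{t}{a+1}\|f\|_{C^0}$, hence $\|T_af\|_{C^0}\leq 1.5\|f\|_{C^0}$, and from the identity $(T_af)'=f-\tfrac at T_af$ together with $\tfrac at|(T_af)(t)|\leq\|f\|_{C^0}$ one gets $\|(T_af)'\|_{C^0}\leq 2\|f\|_{C^0}$. For the extra term $P(t):=2t^{-2}\int_0^t s\,\mu_2(s)\,\dd s$ appearing in $\mu_3$ I would use $\|P\|_{C^0}\leq\|\mu_2\|_{C^0}$ and the identity $P'=\tfrac2t\mu_2-\tfrac2tP$. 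Bounding each of the five coordinates of $\mu$ and of $\mu'$ this way and summing via $\|\mu\|_{C^1}\leq\sum_i\|\mu_i\|_{C^1}$, the resulting constant is comfortably under $25$ for both admissible values of $d_2$.

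The only real obstacle is the bound $C^1\to C^2$, since differentiating the equation twice produces terms like $-t^{-2}L_{d_1d_2}\mu$ and $t^{-1}L_{d_1d_2}\mu'$ that are individually singular at $t=0$; the blow-ups must cancel, and I would carry this out entirely at the level of the $T_a$. Using $(T_af)'-\tfrac1tT_af=f-\tfrac{a+1}{t}T_af$ and writing $f(s)=f(0)+\int_0^s f'$ one finds $\big|(T_af)'(t)-\tfrac1tT_af(t)\big|\leq 2t\|f'\|_{C^0}$, and then $(T_af)''=f'-\tfrac at\big((T_af)'-\tfrac1tT_af\big)$ yields $\|(T_af)''\|_{C^0}\leq(2a+1)\|f'\|_{C^0}$, which controls $\mu_2''$ and $(T_2F_3)''$. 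For $P$ a Fubini computation gives $\mu_2-P=T_2(\mu_2')$, and substituting this into $P''=\tfrac2t\mu_2'-\tfrac6{t^2}(\mu_2-P)$ and again writing $\mu_2'(u)=\mu_2'(t)-\int_u^t\mu_2''$ collapses the singular terms to leave $P''(t)=\tfrac2{t^4}\int_0^t v^3\mu_2''(v)\,\dd v$, whence $\|P''\|_{C^0}\leq\tfrac12\|\mu_2''\|_{C^0}$. Assembling the coordinatewise $C^2$ bounds with $d_2\in\{2,9\}$ (the largest contribution being $2d_2+1=19$ from $\mu_2''$ and half of that from $P''$) and adding the already-established $C^1$ bound, the total is well below $600\,\|F\|_{C^1}$, which finishes the argument.
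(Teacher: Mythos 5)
Your argument is correct and takes a genuinely different route from the paper's. You exploit the sparse, block-triangular structure of $L_{d_1d_2}$ to decouple the $5\times 5$ system into scalar problems: coordinates $1,4,5$ are pure antiderivatives, coordinate $2$ is $T_{d_2}F_2$, and coordinate $3$ is $T_2 F_3$ plus a coupling term $P$, with all bounds reduced to properties of the averaging operators $T_a$. The paper instead keeps the argument matrix-valued: it writes $\mu(t)=\int_0^t\exp(L_{d_1d_2}(\ln t-\ln s))F(s)\,\dd s$, computes the matrix exponential explicitly for $d_2\in\{2,9\}$, and bounds its operator norm uniformly by $2$; the $C^1\!\to C^2$ step is then handled by the decomposition $F=F(0)+(F-F(0))$, noting that the $F(0)$-part gives a solution linear in $t$ (so its second derivative vanishes), while the remainder gains an extra power of $t$ at the origin. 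Your key cancellation --- writing $(T_af)'-\tfrac1tT_af = f - \tfrac{a+1}{t}T_af$ and expanding $f$ about $t$, then the Fubini/integration-by-parts identity $\mu_2-P=T_2(\mu_2')$ leading to $P''(t)=\tfrac{2}{t^4}\int_0^t v^3\mu_2''(v)\,\dd v$ --- plays the same role as the paper's $B/t^2$ estimate but stays entirely scalar. Each approach has a payoff: the paper's is more robust to the form of $L$ (it only needs a uniform bound on $\exp(L\ln(t/s))$ and on $|L|$, and would transfer to a denser linear part), whereas yours is more elementary and, by my tally of the coordinatewise constants, yields bounds substantially sharper than $25$ and $600$. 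One small point: the $C^0$ bound on $P'$ needs one more line than you give --- from $P'=\tfrac2t(\mu_2-P)$ alone the $\tfrac2t$ is singular; you need something like $\mu_2(t)-P(t)=\tfrac{2}{t^2}\int_0^t s(\mu_2(t)-\mu_2(s))\,\dd s$, which is $O(t\|\mu_2'\|_{C^0})$, to cancel it. But that is in the same spirit as your $P''$ computation, so the gap is cosmetic rather than structural.
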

\begin{proof}
Observe that the relationship between $F$ and $\mu$ can be phrased as
    \begin{align*}
        \left(\text{exp}(-L_{d_1d_2}\ln(t))\mu(t)\right)'=\exp(-L_{d_1d_2}\ln(t))F(t),
    \end{align*}
    so that for each $0<t_0<t$, 
    \begin{align*}
        \mu(t)-\exp(L_{d_1 d_2}(\ln(t)-\ln(t_0)))\mu(t_0)=\exp(L_{d_1 d_2}\ln(t))\int_{t_0}^{t}\exp(-L_{d_1 d_2}\ln(s))F(s)\dd s.
    \end{align*}
   We compute 
\begin{align*}
    \text{exp}(L_{d_1d_2}(\ln(t)-\ln(s)))=
    \begin{cases}
        \begin{pmatrix}
           1&0&0&0&0\\
           0&\left(\frac{s}{t}\right)^2&0&0&0\\
           0&2\left(\frac{s}{t}\right)^2\ln\left(\frac{t}{s}\right)&\left(\frac{s}{t}\right)^2&0&0\\
           0&0&0&1&0\\
           0&0&0&0&1
       \end{pmatrix} \ \text{if} \ d_2=2,\\
       \begin{pmatrix}
           1&0&0&0&0\\
           0&\left(\frac{s}{t}\right)^{9}&0&0&0\\
           0&-\frac{2}{7}\left(\frac{s}{t}\right)^{9}\left(1-\left(\frac{t}{s}\right)^{7}\right)&\left(\frac{s}{t}\right)^2&0&0\\
           0&0&0&1&0\\
           0&0&0&0&1
       \end{pmatrix}\ \text{if} \ d_2=9.
    \end{cases}
\end{align*}

   Therefore, for each $s<t$, $\left|\text{exp}(L_{29}(\ln(t)-\ln(s)))\right|\le 2 $. We can therefore take $t_0\to 0$, giving 
    \begin{align*}
        \mu(t)&=\int_0^t \exp(L_{d_1d_2}(\ln(t)-\ln(s)))F(s)\dd s.
        \end{align*}
        Assuming further that $F$ is differentiable, the mean value theorem allows us to write 
        \begin{align*}
        \mu(t)&=\underbrace{\int_0^t \exp(L_{d_1d_2}(\ln(t)-\ln(s)))F(0) \dd s}_{A(t)}+\underbrace{\int_0^t \exp(L_{d_1d_2}(\ln(t)-\ln(s)))s F'(s^*)\dd s}_{B(t)}, 
    \end{align*}
    where $s^*\in [0,s]$ depends on $s$. 
    We now estimate the norm of $\mu$ in terms of $F$. First observe that 
    \begin{align*}
        |\mu(t)|\le \int_0^t 2|F(s)|\dd s\le 3.5 \|F\|_{C^0}.
    \end{align*}
    since we are only interested in values of $t\le 1.5$. Furthermore, 
    \begin{align*}
        \left|\frac{\mu(t)}{t}\right|\le \frac{1}{t}\int_0^t 2|F(s)|\dd s\le 2\|F\|_{C^0}.
    \end{align*}
    Since $\mu'(t)=\frac{1}{t}L_{d_1d_2}\mu(t)+F(t)$, and $|L_{d_1d_2}|\le 10$ we conclude that 
    \begin{align*}
\|\mu\|_{C^1}&=\|\mu\|_{C^0}+\|\mu'\|_{C^0}\\
&\le (3.5+20+1)\|F\|_{C^0}\\
&\le 25 \|F\|_{C^0}.
\end{align*}
For the $C^1\mapsto C^2$ estimate, observe that with the decomposition $\mu(t)=A(t)+B(t)$ described above, the second derivative of $A(t)$ vanishes, while $B'(0)=0$ and \begin{align*}
   B''(t)=\frac{L_{d_1d_2} B'(t)}{t}+F'(t)-\frac{L_{d_1d_2}B(t)}{t^2}. 
\end{align*} 
We estimate 
\begin{align*}
    \left|\frac{B(t)}{t^2}\right|&=\left|\frac{1}{t}\int_0^t \exp(L_{d_1d_2}(\ln(t)-\ln(s)))\left(\frac{s}{t}\right) F'(s^*)\dd s\right|\\
    &\le \sup_{0<s<t}|\exp(L_{d_1d_2}(\ln(t)-\ln(s)))F'(s^*)|\\
    &\le 2 \|F'\|_{C^0},
\end{align*}which implies 
\begin{align*}
    \left|B''(t)-\frac{L_{d_1d_2}B'(t)}{t} \right|\le 21 \|F'\|_{C^0},
\end{align*}
and the $C^0\mapsto C^1$ bound gives 
\begin{align*}
    \|B'\|_{C^1}\le 525 \|F'\|_{C^0}.
\end{align*} 
Combining with $\|B\|_{C^0}\le 6 \|F'\|_{C^0}$ and 
$\|A\|_{C^1}\le 3.5 \left|F(0)\right|$ gives the result. 
\end{proof}
Now we look at estimating the non-linear term. 
\begin{lem}\label{BE}
For all $x,y\in \mathbb{R}^5$, $\left|B_{d_1d_2}(x,y)\right|\le 3(d_1+1)\left|x\right|\left|y\right|$.
\end{lem}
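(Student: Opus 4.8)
The plan is to use the fact that the fourth and fifth components of $B_{d_1 d_2}$ vanish identically, so that, writing $B_k$ for the $k$-th component of $B_{d_1 d_2}(x,y)$, we have $|B_{d_1 d_2}(x,y)|^2 = B_1^2 + B_2^2 + B_3^2$. I would estimate each of the three surviving components separately, obtaining bounds $|B_k| \le c_k |x||y|$ with constants $c_k = c_k(d_1,d_2)$, and then combine them via $|B_{d_1 d_2}(x,y)| \le \sqrt{c_1^2 + c_2^2 + c_3^2}\,|x||y|$, finishing by checking $\sqrt{c_1^2+c_2^2+c_3^2} \le 3(d_1+1)$ for all $d_1,d_2 \ge 2$.

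For components one and three a crude term-by-term estimate is enough: every monomial obeys $|x_i y_j| \le |x||y|$, and a symmetric pair obeys $|x_i y_j + x_j y_i| \le 2|x||y|$, so adding up the absolute values of the coefficients gives $c_1 = 2/d_1 \le 1$ and $c_3 = 1 + \tfrac{1}{d_1} + \tfrac{4}{d_2} \le \tfrac{7}{2}$.

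The one place where naive counting is too wasteful is the second component, since the weight $d_1$ multiplies $x_1 y_1 + x_4 y_4 + x_1 y_4 + x_4 y_1$ and counting its four monomials individually would give a spurious $5 d_1$. The remedy is to write that block as $d_1 (x_1 + x_4)(y_1 + y_4)$ and use $|x_1+x_4| \le \sqrt{2}\,|x|$, $|y_1+y_4| \le \sqrt{2}\,|y|$ (equivalently, the $2\times 2$ matrix with all entries equal to $d_1$ has operator norm $2d_1$), which, together with the crude bound on the two remaining terms $-\tfrac{1}{2}(x_2 y_3 + x_3 y_2)$ and $-d_1 x_5 y_5$, yields $c_2 = 1 + 3 d_1$.

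Finally, with $c_1 \le 1$, $c_2 = 1+3d_1$, $c_3 \le \tfrac{7}{2}$, one has $c_1^2 + c_3^2 \le \tfrac{53}{4}$, while $9(d_1+1)^2 - (1+3d_1)^2 = 12 d_1 + 8 \ge 32 > \tfrac{53}{4}$ for $d_1 \ge 2$, so $c_1^2 + c_2^2 + c_3^2 \le 9(d_1+1)^2$ and the claimed bound follows. I do not expect a genuine obstacle here: the statement is a routine estimate, and the only thing worth care is the grouping trick in the middle component that keeps the dependence on $d_1$ linear with a small enough constant.
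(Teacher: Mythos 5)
Your proof is correct. It follows the same high-level structure as the paper's — bound $\lvert B\rvert^2 = B_1^2 + B_2^2 + B_3^2$ component-wise and then combine — but the per-component estimation is genuinely different. The paper applies the discrete Cauchy--Schwarz bound $\bigl(\sum_{i=1}^N \beta_i\bigr)^2 \le N\sum_i \beta_i^2$ inside each component (picking up factors of $4$, $7$, $5$ for the three components respectively), then collects coefficients on each $(x_iy_j)^2$ across all three squared components and bounds by the largest, which is $7d_1^2 + 5$ on $(x_5y_5)^2$; the conclusion follows from $\sqrt{7d_1^2+5}\le 3(d_1+1)$. You instead bound each $\lvert B_k\rvert \le c_k\lvert x\rvert\lvert y\rvert$ via the triangle inequality on monomials and then combine as $\sqrt{c_1^2+c_2^2+c_3^2}$. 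The distinctive ingredient in your argument is the factorisation $x_1y_1+x_4y_4+x_1y_4+x_4y_1 = (x_1+x_4)(y_1+y_4)$ together with $\lvert x_1+x_4\rvert\le\sqrt{2}\lvert x\rvert$, which controls the dominant $d_1$-weighted block without the waste a naive four-monomial count would incur; the paper instead simply tolerates the factor $7$ from Cauchy--Schwarz over the seven monomials in that component. Both approaches are elementary and land comfortably within the stated constant $3(d_1+1)$; your route is perhaps slightly more transparent about where the linear-in-$d_1$ growth actually comes from.
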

\begin{proof}
  With $x=(x_1,x_2,x_3,x_4,x_5)$ and $y=(y_1,y_2,y_3,y_4,y_5)$, we have 
  \begin{align*}
      \left|x\right|^2\left|y\right|^2=\sum_{i,j=1}^{5}\left|x_i\right|^2\left|y_j\right|^2,
  \end{align*}
  and 
  \begin{align*}
      \left|B_{d_1d_2}(x,y)\right|^2&=\frac{1}{4d_1^2}(x_2y_4+x_4y_2+x_1y_2+x_2y_1)^2\\
      &+\left(-\frac{1}{2}(x_2y_3+x_3y_2)+d_1\left(x_1y_1+x_4y_4+x_1y_4+x_4y_1\right)-d_1x_5y_5\right)^2\\
      &+\left(-\frac{1}{d_2}x_3y_3+\frac{1}{d_2}\left(x_2y_3+x_3y_2\right)-(\frac{1}{d_1}+\frac{1}{d_2})x_2y_2-x_5y_5\right)^2.
  \end{align*}
 Therefore, using 
 $\left(\sum_{i=1}^{N} \beta_i\right)^2\le N \sum_{i=1}^{N} \beta_i^2$ (discrete H\"older's inequality), we obtain 
  \begin{align*}
      \left|B_{d_1d_2}(x,y)\right|^2&\le \frac{1}{d_1^2}\left((x_2y_4)^2+(x_4 y_2)^2+(x_1 y_2)^2+(x_2y_1)^2\right)\\
      &\le 7\left(\frac{(x_2y_3)^2+(x_3y_2)^2}{4}+d_1^2 \left((x_1y_1)^2+(x_4y_4)^2+(x_1y_4)^2+(x_4 y_1)^2+(x_5 y_5)^2\right)\right)\\
      &+ 5\left(\frac{(x_3y_3)^2+(x_2y_3)^2+(x_3y_2)^2}{d_2^2}+\left(\frac{1}{d_1}+\frac{1}{d_2}\right)^2(x_2 y_2)^2+(x_5 y_5)^2\right).
  \end{align*}
  Collecting like terms, we see that the largest coefficient lands on the $(x_5 y_5)^2$ term, which is $7 d_1^2+5$. Taking the square root gives the result.  
\end{proof}
\begin{lem}\label{Btermok}
   For all $x,y\in \mathbb{R}^5$, we have $\left|\langle x,B_{d_1d_2}(x,y)\rangle\right|\le C\left|x\right|^2$, 
   where 
   \begin{align*}
       C(d_1,d_2,y)= \left|y_1\right|(d_1+\frac{1}{4d_1})+\left|y_2\right|(\frac{1}{2d_1}+\frac{3}{2d_2}+\frac{1}{4})+\left|y_3\right|(\frac{1}{2d_2}+\frac{1}{2})+\left|y_4\right|(d_1+\frac{1}{4d_1})+\left|y_5\right|\frac{d_1+1}{2}.
   \end{align*} 
\end{lem}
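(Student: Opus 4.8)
The plan is a direct, elementary computation. Since the fourth and fifth components of $B_{d_1 d_2}(x,y)$ vanish, I would first write
\[
\langle x, B_{d_1 d_2}(x,y)\rangle = x_1 \big(B_{d_1 d_2}(x,y)\big)_1 + x_2 \big(B_{d_1 d_2}(x,y)\big)_2 + x_3 \big(B_{d_1 d_2}(x,y)\big)_3,
\]
expand the right-hand side into a sum of monomials $x_i x_j y_k$, and regroup it as $\sum_{k=1}^{5} y_k\, P_k(x)$, where each $P_k$ is a quadratic form in $x$. Writing $P_k(x) = x^{\top} A^{(k)} x$ for a symmetric matrix $A^{(k)}$ (so that off-diagonal coefficients are split in half between $(i,j)$ and $(j,i)$), the statement reduces to estimating each $|x^{\top} A^{(k)} x|$ and summing.

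For that estimate I would use the elementary inequality, valid for any symmetric $A = (a_{ij})$,
\[
\big|x^{\top} A x\big| \le \sum_{i,j} |a_{ij}|\,|x_i|\,|x_j| \le \sum_{i,j} |a_{ij}|\,\tfrac12\big(x_i^2 + x_j^2\big) = \sum_i x_i^2 \sum_j |a_{ij}| \le \Big(\max_i \sum_j |a_{ij}|\Big)\,|x|^2,
\]
where the middle equality uses the symmetry of $A$ (both $(i,j)$ and $(j,i)$ occur). Hence $|P_k(x)| \le r_k\,|x|^2$, with $r_k$ the largest absolute row sum of $A^{(k)}$, and therefore $|\langle x, B_{d_1 d_2}(x,y)\rangle| \le \big(\sum_{k} r_k |y_k|\big)|x|^2$.

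It then remains to read off the $A^{(k)}$ from the definition of $B_{d_1 d_2}$ and compare $r_k$ with the coefficient of $|y_k|$ in $C(d_1, d_2, y)$. For example, the $y_5$-part is $P_5(x) = -d_1 x_2 x_5 - x_3 x_5$, whose absolute row sums are $\tfrac{d_1}{2}$, $\tfrac12$ and $\tfrac{d_1+1}{2}$ in rows $2, 3, 5$, so $r_5 = \tfrac{d_1+1}{2}$; the $y_1$- and $y_4$-parts both equal $(d_1 - \tfrac{1}{2d_1})x_1 x_2 + d_1 x_2 x_4$, with largest absolute row sum $d_1 - \tfrac{1}{4d_1} \le d_1 + \tfrac{1}{4d_1}$; and for $P_2$ and $P_3$ one uses $d_1, d_2 \ge 2$ to check that the maximizing rows give exactly $\tfrac1{2d_1} + \tfrac3{2d_2} + \tfrac14$ and $\tfrac1{2d_2} + \tfrac12$ respectively. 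Summing over $k$ yields $C(d_1,d_2,y)$. There is no genuine obstacle in this argument; the only care needed is the sign bookkeeping in the expansion of $\langle x, B_{d_1 d_2}(x,y)\rangle$, together with the observation that the stated bounds for the $y_1$ and $y_4$ coefficients are slightly loose (one simply discards the help of a minus sign there).
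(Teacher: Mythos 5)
Your proof is correct, and it does take a slightly different route to the same bound. The paper's proof first applies the triangle inequality to split $\left|\langle x, B_{d_1d_2}(x,y)\rangle\right|\le \left|x_1 (B_{d_1d_2})_1\right|+\left|x_2(B_{d_1d_2})_2\right|+\left|x_3(B_{d_1d_2})_3\right|$, then applies Young's inequality $\left|x_ix_j\right|\le\frac{1}{2}(x_i^2+x_j^2)$ to each cross term, and finally collects the resulting coefficients on each $x_i^2$ and takes their supremum. Your proof instead first groups by $y_k$ into quadratic forms $P_k(x)=x^\top A^{(k)}x$ and then bounds each $\left|P_k(x)\right|$ by the largest absolute row sum of $A^{(k)}$ --- the underlying scalar inequality is the same (Young's), but the ordering of the steps is reversed. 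Your organization has one concrete advantage: grouping by $y_k$ \emph{before} taking absolute values allows the $-\frac{1}{2d_1}x_1x_2$ term from $x_1(B_{d_1d_2})_1$ and the $d_1x_1x_2$ term from $x_2(B_{d_1d_2})_2$ to partially cancel in $P_1=P_4$, yielding the coefficient $d_1-\frac{1}{4d_1}$ for $\left|y_1\right|$ and $\left|y_4\right|$ rather than the paper's $d_1+\frac{1}{4d_1}$ (which arises from taking absolute values of the two contributions separately). Since $d_1-\frac{1}{4d_1}\le d_1+\frac{1}{4d_1}$, your bound still establishes the lemma as stated; it is simply a bit sharper. Your row-sum computations for the remaining $P_k$ are consistent with the paper's supremum of $x_i^2$-coefficients: in $P_2$ the maximum occurs in row $3$ giving $\frac{1}{4}+\frac{1}{2d_1}+\frac{3}{2d_2}$, in $P_3$ in row $2$ giving $\frac{1}{2}+\frac{1}{2d_2}$, and in $P_5$ in row $5$ giving $\frac{d_1+1}{2}$, each using $d_1,d_2\ge 2$ to identify the maximizing row. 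No gap.
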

\begin{proof}
    Using the triangle and Young's inequalities, we estimate
    \begin{align*}
       \left| x\cdot B(x,y)\right|&\le
        \left|-\frac{x_1}{2d_1}(x_2y_4+x_4y_2+x_1y_2+x_2y_1)\right|\\
        &+\left|
        -\frac{x_2}{2}(x_2y_3+x_3y_2)+d_1x_2\left(x_1y_1+x_4y_4+x_1y_4+x_4y_1\right)-d_1x_2x_5y_5\right|\\
        &
        +\left|-\frac{x_3}{d_2}x_3y_3+\frac{x_3}{d_2}\left(x_2y_3+x_3y_2\right)-x_3(\frac{1}{d_1}+\frac{1}{d_2})x_2y_2-x_3x_5y_5\right|\\
        &\le x_1^2 \left(\frac{\left|y_4\right|+3\left|y_2\right|+\left|y_1\right|}{4d_1}+d_1\frac{\left|y_1\right|+\left|y_4\right|}{2}\right)\\
        &+x_2^2\left(\frac{\left|y_4\right|+\left|y_1\right|}{4d_1}+\frac{\left|y_3\right|}{2}+\frac{\left|y_2\right|}{4}+d_1(\left|y_1\right|+\left|y_4\right|)+d_1 \frac{\left|y_5\right|}{2}+\frac{\left|y_3\right|}{2 d_2}+(\frac{1}{2d_1}+\frac{1}{2d_2})\left|y_2\right|\right)\\
        &+x_3^2\left(\frac{\left|y_2\right|}{4}+\frac{\left|y_3\right|+\left|y_2\right|}{d_2}+\frac{\left|y_3\right|}{2d_2}+(\frac{1}{2d_1}+\frac{1}{2d_2})\left|y_2\right|+\frac{\left|y_5\right|}{2}\right)\\
        &+x_4^2 \left(\frac{\left|y_2\right|}{4d_1}+\frac{d_1}{2}\left(\left|y_1\right|+\left|y_4\right|\right)\right)+x_5^2\left(\frac{(d_1+1)\left|y_5\right|}{2}\right).
    \end{align*}
    The result follows from the observation that $C(d_1,d_2,y)$ is greater than the supremum of these five coefficients. 
\end{proof}
 
\section{Plots of $(A(\alpha),\Omega(\omega))$}
In this Appendix, we use our high-precision numerical analysis to plot the $A(\alpha)$ and $\Omega(\omega)$ curves from \eqref{Aalphadef} and \eqref{Omegaomegadef} for various choices of $d_1,d_2$. Intersections coincide with $\mathsf{O}(d_1)\times \mathsf{O}(d_2)$-invariant Einstein metrics on $S^n$, where $n=d_1+d_2+1$. 
\subsection{$n = 9$}
B\"ohm \cite{Bohm98} demonstrated that there are infinitely-many intersections. We can see some of these by zooming in. 
\begin{center}
\includegraphics[height=5cm]{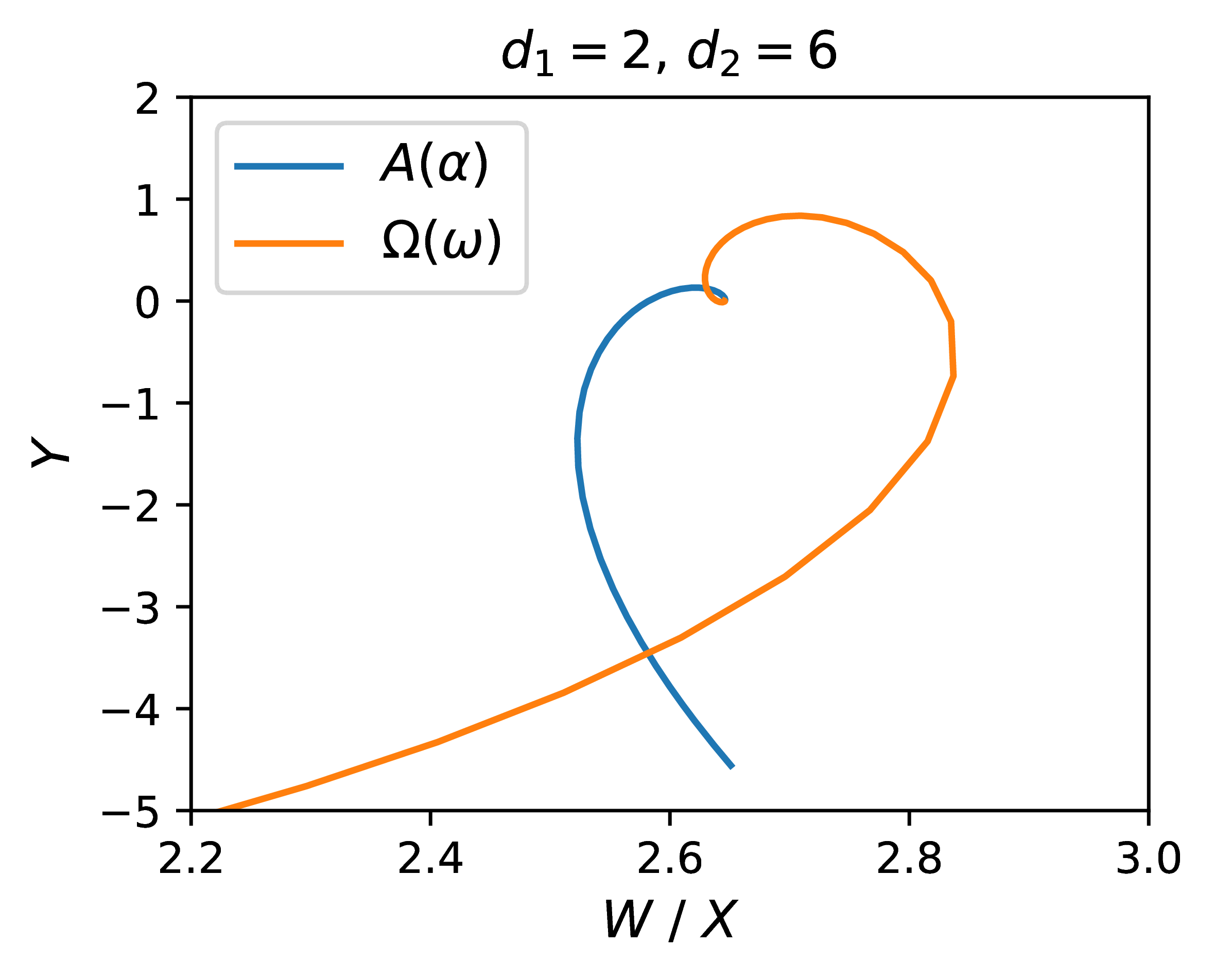}
\includegraphics[height=5cm]{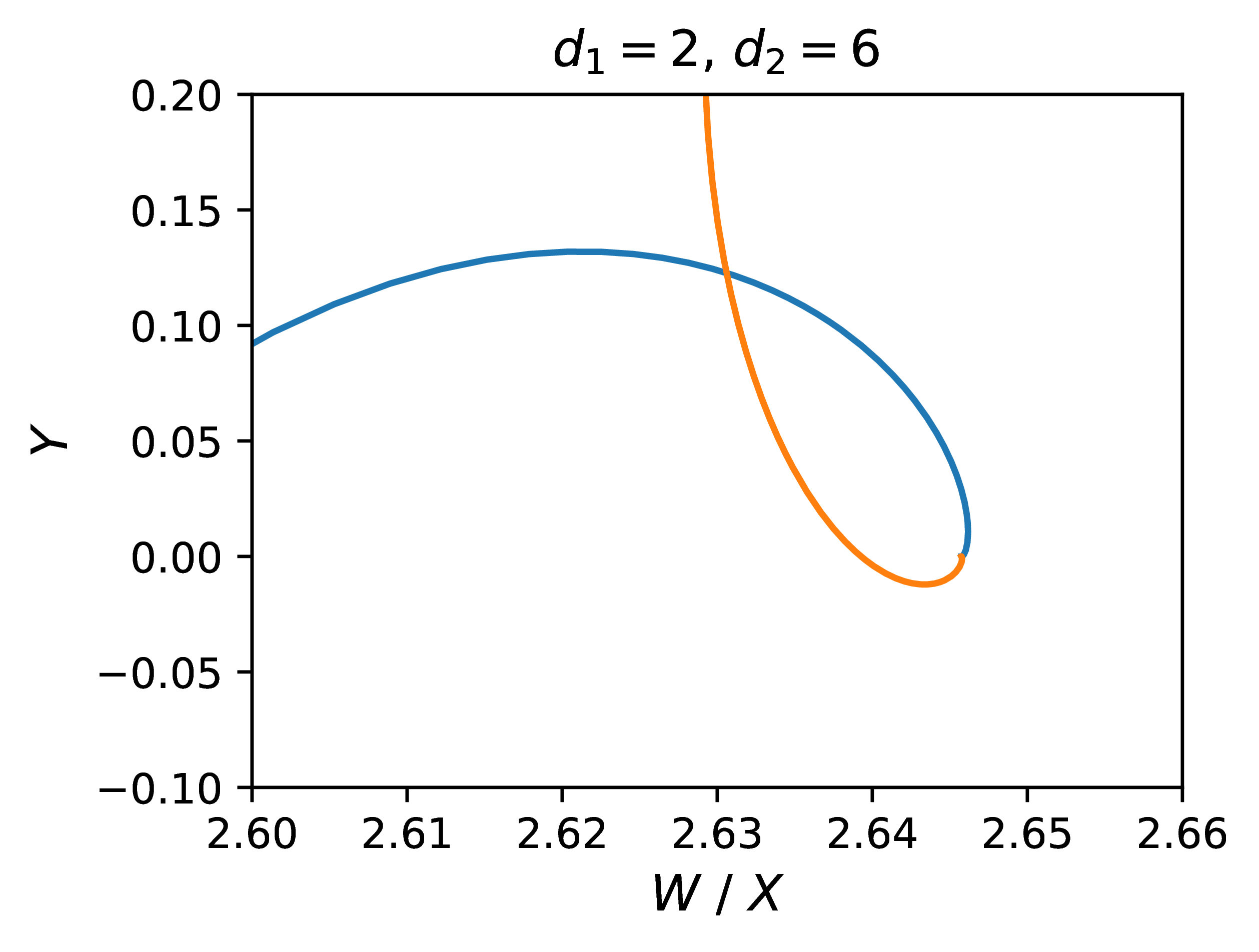}

\includegraphics[height=5cm]{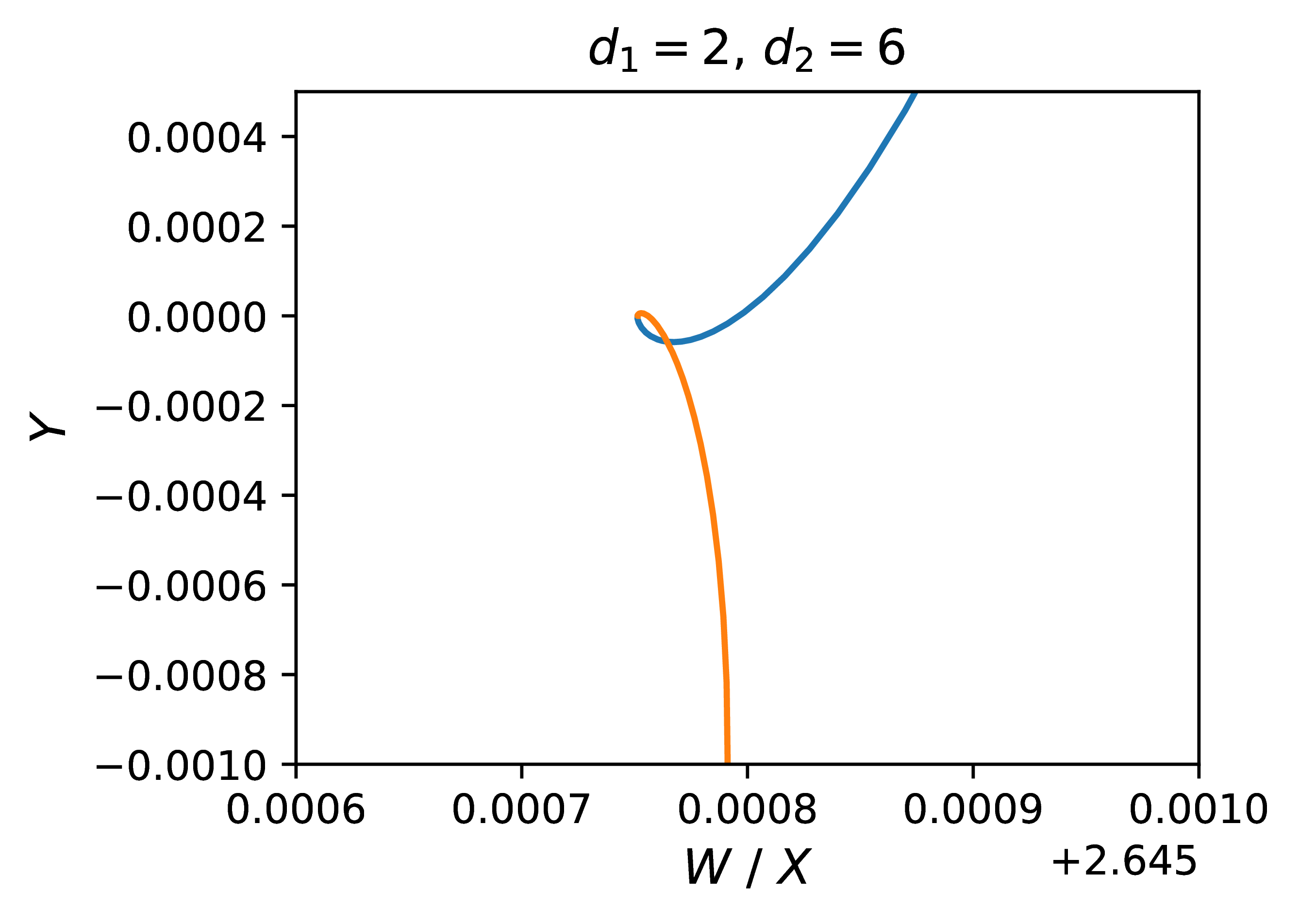}
\end{center}

\subsection{$n = 10$}
Nienhaus-Wink \cite{NienhausWink} showed that if $(d_1,d_2)\in \{(2,7),(3,6),(4,5)\}$, then there are two intersections (the round-Einstein metric and a non-round Einstein metric), and conjectured that there are no more intersections to find. 
\begin{longtable}{cc}
\includegraphics[height=5cm]{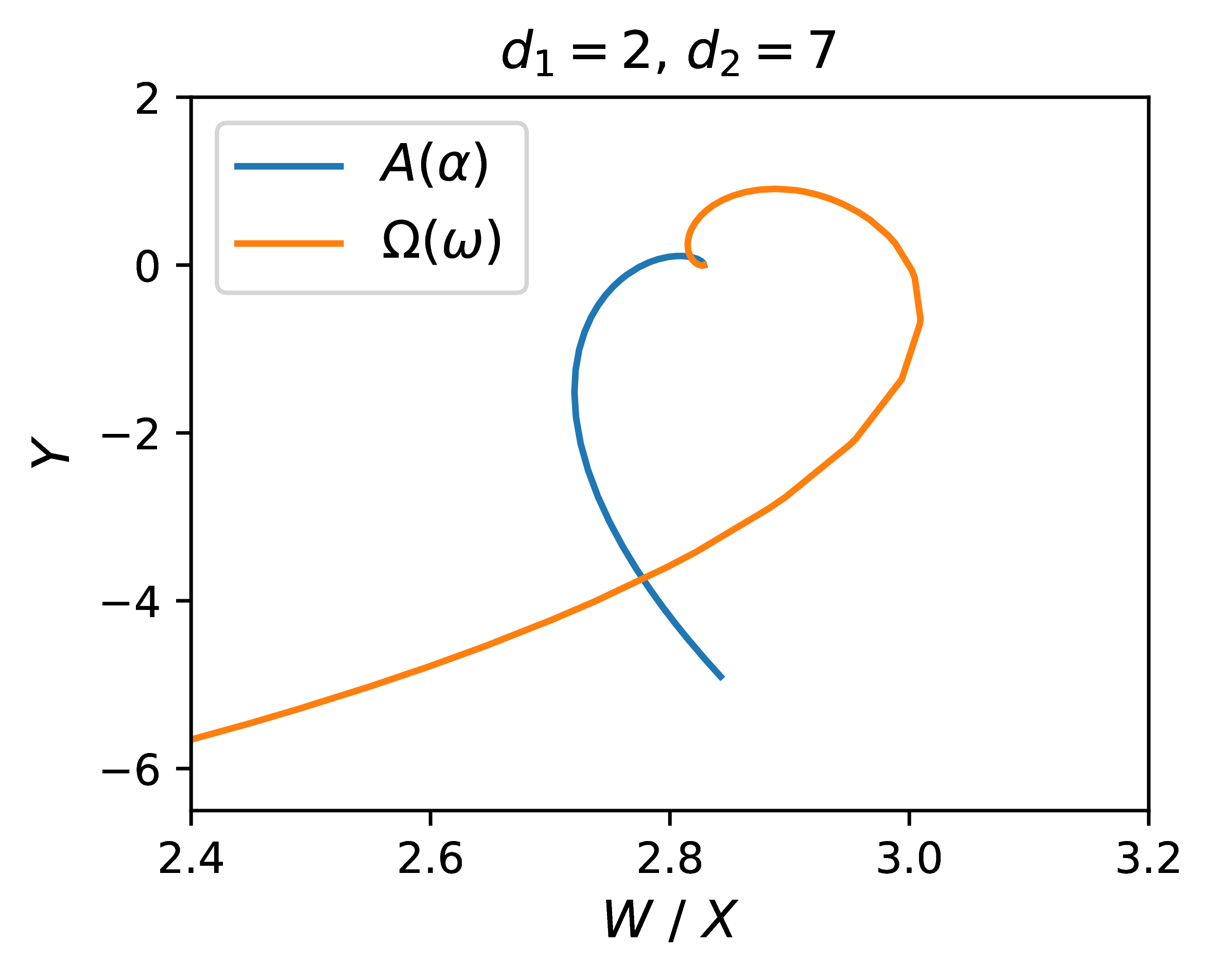} &
\includegraphics[height=5cm]{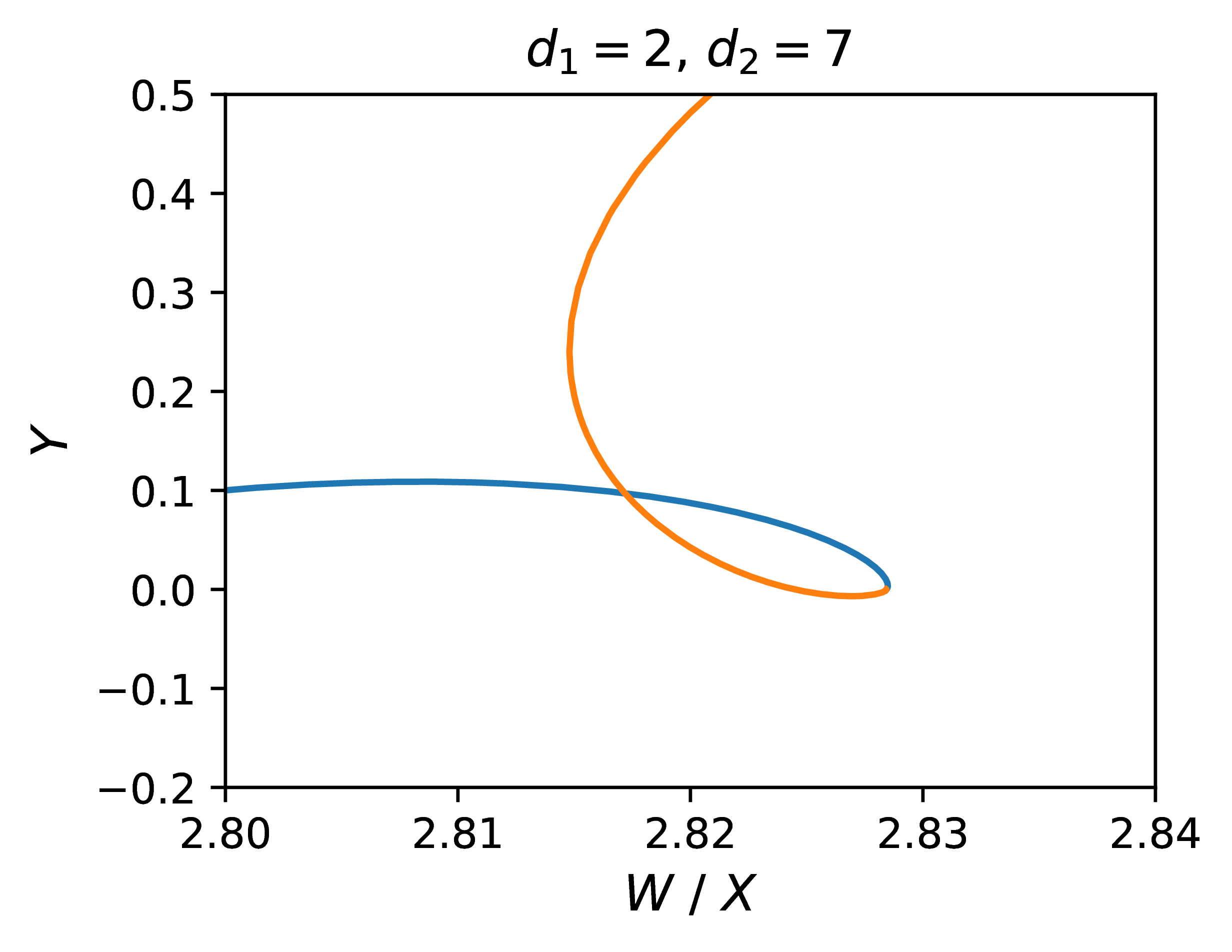} \\
\includegraphics[height=5cm]{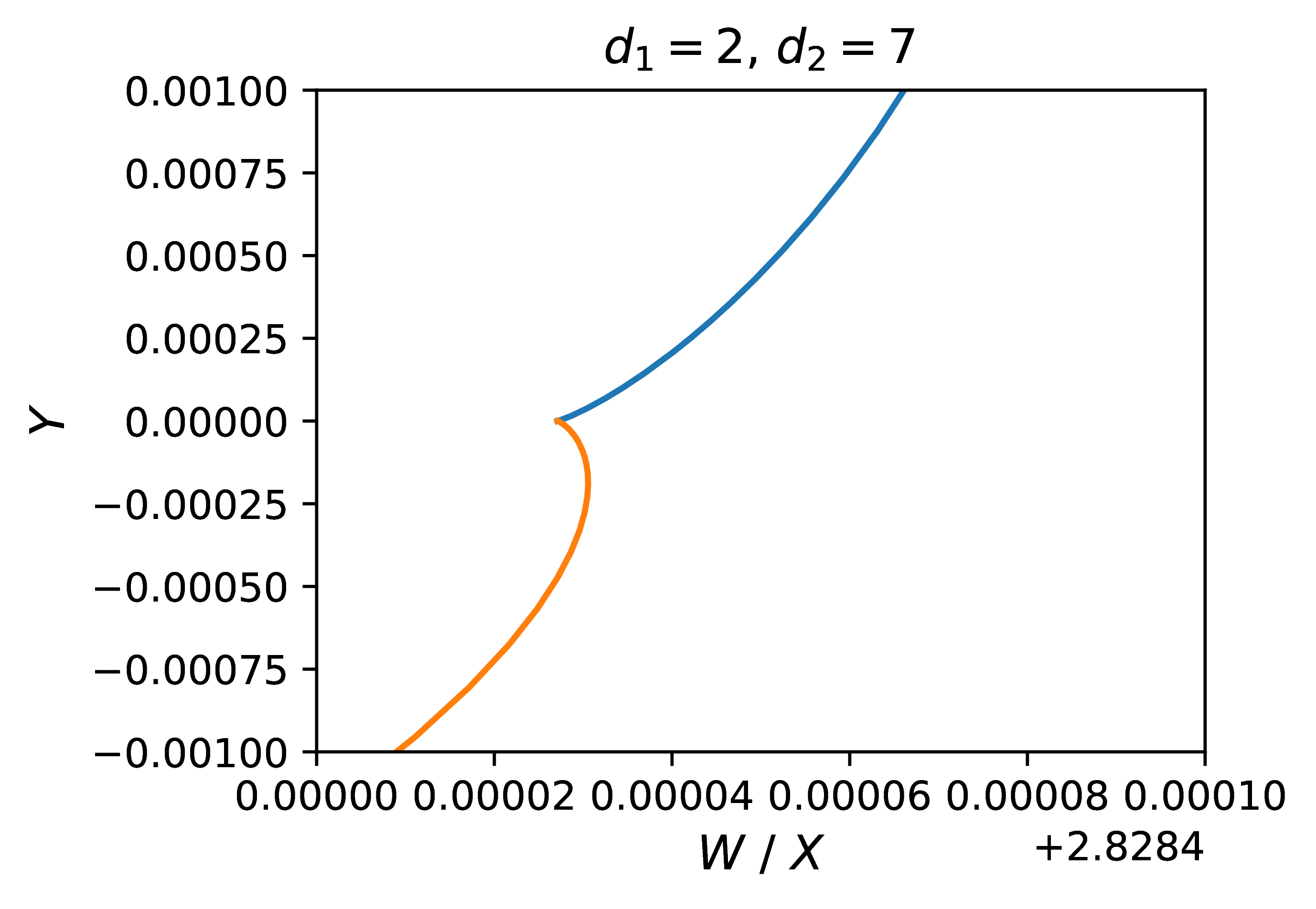} &
\includegraphics[height=5cm]{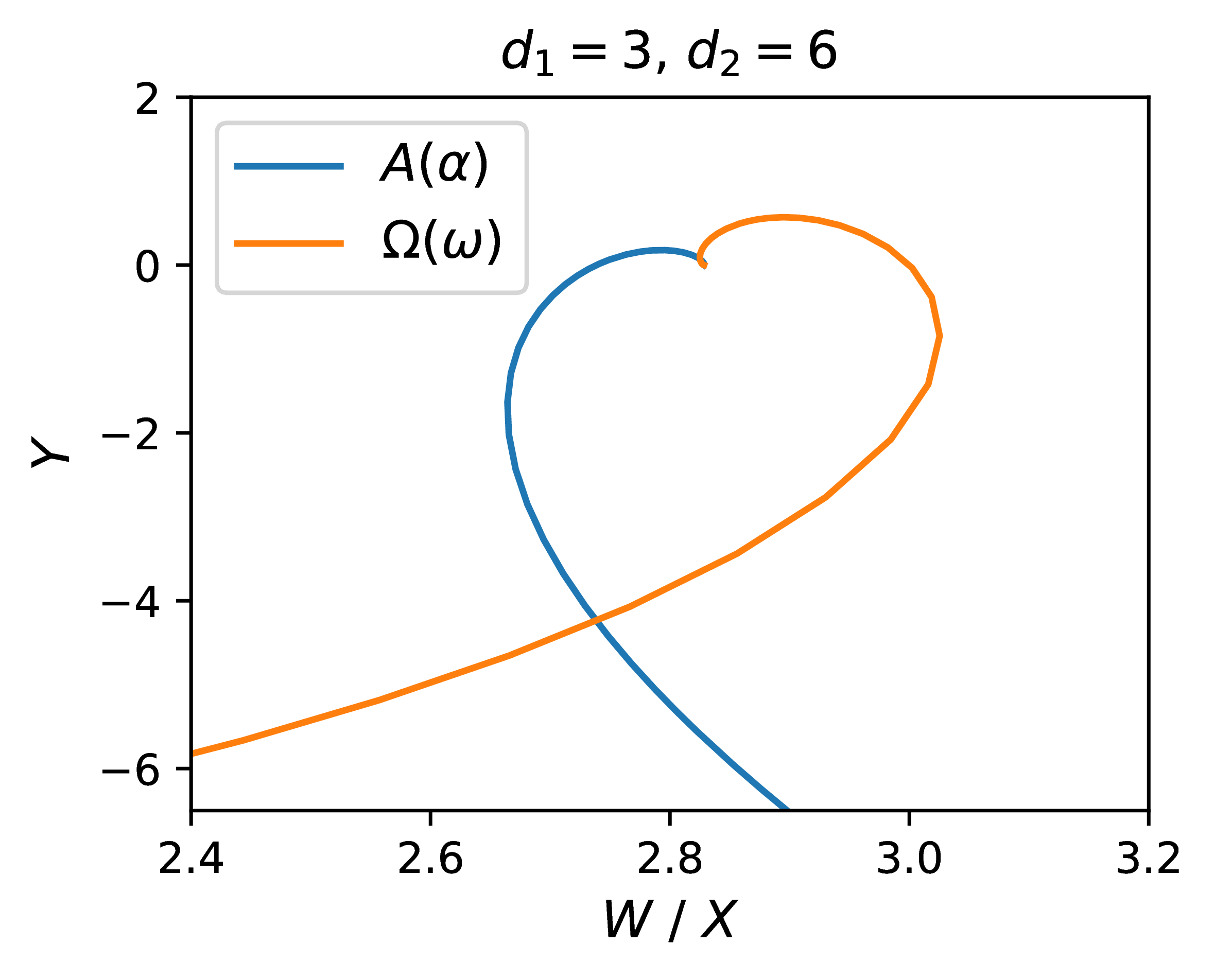} \\
\includegraphics[height=5cm]{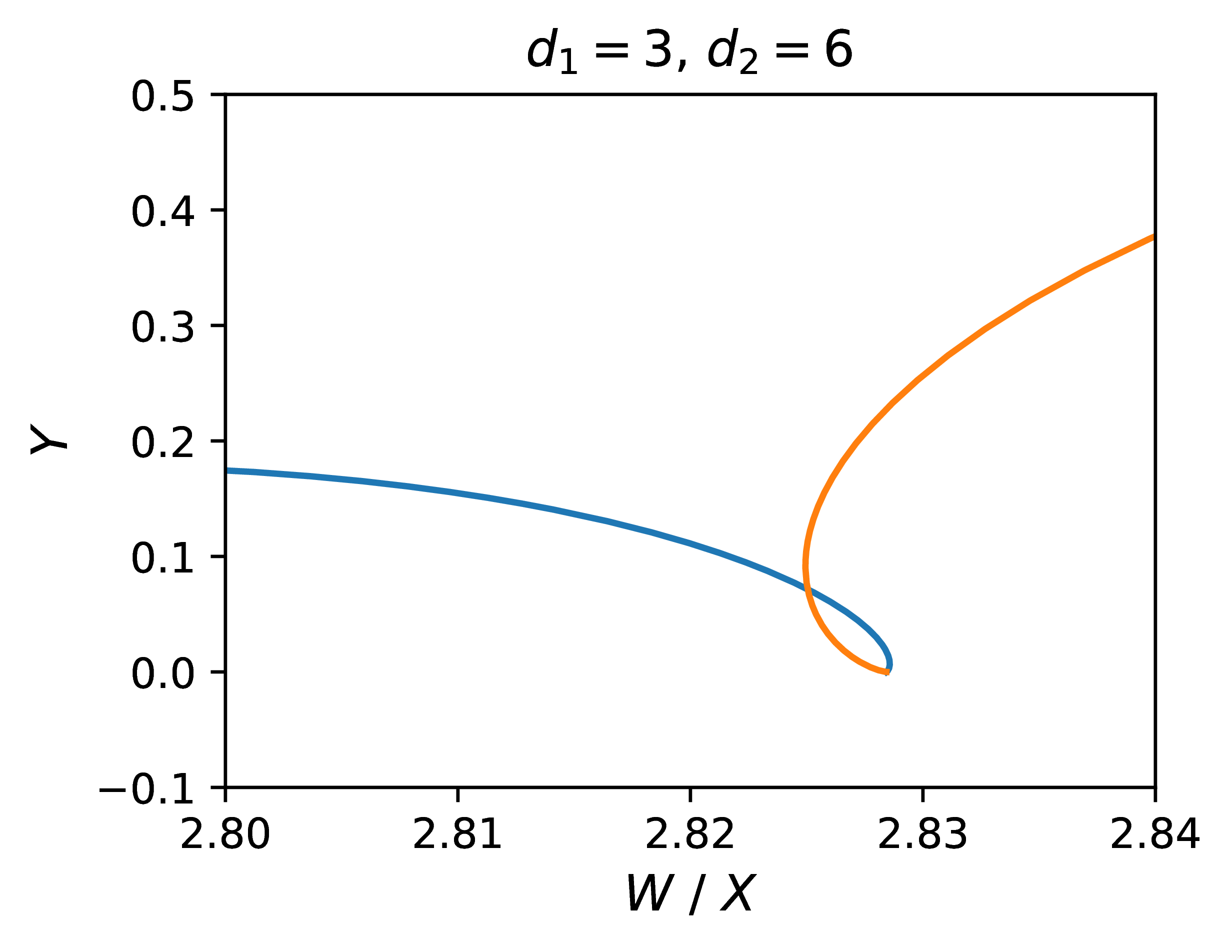} &
\includegraphics[height=5cm]{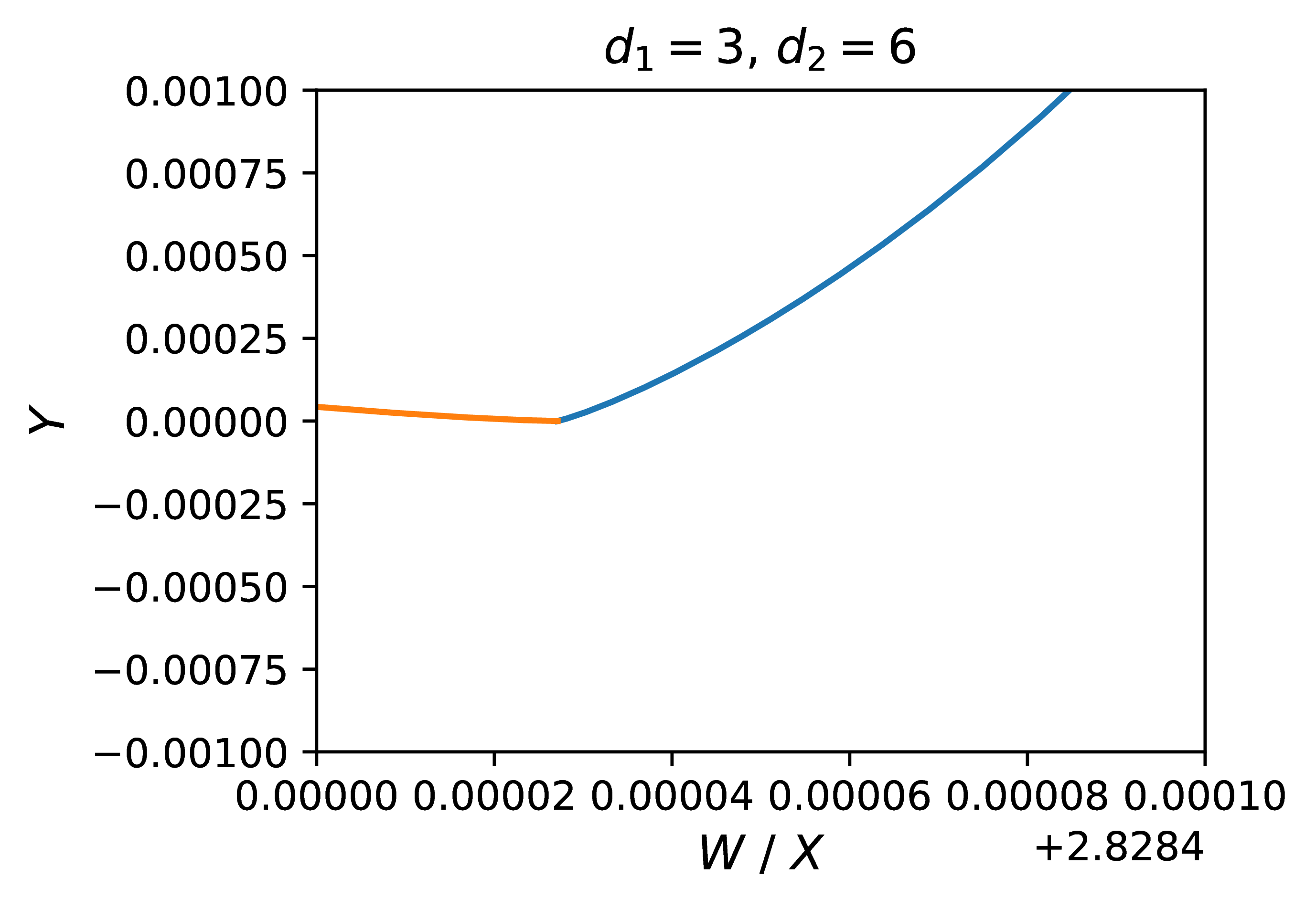} \\
\includegraphics[height=5cm]{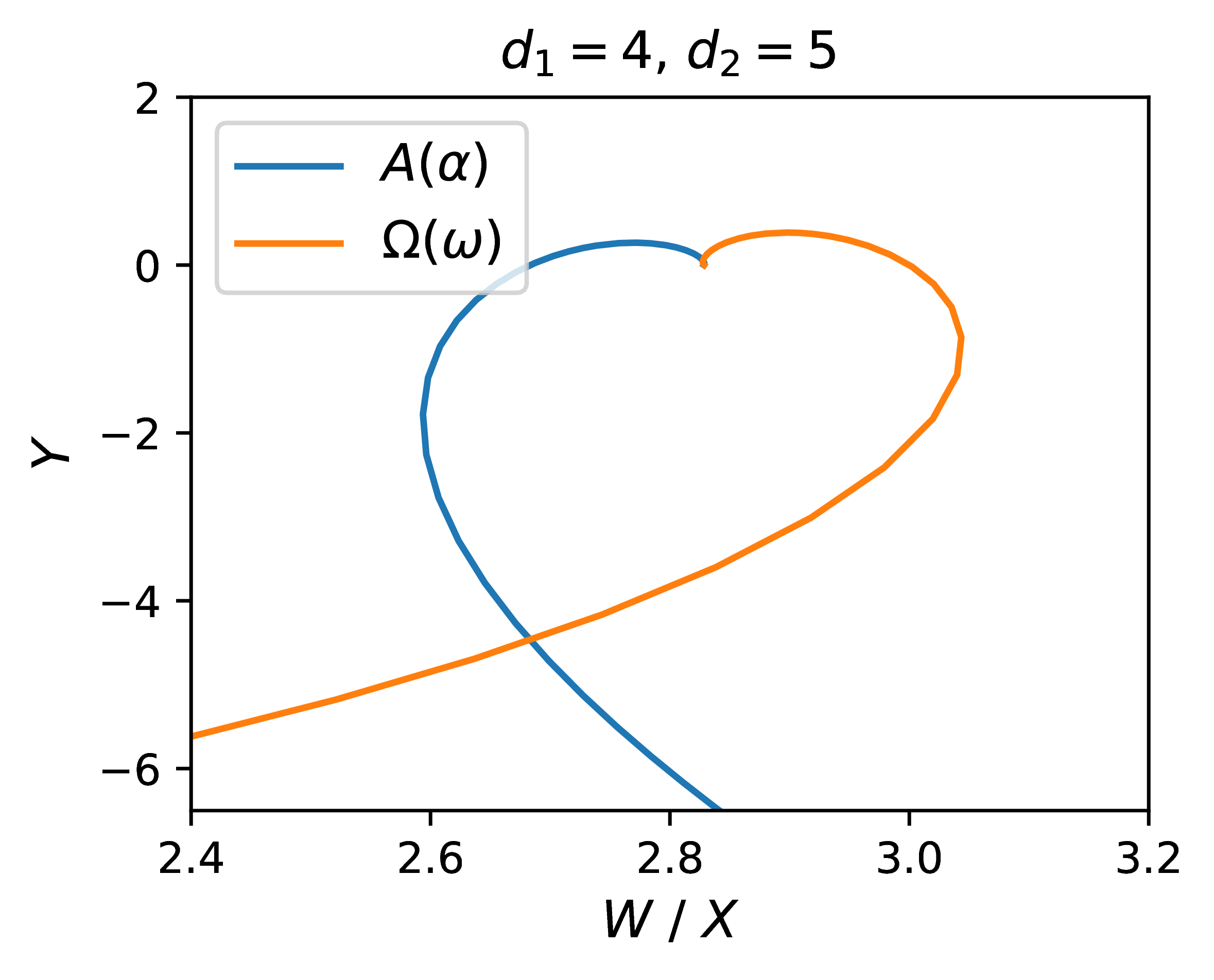} &
\includegraphics[height=5cm]{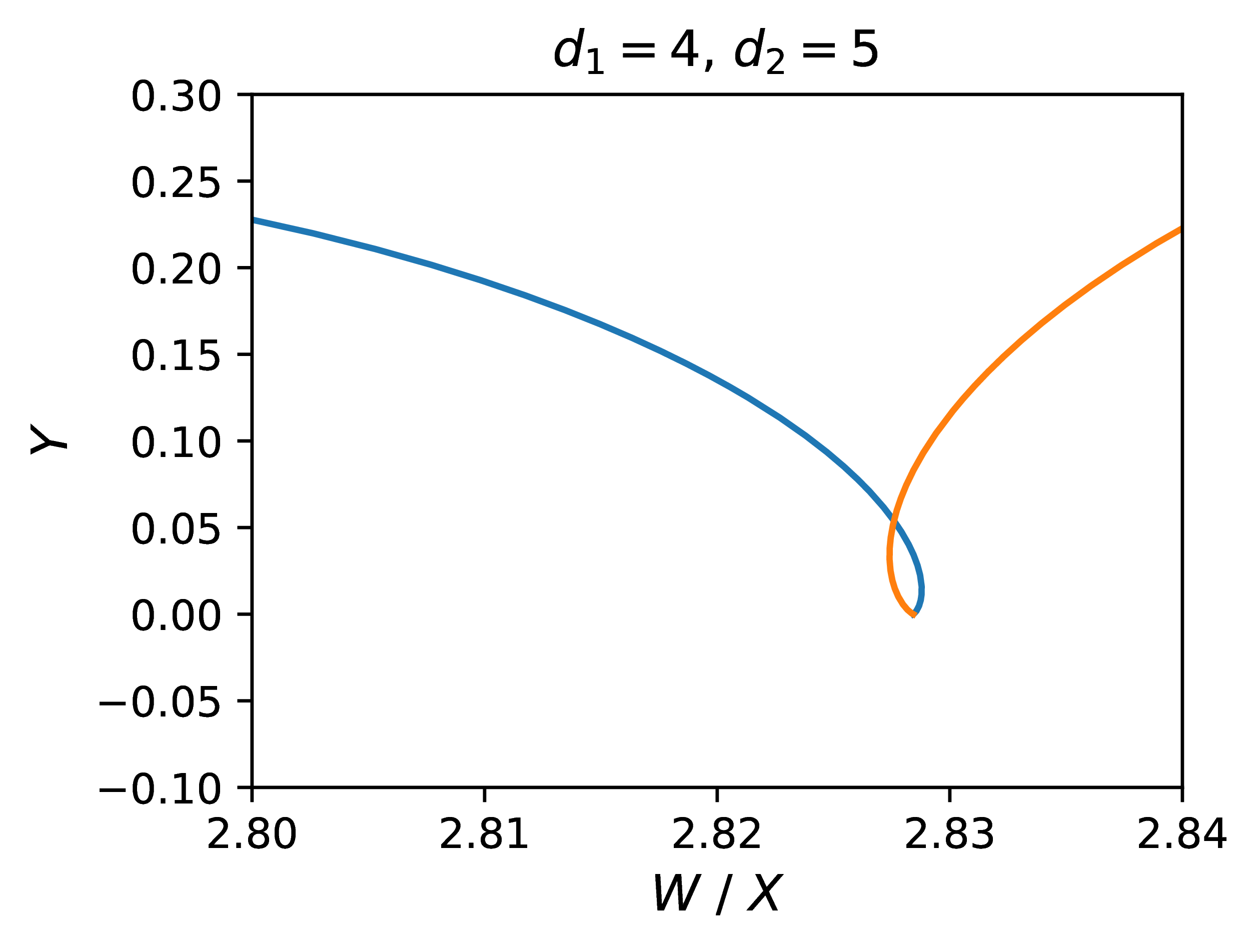}
\end{longtable}

\begin{center}
\includegraphics[height=5cm]{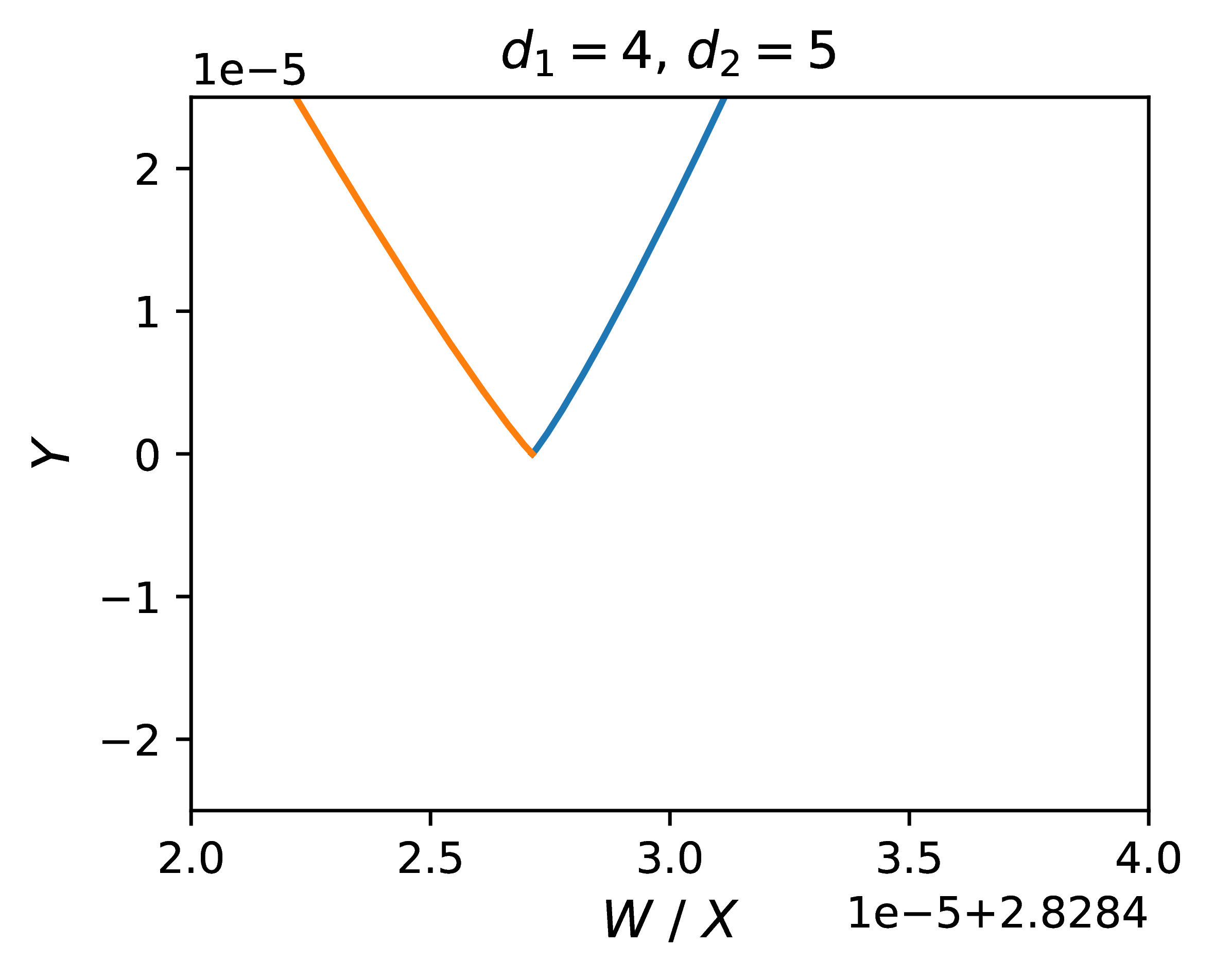}
\end{center}

\subsection{$n = 12$}
In this paper, we prove that if $(d_1,d_2)=(2,9)$, then there is a non-round Einstein metric. The plots below suggest that there are no new Einstein metrics for any other combination of $d_1,d_2$ summing to $11$. However, by zooming in on the $(2,9)$ case, there appears to be a \textit{second} non-round Einstein metric. 
\begin{center}
\includegraphics[height=5cm]{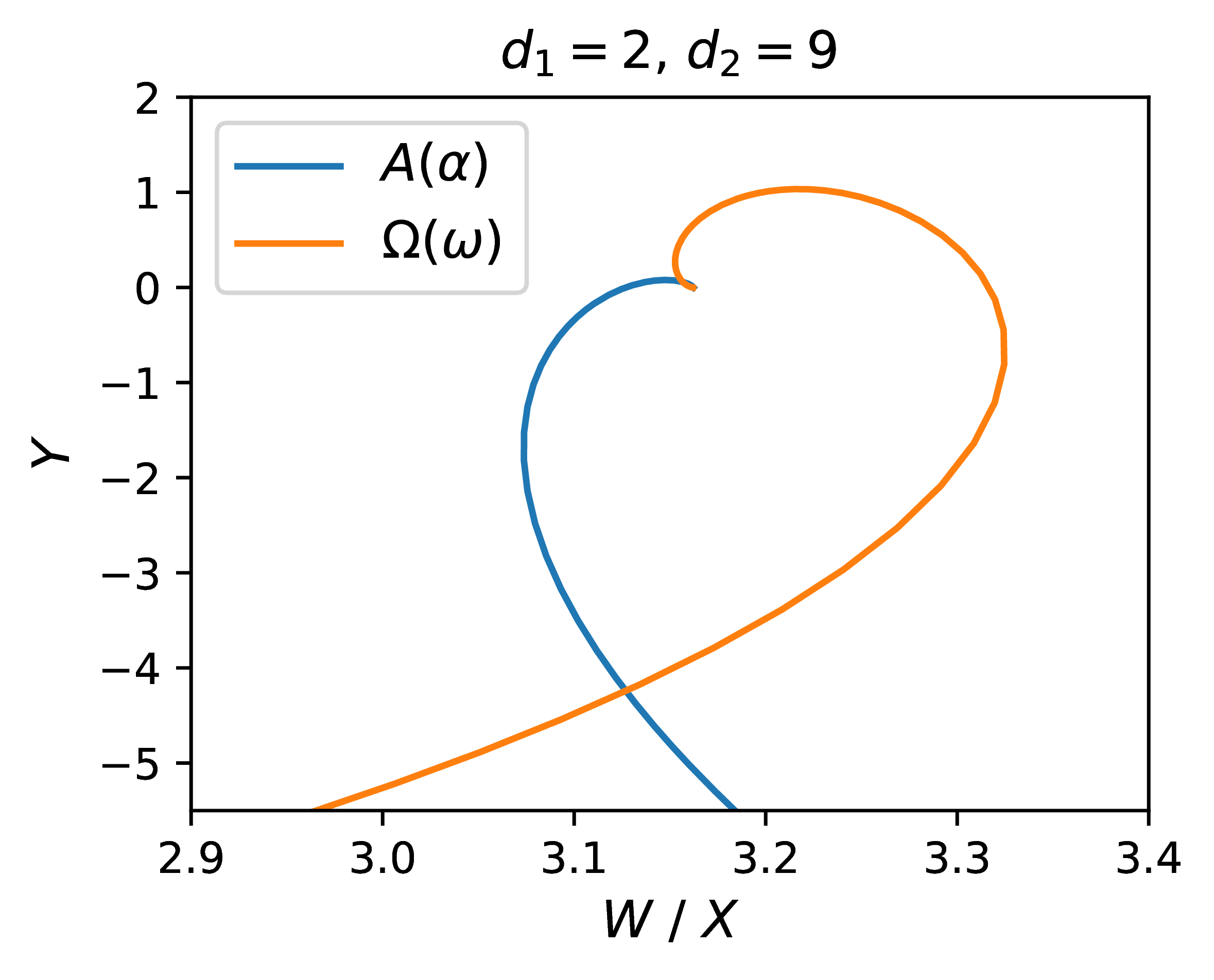} 
\includegraphics[height=5cm]{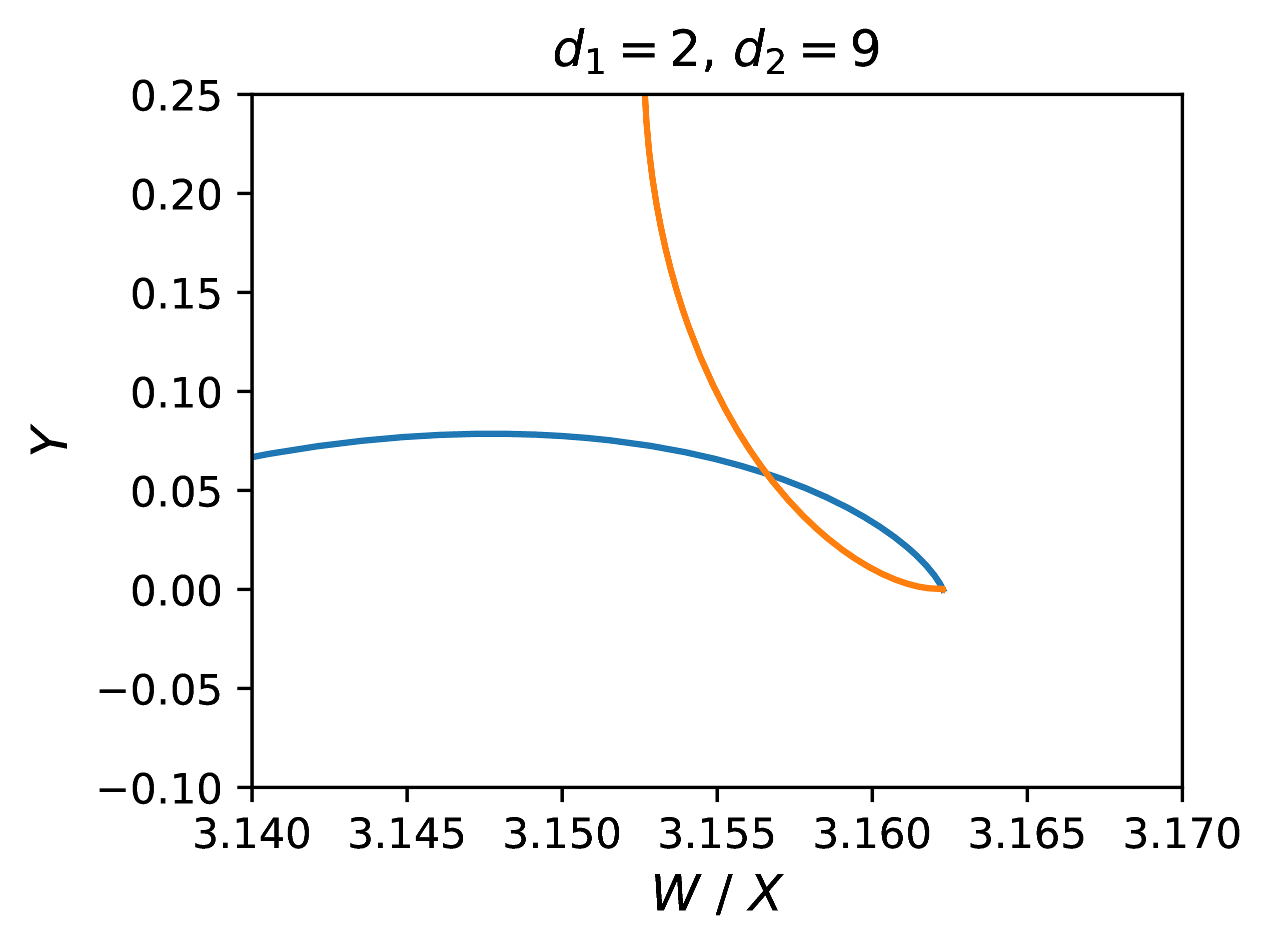}

\includegraphics[height=5cm]{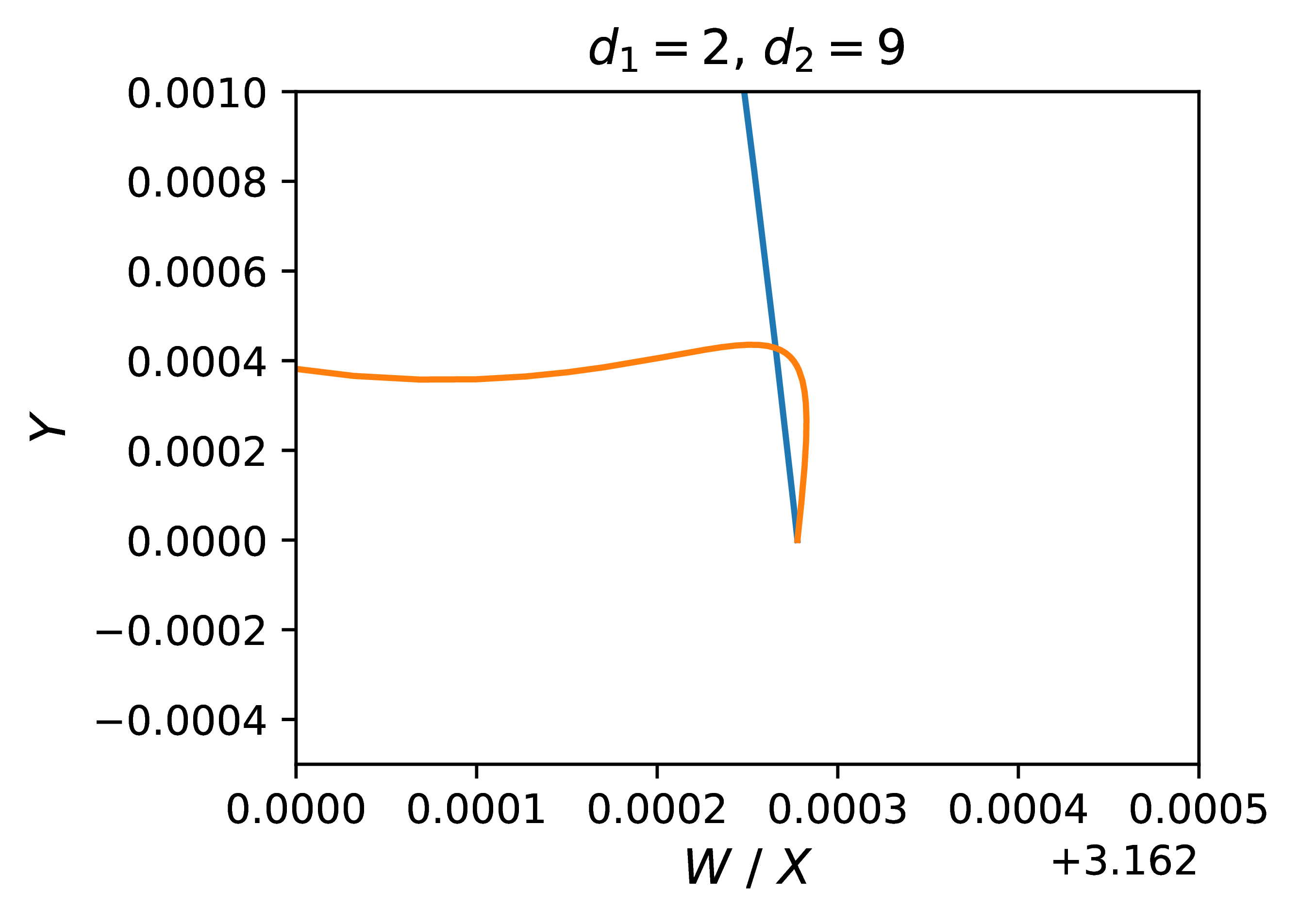}
\end{center}

\begin{longtable}{cc}
\includegraphics[height=5cm]{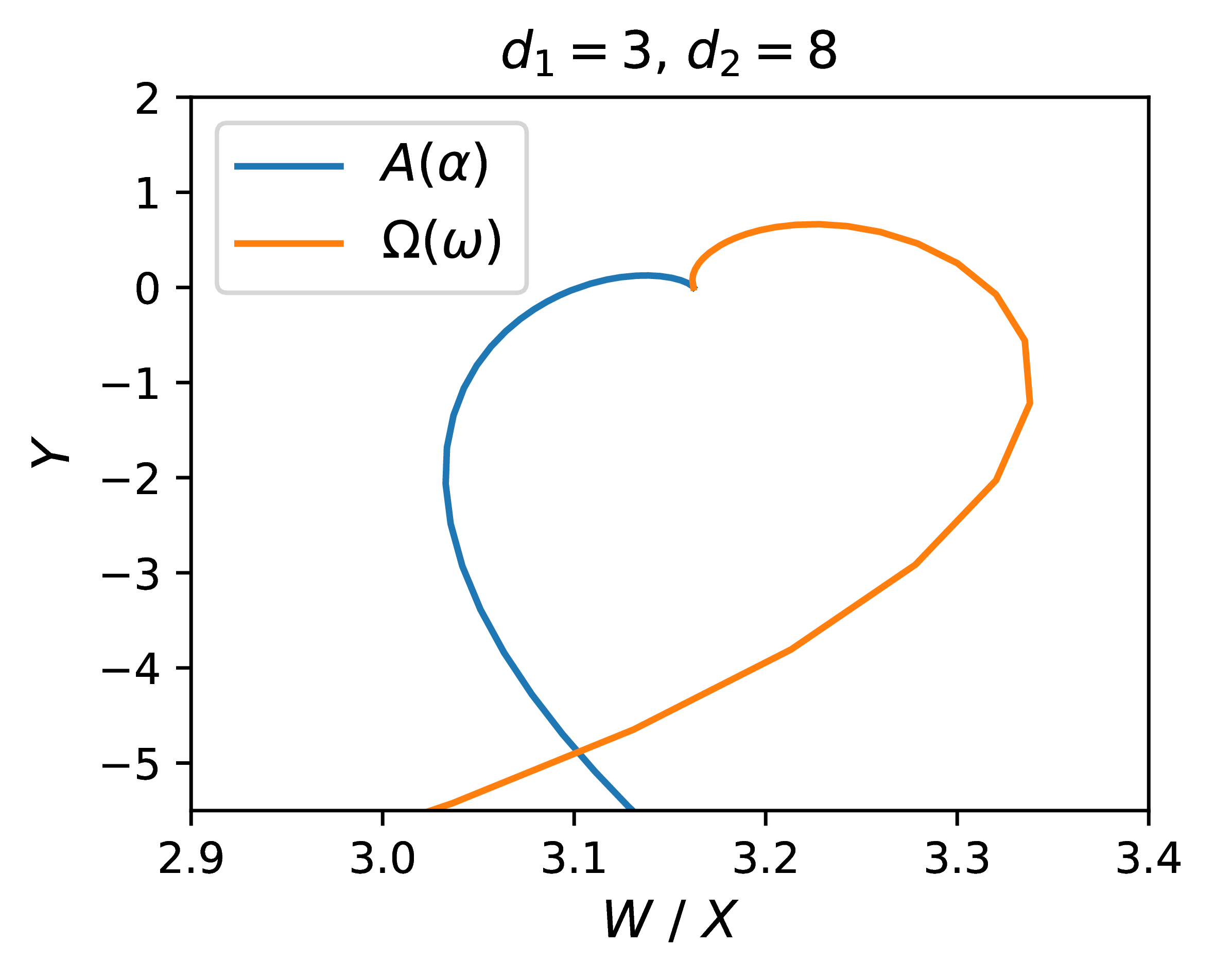} &
\includegraphics[height=5cm]{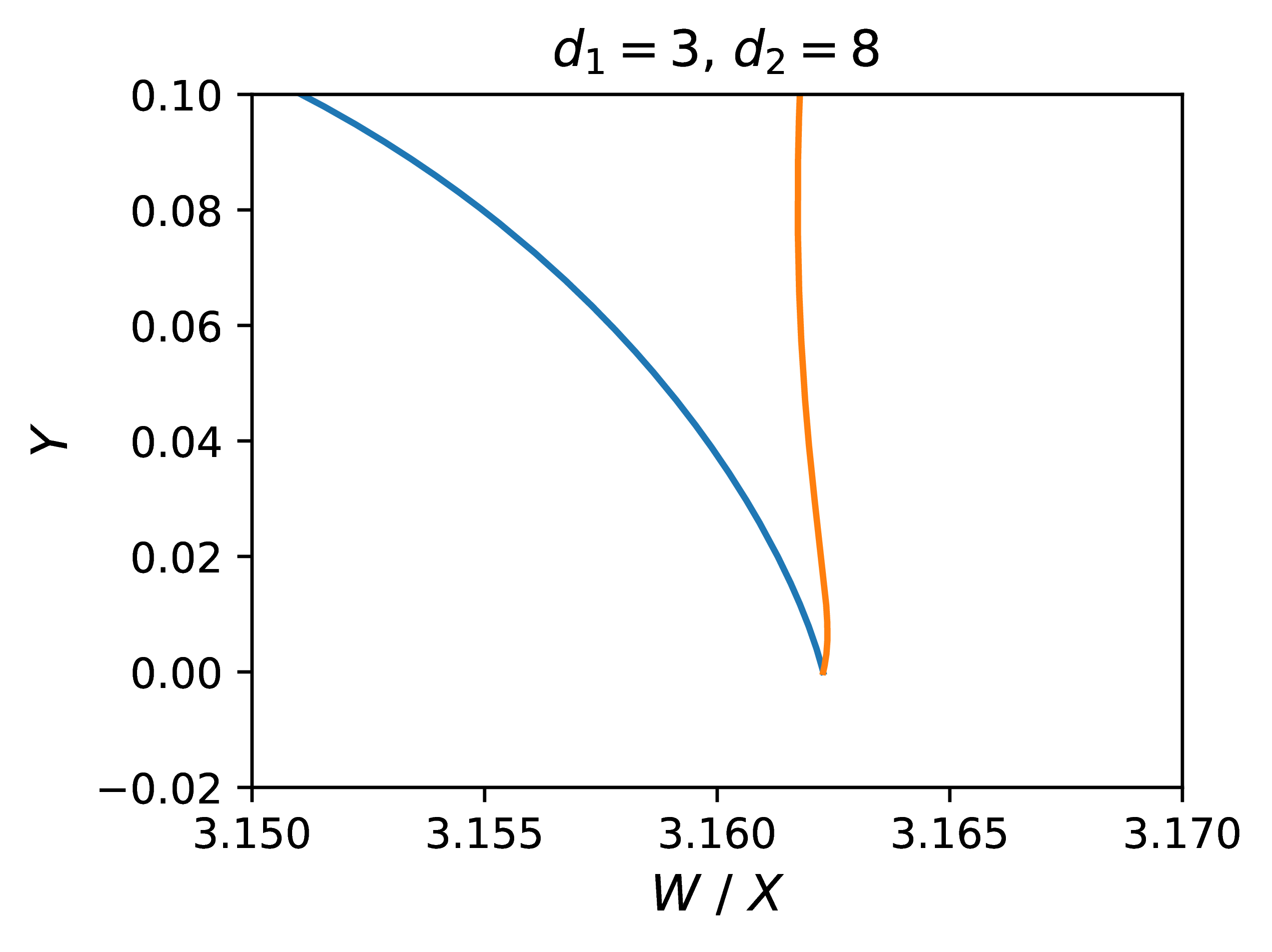} \\
\includegraphics[height=5cm]{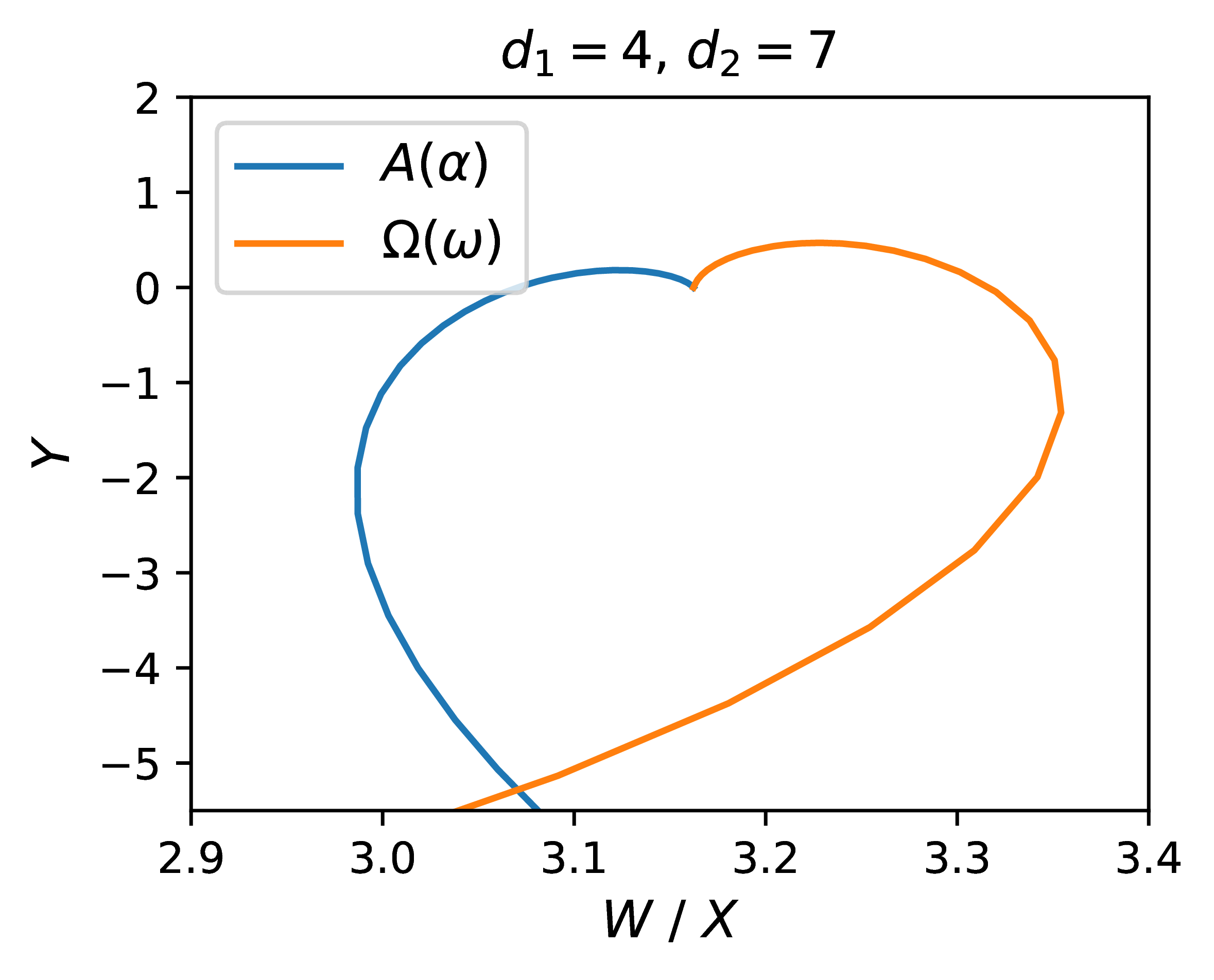} &
\includegraphics[height=5cm]{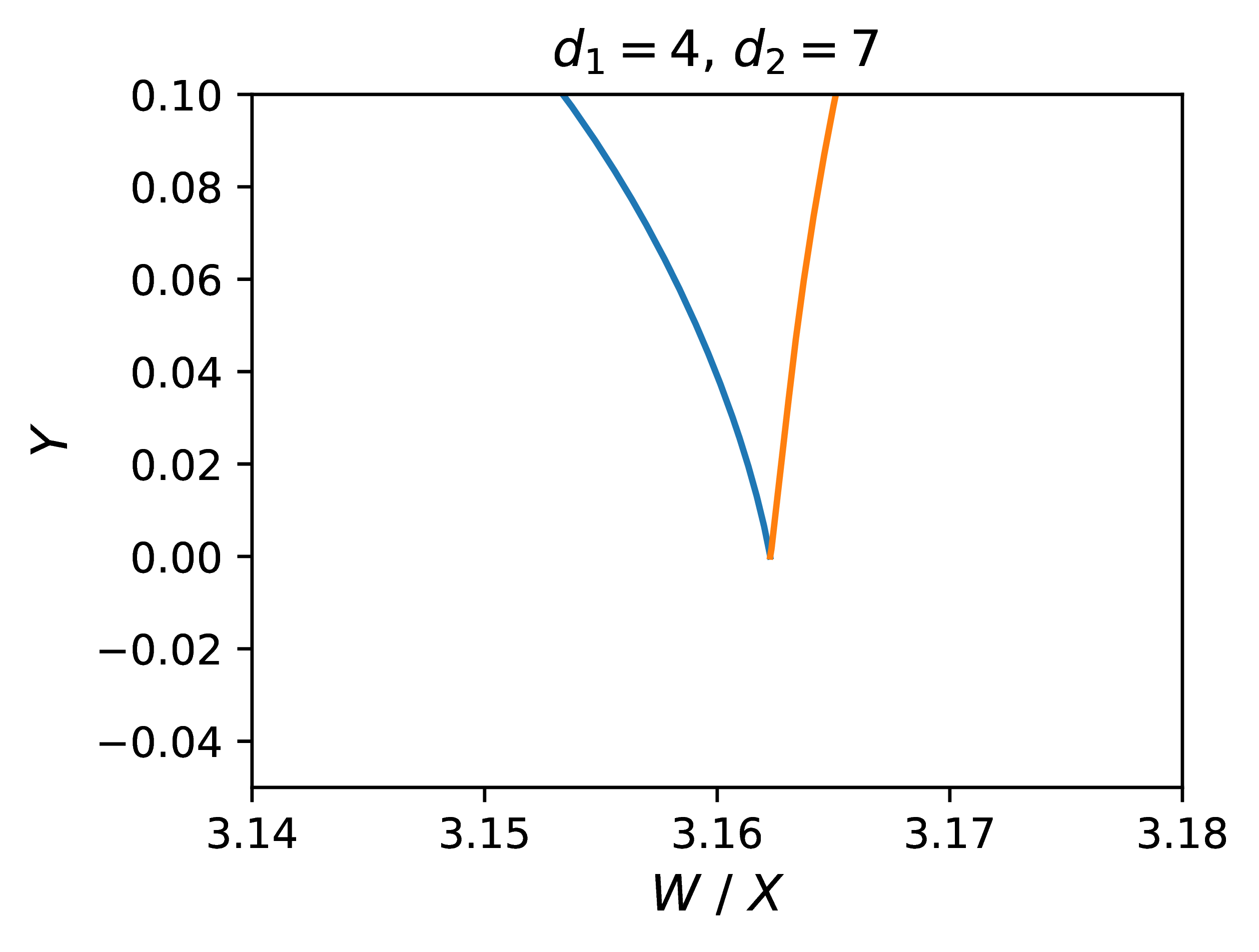} \\
\includegraphics[height=5cm]{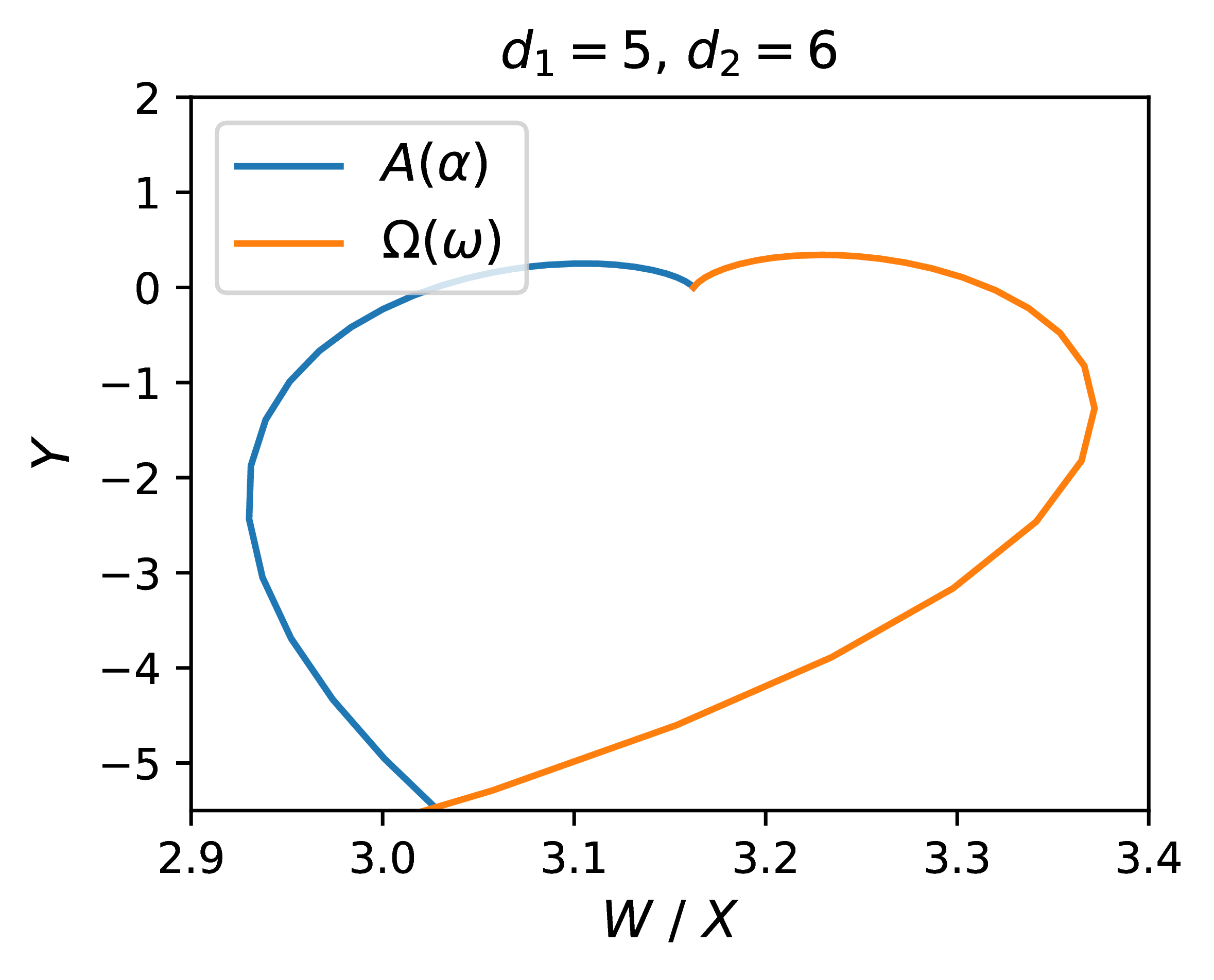} &
\includegraphics[height=5cm]{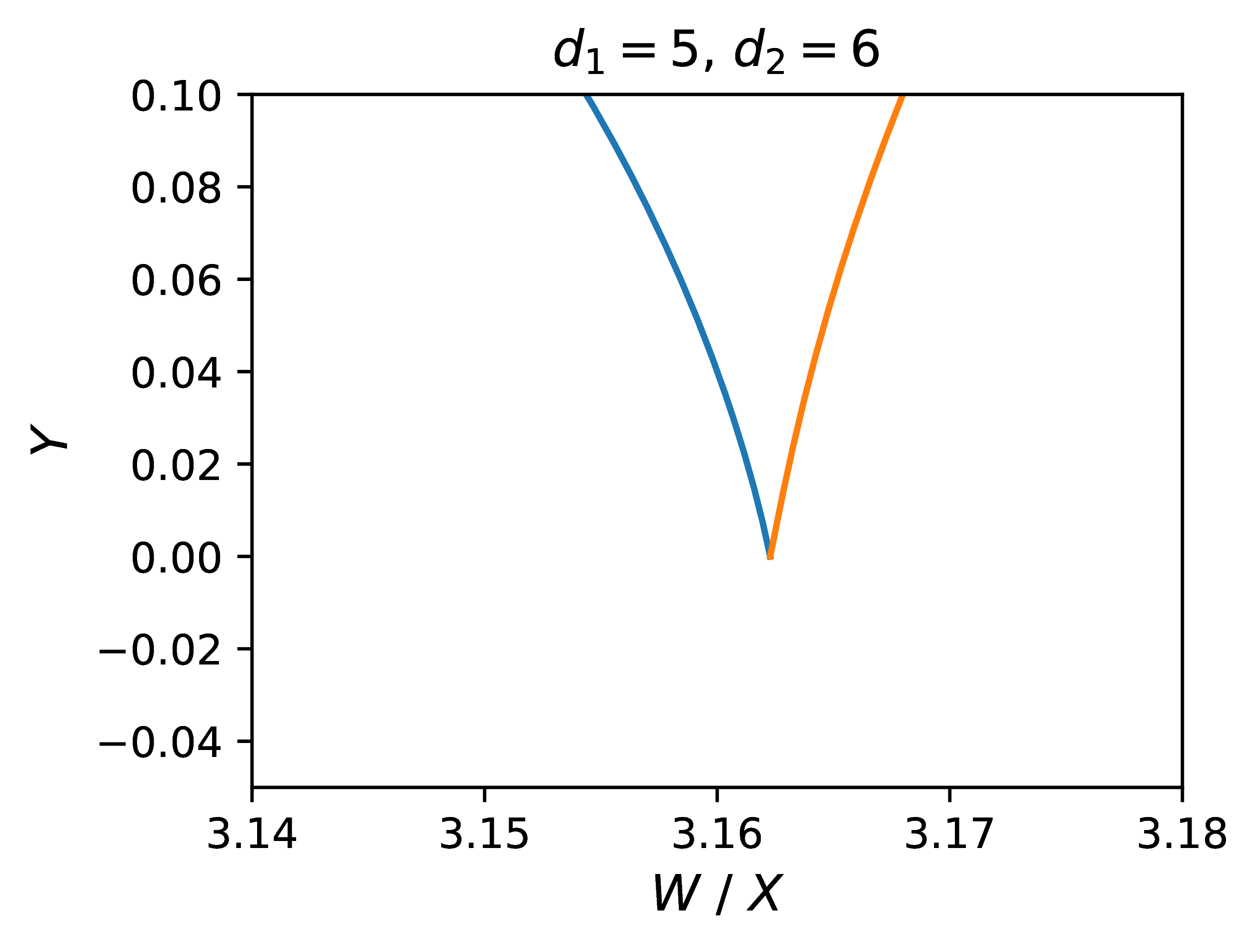} \\
\end{longtable}

\end{document}